\newtheorem{thm}{Theorem}
\crefname{thm}{Theorem}{Theorems}
\newtheorem{cor}[thm]{Corollary}
\crefname{cor}{Corollary}{Corollaries}
\newtheorem{lem}[thm]{Lemma}
\crefname{lem}{Lemma}{Lemmas}
\newtheorem{prop}[thm]{Proposition}
\crefname{prop}{Proposition}{Propositions}
\crefname{conj}{Conjecture}{Conjectures}
\crefname{ques}{Question}{Questions}
\theoremstyle{definition}
\newtheorem{defn}[thm]{Definition}
\crefname{defn}{Definition}{Definitions}
\newtheorem{rem}[thm]{Remark}
\crefname{rem}{Remark}{Remarks}
\newtheorem{ex}[thm]{Example}
\crefname{ex}{Example}{Examples}
\crefname{obs}{Observation}{Observations}
\crefname{claim}{Claim}{Claims}
\newtheorem{ass}[thm]{Assumption}
\crefname{ass}{Assumption}{Assumptions}
\numberwithin{thm}{section}
\newcommand{\cG}{\ensuremath{\mathcal G}}
\newcommand{\cP}{\ensuremath{\mathcal P}}
\newcommand{\cT}{\ensuremath{\mathcal T}}
\newcommand{\cV}{\ensuremath{\mathcal V}}
\newcommand{\cX}{\ensuremath{\mathcal X}}
\newcommand{\Lp}[1]{\mathrm{L}^{#1}}
\renewcommand{\a}{{\ensuremath{\alpha}}}%
\newcommand{\E}{{\mathcal{E}}}
\newcommand{\g}{{\ensuremath{\gamma}}}
\newcommand{\G}{{\ensuremath{\Gamma}}}
\newcommand{\x}{{\ensuremath{\xi}}}
\newcommand{\dist}{d}
\newcommand{\sfd}{\mathsf d}
\newcommand{\sfc}{\mathsf c}
\newcommand{\sfn}{\mathsf n}
\DeclareMathOperator{\proj}{pr}
\DeclareMathOperator{\id}{id}
\renewcommand{\leq}{\leqslant}
\renewcommand{\geq}{\geqslant}
\renewcommand{\le}{\leqslant}
\renewcommand{\ge}{\geqslant}
\renewcommand{\to}{\rightarrow}
\DeclareMathOperator*{\argmin}{arg\,min}
\DeclareMathOperator{\supp}{supp}
\newcommand{\comma}{\,\mathrm{,}\;\,}
\newcommand{\semicolon}{\,\mathrm{;}\;\,}
\newcommand{\fstop}{\,\mathrm{.}}
\def\N{{\mathbb N}}
\def\R{{\mathbb R}}
\newcommand{\eps}{\epsilon}
\newcommand{\dd}{{\, \mathrm{d}}}
\newcommand{\dds}{\frac{\mathrm{d}}{\mathrm{d}s}}
\begin{document}
\title{Kinetic Optimal Transport (OTIKIN) -- Part 1:\\
\Large Second-Order Discrepancies Between Probability Measures}
\author{Giovanni Brigati\thanks{\textsf{giovanni.brigati@ist.ac.at}} }
\author{Jan Maas\thanks{\textsf{jan.maas@ist.ac.at}} }
\author{Filippo Quattrocchi\thanks{\textsf{filippo.quattrocchi@ist.ac.at}} }
\affil{Institute of Science and Technology Austria (ISTA) \\ 
\footnotesize Am Campus 1, 3400 Klosterneuburg, Austria}
\date{\vspace{-0.25cm}\today}
\maketitle
\vspace{-0.75cm}

\begin{abstract}
    This is the first part of a general description in terms of mass transport for time-evolving interacting particles systems, at a mesoscopic level. Beyond kinetic theory, our framework naturally applies in biology, computer vision, and engineering.

    The central object of our study is a new discrepancy~$\mathsf d$ between two probability distributions in position and velocity states, which is reminiscent of the~$2$-Wasserstein distance, but of second-order nature. We construct~$\mathsf d$ in two steps. First, we optimise over transport plans. The cost function is  given by the minimal \emph{acceleration} between two coupled states on a fixed time horizon~$T$. Second, we further optimise over the time horizon~$T>0$.

    We prove the existence of optimal transport plans and maps, and study two time-continuous characterisations of~$\mathsf d$. One is given in terms of dynamical transport plans. The other one ---in the spirit of the Benamou--Brenier formula--- is formulated as the minimisation of an action of the acceleration field, constrained by Vlasov's equations. Equivalence of static and dynamical formulations of $\mathsf d$ holds true. While part of this result can be derived from recent, parallel developments in optimal control between measures, we give an original proof relying on two new ingredients: Galilean regularisation of Vlasov's equations and a kinetic Monge--Mather shortening principle. 

    Finally, we establish a first-order differential calculus in the geometry induced by~$\mathsf d$, and identify solutions to Vlasov's equations with curves of measures satisfying a certain~$\mathsf d$-absolute continuity condition. One consequence is an explicit formula for the~$\mathsf d$-derivative of such curves.
\end{abstract}

\noindent\textbf{MSC2020:} Primary 49Q22; Secondary 82C40, 35Q83.
\\
\textbf{Keywords:} optimal transport, kinetic theory, second-order discrepancy, Vlasov equation, Wasserstein splines.

\pagebreak
\section{Introduction}\label{sec:sec1}
\paragraph{Scientific background}
Kinetic equations describe systems of \emph{many} interacting particles, at an intermediate level between the microscopic scale, where each particle is tracked individually, and the macroscopic scale of observable quantities, corresponding to, e.g., fluid dynamics or diffusion models. Particles are characterised via their position~$x \in \mathcal{X}$ and velocity~$v \in \mathcal{V}$. At the kinetic scale---which is our point of view thorough this paper---we do not track the evolution of each single particle. Rather, particles are indistinguishable, and the only available information is their distribution in $x,v$. The evolution of the system over time~$t \in [0,\infty)$ is modelled in a statistical mechanics fashion, as a time-dependent probability distribution on the phase space~$\Gamma  \coloneqq \mathcal X \times \mathcal V$.

The hierarchy between  scales was already considered by J.-C.~Maxwell and L.~Boltzmann \cite{maxwell1867iv,boltzmann1872weitere}, and later included in D.~Hilbert's problems for the~$\text{XX}^\text{th}$ century (Problem~VI) \cite{gorban2018hilbert}. Kinetic equations, their derivation from microscopic dynamics, and their macroscopic limit regimes---fluid dynamics or diffusion---have been a vast research field ever since, with important open questions still under active investigation.

On the other side, the classical optimal transport (OT) theory \cite{santambrogio2015optimal,MR2459454}, see \S\ref{ss:ss12}, is naturally connected to the macroscopic description of particle systems. Indeed, OT can be reformulated in terms of fluid mechanics  \cite{benamou2000computational}.
In addition, OT provides a deep interpretation of diffusion equations as gradient flows in the space of probability measures~\cite{MR2401600}, as well as variational (JKO \cite{MR1617171}) discrete approximation schemes. 

In this paper, we take a step towards a new \emph{kinetic optimal transport} (OTIKIN) theory, specifically tailored to the kinetic description of particle systems. Indeed, our main object, a new \emph{second-order discrepancy}~$\sfd$ between measures on~$\Gamma$, preserves the distinct nature of the variables~$x$ and~$v$.
We consider the case where particles are subject to Newton's laws of mechanics.

\paragraph{Structure of the paper} 
\begin{description}
    \item[Section \ref{sec:sec1}.] The main definitions and results are formulated in \S\ref{ss:ss11}. In \S\ref{ss:ss12}, we draw connections with related works and collect some motivations, applications, and perspectives.
    \item[Section \ref{sec:sec2}.] We consider the case of Dirac masses. In \S\ref{ss:21}, we study the minimal acceleration problem between states in~$\Gamma$. In \S\ref{ss:22}, we introduce a non-parametric minimal-acceleration discrepancy.
    \item[Section \ref{sec:sec3}.] In \S\ref{ss:31}, we generalise the construction to a minimal-acceleration discrepancy~$\mathsf{d}$ between \emph{probability measures}. The definition is given as a static mass transportation problem. Optimisers (transport plans and maps) are shown to exist in \S\ref{ss:32}. Additional results are given in \S\ref{ss:33}.
    \item[Section \ref{sec:sec4}.] We analyse two equivalent dynamical formulations of the minimal-acceleration discrepancy $\sfd$.  These are defined, respectively, by means of dynamical transport plans (\S\ref{ss:41}) and minimal action of solutions to Vlasov's equations (\S\ref{ss:43}). Further results on dynamical plans and Vlasov's equations are collected in \S\ref{ss:42} and \S\ref{ss:44}, respectively.
    \item[Section \ref{sec:sec5}.] We study a differential calculus induced by the structure of $\sfd$. In \S\ref{ss:51}-\ref{ss:52}, we prove the equivalence between solutions to Vlasov's equations and a class of \emph{physical}~$\mathsf d$-absolutely continuous curves. In \S\ref{ss:53}, we compute the $\mathsf{d}$-derivative of solutions to Vlasov's equations. Moreover, we show that, along such curves, the optimal transport plans are \emph{tangent} to the curve itself. Finally, in \S\ref{sec:repar}, we extend the result to reparametrisations of solutions to Vlasov's equations.
\end{description}

\subsection{Definitions and main results}\label{ss:ss11}

\paragraph{Static formulation}

Set~$\mathcal X \coloneqq \R^n$ and~$\mathcal V \coloneqq \R^n$, and let the phase space be~$\Gamma :=  \mathcal X \times \mathcal V$. Let~$\mathcal{P}_2(\Gamma)$ be the set of probability measures~$\mu \in \mathcal{P}(\Gamma)$, such that the second-order moments of~$\mu$ are finite, i.e.,
\[
 \int_\Gamma \left( |x|^2 + |v|^2 \right) \, \dif \mu(x,v) < \infty \fstop
\]
We aim at defining a minimal acceleration discrepancy between measures~$\mu,\nu \in \cP_2(\Gamma)$. 
Let us start with the case of Dirac masses~$\mu=\delta_{(x,v)}$ and~$\nu=\delta_{(y,w)}$. We can see the squared Euclidean distance between $x$ and $y$ as a variational problem where we minimise the integral of the squared velocity for all paths $\alpha$ joining $x$ and $y$ in one unit of time:
\begin{equation} \label{eq:infder}
	\abs{y-x}^2
		=
	\inf_{\alpha \in H^1(0,1;\cX)} \set{ \int_0^1 \abs{\alpha'(t)}^2 \, \dif t \quad \text{ subject to } \alpha(0) = x \text{ and } \alpha(1) = y } \fstop
\end{equation}
Therefore, one reasonable definition for an acceleration-based discrepancy would be
\begin{equation}\label{eq:naiveacc}
\inf_{\alpha \in \mathrm{H}^{2}(0,1;\cX)} \set{ \int_0^1 \abs{\alpha''(t)}^2 \, \dif t \quad \text{ subject to } (\alpha,\alpha')(0) = (x,v) \text{ and } (\alpha,\alpha')(1) = (y,w) } \comma
\end{equation}
namely, we compute the minimal squared~$\Lp{2}$-norm of a force~$F_t$ that moves~$(x,v)$ to~$(y,w)$ in one unit of time, under Newton's law
\[
		\dot x_t = v_t \comma \quad
		\dot v_t = F_t \fstop
\]
However, unlike in the first-order case, the choice of the time interval~$[0,1]$ is now arbitrary. Indeed, while we can write
\begin{equation} \label{eq:xyT}
	\abs{y-x}^2
		=
	\inf_{\alpha \in H^1(0,T;\cX)} \set{ T \int_0^T \abs{\alpha'(t)}^2 \, \dif t \quad \text{ subject to } \alpha(0) = x \text{ and } \alpha(1) = y }
\end{equation}
for every~$T > 0$, a direct calculation (see~\S\ref{sec:sec2}) shows that
\begin{multline} \label{eq:dynd}
	\inf_{\alpha \in \mathrm{H}^{2}(0,T;\cX)} \set{ T \int_0^T \abs{\alpha''(t)}^2 \, \dif t \quad \text{ s.t.~} (\alpha,\alpha')(0) = (x,v) \text{ and } (\alpha,\alpha')(T) = (y,w) } \\
		=
	12\abs{\frac{y-x}{T} - \frac{v+w}{2} }^2 + \abs{w-v}^2 \eqqcolon \tilde d^2_T\bigl((x,v),(y,w)\bigr) \comma
\end{multline}
which is \emph{not} independent of~$T$. %

Thus, we introduce a relaxed version of \eqref{eq:naiveacc}, where the time parameter is an additional resource to optimise:
\begin{equation}\label{eq:naiveacc2}
	\tilde d\bigl((x,v),(y,w) \bigr) 	\coloneqq \inf_{T>0} \tilde d_T\bigl((x,v),(y,w) \bigr) \fstop
\end{equation}
Problem~\eqref{eq:naiveacc2} admits a solution (see~\S\ref{sec:sec2}). 
Namely, for all $(x,v), (y,w) \in \Gamma$,
\begin{equation}
\begin{aligned}
\label{eq:st1.}
	\tilde \dist^2\bigl( (x,v), (y,w) \bigr) 
	= 
	\begin{cases}
		3 \abs{v+w}^2 - 3\left(\frac{y-x}{\abs{y-x}}\cdot (v+w) \right)_+^2 + \abs{w-v}^2 
	& \text{if } x \neq y 
		\comma 
	\\
		3 \abs{v+w}^2 + \abs{w-v}^2  
	& \text{if } x=y 
	\fstop
	\end{cases}
\end{aligned}
\end{equation}
This quantity is not lower-semicontinuous, but we can write its lower-semicontinuous envelope explicitly: for $(x,v), (y,w) \in \Gamma$,
\begin{equation}
	\begin{aligned}	
		\label{eq:st2.}
		\dist^2\bigl((x,v), (y,w) \bigr) = 
			\begin{cases}
				3 \abs{v+w}^2 - 3\left(\frac{y-x}{\abs{y-x}}\cdot (v+w) \right)_+^2 + \abs{w-v}^2 &\text{if } x \neq y \comma \\
				\abs{w-v}^2  &\text{if } x=y \fstop
			\end{cases}
	\end{aligned}
\end{equation}	
This function~$d$ is our second-order discrepancy in the case of Dirac deltas. Notice that~$d$ and~$\tilde d$ are not distances. A collection of their properties is given in~\S\ref{sec:sec2}.

For general probability measures~$\mu,\nu \in \cP_2(\Gamma)$, we define~$\tilde \sfd_T(\mu,\nu)$,~$\tilde \sfd(\mu,\nu)$, and~$\sfd(\mu,\nu)$, by optimising over couplings (or transport plans)~$\pi$ as follows:

        \begin{align}
        	\label{eq0.} 
        \tilde \sfd_T^2(\mu,\nu)
        	&\coloneqq
        \inf_{\pi \in \Pi(\mu,\nu)} \left( 12\norm{\frac{y-x}{T}-\frac{v+w}{2}}^2_{\Lp{2}(\pi)}+\norm{w-v}^2_{\Lp{2}(\pi)} \right) \comma \\
        \label{eq1.} 
        \tilde \sfd^2(\mu,\nu) &\coloneqq \inf_{\pi \in \Pi(\mu,\nu)} \begin{cases}  3  \norm{v+w}^2_{\Lp{2}(\pi)} - 3\frac{\bigl((y-x,v+w)_\pi\bigr)_+^2}{\norm{y-x}^2_{\Lp{2}(\pi)}}  + \norm{w-v}^2_{\Lp{2}(\pi)} &\text{if }  \norm{y-x}_{\Lp{2}(\pi)} > 0 \comma \\  3\norm{v+w}^2_{\Lp{2}(\pi)} + \norm{w-v}^2_{\Lp{2}(\pi)}, &\text{if } \norm{y-x}_{\Lp{2}(\pi)}  = 0 \comma
        \end{cases} \\
        \label{eq2.} 
        \sfd^2(\mu,\nu) &\coloneqq \inf_{\pi \in \Pi(\mu,\nu)} \begin{cases}  3  \norm{v+w}^2_{\Lp{2}(\pi)} - 3\frac{\bigl((y-x,v+w)_\pi\bigr)_+^2}{\norm{y-x}^2_{\Lp{2}(\pi)}}  + \norm{w-v}^2_{\Lp{2}(\pi)} &\text{if }  \norm{y-x}_{\Lp{2}(\pi)} > 0 \comma \\  \norm{w-v}^2_{\Lp{2}(\pi)}, &\text{if } \norm{y-x}_{\Lp{2}(\pi)}  = 0 \comma
        \end{cases}
        \end{align}
where~$(\cdot,\cdot)_\pi$ is the scalar product in~$\Lp{2}(\pi)$, and%
\[
	\Pi(\mu,\nu)
		=
	\set{ \pi \in \mathcal{P}(\Gamma \times \Gamma) \, : \, (\proj_{x,v})_\# \pi = \mu \comma \quad (\proj_{y,w})_\# \pi = \nu} \fstop
\]
Observe that~\eqref{eq0.}-\eqref{eq2.} define finite non-negative quantities for every choice of~$\mu,\nu \in \mathcal{P}_2(\Gamma)$. This is proved as in the classical OT theory, by testing with~$\pi \coloneqq \mu \otimes \nu \in \Pi(\mu,\nu)$.
Another observation from OT theory is that~\eqref{eq0.} admits a minimiser for all~$\mu,\nu$.

Our first result establishes the existence of optimisers for~\eqref{eq2.} and, under the assumption of absolute continuity for~$\mu$, of an optimal transport map, in analogy with the classical OT theory~\cite{santambrogio2015optimal,MR2401600,MR2459454}.%

\begin{thm}[Optimal plans and maps]\label{thm1}
	The following statements hold.
\begin{enumerate}
	\item \label{thm1.1} (\Cref{prop:thm1.1}) We have
	\begin{equation} \label{eq:dinfT}
		\tilde \sfd(\mu,\nu) = \inf_{T > 0} \tilde \sfd_T(\mu,\nu) \comma \qquad \mu,\nu \in \mathcal{P}_2(\Gamma) \fstop
	\end{equation}
	\item \label{thm1.2} (\Cref{prop:thm1.2}) The second-order discrepancy~$\sfd$ is the lower-semicontinuous envelope of~$\tilde \sfd$ with respect to the $2$-Wasserstein distance on~$\mathcal{P}_2(\Gamma)$. 
	\item \label{thm1.3} (\Cref{prop:thm1.3}) For all~$\mu,\nu \in \mathcal{P}_2(\Gamma)$, there exists a minimiser for~\eqref{eq2.}. 
	\item \label{thm1.4} (\Cref{prop:thm1.4}) If~$\mu$ is absolutely continuous with respect to the Lebesgue measure, then, there exists a $\sfd$-optimal transport map between~$\mu$ and~$\nu$, i.e.,~a measurable function~$M \colon \Gamma \to \Gamma$ such that~$M_\# \mu = \nu$ and~$\pi \coloneqq (\id, M)_\# \mu$ is a minimiser for~\eqref{eq2.}.
\end{enumerate}
\end{thm}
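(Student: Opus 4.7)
My plan for the first claim is to swap the two infima. For every fixed $\pi \in \Pi(\mu,\nu)$, setting $A := \|y-x\|_{\Lp{2}(\pi)}^2$, $B := (y-x, v+w)_\pi$, $C := \|v+w\|_{\Lp{2}(\pi)}^2$, $D := \|w-v\|_{\Lp{2}(\pi)}^2$, the functional in \eqref{eq0.} evaluated at $\pi$ reads $12 A s^2 - 12 B s + 3C + D$ with $s := 1/T$, a quadratic in $s \in (0,\infty)$ whose minimum equals exactly the integrand of \eqref{eq1.}: namely $3C - 3(B_+)^2/A + D$ when $A > 0$, and $3C + D$ when $A = 0$. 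Interchanging $\inf_T$ and $\inf_\pi$ then yields \eqref{eq:dinfT}.

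For the third claim, I would apply the direct method on $\Pi(\mu,\nu)$, which is tight because the marginals in $\mathcal{P}_2(\Gamma)$ are fixed and the total squared norm $\int |z|^2 \, d\pi_k$ is constant in $k$. A minimizing sequence $\pi_k$ thus admits a subsequential weak limit $\pi \in \Pi(\mu,\nu)$ with all quadratic moments $A_k, B_k, C_k, D_k$ converging (by Vitali, using uniform integrability of $|z|^2$). Denote by $J(\pi)$ the functional in \eqref{eq2.}; it is continuous on $\{A(\pi) > 0\}$, and when $A_k \to 0$ the Cauchy--Schwarz bound $B_k^2 \le A_k C_k$ yields $3(B_k)_+^2/A_k \le 3 C_k$, whence $\liminf_k J(\pi_k) \ge 3C + D - 3C = D = J(\pi)$. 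The second claim then follows by combining this lower semicontinuity of $\sfd$ with the trivial inequality $\sfd \le \tilde\sfd$ (read off from \eqref{eq1.}--\eqref{eq2.}) and the following approximation: given a $\sfd$-optimal $\pi^*$, push it by $F_\epsilon(x,v,y,w) := (x - \tfrac{\epsilon}{2} v, v, y + \tfrac{\epsilon}{2} w, w)$ to obtain $\pi_\epsilon \in \Pi(\mu_\epsilon,\nu_\epsilon)$ with $y'-x' = (y-x) + \epsilon(v+w)/2$. The only regime where $\sfd < \tilde\sfd$ is when $\pi^*$ is supported on $\{x=y\}$, in which case $y'-x' = \epsilon(v+w)/2$ aligns with $v+w$, saturating Cauchy--Schwarz in $B/\sqrt{A}$ and yielding $\tilde\sfd^2(\mu_\epsilon,\nu_\epsilon) \le \sfd^2(\mu,\nu)$ while $\mu_\epsilon \to \mu$, $\nu_\epsilon \to \nu$ in $W_2$.

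The main obstacle is the fourth claim: $J$ is \emph{not} linear in $\pi$ (the term $(B_+)^2/A$ is non-local), so Brenier--Gangbo--McCann does not apply directly. My strategy is to show that any $\sfd$-optimal $\pi^*$ is automatically optimal for the linearized OT problem $\tilde\sfd^2_{T^*}$ at a suitably chosen $T^*$, reducing to the quadratic cost
\begin{equation*}
    c_T\bigl((x,v),(y,w)\bigr) := 12\left|\tfrac{y-x}{T} - \tfrac{v+w}{2}\right|^2 + |w-v|^2.
\end{equation*}
The key algebraic identity is
\begin{equation*}
    \int c_T \, d\pi - J(\pi) = 3\Bigl(\tfrac{2\sqrt{A(\pi)}}{T} - \tfrac{B_+(\pi)}{\sqrt{A(\pi)}}\Bigr)^2 \ge 0,
\end{equation*}
valid for every $T > 0$ and every $\pi \in \Pi(\mu,\nu)$ with $A(\pi) > 0$. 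Setting $T^* := 2A^*/B^*$ when $B^* > 0$ achieves equality at $\pi^*$, forcing $\pi^*$ to minimize $\int c_{T^*} \, d\pi$. The Jacobian of $\nabla_{(x,v)} c_{T^*}$ in $(y,w)$ is an invertible block matrix, so $c_{T^*}$ satisfies the twist condition, and Gangbo--McCann's theorem with $\mu \ll \mathrm{Leb}$ produces the required optimal transport map.

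The delicate remaining cases are $\mu = \nu$ (trivial with $M = \id$) and $B^* \le 0$. For $B^* = 0$ one formally takes $T^* = +\infty$, in which limit the cost degenerates to $c_\infty := 3|v+w|^2 + |w-v|^2$, depending only on $(v,w)$; I would then apply Brenier to the $v$-marginal of $\mu$ (absolutely continuous by Fubini since $\mu \ll \mathrm{Leb}$) to build a $v \mapsto w$ map, and lift it to $\Gamma$ via a measurable selection of $y$ consistent with the $w$-disintegration of $\nu$. Verifying that the regime $B^* < 0$ cannot occur at a $\sfd$-optimum, or otherwise reducing it to the previous cases, is where I expect the main technical work to concentrate.
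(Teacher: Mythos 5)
Your arguments for claims \ref{thm1.1}--\ref{thm1.3} are correct and essentially the paper's own (\Cref{prop:thm1.1}, \Cref{prop:thm1.2}, \Cref{prop:thm1.3}): minimise the quadratic in $1/T$ at fixed $\pi$ and swap infima; prove existence by narrow compactness of $\Pi(\mu,\nu)$ plus lower semicontinuity of the cost via uniform integrability and Cauchy--Schwarz; build a recovery sequence by perturbing so that $y-x$ becomes parallel to $v+w$ (the paper moves only $\nu$, via $y\mapsto y+\tfrac{v+w}{k+1}$, but your symmetric perturbation $F_\epsilon$ works just as well). One bookkeeping point: for claim \ref{thm1.2} you need lower semicontinuity of $\sfd$ along ${\mathrm{W}_2}$-convergent sequences of \emph{marginals}, not only along plans with fixed marginals; your Vitali argument extends verbatim because ${\mathrm{W}_2}$-convergence gives uniform integrability of the second moments (this is the paper's \Cref{lem:semicontsfc}), but you should say so.

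For claim \ref{thm1.4}, your reduction in the regime $B^*>0$ is exactly the paper's Case~2 (note that your ``identity'' is only an inequality when $B(\pi)<0$, since $J$ contains $B_+$ while $\int c_T\,\dif\pi$ contains $B$; only the inequality and the equality at $\pi^*$ are used, so this is harmless). Your anticipated difficulty with $B^*<0$ is in fact a non-issue: since only $B_+$ enters $J$, whenever $A^*>0$ and $B^*\le 0$ one has $J(\pi^*)=3C^*+D^*=\int c_\infty\,\dif\pi^*$, while $\int c_\infty\,\dif\pi=3C(\pi)+D(\pi)\ge J(\pi)\ge J(\pi^*)$ for every competitor, so $\pi^*$ minimises $\int c_\infty\,\dif\pi$ and your $B^*=0$ construction (optimal velocity map for the twisted cost $3\abs{v+w}^2+\abs{w-v}^2$, lifted through the disintegrations by a measurable selection, e.g.\ \cite[Theorem~1.1]{MR2643592}) applies verbatim; this is the paper's Case~3.

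The genuine gap is the case $A^*=\norm{y-x}^2_{\Lp{2}(\pi^*)}=0$, which your case list does not cover: it does \emph{not} imply $\mu=\nu$, only that $\pi^*$ is concentrated on $\set{y=x}$ (e.g.\ $\mu=\rho\otimes g_1$, $\nu=\rho\otimes g_2$ with $g_1\neq g_2$ supported near a common nonzero velocity; there the optimal plan keeps $y=x$, and $\mu\ll\mathrm{Leb}$ is compatible with this). In that regime $J(\pi^*)=\norm{w-v}^2_{\Lp{2}(\pi^*)}=D^*$, which is strictly smaller than $\int c_\infty\,\dif\pi^*=3C^*+D^*$ whenever $C^*>0$, so $\pi^*$ does \emph{not} minimise $\int c_\infty$ and your formal $T^*=\infty$ reduction breaks; moreover a map $(x,v)\mapsto(x,B(v))$ built from the velocity marginals alone need not push $\mu$ to $\nu$. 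The paper's Case~1 handles this by disintegrating over the common spatial marginal $\eta=(\proj_x)_\#\pi^*$, taking for $\eta$-a.e.\ $x$ the ${\mathrm{W}_2}$-optimal map from $\mu_x$ to $\nu_x$ (possible since $\mu\ll\mathrm{Leb}$ forces $\mu_x\ll\mathrm{Leb}$), and gluing these fibrewise maps measurably via \cite[Theorem~1.1]{MR2643592}; the resulting $M(x,v)=(x,M_2(x,v))$ keeps $y=x$, so its cost is $\norm{M_2(x,v)-v}^2_{\Lp{2}(\mu)}\le D^*=\sfd^2(\mu,\nu)$. You need to add this fibrewise construction (and the measurable gluing) to complete the proof.
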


While~\Cref{prop:thm1.3} shows that~$\sfd$-optimal transport plan exist, we will see that this is not always the case for~$\tilde \sfd$, i.e.,~minimisers for~\eqref{eq1.} may not exist, see \Cref{ex:minimisertildesfd}.
In~\S\ref{sec:nonuniq}, we show that \emph{uniqueness} of $\sfd$-optimal kinetic transport plans (i.e.,~minimisers for~\eqref{eq2.}) and maps is not to be expected in general.

\paragraph{Dynamical formulations}

Even though~\eqref{eq0.} generalises~\eqref{eq:naiveacc2}---which is derived from the dynamical optimal control problem~\eqref{eq:dynd}---it is not immediate to recognise a minimal acceleration in the cost of~\eqref{eq0.}. However, there are at least two natural ways to generalise~$\tilde d_T$ to a discrepancy between probability measures via \emph{dynamical formulations}. In what follows, we discuss them and state our second theorem: these formulations are indeed equivalent to the static one.

Fix~$\mu,\nu \in \cP_2(\Gamma)$ and~$T > 0$. To build our first dynamical formulation, the idea is to take a mixture of curves~$(\alpha,\alpha') \colon [0,T] \to \Gamma$ connecting points of~$\supp(\mu)$ and~$\supp(\nu)$. Precisely, we consider measures~$\mathbf{m} \in \mathcal{P}\bigl(\mathrm H^2(0,T;\cX)\bigl)$ such that 
\begin{equation} \label{eq:bdryConds}
\left(\proj_{\alpha(0),\alpha'(0)}\right)_\# \mathbf{m} = \mu \comma \qquad \left(\proj_{\alpha(T),\alpha'(T)}\right)_\# \mathbf{m} = \nu \comma
\end{equation}
where $\proj_{\alpha(t),\alpha'(t)}$ denotes the evaluation map $\alpha \mapsto \bigl(\alpha(t),\alpha'(t)\bigr)$,
and we define
\begin{equation}
    \label{eq:dtp1.}
    \tilde \sfn^2_T(\mu,\nu)
    	\coloneqq
    \inf_{\mathbf{m} \in \mathcal{P}\bigl(\mathrm H^2(0,T;\cX)\bigr) } \set{  T \int_0^T \int \abs{\alpha''(t)}^2 \, \dif \mathbf{m}(\alpha) \, \dif t \quad \text{ subject to~\eqref{eq:bdryConds}}} \fstop
\end{equation}
The function~$\tilde \sfn_T$ is a natural generalisation of~$\tilde d_T$, i.e.,
\[
	\tilde d_T\bigl((x,v),(y,w)\bigr)
		=
	\tilde \sfn_T\left(\delta_{(x,v)},\delta_{(y,w)}\right) \comma \qquad (x,v) \comma (y,w) \in \Gamma \comma \quad T > 0 \fstop
\]

To write our second dynamical formulation, we observe that, for any~$\alpha \in \mathrm H^2(0,T; \cX)$ and~$\varphi \in \mathrm{C}_\mathrm{c}^\infty\bigl((0,T) \times \cX \times \cV\bigr)$, we have
\begin{align*}
	0
		&=
	\int_0^T \frac{\dif}{\dif t} \varphi\bigl(t,\alpha,\alpha'\bigr) \, \dif t = \int_0^T \bigl(\partial_t \varphi(t,\alpha,\alpha') + \alpha' \cdot  \nabla_x \varphi(t,\alpha,\alpha') + \alpha'' \cdot \nabla_v \varphi(t,\alpha,\alpha'') \bigr) \, \dif t \\
		&=
	\int_0^T \int_\Gamma \bigl(\partial_t \varphi(t,x,v) + v \cdot  \nabla_x \varphi(t,x,v) + \alpha''(t) \cdot \nabla_v \varphi(t,x,v) \bigr)  \, \dif \delta_{\bigl(\alpha(t),\alpha'(t)\bigr)}(x,v) \, \dif t \comma
\end{align*}
meaning that~$\mu_t \coloneqq \delta_{\bigl(\alpha(t),\alpha'(t)\bigr)}$ and~$F_t(x,v) \coloneqq \alpha''(t)$ satisfy Vlasov's equation
\begin{equation}
    \label{eq:vlasov.}
    \partial_t \mu_t + v\cdot \nabla_x \mu_t + \nabla_v \cdot (F_t \, \mu_t) = 0
\end{equation}
weakly in~$(0,T) \times \Gamma$. 
For given $\mu, \nu \in \mathcal{P}_2(\Gamma)$ we define the $T$-\emph{minimal action} as
\begin{equation} \label{eq:vlasov2.}
	\widetilde{\mathcal{MA}}^2_T(\mu,\nu)
		\coloneqq
	\inf_{(\mu_t,F_t)_{t \in [0,T]}} \set{ T \int_0^T \norm{F_t}_{\Lp{2}(\mu_t)}^2 \, \dif t \quad \text{s.t.~}\eqref{eq:vlasov.} \text{ and } \mu_0 = \mu \comma \mu_T = \nu } \comma
\end{equation}
which is reminiscent of the Benamou--Brenier formulation of the~$2$-Wasserstein distance~\cite{benamou2000computational}, see \eqref{eq:bb.} below. 
As before,
\begin{equation}
	\label{eq:equiv-Dirac}
	\tilde d_T\bigl((x,v),(y,w)\bigr)
		=
	\widetilde{\mathcal{MA}}_T \left(\delta_{(x,v)},\delta_{(y,w)}\right) \comma \qquad (x,v) \comma (y,w) \in \Gamma \comma \quad T>0 \fstop
\end{equation}
Indeed, the inequality~$\ge$ follows from the discussion above. To justify the converse: when a given curve~$(\mu_t,F_t)_{t \in (0,T)}$ solves~\eqref{eq:vlasov.}, then~$t \mapsto \alpha(t) \coloneqq \int_{\Gamma} x \, \dif \mu_t$ (formally) satisfies
\begin{align*}
	\alpha_i'(t)
		=
	\frac{\dif}{\dif t} \int_\Gamma x_i \, \dif \mu_t
		\stackrel{\eqref{eq:vlasov.}}{=}
	\int_\Gamma \bigl( v \cdot \nabla_x x_i + F_t \cdot \nabla_v x_i \bigr) \, \dif \mu_t = \int_\Gamma v_i \, \dif \mu_t \comma \qquad i \in \set{1,\dots,n} \comma \\
	\alpha_i''(t)
	=
	\frac{\dif}{\dif t} \int_\Gamma v_i \, \dif \mu_t
	\stackrel{\eqref{eq:vlasov.}}{=}
	\int_\Gamma \bigl( v \cdot \nabla_x v_i + F_t \cdot \nabla_v v_i \bigr) \, \dif \mu_t = \int_\Gamma (F_t)_i \, \dif \mu_t \comma \qquad i \in \set{1,\dots,n} \comma
\end{align*}
and, by Jensen's inequality,~$\abs{\alpha''(t)} \le \norm{F_t}_{\Lp{2}(\mu_t)}$ for all~$t \in (0,T)$, which yields $\leq$ in \eqref{eq:equiv-Dirac}. 
The following result extends \eqref{eq:equiv-Dirac} from Dirac measures to all of $\mathcal{P}_2(\Gamma)$. 
\begin{thm}[Equivalence of static and dynamic formulations]
    \label{thm2}
    For every~$\mu,\nu \in \mathcal{P}_2(\Gamma)$ and~$T > 0$, the problems \eqref{eq:dtp1.} and \eqref{eq:vlasov2.} admit a minimiser. Moreover, we have the identities
    \begin{equation}
    \tilde \sfn_T(\mu,\nu)
    	=
    \widetilde{\mathcal{MA}}_T(\mu,\nu)
    	=
    \tilde \sfd_T(\mu,\nu) \fstop
    \end{equation}
\end{thm}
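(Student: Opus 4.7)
My plan is to close the cycle of inequalities $\tilde \sfd_T \le \tilde \sfn_T \le \widetilde{\mathcal{MA}}_T \le \tilde \sfn_T \le \tilde \sfd_T$, which simultaneously yields the identities and the existence of minimisers for the two dynamical problems. Existence for the static problem \eqref{eq0.} is standard, since the integrand $12|\tfrac{y-x}{T} - \tfrac{v+w}{2}|^2 + |w-v|^2$ is nonnegative, continuous on $\Gamma \times \Gamma$, and $\Pi(\mu,\nu)$ is narrowly compact; call $\pi^\star$ an optimiser. Throughout, I write $\mathrm{ev}_t(\alpha) := \bigl(\alpha(t),\alpha'(t)\bigr)$ for the phase-space evaluation map.

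For $\tilde \sfn_T \ge \tilde \sfd_T$, given admissible $\mathbf{m}$ for \eqref{eq:dtp1.}, I set $\pi := (\mathrm{ev}_0,\mathrm{ev}_T)_\# \mathbf{m} \in \Pi(\mu,\nu)$ and apply \eqref{eq:dynd} curve-by-curve; integration against $\mathbf{m}$ bounds the kinetic action from below by $\int \tilde d_T^2 \, d\pi$, which is exactly the integrand of \eqref{eq0.} at $\pi$. For the reverse direction, I use the explicit cubic Hermite polynomial $\alpha_{x,v,y,w}$ attaining the infimum in \eqref{eq:dynd}: since it depends continuously on the endpoint data, the pushforward $\mathbf{m}^\star := (\alpha_{\cdot})_\# \pi^\star$ is admissible for \eqref{eq:dtp1.} and saturates the action. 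This proves $\tilde \sfn_T = \tilde \sfd_T$ and exhibits a minimiser $\mathbf{m}^\star$ for $\tilde \sfn_T$.

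For $\widetilde{\mathcal{MA}}_T \le \tilde \sfn_T$, starting from $\mathbf{m}^\star$, I put $\mu_t := (\mathrm{ev}_t)_\# \mathbf{m}^\star$, disintegrate $\mathbf{m}^\star = \int \mathbf{m}^\star_{t,x,v} \, d\mu_t(x,v)$ at each fixed $t$, and define $F_t(x,v) := \int \alpha''(t) \, d\mathbf{m}^\star_{t,x,v}(\alpha)$. The integration by parts displayed just before \Cref{thm2}, averaged against $\mathbf{m}^\star$, shows that $(\mu_t,F_t)$ solves \eqref{eq:vlasov.} weakly with the prescribed endpoints; Jensen's inequality gives $\|F_t\|_{\Lp{2}(\mu_t)}^2 \le \int |\alpha''(t)|^2 \, d\mathbf{m}^\star(\alpha)$, which after integration in $t$ yields the desired bound and a feasible competitor $(\mu_t,F_t)$ for $\widetilde{\mathcal{MA}}_T$.

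The main obstacle is the reverse inequality $\tilde \sfn_T \le \widetilde{\mathcal{MA}}_T$ for an arbitrary admissible $(\mu_t,F_t)$; combined with the previous paragraph, this closes the cycle and simultaneously certifies the $(\mu_t,F_t)$ built above as a minimiser of $\widetilde{\mathcal{MA}}_T$. I handle it via Ambrosio's superposition principle applied to the continuity equation on $\Gamma$ driven by the velocity field $b_t(x,v) := (v, F_t(x,v))$. The key integrability $\int_0^T \int |b_t|^2 \, d\mu_t \, dt < \infty$ reduces to propagation of second moments, which I obtain from the Gronwall estimate $\tfrac{d}{dt}\int |v|^2 \, d\mu_t \le \int |v|^2 \, d\mu_t + \|F_t\|_{\Lp{2}(\mu_t)}^2$ combined with $\mu \in \cP_2(\Gamma)$. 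The superposition principle then produces $\mathbf{m} \in \cP\bigl(C([0,T];\Gamma)\bigr)$ concentrated on integral curves $\gamma(t) = (\alpha(t),\alpha'(t))$ satisfying $\alpha''(t) = F_t(\alpha(t),\alpha'(t))$ for $\mathbf{m}$-a.e.~$\alpha$. Fubini gives $\alpha'' \in \Lp{2}(0,T;\cX)$ (and moment propagation gives $\alpha,\alpha'\in \Lp{2}$) for $\mathbf{m}$-a.e.~$\alpha$, so $\mathbf{m}$ is in fact concentrated on $\mathrm{H}^2(0,T;\cX)$, and the total action of $\mathbf{m}$ equals the Vlasov action of $(\mu_t,F_t)$, closing the cycle.
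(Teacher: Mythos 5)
Your architecture is sound and the two easy inequalities coincide with the paper's: your steps for $\tilde\sfn_T=\tilde\sfd_T$ (curve-by-curve use of \eqref{eq:dynd} for ``$\ge$'', and the pushforward of an optimal static plan under the cubic-spline map for ``$\le$'') are exactly the content of \Cref{lifthm}. Where you genuinely diverge is in the treatment of $\widetilde{\mathcal{MA}}_T$. For $\widetilde{\mathcal{MA}}_T\le\tilde\sfn_T$ you disintegrate the optimal dynamical plan and define $F_t$ as a conditional average, using Jensen; this is legitimate (for $\mathbf m^\star$ concentrated on splines the field is even explicit) and is arguably lighter than the paper's route in \Cref{thmbb}, which instead proves $\tilde\sfd_T\ge\widetilde{\mathcal{MA}}_T$ by building the spline flow from an optimal \emph{map} (\Cref{prop:thm1.4}, \Cref{injectiveMap}) and then removing the absolute-continuity assumption on $\mu$ by approximation and compactness. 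For the hard inequality $\tilde\sfn_T\le\widetilde{\mathcal{MA}}_T$ you invoke the superposition principle for the field $b_t=(v,F_t)$; the paper instead regularises an arbitrary admissible $(\mu_t,F_t)$ with the Galilean (kinetic) convolution of \Cref{approxlem}, represents the regularised solutions by their flows (\Cref{prop38}), and passes to the limit by tightness and lower semicontinuity. Your superposition route is viable — it is essentially the lifting statement the paper records as \Cref{prop39} — and it trades the mollification machinery for the verification of the superposition hypotheses.

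That verification is the one genuine gap. You justify the integrability $\int_0^T\int(|v|^2+|F_t|^2)\,\dif\mu_t\,\dif t<\infty$ by the Gronwall inequality $\frac{\dif}{\dif t}\int|v|^2\,\dif\mu_t\le\int|v|^2\,\dif\mu_t+\norm{F_t}_{\Lp{2}(\mu_t)}^2$, but this is precisely the computation the paper presents at the start of \S\ref{ss:44} and explicitly labels as \emph{formal}: an admissible curve for \eqref{eq:vlasov2.} is only known to satisfy \eqref{eq:ic}, $|v|^2$ is not an admissible test function in the weak formulation, and a priori the second moment of $\mu_t$ could be infinite for $t>0$, so the differential inequality cannot simply be asserted. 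Worse, the standard rigorous proofs of such moment propagation themselves go through a lifting of the curve, so using Gronwall to verify the hypotheses of the superposition principle risks circularity. Two repairs are available: either quote the moment estimate lemma of \S\ref{ss:44} (whose proof applies the superposition principle first to the velocity-marginal continuity equation, where only the $\Lp{2}$ bound on the averaged force is needed, and only then to the full phase-space equation), or use the version of the superposition principle requiring only linear-growth integrability of $b_t$ (which follows from \eqref{eq:ic} and $\int_0^T\norm{F_t}^2_{\Lp{2}(\mu_t)}\dif t<\infty$), after which the $\mathrm H^2$ regularity of $\mathbf m$-a.e.\ curve and the action identity follow by Fubini as you indicate. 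With either fix your argument closes correctly; as written, this step is asserted at the level of the heuristic.
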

This result is proved in two steps, corresponding to Theorem \ref{lifthm} and Theorem \ref{thmbb}.
After posting a first version of this manuscript on arXiv, we were informed of the preprint~\cite{elamvazhuthi2025benamoubrenierformulationoptimaltransport} by K.~Elamvazhuthi---building on a previous work~\cite{ElamvazhuthiLiuLiOsher}---which contains a generalised version of the second equality in \Cref{thm2}, in the context of optimal control systems. In \Cref{thm2}, we prove further equivalence with the formulation $\tilde{ \mathsf n}_T$, and existence of minimisers for all three problems. Distinctive features of our approach are an original kinetic Monge--Mather principle (cf.~\Cref{prop:inj} and \Cref{injectiveMap}) and the regularisation of solutions to Vlasov's equation via Galilean convolution (cf.~\Cref{approxlem}), which may be of independent interest. %

\paragraph{A variational characterisation of Vlasov's equations}
In the classical optimal transport theory, the Benamou--Brenier formula is constrained by the continuity equation~$\partial_t \rho_t + \nabla \cdot (V_t \rho_t) = 0$, for a velocity field~$V_t$. Solutions to the continuity equation on a bounded open interval $(a,b)$ turn out to coincide with absolutely continuous curves in the Wasserstein space, under appropriate integrability conditions \cite{MR2401600,santambrogio2015optimal}. Although~$\sfd$ is not a distance, we will give a similar characterisation for solutions to Vlasov's equations. 

\begin{defn}[Physical curves]
    Let $(\mu_t)_t : (a,b) \to \Gamma$ be a~$2$-Wasserstein absolutely continuous curve (see \S\ref{sss121}). We say that $(\mu_t)_t$ is physical if, in addition, for all $s<t \, \in(a,b)$, and for a function $\ell \in \mathrm{L}_{\geq0}^2(a,b)$, it holds true that
    \begin{equation}
        \label{eq:phys}
        \tilde\sfd_{t-s}(\mu_s,\mu_t) \leq \int_s^t \ell(r) \, \dif r  \fstop
    \end{equation}
    
\end{defn}

Heuristically, the physicality condition for a curve $( \mu_t)_t$ yields some differentiable control in the velocity marginal $(\proj_v)_\# \mu_t$, together with the fact that the variation of the spatial marginal $(\proj_x)_\# \mu_t$ is given by $v$ (hence, $\mu_t$ would solve \eqref{eq:vlasov.}).
This idea is made rigorous in the next result.

\begin{thm}[Identification of the tangent I: physical curves and Vlasov's equations]\label{thm:main:new:1} %
The following hold true.
\begin{enumerate}
        \item If~$(\mu_t,F_t)_t$ is a weak solution to \eqref{eq:vlasov.} on $( a, b)$ for some force field~$( F_t)_t$ such that 
    \begin{equation}
    \int_{ a}^{b} \left( \norm{v}_{\Lp{2}(\mu_t)}^2 + \norm{F_t}_{\Lp{2}( \mu_t)}^2 \right) \, \dif t < \infty \comma
    \end{equation}
    then the curve~$(\mu_t)_t$ is~physical with
    \begin{equation} \ell(t) = 2\, \|F_t\|_{\mathrm{L}^2( \mu_t)} \fstop
    \end{equation}
    \item	Assume that~$(\mu_t)_{t \in (a,b)}$ is a physical  curve. 
    Then, there exists a vector field~$( F_t)_t$ with~$\|F_t\|_{\mathrm L^2(\mu_t)} \leq \ell(t)$ for a.e.~$t \in (a,b)$, such that~$(\mu_t,F_t)_t$ is a weak solution to Vlasov's equation~\eqref{eq:vlasov.} and we have the limit
	\begin{equation}
	\lim_{h \downarrow 0} \frac{\tilde \sfd_{h}(\mu_t, \mu_{t+h})}{h} = \norm{ F_t}_{\Lp{2}(\mu_t)}
	\end{equation}
	for a.e.~$t \in (a,b)$.
    \end{enumerate}%
\end{thm}

\noindent The proof of this result can be found in \S\ref{sec:sec5}, as a combination of \Cref{prop:vlasovtoac1}, \Cref{cor:actovlasov_physical}, and \Cref{prop:metricder_bis}.

\paragraph{Hypoelliptic Riemannian structure}

The class of solutions to Vlasov's equations~\eqref{eq:vlasov.} is rather rigid, as it is not closed under Lipschitz time-reparametrisation. 
The latter is a desirable property for ``absolutely continuous'' curves, which we define below.

\begin{defn}[$\sfd$-absolutely continuous curves]
    Let~$(\tilde \mu_s)_{s \in (\tilde a,\tilde b)}$ be a~$2$-Wasserstein absolutely continuous curve (see \S\ref{sss121}). We say that~$(\tilde \mu_s)_{s \in (\tilde a, \tilde b)}$ is~$\sfd$-absolutely continuous if there exists a function~$\tilde \ell \in \Lp{2}_{\ge 0}(\tilde a,\tilde b)$ such that for every~$s,t \in (\tilde a, \tilde b)$ with~$s<t$, we have \begin{equation}\label{eq:2ac.}
	   \sfd( \tilde \mu_s, \tilde  \mu_t) \leq \int_s^t \tilde \ell(r) \, \dif r \fstop
	\end{equation}
\end{defn}

All physical curves are $\sfd$-absolutely continuous. The converse is not true, e.g., a time-reparametrisation of a physical curve is still absolutely continuous (but not physical). 
We may wonder how general this example is and, consequently, how large the class of absolutely continuous curves is compared to that of physical curves.

We find that, under a suitable regularity condition (Assumption~\ref{defn:ac}), $\sfd$-absolutely continuous curves coincide with the closure of physical curves under regular reparametrisations in time.
Heuristically, $\sfd$-absolute continuity is enough to have a differentiable control on the velocity marginal for a curve $(\tilde\mu_s)_s$, together with the fact that the variation of the space marginal of $\tilde\mu_s$ is positively proportional to $v$, i.e, it amounts to $\tilde \lambda(s) v$, for some~$\tilde \lambda(s) \ge 0$ (independent of the space-velocity variables). The proportionality factor~$\tilde \lambda(s)$ can be renormalised to one via a time reparametrisation.%

The factor $\tilde \lambda(s)$ is related to the infinitesimal ratio between the optimal time horizon in the definition of~$\sfd(\tilde \mu_s,\tilde \mu_{s+\tilde h})$ and the physical time~$\tilde h$.
More precisely, given $s,\tilde h$, we define~$T(s,s+\tilde h) \coloneqq \argmin_T \, \tilde{\sfd}_T(\tilde \mu_s,\tilde \mu_{s+\tilde h})$. Then, whenever $\tilde h \mapsto T(s,s+\tilde h)$ is right-differentiable at~$0$, we have that $\tilde \lambda(s)$ is finite, and 
\[
\tilde \lambda(s) = \lim_{\tilde h \to 0^+} \frac{T(s,s+\tilde h)}{\tilde h}. 
\]
We state these ideas precisely in \Cref{thm:main:new:2} below.%

\begin{rem}
	If~$(\mu_t,F_t)_{t \in (a,b)}$ solves Vlasov's equation~\eqref{eq:vlasov.}, and~$\tau \colon (\tilde a, \tilde b) \to (a, b)$ is a bi-Lipschitz reparametrisation, then the curve~$s \mapsto \tilde \mu_s \coloneqq \mu_{\tau(s)}$ solves the reparametrised Vlasov equation
	\begin{equation} \label{eq:vlasov3.}
		\partial_s \tilde \mu_s + \tilde \lambda(s) \, v \cdot \nabla_x \tilde \mu_s + \nabla_v \cdot (\tilde F_s \tilde \mu_s) = 0 \comma \qquad s \in (\tilde a, \tilde b) \comma
	\end{equation}
	with~$\tilde \lambda(s) \coloneqq \tau'(s)$ and~$\tilde F_s \coloneqq \tilde \lambda(s) F_{\tau(s)}$.
\end{rem}

\begin{thm}[Identification of the tangent II: $\sfd$-absolutely continuous curves, $\sfd$-derivative]  \label{thm:main:new:2}
The following hold true.
    \begin{enumerate}
        \item Assume that~$(\tilde \mu_s,\tilde F_s)_s$ is a weak solution to \eqref{eq:vlasov3.} on $(\tilde a,\tilde b)$ for some force field~$(\tilde F_s)_s$ such that 
    \begin{equation}
    \int_{\tilde a}^{\tilde b} \left( \norm{v}_{\Lp{2}(\tilde \mu_s)}^2 + \norm{\tilde F_s}_{\Lp{2}(\tilde \mu_s)}^2 \right) \, \dif s < \infty \comma
    \end{equation}
	and for a function~$\tilde \lambda$ bounded from above and below by positive constants.
	
    If the Wasserstein metric derivative of the spatial marginal 
		$\tilde \rho_s(\cdot) \coloneqq \tilde \mu_s(\cdot \times \mathcal{V})$
		satisfies
	\begin{equation} \label{eq:pos_der}
    \abs{\tilde \rho_s'}_{\mathrm{W}_2} > 0 \quad \text{for a.e. } s \in (\tilde a,\tilde b) \comma   
    \end{equation}
    then, the curve~$(\tilde\mu_s)_s$ is~$\sfd$-absolutely continuous and satisfies \Cref{defn:ac}, with
    \begin{equation} \tilde \ell(s) = 2\, \|\tilde F_s\|_{\mathrm{L}^2(\tilde \mu_s)}  \quad \text{and} \quad \tilde \lambda_\mathrm{ac}(s)=\tilde \lambda(s) \comma \qquad s \in (\tilde a, \tilde b)\fstop
    \end{equation}
    \item Assume that~$(\tilde \mu_s)_{s \in (\tilde a,\tilde b)}$ is a~$\mathsf d$-absolutely continuous curve satisfying \Cref{defn:ac}.
    If~$\tilde \rho_s$
	satisfies~\eqref{eq:pos_der},
    then there exists a vector field~$(\tilde F_s)_s$ with~$\|\tilde F_s\|_{\mathrm L^2(\tilde \mu_s)} \leq \tilde \ell(s)$ for a.e.~$s \in (\tilde a, \tilde b)$, such that~$(\tilde \mu_s,\tilde F_s)_s$ is a solution to \eqref{eq:vlasov3.} with~$\tilde \lambda = \tilde \lambda_\mathrm{ac}$, %
	and we have the limit
	\begin{equation}
	\lim_{\tilde h \downarrow 0} \frac{\sfd(\tilde \mu_s, \tilde \mu_{s+\tilde h})}{\tilde h} = \norm{\tilde F_s}_{\Lp{2}(\tilde \mu_s)}
	\end{equation}
	for a.e.~$s \in (\tilde a, \tilde b)$.
    \end{enumerate}%
\end{thm}
\noindent The proof of this result can be found in \S\ref{sec:sec5}, see in particular \Cref{thm:rep}.

\begin{rem}[The flow velocity]
	In both statements in \Cref{thm:main:new:2}, we assume positivity a.e.~of the quantity $\abs{\tilde \rho'_s}_{\mathrm{W}_2}$, i.e., of the Wasserstein metric derivative of the spatial density $(\tilde \rho_s)_s.$ This can be interpreted as macroscopic non-steadiness of the system. Using the theory of optimal transport (cf.~\cite{MR2401600} and \Cref{lem:derW2} below) it is possible to prove the following. When~$(\tilde \mu_s)_s$ solves the reparametrised Vlasov equation~\eqref{eq:vlasov3.} the metric derivative~$\abs{\tilde \rho_s'}_{\mathrm{W}_2}$ is equal to the~$\Lp{2}(\tilde \rho_s)$-norm of the irrotational part of the vector~$\tilde \lambda(s) \, \tilde j_s(x)$, where~$\tilde j_s(x)$ is the \emph{flow velocity} 
	\[
	\tilde j_s(x) \coloneqq \int_{\cV} v \, \dif \tilde \mu_s(x,v) \fstop
	\]
	Notice that~$(\tilde \rho_s, \tilde j_s)_s$ solves Euler's equation
	\[
		\partial_s \tilde \rho_s + \tilde \lambda(s) \nabla_x \cdot \tilde j_s = 0 \fstop
	\] 
\end{rem}

\begin{rem}[The tangent cone I: admissible directions]\label{rem:tgc1}
\Cref{thm:main:new:2} asserts that $\sfd$-absolutely continuous curves are identified with solutions to \eqref{eq:vlasov3.}, which in turn are induced by vector fields $(\tilde\lambda(s)v,\tilde F_s)_{s\in(\tilde a,\tilde b)}$.
A narrowly continuous curve of measures $(\tilde \mu_s)_s$ solves \eqref{eq:vlasov3.} for two different vector fields $(\tilde F_s)_s$ and $(\tilde G_s)_s$ if and only if, by linearity of~\eqref{eq:vlasov3.}, we have $\nabla_v \cdot \bigl((\tilde F_s -\tilde G_s)\,\tilde \mu_s\bigr) = 0$ in the sense of distributions, that is, if and only if~$\tilde F_s$ and~$\tilde G_s$ have the same projection onto 
\[
\mathrm{cl}_{\mathrm{L}^2(\tilde \mu_s)}\set{ \nabla_v \varphi \, : \, \varphi \in \mathrm{C}^1_\mathrm{c}(\Gamma)} \fstop
\]
As in the classical case, see \S\ref{sss121}, we can define the hypoelliptic tangent cone at $\mu$ as
\begin{equation} \label{eq:formaltangent}
\cT_{\mu,\sfd} \mathcal{P}_2(\Gamma) := \mathrm{cl}_{\mathrm{L}^2(\mu)} \bigl\{ (\lambda \, v,\nabla_v \varphi) \, :\, \lambda \in \mathbb R_{\geq 0},\ \varphi\in \mathrm{C}^1_\mathrm{c}(\Gamma)\bigr\} \comma
\end{equation}
and equip $\cT_{\mu,\sfd} \mathcal{P}_2(\Gamma)$ with the degenerate Riemannian form
\begin{equation} \label{eq:deg_riem}
\big\langle (\lambda^{(1)} v, F^{(1)}) , (\lambda^{(2)} v, F^{(2)}) \big\rangle_{\cT_{\mu,\sfd} \mathcal{P}_2(\Gamma)} := \int F^{(1)} \cdot F^{(2)} \, \dif\mu \fstop 
\end{equation}
Finally, recall that Equation \eqref{eq:vlasov3.} is the closure under time-reparametrisation of \eqref{eq:vlasov.}. At the geometric level, we formally interpret this fact as follows. The hypoelliptic tangent $\cT_{\mu,\sfd} \mathcal{P}_2(\Gamma)$ is the conical envelope of the vectors $\set{(v,\nabla_v \varphi), \quad \varphi\in \mathrm{C}^1_\mathrm{c}(\Gamma)},$ which correspond exactly to the vector fields inducing \eqref{eq:vlasov.}. See also Remark \ref{rem:tgc2} below.
\end{rem}

\begin{ass}[Regularity] \label{defn:ac}
	Let~$(\tilde \mu_s)_{s \in (\tilde a,\tilde b)}$ be a $\sfd$-absolutely continuous (hence $2$-Wasserstein a.c.)~curve, and define the open set of times
	\begin{equation}
	\tilde \Omega\coloneqq\set{s \in (\tilde a, \tilde b) \, : \,\|v\|_{\mathrm{L}^2(\tilde \mu_s)}>0} \fstop
	\end{equation}
	We assume that there exist
	\begin{enumerate}
		\item a measurable selection~$(s,t) \mapsto \tilde \pi_{s,t} \in \Pi(\tilde \mu_s, \tilde \mu_t)$ of~$\sfd$-optimal transport plans, i.e.,~minimisers in~\eqref{eq2.},
		\item a measurable function~$\tilde \lambda_\mathrm{ac} \colon (\tilde a, \tilde b) \to \R_{>0}$ bounded from above and below by positive constants,
	\end{enumerate}
	such that, defining the \emph{optimal time}\footnote{This is a minimiser for~\eqref{eq:dinfT} between~$\tilde \mu_s$ and~$\tilde \mu_t$, see \Cref{prop:thm1.1}}
        \begin{equation}
        	\tilde T_{s,t} = \begin{cases}
        		2 \displaystyle \frac{\norm{y-x}_{\Lp{2}(\tilde \pi_{s,t})}^2}{(y-x,v+w)_{\tilde \pi_{s,t}}} &\text{if } (y-x,v+w)_{\tilde \pi_{s,t}} > 0 \comma \\
        		0 &\text{if } \norm{y-x}_{\Lp{2}(\tilde \pi_{s,t})} = 0 \comma \\
        		\infty &\text{otherwise,}
        	\end{cases}\qquad \tilde a<s<t<\tilde b \comma
        \end{equation}
        then, the convergence \begin{equation}
		 \frac{\tilde T_{s,s+\tilde h}}{\tilde h}  \to \tilde \lambda_\mathrm{ac}(s) \quad \text{as } \tilde h\downarrow 0\comma  \end{equation}
	holds for a.e.~$s \in (\tilde a,\tilde b)$, with $\mathrm{L}^1$-domination on every compact subset of~$\tilde \Omega$.
\end{ass}

We conclude by stating an immediate consequence of \Cref{thm:main:new:1} and \Cref{thm:main:new:2}.

\begin{cor}
    Let~$(\mu_t)_{t}$ be a~$2$-Wasserstein absolutely continuous curve with~$\abs{\rho_t'}_{\mathrm{W}_2} > 0$ for a.e.~$t$. The curve~$(\mu_t)_t$ is physical if and only if:
    \begin{enumerate}
        \item it is~$\sfd$-absolutely continuous, and
        \item it satisfies \Cref{defn:ac} with~$\tilde \lambda_\mathrm{ac} \equiv 1$.
    \end{enumerate}
\end{cor}

\subsection{Motivation, related contributions, and perspectives}\label{ss:ss12}
In this section, we review the recent literature that motivated or inspired our construction. We also give a perspective on future developments after this work in~\S\ref{sec:perspectives}.
\subsubsection{Comparison with standard optimal transport} \label{sss121}

\paragraph{Optimal transport (OT)} Let~$\rho_0,\rho_1 \in \mathcal{P}_2(\cX)$. One way to define the standard $2$-Wasserstein distance between~$\rho_0$ and~$\rho_1$ is given by
\begin{equation}\label{eq:ot.}
\mathrm{W}_2(\rho_0,\rho_1) \coloneqq \inf_{\pi \in \Pi(\rho_0,\rho_1)} \, \sqrt{\int \abs{y-x}^2 \, \dif \pi(x,y)} \comma
\end{equation}
where~$\Pi(\rho_0,\rho_1)$ is the set of all couplings~$\pi \in \cP_2(\cX \times \cX)$ of~$\rho_0$ and~$\rho_1$.
This variational problem, in which the average distance between coupled points~$x \in \mathrm{supp}(\rho_0)$ and~$y \in \mathrm{supp}(\rho_1)$ is minimised, is known as \emph{Kantorovich formulation}. The existence of minimisers is classical \cite{santambrogio2015optimal} and they are referred to as \emph{optimal transport plans}. Under mild conditions on~$\rho_0$,  it is also possible to establish existence of an \emph{optimal transport map} between~$\rho_0$ and~$\rho_1$, i.e.,~a function~$M \colon \cX \to \cX$ for which~$\pi \coloneqq (\id,M)_\# \rho_0$ is an optimal plan~\cite{brenier1991polar}. The optimal transport map is~$\rho_0$-a.e.~uniquely determined and can be found by solving a Monge--Ampère equation. Its regularity is a major research topic \cite{santambrogio2015optimal,figalli2017monge}.

Recalling~\eqref{eq:infder}, $\abs{y-x}^2$ is the squared length of the line joining~$x$ with~$y$ that minimises~$\int_0^1 \abs{\alpha'(t)}^2\, \dif t$, among all~$\mathrm{H}^1$-regular curves~$\alpha \colon (0,1) \to \cX$ with~$x$ and~$y$ as endpoints. Thus, the~$2$-Wasserstein distance can also be written in its \emph{dynamical formulation}
\begin{multline}\label{eq:dot.}
	\mathrm{W}_2^2(\rho_0,\rho_1) = \inf \Biggl\{ \int_0^1 \int \abs{\alpha'(t)}^2 \, \dif \mathbf{m} \, \dif t \comma \quad \text{s.t. } \mathbf{m} \in \mathcal{P}\bigl(H^1(0,1;\cX)\bigr) \comma \\ \left(\proj_{\alpha(0)}\right)_\# \mathbf{m} = \rho_0 \comma \left(\proj_{\alpha(1)}\right)_\# \mathbf{m} = \rho_1 \Biggr\} \fstop
\end{multline}
Moreover, J.-D.~Benamou and Y.~Brenier~\cite{benamou2000computational} provided the following fluid-mechanical characterisation:
\begin{multline}\label{eq:bb.}
\mathrm{W}_2^2(\rho_0,\rho_1) = 
\inf \Biggl\{ \int_0^1 \norm{V_t}^2_{\Lp{2}(\rho_t)} \, \dif t \comma \quad \text{s.t.~} \partial_t \rho_t + \nabla_x \cdot (V_t \rho_t) = 0 \text{ in } \mathcal{D}^\star\bigl((0,1) \times \cX\bigr) \comma \\ \rho_{t=i} = \rho_i \text{ for } i=0,1 \Biggr\}.
\end{multline}
The idea is that the characteristic ODE of the continuity equation~$\partial_t \rho_t + \nabla_x \cdot (V_t \rho_t) = 0$ in~\eqref{eq:bb.} is~$\dot x_t = V_t$. Thus, the squared norm of the velocity field~$(V_t)_t$ is equal to the average squared path-wise speed, cf.~\cite[Chapter 8]{MR2401600}. 
As it turns out, the curves~$(\rho_t)_t$ solving the continuity equation for some vector field~$(V_t)_t$ such that~$\int_0^1 \norm{V_t}_{\Lp{2}(\rho_t)}^2 \, \dif t < \infty$ are exactly the~$\mathrm{W}_2$-$2$-absolutely continuous curves~\cite{MR2401600}, i.e.,~those satisfying
\begin{equation}
	\mathrm{W}_2(\rho_s,\rho_t) \le \int_s^t \ell(r) \, \dif r \comma \qquad 0<s<t<1 \fstop
\end{equation}
for some function~$\ell \in \Lp{2}_{\ge 0}(0,1)$. For such curves, the \emph{metric derivative} is
\begin{equation}
	\abs{\rho_t'}_{{\mathrm{W}_2}} \coloneqq \lim_{h \to 0} \frac{\mathrm{W}_2(\rho_t,\rho_{t+h})}{h} = \norm{V_t}_{\Lp{2}(\rho_t)} \comma \qquad \text{for a.e.~} t \in (0,1) \comma
\end{equation}
if~$(\rho_t,V_t)_t$ solves the continuity equation and~$V_t$ is chosen in the~$\Lp{2}(\rho_t)$-closure of the set~$\set{\nabla_x \phi \, : \, \phi \in \mathrm{C}^1_\mathrm{c}(\cX)}$. Formally, the distance~$\mathrm{W}_2$ induces a Riemannian structure~\cite{otto2001geometry}:
\begin{equation} \label{eq:riem_W2}
\cT_\rho \mathcal{P}_2(\cX) \coloneqq \mathrm{cl}_{\Lp{2}(\rho)} \set{ \nabla_x \phi \, : \, \phi \in \mathrm{C}^1_\mathrm{c}(\cX)} \comma \qquad \langle F , G \rangle_{\cT_\rho \mathcal{P}_2(\cX)} \coloneqq \int F \cdot G \, \dif \rho \fstop
\end{equation}
\paragraph{Comparison of OTIKIN and OT}
\begin{itemize}
    \item At the level of static problems (i.e.,~optimal transport plans), existence of minimisers is true for both OT (i.e.,~in the problem~\eqref{eq:ot.} defining~$\mathrm{W}_2$) and OTIKIN (i.e.,~in~\eqref{eq2.}, defining~$\sfd$). For both, also existence of an optimal transport map holds under the assumption of absolute continuity of the starting measure w.r.t.~Lebesgue (see~\Cref{prop:thm1.4}), but uniqueness in OTIKIN does not hold, see~\S\ref{sec:nonuniq}.

	\item By minimising the acceleration as in~\eqref{eq:dtp1.}, we find a discrepancy~$\tilde \sfn_T = \tilde \sfd_T$ that depends on the time parameter~$T$. The non-parametric discrepancy~$\sfd$ is then found by optimising in~$T$ and taking the~$\mathrm{W}_2$-relaxation, see \Cref{thm1}. Note that this relaxation is necessary in order to ensure existence of optimal transport plans, see \Cref{prop:thm1.3} and \Cref{ex:minimisertildesfd}. In OT, the Wasserstein distance~$W_2$ is, instead, naturally non-parametric, in the sense that the choice of the time interval in~\eqref{eq:dot.} is inconsequential. This follows from our discussion in~\S\ref{ss:ss11}.
	
    \item In~\eqref{eq:dot.}, an optimiser exists and is supported on constant-speed straight lines connecting points~$x \in \mathrm{supp}(\rho_0)$ to points~$y \in \mathrm{supp}(\rho_1)$, according to the optimal coupling in~\eqref{eq:ot.}. In this case, there is no loss of generality in considering only curves on~$[0,1]$, see~\eqref{eq:xyT}. In \eqref{eq:dtp1.}, we have the existence of a minimiser~$\mathbf{m}_T$, and this measure is supported on cubic~$T$-splines \cite{benamou2019second} (see~\S\ref{sec:sec2}), for every~$T$. However, time-reparametrisations of~$\mathbf{m}_T$ on another interval~$[0,T']$ might no longer satisfy the desired boundary conditions. This happens even for one single \emph{spline} between two Dirac masses. Furthermore, even if a reparametrisation satisfies the boundary conditions, it may not be optimal for the problem with time~$T'$.
    \item While the class of continuity equations~$\partial_t \rho_t + \nabla_x \cdot (V_t \rho_t) =0$ is invariant under time-reparametrisation, the class of Vlasov's equations~$\partial_t \mu_t + v \cdot \nabla_x \mu_t + \nabla_v \cdot (F_t \mu_t) =0$ is not. %
    When working with Dirac deltas, the same observation arises by comparing the first-order ODE~$\dot x_t = V_t$ with the second-order ODE~$\dot x_t = v_t \comma \dot v_t = F_t$.
    \end{itemize}
    \begin{rem}[The tangent cone II: tangency of optimal plans]\label{rem:tgc1bis} %
In \cite[Chapter~8]{MR2401600}, the Wasserstein tangent space at $\rho \in \cP_2(\cX)$---denoted by $\cT_\rho\mathcal{P}_2(\mathcal X)$---is identified with the closure in $\mathrm{L}^2(\rho;\mathbb R^{d})$ of $\set{ \nabla_{x} \varphi \, : \, \varphi \in \mathrm{C}^1_\mathrm{c}(\mathcal X) }$. A time-dependent vector field~$V_t \in \cT_{\rho_t}\mathcal{P}_2(\mathcal X)$ can be interpreted as the velocity field of a curve solving~$\partial_t \rho_t + \nabla_{x} \cdot (\rho_t \, V_t) =0$ (which is a necessary and sufficient condition for a curve to be $\mathrm{W}_2$-$2$-a.c.).  
In \cite[Proposition~8.4.6]{MR2401600}, it is shown that, taken two measures~$\rho_t,\rho_{t+h}$ along such a curve, one has~$y - x - h \, V_t = o(h)$, as~$h \to 0$, on the support of any $\mathrm{W}_2$-optimal plan between~$\rho_t$ and~$\rho_{t+h}$. %

In our setting, we establish a similar structure. Let $(\mu_t)_t$ be a solution to the Vlasov equation \eqref{eq:vlasov.}, i.e. $\partial_t \mu_t + v\cdot \nabla_x \mu_t + \nabla_v \cdot (F_t \mu_t) = 0$, with~$(v,F_t) \in \cT_{\mu_t,\sfd} \cP_2(\Gamma)$ (see~\eqref{eq:formaltangent}). Note that, by \Cref{thm:main:new:1}, this is equivalent to physicality. In \S\ref{ss:tangent}, we will prove that, if $\pi_{t,t+h}$ is an optimal transport plan for~$\mathsf d(\mu_t,\mu_{t+h})$, then, $\pi_{t,t+h}$-almost everywhere, 
\[
\begin{aligned}
    &y-x-h\,v =o(h) \comma \\
    &w-v - h \, F_t(x,v) = o(h) \fstop 
\end{aligned}
\]
Whenever the \emph{total momentum} of $\mu_t$ is non-zero, we gain a further order of precision in our Taylor expansions on the support of $\pi_{t,t+h}$:
\[
y = x + hv + \frac{1}{2} h^2 F_t(x,v) + o(h^2) \fstop
\]
\end{rem}

    \begin{rem}[The tangent cone III: geometry of the tangent bundle]\label{rem:tgc2}
    We continue the formal geometric considerations of Remark \ref{rem:tgc1}. Formally, the $2$-Wasserstein distance induces a Riemannian structure on~$\mathcal{P}_2(\cX),$ with a clear identification of the tangent bundle \cite{otto2001geometry,MR2401600}. The discrepancy~$\mathsf d$ of OTIKIN yields a sort of \emph{hypoelliptic} Riemannian structure \cite{hormander1967hypoelliptic}. Vlasov's equation \eqref{eq:vlasov.} and its time-reparametrisations \eqref{eq:vlasov3.}, can be rewritten as~$\partial_s \tilde \mu_s + \nabla_{x,v} \cdot \bigl((\tilde \lambda(t) v,\tilde F_t) \, \tilde \mu_t\bigr) = 0$, which is a special case of the continuity equation~$\partial_s \tilde \mu_s + \nabla_{x,v} \cdot (X_s \tilde \mu_s) =0$ associated with the~$\mathrm{W}_2$-distance over~$\mathcal{P}_2(\Gamma),$ for a~$2n$-component vector field~$X_s : \Gamma \to \R^{2n}$. Thus, we can formally see the geometry of~$\mathsf d$---i.e.,~the hypoelliptic tangent cone~$\cT_{\tilde \mu_s,\sfd} \mathcal{P}_2(\Gamma)$ with the form \eqref{eq:deg_riem}---as a distribution of vectors in~$\cT_{\tilde \mu_s,\mathrm{W}_2} \cP_2(\Gamma)$, equipped with a degenerate version of~\eqref{eq:riem_W2} that measures only the acceleration component~$\tilde F_t$.%
   \end{rem}

    \begin{rem}[Comparison with sub-Riemannian optimal transport]
    A.~Figalli and L.~Rifford developed a theory for optimal transport on sub-Riemannian manifolds \cite{figalli2010mass}.   Thy consider a $m$-dimensional Riemannian manifold $(\mathcal M,\langle\cdot,\cdot\rangle$), equipped with a distribution of vector fields  $\Delta=\mathrm{span}\{X_1,\cdots,X_k\}$, with $k<m$, such that $\mathrm{Lie}(\Delta) = \cT\mathcal{M}$, i.e., the \emph{H\"ormander condition} \cite{hormander1967hypoelliptic} is satisfied. The Wasserstein distance is replaced by 
    \[
    \mathrm{W}^2_{\mathrm{SR},2}(\mu,\nu) :=\inf_{\pi \in \Pi(\mu,\nu)} \int \mathrm{d}_{\mathrm{SR}}^2(x,y) \, \dif \pi(x,y) \comma
    \]
    where $\mathrm{d}_{\mathrm{SR}}$ denotes the sub-Riemannian distance on $(\mathcal M, \langle\cdot,\cdot\rangle,\Delta)$. 
    Notice that~$\mathrm{d}_{\mathrm{SR}}$ is obtained via minimal length of curves tangent to $\Delta$, 
    \[
    \mathrm{d}_{\mathrm{SR}}^2(x,y) = \inf_{\substack{\alpha:[0,1] \to \mathcal{M} \\ \alpha' \in \Delta}} \, \int_0^1  \langle \alpha'(t), \alpha'(t)\rangle_{\alpha(t)} \, \dif t \fstop
    \]
    Our case is different, since $\sfd$-absolutely continuous curves are induced by vector fields tangent to\footnote{Note that~$\mathrm{Lie}(\Delta_{\mathrm{kin}}) = \cT \,\mathbb R^{2n}$, so that H\"ormander's condition holds.}
    \[
    \Delta_{\mathrm{kin}}:= \mathrm{span}\, \{Z,Y_1,\cdots,Y_n\} \comma\quad \text{with } Z = v\cdot \nabla_x \comma \quad Y_i = \frac{\partial}{\partial_{v_i}} \comma
    \]
    but we only measure the speed of curves in the directions $\{Y_i\}^n_{i=1}$, while the vector field $Z$ acts solely as a constraint. The resulting hypoelliptic geometry combines degenerate Riemannian with symplectic effects. 
    Heuristically, the difference between sub-Riemannian geometry and our hypoelliptic geometry is analogous to the distinction between H\"ormander operators of the \emph{first kind}, like the sub-elliptic Laplacian $\sum_{i=1}^k (X_i)^2$, and H\"ormander operators of the \emph{second kind}, like the Kolmogorov operator $Z + \sum_{i=1}^n \, (Y_i)^2$. 
    \end{rem}
    
\subsubsection{Minimal acceleration costs, kinetic Wasserstein, and related distances}\label{sss122}
Optimal transport with minimal acceleration cost has appeared in the context of variational schemes for fluid dynamics \cite{gangbo2009optimal,cavalletti2019variational}, see also \S\ref{sss124}. 
There, a discrete time-step~$T>0$ is fixed, and the authors consider both~$\tilde \sfd_T$ and
\begin{align}
	\mathbf{W}^2_T(\mu,\nu) &\coloneqq \inf_{\pi \in \Pi(\mu,\nu)} \int \left(12 \, \abs{ \frac{y-x}{T} - \frac{w-v}{2} }^2 + \abs{w-v}^2 \right) \, \dif \pi \comma \qquad \mu,\nu \in \mathcal{P}_2(\Gamma) \comma
\end{align}
which differ in the sign of~$\frac{w-v}{2}$.

For our purposes, the functionals~$\tilde \sfd_T$ and~$\mathbf{W}_T$ cannot be used as such, indeed:
\begin{enumerate}
	\item smooth curves~$t \mapsto \mu_t$ are not, in general,~$\tilde \sfd_T$-continuous, in the sense that
	\[	\liminf_{h \downarrow 0} \tilde \sfd_T(\mu_t,\mu_{t+h}) > 0  \fstop \]
    Therefore, absolute continuity is not meaningful for~$\tilde \sfd_T$;
    \item on the one hand, for all~$T>0$, the functional~$\mathbf{W}_T$ is a distance on~$\mathcal{P}_2(\Gamma)$, which is equivalent---with equivalence constants depending on $T$---to $\mathrm{W}_2$, as the cost function satisfies~$\mathsf w_T\bigl((x,v),(y,w)\bigr) \coloneqq 12 \, \abs{ \frac{y-x}{T} - \frac{w-v}{2} }^2 + \abs{w-v}^2 \asymp |(x,v)-(y,w)|^2$. However, the derivative of~$\mathsf w_T$ along Newton's ODE~$\dot x_t=v_t, \, \,\dot v_t = F_t$ is given by
$$\lim_{h \to 0} \, \frac{\mathsf w_T^2\bigl((x_t,v_t),(x_{t+h},v_{t+h})\bigr)}{h^2} =  12 \left| \frac{v_t}{T} - \frac{F_t}{2} \right|^2+|F_t|^2 \fstop$$
This quantity is not \emph{natural} in our setting, which demands the squared force $|F_t|^2$ instead, as we build a purely acceleration-based theory. 
\end{enumerate}
In a recent paper \cite{iacobelli2022new}, M.~Iacobelli explored two other families of `kinetic Wasserstein distances', yielding to new results for Vlasov's PDEs. The first idea is to build perturbations of the standard Wasserstein distance~$\mathrm{W}_2$ on~$\cP_2(\Gamma)$, using transportation costs of the form~$a\,\abs{y-x}^2 + b \, (y-x) \cdot (w-v) + c\, \abs{w-v}^2$, with~$a,b,c \in \R$ to be tuned. Then, time-dependent and non-linear generalisations are considered. Twisting the reference distance (usually~$\mathrm{H}^{2}$ or~$\Lp{2}$) to better capture the interaction between space and velocity variables has been a fruitful technique in kinetics, to prove both regularity (\emph{hypoellipticity}~\cite{kolmogoroff1934zufallige,hormander1967hypoelliptic}) and long-time convergence to equilibrium (\emph{hypocoercivity}~\cite{villani2009hypocoercivity,dolbeault2015hypocoercivity}). The interplay between hypocoercivty and optimal transport has been analysed in a few papers \cite{baudoin2013bakry,salem2021optimal}.
A second class of distances is constructed by adding a time-shift as follows \cite{iacobelli2022new}:
\[
\inf_{\pi \in \Pi(\mu,\nu)} \int \left( \abs{(x-tv) - (y-tw)}^2 + \abs{w-v}^2 \right) \, \dif \pi \comma \qquad t>0 \comma \quad  \mu,\nu \in \mathcal{P}_2(\Gamma) \fstop
\]

Our discrepancy~$\mathsf d$ differs from previous constructions, as it involves an optimisation over~$T>0$. As a result,~$\mathsf d$ is not a distance, but it has the physical dimension of a speed, so that its time derivative along curves of measures is naturally an acceleration. 

\subsubsection{Variational approximation schemes for kinetic equations}\label{sss124}
Minimising the squared acceleration in optimal transport originated from a series of papers about variational approximation schemes for dissipative kinetic PDEs~\cite{huang2000variational, huang2000variationall,duong2013generic,duong2014conservative,park2024variational}, before being readapted to fluid dynamics~\cite{gangbo2009optimal,cavalletti2019variational}. The goal there is to approximate, by means of De Giorgi minimising movement schemes~\cite{MR2401600}, the solution to Kramer's equation
\begin{equation}
    \label{vfp}
    \partial_t f + v \cdot \nabla_x f = \Delta_v f \comma \qquad f \colon (a,b) \times \Gamma \to \R \comma %
\end{equation}
and various generalisations thereof.
One prototypical result is the following.
\begin{thm}[\cite{duong2014conservative,huang2000variationall}\textcolor{white}{.}]
	Let~$\mathcal{E} \colon \mathcal{P}_2(\Gamma) \to [0,\infty]$ be the Boltzmann--Gibbs entropy 
	\begin{equation}
	\E(\mu) \coloneqq \begin{cases}
		\int_\Gamma f \log f \, \dif x \, \dif v &\text{if } \mu = f(x,v) \, \dif x \, \dif v \comma \\
		+\infty \qquad &\text{otherwise}.
	\end{cases} 
	\end{equation}
	Given an initial datum~$f_0 \in \Lp{1}(\Gamma)$, define the sequence 
	\begin{align}\label{eq:dg.}
		\begin{split}
			\mu^h_0 &\coloneqq f_0 \, \dif x\, \dif v \comma \\
			\mu^h_{(k+1)h} &\in \argmin_{\nu \in \mathcal{P}_2(\Gamma)} \, \left( \E(\nu) + \frac{1}{2h} \, \tilde \sfd_h^2\bigl(\mu^h_{kh},\nu\bigr)\right) \comma \qquad k \in \N \fstop
		\end{split}
	\end{align}
	Then, as~$h \to 0$, the piece-wise constant interpolation~$(\mu^h_t)_{t \ge 0}$ converges to the solution to~\eqref{vfp} with initial datum~$f_0$.
\end{thm}
As observed in~\cite{duong2014conservative}, the choice of the entropy~$\E$ and the penalisation~$\tilde \sfd_h$ can be motivated by a \emph{large deviation principle}.
However, the penalisation~$\tilde \sfd_h$ depends on the timestep~$h$, so that \eqref{eq:dg.} does not exactly define a De Giorgi scheme~\cite{MR2401600}.
Instead, the discrepancy~$\sfd$ we introduce does not depend on the timestep (in fact, we optimise over the time parameter). This way, the intrinsic time parameter of $\mathsf d$ will depend on the evolution itself, without being conditioned by the time discretisation. The analysis of a scheme akin to~\eqref{eq:dg.}, with~$\sfd$ in place of~$\tilde \sfd_h$, will be the subject of a forthcoming work.

\subsubsection{Wasserstein splines and interpolation}\label{sss123}

\paragraph{Splines between probability measures}

Splines arise in numerics as an interpolation method for a set of data~$(t_i,x_i)_{i=1,\dots,k} \subseteq [0,1] \times \cX$. An interpolating curve is a function~$\alpha \colon [0,1] \to \cX$ with~$\alpha(t_i) = x_i$, subject to certain constraints (e.g.,~regularity). One interesting and common example is the solution to
\[
\bar\alpha \in \argmin_{\alpha \in \mathrm{H}^{2}(0,1;\cX)} \set{ \int_0^1 \abs{\alpha''(t)}^2 \, \dif t \comma \text{ s.t.~} \alpha(t_i) = x_i \text{ for every } i } \fstop
\]
Spline interpolation between probability measures is the problem of finding a \emph{minimal acceleration curve}~$(\rho_t)_{t \in [0,1]}$ such that~$\rho_{t_i} = \rho_i$, for a given dataset~$(t_i,\rho_i)_{i=1,\dots,k} \subseteq [0,1] \times \mathcal{P}_2(\cX)$. This matter has recently attracted increasing interest and has been the subject of both theoretical and numerical investigations~\cite{benamou2019second,chen2018measure,clancy2021interpolating,chewi2021fast}. One possible approach is to interpret~$(\rho_t)_t$ as a curve taking values in the Riemannian-like space~$\bigl(\mathcal{P}_2(\cX),\mathrm{W}_2\bigr)$ (see~\S\ref{sss121}) and to measure the \emph{covariant acceleration} of~$\rho$ via the Levi--Civita connection~\cite{lott2008some,MR2920736}. A more tractable strategy is to consider measure-valued path splines: %
\begin{equation}\label{eq:dtp2.}
\bar{\mathbf m} \in \argmin_{\mathbf{m} \in \mathcal{P}\bigl(\mathrm{H}^{2}(0,1;\cX)\bigr)} \set{ \int_0^1 \int |\alpha''(t)|^2  \, \dif \mathbf{m}\, \dif t \comma \text{ s.t.~} (\proj_{t_i})_\# \mathbf m = \rho_{i} \text{ for every } i} \fstop
\end{equation}
Notice that this problem differs from~\eqref{eq:dtp1.} with~$T=1$, where we interpolate only between two measures and, most notably, we also fix the velocity marginals. However,~\eqref{eq:dtp2.} writes as a relaxation of \eqref{eq:dtp1.}, by further minimising over all possible velocity marginals~\cite{chen2018measure}.

\paragraph*{Kinetic Optimal Transport for measure interpolation}

We address two issues from the recent literature related to spline interpolation:
\begin{enumerate}
    \item Theorem \ref{thm2} provides a fully rigorous proof of the kinetic Benamou--Brenier formula with fixed space-velocity marginals. As a corollary, we prove a version thereof where only the space marginals are assigned, as conjectured by Y.~Chen, G.~Conforti, and T.~T.~Georgiou~\cite[Claim 4.1]{chen2018measure}. 
    Details are given in Remark \ref{rem:rem}.
    \item We outline a variation on the algorithm by S.~Chewi, J.~Clancy, T.~Le Gouic, P.~Rigollet, G.~Stepaniants, and A.~Stromme~\cite[Section 3]{chewi2021fast} for the construction of splines in~$\cP_2(\cX)$. As above, the problem is to construct a curve interpolating a given dataset~$(t_i,\rho_{i})_{i=1,\dots,k} \subseteq [0,1] \times \cP_2(\cX)$, with~$t_1 < t_2 < \dots < t_k$. The procedure in \cite{chewi2021fast} is briefly described as follows. Firstly, one computes the optimal multimarginal $2$-Wasserstein coupling~$\pi \in \cP_2(\cX^k)$. Secondly, one connects each tuple~$(x_1,\dots,x_k) \in \cX^k$ by means of an acceleration-minimising spline~$\bar \alpha_{x_1,\dots,x_k}$, and defines the interpolating spline of measures~$t \mapsto \rho_t$ at time~$t$ as the push-forward of~$\pi$ through the map~$\bar \alpha_\cdot (t) \colon (x_1,\dots,x_k) \mapsto \bar \alpha_{x_1,\dots,x_k}(t)$. The modification we propose is to use the construction above only to assign velocity marginals to each~$\rho_{i}$, i.e.,~we set~$\mu_{i} \coloneqq (\bar \alpha_{\cdot}(t), \bar \alpha_{\cdot}'(t))_\# \pi \in \cP_2(\Gamma)$. Given~$t \in (t_i, t_{i+1})$, we take the optimal~$\tilde{ \mathbf n}_{(t_{i+1}-t_{i})}$-optimal dynamical plan~$\mathbf m$ between~$\mu_{i}$ and~$\mu_{{i+1}}$, set
    \begin{equation}
        \mu_t \coloneqq \left(\proj_{\alpha(t),\alpha'(t)}\right)_\# \mathbf m \comma
    \end{equation}
    and define~$\rho_t$ as the space marginal of~$\mu_t$. Remarkably, our interpolation is deterministic and injective, in the sense that
    \[\left(\proj_{\bigl(\alpha({t_i}),\alpha'(t_{i})\bigr),\bigl(\alpha(t),\alpha'(t)\bigr)}\right)_\# \mathbf m
    \]
    is induced by an \emph{injective map}~$\Gamma \to \Gamma$ if~$\mu_{i}$ is absolutely continuous, see \S\ref{ss:42}.
    
\end{enumerate}

\subsubsection{Applications to biology and engineering}
The problem of finding a \emph{minimal acceleration path} between two measures~$\mu,\nu$ appears naturally in applications. 
\begin{itemize}
    \item \emph{Trajectory inference} is aimed at reconstructing a time-continuous evolution from a few time-separated observations. This technique has recently gained relevance in mathematical biology to study the development of cells~\cite{lavenant2021towards,schiebinger2021reconstructing,bunne2024optimal,chizat2022trajectory}, with potential applications in regenerative medicine. Wasserstein splines (see~\S\ref{sss123}) and our Proposition \ref{injtinterp} provide a smooth interpolation scheme to this purpose~\cite{rohbeck2024modeling,chen2021controlling}. In addition, the action functional in \eqref{eq:dtp1.}, which encodes minimal acceleration/consumption along paths, can be easily adapted to the specific model under investigation (taking into account, e.g.,~potential energy, drift).
    \item Images in computer vision can be cast into probability measures. Various applications involve continuous interpolations between images, which are often performed using classical OT~\cite{rabin2012wasserstein,santambrogio2015optimal}. More recently, an alternative has been formulated using Wasserstein splines~\cite{benamou2019second,clancy2021interpolating,friesecke2023gencol,justiniano2023consistent,zhang2022wassersplines}, with applications to texture generation models \cite{justiniano2024approximation}. Our construction, see \S\ref{sec:sec4}, proposes a twofold variation. Firstly, we remove the  dependence on the timespan~$T$, which is usually a datum of the problem in the literature. Secondly, we also assign the velocity marginals.
    \item The minimal acceleration problem~\eqref{eq:dynd}-\eqref{eq:naiveacc2} arises naturally also in optimal control. Indeed, we look for the optimal time-dependent force~$F_t = \alpha''(t)$ and timespan~$T$ required to connect two states~$(x,v)$ and~$(y,w)$ in~$\Gamma$. The quantity we minimise is the squared norm of~$F_t$ over time, which is reminiscent of resource consumption in steering of robots and space vehicles \cite{mwitta2024integration,longuski2014optimal}. In particular,~\eqref{eq:dynd}-\eqref{eq:naiveacc2} is used for rockets powered by Variable Specific Impulse engines~\cite{marec2012optimal,KECHICHIAN1995357}. Our work yields a natural generalisation, i.e.,~a mathematical framework for the optimal steering of a fleet of agents between two configurations, described by~$\mu, \nu \in \mathcal{P}_2(\Gamma)$.
\end{itemize}
\subsubsection{Conclusions and perspectives} \label{sec:perspectives}
\begin{itemize}
    \item In the current work, we build an optimal-transportation discrepancy~$\mathsf d$ between probability measures, which is based on the minimal acceleration. Also the timespan of the minimal-acceleration path is optimised. 
    In addition, we give a characterisation of~$\mathsf d$-absolutely continuous curves, see Theorem \ref{thm:main:new:2}. Such a result allows us to recast kinetic equations of Vlasov type, driven by the transport operator~$v \cdot \nabla_x$  and various collision terms, as paths in the space of probability measures, with the metric derivative depending only on the collisional effects. This is the starting point of the forthcoming~$\emph{Part II - Kinetic Gradient Flows}$. There, we are going to recast dissipative kinetic PDEs as steepest descent curves, among all~$\mathsf d$-absolutely continuous curves, for some given free-energy functionals. The steepest descent will be characterised via optimality in an energy-dissipation functional inequality. As a further justification, we aim at finding a large-deviation principle behind our kinetic gradient flow structure \cite{duong2014conservative}.
    \item In \emph{Part II - Kinetic Gradient Flows}, we aim to define JKO discrete variational schemes~\cite{MR1617171} with the discrepancy~$\mathsf d$ and prove their convergence to dissipative kinetic PDEs. To this end, we will treat a kinetic equation as a whole, exploiting the interplay between space and velocity variables, which is also the \emph{leitmotiv} of the current work. This strategy is also at the core of the \emph{hypocoercivity} theory~\cite{villani2009hypocoercivity,dolbeault2015hypocoercivity}, where the reference norm is twisted precisely in order to capture the interaction of~$x$ and~$v$ via~$v \cdot \nabla_x$. This is in contrast with the \emph{splitting} numerical schemes \cite{park2024variational}, where the transport and the collision terms are treated separately at each iteration.
    \item In this work, we specialise to a model case: particles are subject to Newton's laws~$\dot x_t = v_t \comma \dot v_t=F_t$, without external confinement. However, this suggests a general scheme to build adapted versions of the discrepancy~$\mathsf d$ for systems of interacting particles. The collision/irreversible effects in the phenomenon are measured via an action functional to be minimised under a constraint, given by a suitable continuity equation, see~\eqref{eq:vlasov2.}. Such a continuity equation (Vlasov's equation \eqref{eq:vlasov.} in our setting) is determined by the conservative/reversible  dynamics of the system. The resulting discrepancy captures the interaction between reversible and irreversible effects, while clearly distinguishing their roles. The induced geometry on~$\mathcal{P}_2(\Gamma)$ formally reads as a degenerate Riemann-like structure (where the $\sfd$-derivative of curves corresponds to the instantaneous action), constrained on a symplectic form, which allows only the \emph{physically} admissible directions.    
    We will explore generalisations of our theory in forthcoming papers. In particular, we aim at giving an optimal-transport interpretation of systems belonging to the GENERIC (General Equation for Non-Equilibrium Reversible-Irreversible Coupling) framework \cite{grmela1997dynamics,ottinger1997dynamics}.  
\end{itemize}

\section{The particle model}\label{sec:sec2}

In this section, we describe the kinetic optimal transport model for Dirac measures.

\subsection{The fixed-time discrepancy}\label{ss:21}

Let~$T > 0$ be a time parameter, and fix two points~$(x,v)$ and~$(y,w)$ in the phase space~$\Gamma \coloneqq \cX \times \cV = \R^n \times \R^n$. Recall the minimisation problem
\begin{equation} \label{eq:dynd2}
	\inf_{\alpha \in \mathrm{H}^{2}(0,T;\cX)} \set{ T \int_0^T \abs{\alpha''(t)}^2 \, \dif t \quad \text{ s.t.~} (\alpha,\alpha')(0) = (x,v) \text{ and } (\alpha,\alpha')(T) = (y,w) } \fstop
\end{equation}
This problem is strictly convex and coercive, 
hence, it admits a unique minimiser~$\a^T_{x,v,y,w}$. 
This curve satisfies the Euler--Lagrange equation~$\a'''' \equiv 0$ (i.e.,~it is a degree-3 polynomial in~$t$) with the prescribed boundary conditions. Straightforward computations yield
\begin{equation} \label{eq:gammaT}
	\a^T_{x,v,y,w}(t) = \left( \frac{v+w}{T^2}-2\frac{y-x}{T^3} \right)t^3 + \left(3\frac{y-x}{T^2}-\frac{2v+w}{T}\right)t^2 + vt+x \comma \qquad t \in (0,T) \comma
\end{equation}
or, equivalently,
\[
	\a^T_{x,v,y,w}(\xi T) 
	= 
	x 
	+ 
	\xi^2 (3 - 2 \xi) (y-x)
	+
	T \xi (1-\xi) \bigl( (1- \xi) v - \xi w \bigr) \comma
	\qquad \xi \in (0,1) \fstop
\]
We thus find the identity~\eqref{eq:dynd}, i.e.,~the minimal value of~\eqref{eq:dynd2} coincides with
\begin{equation} \label{eq:dtildeT}
	\tilde d_T^2 \bigl((x,v),(y,w)\bigr) \coloneqq 12 \abs{\frac{y-x}{T} - \frac{v+w}{2}}^2 + \abs{w-v}^2
	\fstop
\end{equation}

\begin{rem}
	It was shown by Kolmogorov~\cite{kolmogoroff1934zufallige} that the function
	\[
		\Psi\bigl((x,v),(y,w),t\bigr) 
			\coloneqq 
		\left(\frac{3}{2\pi t^2}\right)^d 
		\exp\left(-\frac{\tilde d_t^2\bigl((x,v),(y,w)\bigr)}{4t}\right)
	\]
	is the fundamental solution to the Kramers equation
	\[
		\partial_t \Psi + v \cdot \nabla_x \Psi 
			= 
		\Delta_v \Psi \fstop
	\]
\end{rem}

\begin{rem}
Contrary to the minimal~$\Lp{2}$-norm of the velocity (see~\eqref{eq:xyT}), 
the function~$\tilde d_T$ is not a distance on~$\Gamma$. 
Indeed, 
it is not symmetric, 
does not vanish on the diagonal of~$\Gamma \times \Gamma$ 
(i.e.,~points~$(x,v) = (y,w)$), 
and does not satisfy the triangle inequality. 
The latter can be easily checked on
\[
	(x_1,v_1) \coloneqq (0, \bar v) 
			\comma \quad
	(x_2,v_2) \coloneqq (T\bar v, \bar v) 
			\comma \quad 
	(x_3,v_3) \coloneqq (2T\bar v, \bar v) 
			\comma
\]
which satisfy 
	$\tilde d_T\bigl((x_1,v_1),(x_2,v_2)\bigr) 
	= \tilde d_T\bigl((x_2,v_2),(x_3,v_3)\bigr)= 0$,
while $\tilde d_T\bigl((x_1,v_1),(x_3,v_3)\bigr) = \sqrt{12} \, \abs{\bar v} \neq 0$
for any~$\bar v \in \cV \setminus \{0\}$. 
However, as observed in~\cite{gangbo2009optimal}, 
we have the equivalence
\begin{equation}
	\tilde d_T^2 \bigl( (x,v), (y,w) \bigr) = 0 
		\quad \Longleftrightarrow \quad 
	y=x+Tv \text{ and } v=w \fstop
\end{equation}
In this case, following~\cite{gangbo2009optimal}, we say that~$(y,w)$ is the $T$-\emph{free transport} of~$(x,v)$ and write~$(y,w) = \cG_T(x,v)$. Note that~$(\mathcal G_T)_{T \ge 0}$ enjoys the semigroup property
\begin{equation}
	\mathcal G_{T_1} \circ \mathcal G_{T_2} = \mathcal G_{T_1+T_2} \comma \qquad T_1,T_2 \ge 0 \fstop
\end{equation}
\end{rem}

\begin{rem}
	The fact that~$\tilde d_T\bigl((x,v), (y,w)\bigr)$ is finite for every~$(x,v),(y,w) \in \Gamma$ can be seen as an elementary version of \emph{hypoellipticity} (see~\cite{hormander1967hypoelliptic}). In fact, solutions to Newton's equation~$\dot x_t = v_t \comma \dot v_t = F_t$ are generated by vector fields in the space~$\set{(v,F) \, : \, F \in \R^n}$. This vector space has dimension~$n$, but it generates a Lie algebra of full rank~$2n$:
	\[
		\bigl[ (v,0),(v,e_i) \bigr] = (e_i,0) \comma \qquad i \in \set{1,\dots,n} \fstop
	\]
\end{rem}
\begin{rem}
	Fix~$(x,v), (y,w) \in \Gamma$. Let~$(v_t,F_t)_{t \in (0,T)}$ be the solution to Newton's equations, connecting~$(x,v)$ to~$(y,w)$ with the minimal action (i.e.,~$F_t = \left(\alpha_{x,v,y,w}^T\right)''$). The first norm in~\eqref{eq:dtildeT} (i.e.,~$\abs{\frac{y-x}{T}-\frac{v+w}{2}}$) is the distance between the average velocity~$\fint_0^T v_t \, \dif t$ and the arithmetic mean of the velocities at the endpoints. The second norm (i.e.,~$\abs{w-v}$) can be written as~$\abs{\int_0^T F_t \, \dif t}$.
\end{rem}

\begin{rem}
	It is interesting to analyse the behaviour of the optimal curves 
		$\a_{x,v,y,w}^T$
	for large~$T$.
	When~$t,T \to \infty$ with~$t \ge a\, T$ for some~$a > 0$, the formula~\eqref{eq:gammaT} gives
	\begin{align*}
		\left(\a_{x,v,y,w}^T\right)'(t) 
			&= \frac{t^2}{T^2}  (3v+3w)
			- \frac{t}{T} (4v+2w) 
			+ v 
			+ \left( -\frac{t^2}{T^2} + \frac{t}{T} \right)    \frac{6(y-x)}{T} 
			\\
			&= \xi^2 (3v+3w)  - \xi \, (4v+2w) + \xi \, (1-\xi) \frac{6(y-x)}{T} \comma \qquad \xi \coloneqq t/T \fstop
	\end{align*}
	Therefore, two cases may occur.
	\begin{itemize}
		\item \textbf{Case 1: slow line.} If the vector-valued polynomial
		\[
			p(\x) \coloneqq \xi^2 (3v+3w)  - \x \, (4v+2w) + v
		\]
		is identically equal to~$0$, then~$v=w=0$, the curve~$t \mapsto \a_{x,0,y,0}^T(t)$ is a parametrisation of the segment connecting~$(x,0)$ to~$(y,0)$, and~$\left(\a_{x,0,y,0}\right)'(t) = O(T^{-1})$ uniformly in~$t \in (0,T)$ as~$T \to \infty$.
		\item \textbf{Case 2: long curve.} If~$\x \mapsto p(\x)$ is not identically equal to~$0$, then there must exist an interval~$[a,b] \subseteq [0,1]$ (with~$0\le a<b \le 1$) where it never vanishes. On such an interval, we have
		\begin{equation*}
			\abs{\left(\a_{x,v,y,w}^T\right)'(t)} \ge \min_{\x \in [a,b]} \abs{p(\x)} - O(T^{-1}) \comma \qquad
			\text{as } T \to \infty \comma \text{ uniformly in }t \in [a,b] \comma
		\end{equation*}
		which shows that \emph{the total length of~$\a_{x,v,y,w}^T$ is of order~$\Theta(T)$}. On the other hand, the curvature is bounded as
		\[
			\kappa(t) \le \frac{\abs{\left(\a_{x,v,y,w}^T\right)''(t)}}{\abs{\left(\a_{x,v,y,w}^T\right)'(t)}^2} = \frac{T^{-1}\abs{p'(t/T)} + O(T^{-2})}{\abs{p(t/T)}^2+O(T^{-1})} = O(T^{-1}) \qquad \text{as } T \to \infty \comma
		\]
		uniformly in~$t \in [a,b]$. Moreover, in dimension~$d \ge 2$, we can often choose~$[a,b] = [0,1]$. This is because the set
		\[
			\set{(v,w) \in \cV \times \cV \, : \, \exists \x \in [0,1] \text{ with }p(\x) = 0}
		\]
		has dimension~$d+1$ (hence, it is negligible). All these observations indicate that, typically, this second case corresponds to~$\a_{x,v,y,w}^T$ resembling a large loop in the limit~$T \to \infty$.
	\end{itemize}

	\begin{figure}
		\centering
		\includegraphics[width=.45\textwidth, trim={170mm 70mm 160mm 80mm},clip]{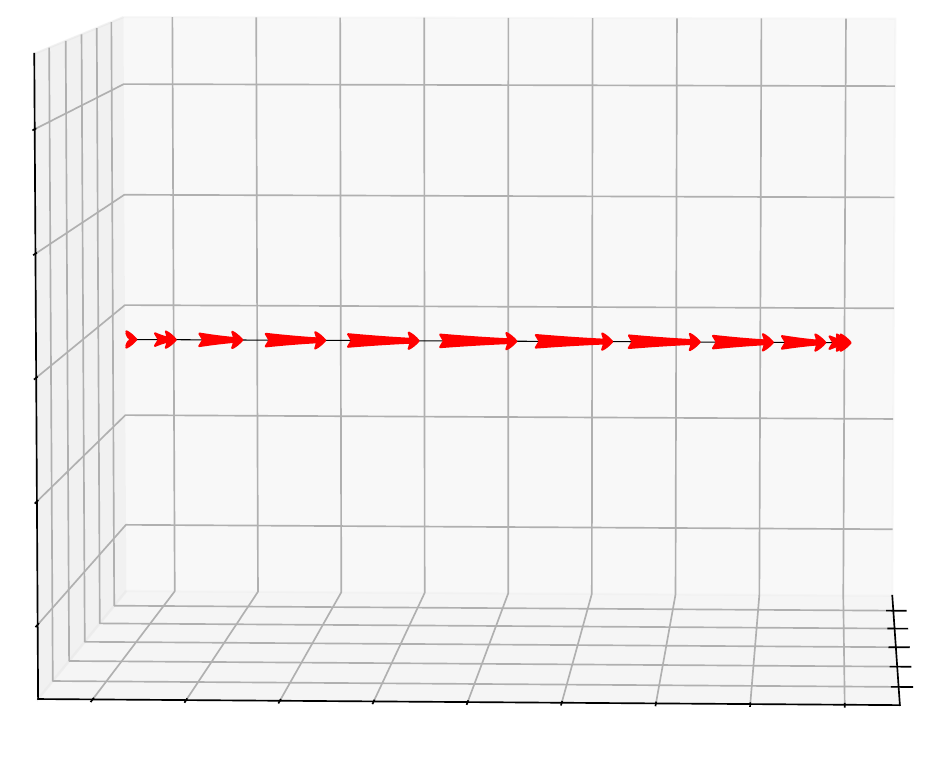}
			\hspace{60pt}
			\includegraphics[width=.39\textwidth, trim={48mm 20mm 40mm 25mm},clip]{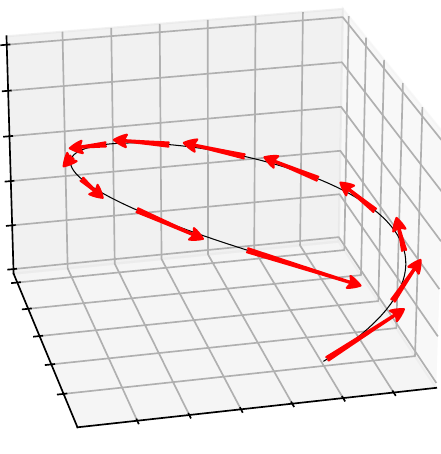}
		\caption{Examples of trajectories. In the left figure,~$v=w=0$.}
	\end{figure}
\end{rem}

We conclude this section with a version of the Monge--Mather's shortening principle, cf.~\cite[Chapter~8]{MR2459454}. Namely, we show that, given the initial and final configurations of two indistinguishable particles\footnote{We also optimise---with respect to the average squared acceleration---how particles in the initial configuration are coupled with those in the final configuration. Namely, a coupling is a matching of each particle in the initial configuration to one in the final configuration. Then, for every pair, one can compute the minimal acceleration as in \eqref{eq:dynd2}, and average such contribution over all pairs.} in different locations, their optimal trajectories for the minimal acceleration problem cannot meet \emph{at the same time, at the same point, with the same velocity}.

\begin{figure}
	\centering
	\includegraphics[width=.35\textwidth, trim={18mm 180mm 93mm 3mm},clip]{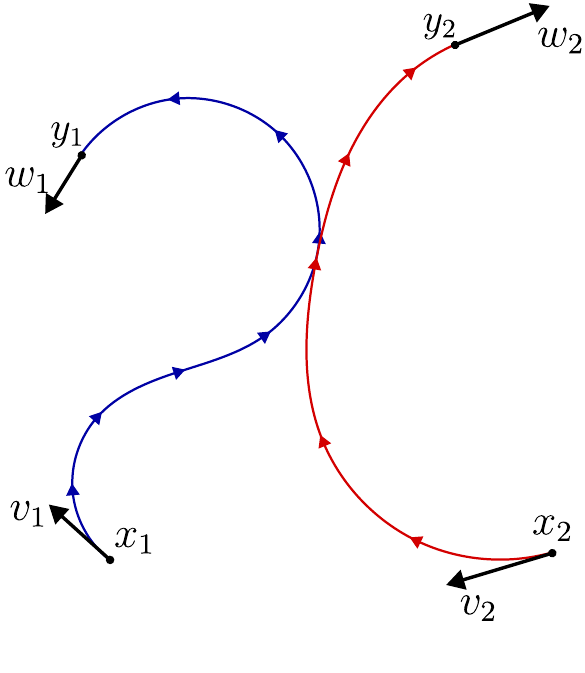}
	\caption{Trajectories that meet at the same time with the same velocity are not optimal.}
\end{figure}

\begin{prop} \label{prop:inj}
	Fix~$T > 0$ and let~$(x_1,v_1),(y_1,w_1),(x_2,v_2),(y_2,w_2) \in \Gamma$. Let~$\a_1,\a_2$ be the optimal (polynomial) curves for the problem \eqref{eq:dynd} between~$(x_1,v_1)$ and~$(y_1,w_1)$, and between~$(x_2,v_2)$ and~$(y_2,w_2)$, respectively, i.e.,
	\begin{equation}
		\tilde d_T^2\bigl((x_1,v_1),(y_1,w_1)\bigr) 
			= T \int_0^T \abs{\a_1''(t)}^2 \, \dif t
		 \quad \text{and} \quad 
		\tilde d_T^2\bigl((x_2,v_2),(y_2,w_2)\bigr) 
			= T \int_0^T \abs{\a_2''(t)}^2 \, \dif t \fstop
	\end{equation}
	If there exists~$\bar t \in (0,T)$ such that~$(\a_1(\bar t),\a_1'(\bar t)) = (\a_2(\bar t),\a_2'(\bar t))$, and if
	\begin{multline} \label{eq:mon}
			\tilde d_T^2\bigl((x_1,v_1),(y_1,w_1)\bigr) 
		+	\tilde d_T^2\bigl((x_2,v_2),(y_2,w_2)\bigr) 
		\\ \le 
			\tilde d_T^2\bigl((x_1,v_1),(y_2,w_2)\bigr) 
		+ 	\tilde d_T^2\bigl((x_2,v_2),(y_1,w_1)\bigr) \comma
	\end{multline}
	then,~$(x_1,v_1) = (x_2,v_2)$ and~$(y_1,w_1) = (y_2,w_2)$.
\end{prop}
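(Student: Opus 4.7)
The plan is to exploit the classical Monge--Mather swapping trick, adapted to second-order curves, and conclude via uniqueness of the cubic minimiser of \eqref{eq:dynd2}. Define the spliced curves
\[
	\b_1(t) \coloneqq \begin{cases} \a_1(t) & t \in [0,\bar t] \comma \\ \a_2(t) & t \in [\bar t, T] \comma \end{cases}
	\qquad
	\b_2(t) \coloneqq \begin{cases} \a_2(t) & t \in [0,\bar t] \comma \\ \a_1(t) & t \in [\bar t, T] \fstop \end{cases}
\]
Since $\a_1$ and $\a_2$ share \emph{both} position and velocity at $\bar t$, each $\b_i$ is continuous with absolutely continuous derivative, and its piecewise-polynomial second derivative lies in $\Lp{2}(0,T)$; hence $\b_i \in \mathrm H^2(0,T;\cX)$. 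By construction $\b_1$ joins $(x_1,v_1)$ to $(y_2,w_2)$ and $\b_2$ joins $(x_2,v_2)$ to $(y_1,w_1)$.

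Next, I split the action integrals at $\bar t$ and use the optimality of $\a_1,\a_2$ for their own problems to write
\[
	T \int_0^T \bigl( \abs{\b_1''}^2 + \abs{\b_2''}^2 \bigr) \dif t
	=
	T \int_0^T \bigl( \abs{\a_1''}^2 + \abs{\a_2''}^2 \bigr) \dif t
	=
	\tilde d_T^2\bigl((x_1,v_1),(y_1,w_1)\bigr) + \tilde d_T^2\bigl((x_2,v_2),(y_2,w_2)\bigr).
\]
On the other hand, by the very definition of $\tilde d_T$ as an infimum over $\mathrm H^2$ curves with prescribed endpoints,
\[
	T \int_0^T \abs{\b_1''}^2 \dif t \ge \tilde d_T^2\bigl((x_1,v_1),(y_2,w_2)\bigr) \comma \qquad T \int_0^T \abs{\b_2''}^2 \dif t \ge \tilde d_T^2\bigl((x_2,v_2),(y_1,w_1)\bigr) \fstop
\]
Summing and combining with hypothesis \eqref{eq:mon} forces equality everywhere, so $\b_1$ is itself a minimiser for the problem between $(x_1,v_1)$ and $(y_2,w_2)$ (and similarly for $\b_2$).

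Finally, by strict convexity of $\a \mapsto T\int_0^T |\a''|^2 \dif t$ on the affine subspace of $\mathrm H^2$ curves with fixed boundary data, this problem has a \emph{unique} minimiser, which is the cubic polynomial $\a^T_{x_1,v_1,y_2,w_2}$ of formula \eqref{eq:gammaT}. Thus $\b_1$ coincides on $[0,T]$ with this single cubic. But $\b_1$ also agrees with the cubic polynomial $\a_1$ on the nondegenerate interval $[0,\bar t]$, and two cubics agreeing on an interval must be identical. Therefore $\a_1 \equiv \b_1$ on $[0,T]$, and by the symmetric argument $\a_2 \equiv \b_1$ as well. Evaluating at $t=0$ and $t=T$ yields $(x_1,v_1)=(x_2,v_2)$ and $(y_1,w_1)=(y_2,w_2)$.

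The only point requiring attention, beyond the swap itself, is the admissibility check for the spliced curves: continuity of position and velocity at $\bar t$ suffices for $\mathrm H^2$ regularity (no matching of the accelerations is required). The remaining work is an integral-splitting identity and an invocation of the polynomial rigidity of the unique cubic minimiser—both routine once the swap is set up correctly.
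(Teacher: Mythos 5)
Your proof is correct and follows essentially the same route as the paper: splice the two curves at $\bar t$ (using that matching position and velocity gives $\mathrm{H}^2$ admissibility), compare the summed actions with hypothesis~\eqref{eq:mon} to force the spliced curves to be optimal, and then use polynomial rigidity of the unique cubic minimiser to conclude $\a_1 \equiv \a_2$. No gaps; the argument matches the paper's proof step for step.
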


\begin{proof}
	Define the curves
	\begin{equation*}
		\tilde \a_1(t) := \begin{cases}
			\a_1(t) &\text{if } t \in [0,\bar t]\comma \\
			\a_2(t) &\text{if } t \in [\bar t,T]\comma
		\end{cases}
		\qquad
		\tilde \a_2(t) := \begin{cases}
			\a_2(t) &\text{if } t \in [0,\bar t]\comma \\
			\a_1(t) &\text{if } t \in [\bar t,T]\comma
		\end{cases}
	\end{equation*}
	which, by our assumptions, are of class~$ \mathrm{H}^2$. They are competitors for the problem \eqref{eq:dynd2} between~$X_1 \coloneqq (x_1,v_1)$ and~$Y_2 \coloneqq (y_2,w_2)$, and between~$X_2 \coloneqq (x_2,v_2)$ and~$Y_1 \coloneqq (y_1,w_1)$, respectively. We also notice that, by additivity of the integral in the domain of integration,
	\begin{equation} \label{eq:integrEq}
		\int_0^T \abs{\a_1''(t)}^2 \, \dif t + \int_0^T \abs{\a_2''(t)}^2 \, \dif t = \int_0^T \abs{\tilde \a_1''(t)}^2 \, \dif t + \int_0^T \abs{\tilde \a_2''(t)}^2 \, \dif t.
	\end{equation}
	Exploiting the assumption \eqref{eq:mon}, we obtain
	\begin{align*}
		\tilde d_T^2(X_1,Y_1) + \tilde d_T^2(X_2,Y_2) &\stackrel{\eqref{eq:mon}}{\le} \tilde d_T^2(X_1,Y_2) + \tilde d_T^2(X_2,Y_1) \\
		&\le T \int_0^T \abs{\tilde \a_1''(t)}^2 \, \dif t + T \int_0^T \abs{\tilde \a_2''(t)}^2 \, \dif t \\
		&\stackrel{\eqref{eq:integrEq}}{=} T \int_0^T \abs{\a_1''(t)}^2 \, \dif t + T \int_0^T \abs{\a_2''(t)}^2 \, \dif t \\
		&= \tilde d_T^2(X_1,Y_1) + \tilde d_T^2(X_2,Y_2) \fstop
	\end{align*}
	We infer that the two inequalities in the latter formula are in fact equalities and, therefore,~$\tilde \a_1$ and~$\tilde \a_2$ are \emph{optimal} for the problem \eqref{eq:dynd2}. Consequently,~$\tilde \a_1$ and~$\tilde \a_2$ are polynomials. Since~$\a_1(t) = \tilde \a_1(t)$ for~$t \in [0,\bar t]$ and~$\bar t > 0$, the two polynomials~$\a_1$ and~$\tilde \a_1$ coincide. Similarly,~$\tilde \a_1 \equiv \a_2$; therefore,~$\a_1 \equiv \a_2$. We conclude that
	\begin{equation*}
		(x_1,v_1) = \a_1(0) = \a_2(0) = (x_2,v_2) \quad \text{and} \quad (y_1,w_1) = \a_1(T) = \a_2(T) = (y_2,w_2) \fstop \qedhere
	\end{equation*}
\end{proof}

\subsection{The non-parametric discrepancy}\label{ss:22}
Using~$\tilde d_T$, we shall now define a discrepancy~$d$ which is not parametric in time.

\begin{defn} \label{defn:defdist}
	For all~$(x,v), (y,w) \in \Gamma$, set
	\begin{equation}
		\tilde \dist\bigl( (x,v), (y,w) \bigr) \coloneqq \inf_{T > 0} \tilde d_T \bigl( (x,v), (y,w) \bigr) \fstop
	\end{equation}
	We denote by~$\dist \colon \Gamma \times \Gamma \to \R_{\ge 0}$ the lower-semicontinuous envelope of~$\tilde \dist$. We give~$d$ the name \emph{second-order discrepancy} between particles.
\end{defn}

\begin{prop} \label{prop:dist}
	The following hold.
	\begin{enumerate}
		\item \label{st1} The function~$\tilde \dist \colon \Gamma \times \Gamma \to \R_{\ge 0}$ is upper-semicontinuous. For every~$(x,v),(y,w) \in \Gamma$, we have
		\begin{equation}
			\tilde \dist\bigl((x,v), (y,w) \bigr) 
			= \begin{cases}
				\lim_{T \to \infty} \tilde d_T\bigl((x,v),(y,w)\bigr) &\text{if } (y-x) \cdot (v+w) \le 0 \comma \\
				\tilde d_{T^*} \bigl((x,v),(y,w)\bigr) &\text{if } (y-x) \cdot (v+w) > 0 \comma
			\end{cases}
		\end{equation}
		where
		\begin{equation}
			\label{eq:Tstar}
			T^* \coloneqq 2\frac{\abs{y-x}^2}{(y-x)\cdot (v+w)} \fstop
		\end{equation}
		Hence, the following formula
		\begin{multline} \label{eq:st1}
			\tilde \dist^2\bigl((x,v), (y,w) \bigr) = \begin{cases}
				3 \abs{v+w}^2 - 3
				\left(\frac{y-x}{\abs{y-x}}\cdot (v+w) \right)_+^2 + \abs{w-v}^2 &\text{if } x \neq y \comma \\
				3 \abs{v+w}^2 + \abs{w-v}^2  &\text{if } x=y
			\end{cases}\\
		(x,v), (y,w) \in \Gamma \fstop
		\end{multline}
		\item \label{st2} The second-order discrepancy~$\dist \colon \Gamma \times \Gamma \to \R_{\ge 0}$ is given by the formula
		\begin{multline} \label{eq:st2}
			\dist^2\bigl((x,v), (y,w) \bigr) = \begin{cases}
				3 \abs{v+w}^2 - 3\left(\frac{y-x}{\abs{y-x}} \cdot (v+w) \right)_+^2 + \abs{w-v}^2 &\text{if } x \neq y \comma \\
				\abs{w-v}^2  &\text{if } x=y \comma
			\end{cases}\\
			(x,v), (y,w) \in \Gamma \fstop
		\end{multline}
	\item \label{st3} We have~$\dist\bigl((x,v),(y,w)\bigr) = 0$ if and only if either~$(y,w) = \cG_T(x,v)$ for some~$T \ge 0$, or~$x \neq y$ and~$v=w=0$.
	\end{enumerate}
\end{prop}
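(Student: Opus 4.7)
The plan is to reduce everything to elementary calculus on the one-variable map $T \mapsto \tilde d_T^2((x,v),(y,w))$, whose expansion
\begin{equation*}
	\tilde d_T^2\bigl((x,v),(y,w)\bigr)
		=
	\frac{12\abs{y-x}^2}{T^2} - \frac{12(y-x)\cdot(v+w)}{T} + 3\abs{v+w}^2 + \abs{w-v}^2
\end{equation*}
will be the workhorse for all three claims. For (1), I will split on $x = y$ (the function is constant in $T$) versus $x \neq y$, in which case the derivative $\tfrac{12}{T^3}\bigl((y-x)\cdot(v+w)\,T - 2\abs{y-x}^2\bigr)$ is either strictly negative throughout---when $(y-x)\cdot(v+w) \le 0$, forcing the infimum to be the limit at $T \to \infty$---or vanishes at the unique critical $T^*$ of~\eqref{eq:Tstar}, which is then a global minimum. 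Substituting the two non-trivial subcases and merging them through the positive part $(\cdot)_+$ yields~\eqref{eq:st1}. Upper semicontinuity of $\tilde d$ is then immediate, since $\tilde d$ is a pointwise infimum of the jointly continuous family $\{\tilde d_T\}_{T > 0}$.

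For (2), the easy observation is that the right-hand side of~\eqref{eq:st1} is continuous on the open set $\{x \neq y\}$, so $d = \tilde d$ there. The heart of the argument is the diagonal $\{x = y\}$, where I must compute $\liminf \tilde d^2\bigl((x_n,v_n),(y_n,w_n)\bigr)$ along all sequences approaching $((x,v),(x,w))$. The Cauchy--Schwarz inequality makes the first two summands in~\eqref{eq:st1} jointly non-negative, giving the easy bound $\tilde d^2 \ge \abs{w_n-v_n}^2$; the matching upper bound comes from the test sequence $y_n := x_n + h_n (v+w)/\abs{v+w}$ (for $v+w \neq 0$), $v_n \equiv v$, $w_n \equiv w$, with $h_n \downarrow 0$, along which $(y_n - x_n)$ is parallel to $v+w$ so that those first two summands exactly cancel in the limit and leave $\abs{w-v}^2$. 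The degenerate case $v+w = 0$ is even simpler, since there the diagonal value $3\abs{v+w}^2 + \abs{w-v}^2$ already reduces to $\abs{w-v}^2$. This confirms~\eqref{eq:st2}.

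For (3), I will read the characterisation directly off formula~\eqref{eq:st2}. If $x = y$, then $d = 0$ iff $w = v$, which is precisely $(y,w) = \cG_0(x,v)$. If $x \neq y$, then $d = 0$ forces both non-negative summands to vanish: firstly $w = v$, and secondly the Cauchy--Schwarz equality $\abs{v+w} = \bigl(\tfrac{y-x}{\abs{y-x}}\cdot(v+w)\bigr)_+$, which requires $v+w$ to be a non-negative multiple of $(y-x)/\abs{y-x}$. Combined with $w = v$, this leaves two alternatives: either $v = 0$, yielding the second branch of the statement (available precisely because $x \neq y$), or $v \neq 0$, in which case $y - x = Tv$ for a unique $T > 0$, i.e.~$(y,w) = \cG_T(x,v)$. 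The converse implications are verified by inserting each case back into~\eqref{eq:st2}. I expect no serious obstacle; the only subtlety is the bookkeeping overlap between the degenerate cases $v+w = 0$ and $v = w = 0$.
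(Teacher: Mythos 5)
Your proposal is correct and follows essentially the same route as the paper: elementary calculus on $T \mapsto \tilde d_T^2$ with the critical time $T^*$ for part~(1), a recovery sequence displacing $y$ along the direction $v+w$ (the paper uses $y_k = x + \tfrac{1}{k}(v+w)$) for the relaxation in part~(2), and the Cauchy--Schwarz equality case analysis for part~(3). The only cosmetic difference is that you compute the lower-semicontinuous envelope pointwise as a $\liminf$, whereas the paper verifies lower semicontinuity of the candidate formula and then exhibits the same recovery sequence; both are fine.
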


\begin{proof}
	\textbf{Proof of~\ref{st1}.} Upper-semicontinuity is trivial, because~$\tilde \dist$ is defined as an infimum of continuous functions.
	
	Fix~$(x,v),(y,w) \in \Gamma$. If~$y=x$, then~$T \mapsto \tilde d_T^2\bigl((x,v),(y,w)\bigr)$ is constant, hence always equal to its limit as~$T \to \infty$. Otherwise, let us rewrite~\eqref{eq:dtildeT} as a convex quadratic polynomial in~$T^{-1}$, namely:
	\[
		\tilde d_T^2\bigl((x,v),(y,w)\bigr) = 12 T^{-2} \abs{y-x}^2 - 24 T^{-1} (y-x) \cdot (v+w) + 3 \abs{v+w}^2 + \abs{w-v}^2 \fstop
	\]
	The vertex of this parabola is found at
	\[
		T^{-1} = \frac{(y-x) \cdot (v+w)}{2 \abs{y-x}^2} \fstop
	\]
	Therefore, when~$(y-x) \cdot (v+w) \le 0$, the minimum of~$\tilde d_T^2$, constrained to~$T > 0$, is approached as~$T^{-1} \to 0$. In formulae:
	\[
		\tilde d^2\bigl((x,v),(y,w) \bigr) = \lim_{T \to \infty} \tilde d_T^2\bigl((x,v),(y,w)\bigr) = 3 \abs{v+w}^2 + \abs{w-v} \fstop
	\]
	Instead, when~$(y-x) \cdot (v+w) > 0$, we have
	\[
		\tilde \dist^2\bigl((x,v), (y,w)\bigr) = \tilde d_{T^*}^2 \bigl((x,v),(y,w)\bigr) = 3 \abs{v+w}^2-3 \biggl(\frac{y-x}{\abs{y-x}} \cdot (v+w)\biggr)^2 + \abs{w-v}^2 \fstop
	\]
	
	\textbf{Proof of~\ref{st2}.} The right-hand side in~\eqref{eq:st2} is lower-semicontinuous. Since it coincides with~$\tilde \dist\bigl((x,v), (y,w)\bigr)$ when~$x \neq y$ or~$v+w = 0$, we are only left with showing that, for every~$x,v,w$ with~$v+w \neq 0$, there exists a sequence~$y_k \to x$ such that
	\[
		\abs{w-v}^2 \ge \limsup_{k \to \infty} \tilde \dist^2\bigl((x,v), (y_k,w)\bigr) \fstop
	\]
	We simply choose
	\[
		y_k \coloneqq x + \frac{1}{k}(v+w) \comma \qquad k \in \N_{1} \fstop
	\]
	
	\textbf{Proof of~\ref{st3}.} Assume that the right-hand side of~\eqref{eq:st2} equals~$0$. We infer that~$v = w$. If~$x=y$, then~$(y,w) = \cG_0(x,v)$. If~$x \neq y$, then
	\[
		2\abs{v} = 2 \frac{y-x}{\abs{y-x}} \cdot v
	\]
	and, therefore, either~$v = 0$, or~$y = x+T v$ for some~$T > 0$, that is,~$(y,w) = \cG_T(x,v)$. The converse implication is a direct computation.
\end{proof}

\begin{rem} \label{rem:counterexamplesd}
	It follows from~$\eqref{eq:st1}$ and~$\eqref{eq:st2}$ that neither~$\tilde \dist$ nor~$\dist$ is symmetric. Neither of the two satisfies the triangle inequality: consider
	\[
		(x_1,v_2) \coloneqq (0,\bar v) \comma \quad (x_2,v_2) \coloneqq (\bar v,0) \comma \quad (x_3,v_3) \coloneqq (-\bar v, 0)
	\]
	for any~$\bar v \in \cV \setminus \set{0}$. Moreover, we have the characterisation
	\begin{equation}
		d^2\bigl((x,v),(y,w)\bigr) = 0 \quad \iff \quad \left[ \,  v=w=0 \text{ or } (y,w)=\mathcal G_T(x,v) \text{ for some } T \ge 0 \, \right] \fstop
	\end{equation}
\end{rem}

In analogy with the metric setting of~\cite[Theorem~1.1.2]{MR2401600}), we give the following.

\begin{defn} \label{defn:ddiff}
	We say that a curve~$\g = (x_\cdot,v_\cdot) \colon (a,b) \to \Gamma$ is~\emph{$\dist$-differentiable} at~$t \in (a,b)$ when the one-sided limit
	\begin{equation} \label{eq:incrratio}
	\lim_{h \downarrow 0} \frac{\dist\bigl(\g(t), \g(t+h)\bigr)}{h} \fstop
	\end{equation}
	exists. In this case, we denote it by~$\abs{\g'}_\dist(t)$ and call it the \emph{$\dist$-derivative of~$\g$} at~$t$.
\end{defn}

\begin{rem}
	As the discrepancy~$d$ is not symmetric, taking left or right limits to define~$d$-differentiability is not the same, even for smooth curves. We argue that~\eqref{eq:incrratio} is the natural definition. Indeed, if, for example, we consider the straight constant-speed line~$\gamma(t) \coloneqq (t\bar v,\bar v)$,~$t \in \R$, for some~$\bar v \in \cV \setminus \set{0}$, we have
    \[
        d\bigl(\gamma(t),\gamma(t+h) \bigr) = 0 \quad \text{for every } h > 0 \comma
    \]
    and
    \[
        d\bigl(\gamma(t),\gamma(t-h) \bigr) = \abs{2v} > 0 \quad \text{for every } h > 0 \fstop
    \]
    In particular,~$\gamma$ is~$d$-differentiable in the sense of \Cref{defn:ddiff}, but
    \[
        \lim_{h \downarrow 0} \frac{\dist\bigl(\g(t), \g(t-h)\bigr)}{\abs{h}} = \infty \fstop
    \]
\end{rem}

Our next aim is to formulate necessary and sufficient conditions for $d$-differentiability.

\begin{prop} \label{prop:ddiff1}
    Let~$\g = (x_\cdot, v_\cdot) \colon (a,b) \to \Gamma$ be a curve such that~$x_\cdot$ is of class~$\mathrm{C}^1$ and~$v_\cdot$ is of class~$\mathrm{C}^0$. If~$\gamma$ is~$d$-differentiable at~$t \in (a,b)$ and~$v_t \neq 0$, then there exists~$\lambda(t) \ge 0$ such that~$\dot x_t = \lambda(t) v_t$.
\end{prop}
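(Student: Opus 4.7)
The plan is to read the desired parallelism directly off the explicit formula~\eqref{eq:st2} for $\dist^2$, using the continuity of $v_\cdot$ with $v_t \neq 0$ together with the assumed finiteness of the one-sided limit~\eqref{eq:incrratio}.

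If $\dot x_t = 0$, the conclusion is satisfied with $\lambda(t) = 0$. Assume from now on that $\dot x_t \neq 0$. Since $x_\cdot$ is $\mathrm{C}^1$, we have $\abs{x_{t+h} - x_t}/h \to \abs{\dot x_t} > 0$, so that $x_{t+h} \neq x_t$ for all sufficiently small $h>0$ and the unit vectors $e_h \coloneqq (x_{t+h} - x_t)/\abs{x_{t+h} - x_t}$ converge to $\dot x_t / \abs{\dot x_t}$. Continuity of $v_\cdot$ at $t$ gives $w_h \coloneqq v_t + v_{t+h} \to 2 v_t \neq 0$ and $\abs{v_{t+h} - v_t}^2 \to 0$.

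The main step is to exploit the decomposition
\[
    \dist^2\bigl(\g(t), \g(t+h)\bigr) = \bigl( 3\abs{w_h}^2 - 3 (e_h \cdot w_h)_+^2 \bigr) + \abs{v_{t+h} - v_t}^2
\]
provided by~\eqref{eq:st2}, in which both summands are non-negative (the first by Cauchy--Schwarz applied to the unit vector $e_h$). Finiteness of $\lim_{h \downarrow 0} \dist(\g(t), \g(t+h))/h$ forces each summand to be $O(h^2)$; combined with $\abs{w_h}^2 \to 4\abs{v_t}^2 > 0$, the first estimate yields $(e_h \cdot w_h)_+^2 \to 4\abs{v_t}^2$, and hence $(e_h \cdot w_h)_+ \to 2\abs{v_t} > 0$. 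For $h$ small enough the positive part therefore equals the scalar product itself, and passing to the limit gives $\dot x_t \cdot v_t = \abs{\dot x_t}\,\abs{v_t}$. The equality case of Cauchy--Schwarz then produces $\dot x_t = \lambda(t) v_t$ with $\lambda(t) > 0$.

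The main — and really only — obstacle is conceptual: we must rule out the anti-parallel configuration, which would give a negative $\lambda(t)$. This is exactly what the cut-off $(\cdot)_+$ in~\eqref{eq:st2} detects: were $\dot x_t$ a negative multiple of $v_t$, we would have $e_h \cdot w_h \to -2\abs{v_t} < 0$, so $(e_h \cdot w_h)_+ = 0$ for small $h$, the first summand above would tend to the strictly positive quantity $12\abs{v_t}^2$, and $\dist(\g(t),\g(t+h))/h$ would diverge, contradicting $\dist$-differentiability. Only the positive-parallel configuration survives, completing the argument.
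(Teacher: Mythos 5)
Your proof is correct, and it takes a genuinely different route from the paper's. The paper never invokes the closed formula~\eqref{eq:st2} at this point: it works with the fixed-time costs $\tilde d_T$, picks a near-optimal time $T(h)$ with $\tilde d_{T(h)}^2\bigl(\g(t),\g(t+h)\bigr) \le \tilde d^2\bigl(\g(t),\g(t+h)\bigr) + h^2$, bounds the mismatch term $12\bigl|\frac{x_{t+h}-x_t}{T(h)} - \frac{v_t+v_{t+h}}{2}\bigr|^2$ by $(C^2+1)h^2$, and deduces both that $T(h)/h \to \lambda(t) \in (0,\infty)$ and that $\dot x_t = \lambda(t)v_t$. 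You instead read the conclusion off the lower-semicontinuous envelope formula~\eqref{eq:st2}: the $O(h^2)$ bound forces $3\abs{w_h}^2 - 3(e_h\cdot w_h)_+^2 \to 0$, hence $(e_h\cdot w_h)_+ \to 2\abs{v_t}>0$, and the equality case of Cauchy--Schwarz gives positive parallelism. Both arguments are sound; yours is arguably more self-contained once \Cref{prop:dist} is in place, and it makes the role of the cut-off $(\cdot)_+$ in excluding the anti-parallel configuration transparent (though your closing paragraph on that point is strictly redundant: the limit identity $\dot x_t \cdot v_t = \abs{\dot x_t}\,\abs{v_t}$ already rules it out). What the paper's route buys in exchange is the asymptotic behaviour of the near-optimal time, $T(h) \sim \lambda(t)h$, which is exactly the quantity that reappears in \Cref{defn:ac} and \Cref{thm3}; your argument produces $\lambda(t)$ without any information about optimal times.
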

\begin{proof}
    If~$\gamma$ is~$\dist$-differentiable at~$t \in (a,b)$, then~$\dist\bigl(\g(t),\g(t+h)\bigr) \leq C h$ for suitable $C > 0$,
	whenever $h > 0$ is small enough.
    If~$\dot x_t = 0$, then it suffices to choose~$\lambda(t) = 0$. Otherwise,
	we have $x_{t+h} \neq x_t$ for small enough $h > 0$,
	hence $\dist^2\bigl(\g(t),\g(t+h)\bigr) 
		= \tilde \dist^2\bigl(\g(t),\g(t+h)\bigr)$.
	Pick $T(h) > 0$ such that 
	$
	\tilde \dist_{T(h)}^2\bigl(\g(t),\g(t+h)\bigr)
	\leq 
	\tilde \dist^2\bigl(\g(t),\g(t+h)\bigr)
	+ h^2
	$.
	In particular, for $h > 0$ small enough,
	\begin{align*}
		12\abs{\frac{x_{t+h}-x_t}{T(h)}-\frac{v_{t}+v_{t+h}}{2}}^2 
			\leq
		\tilde \dist_{T(h)}^2\bigl(\g(t),\g(t+h)\bigr)
			& \leq 
		\tilde \dist^2\bigl(\g(t),\g(t+h)\bigr) + h^2
			\\& =
		\dist^2\bigl(\g(t),\g(t+h)\bigr) + h^2
			\leq
		(C^2 + 1) h^2 \fstop
	\end{align*}
	Since 
		$\frac{x_{t+h}-x_t}{h} \to \dot x_t \neq 0$ 
	and
		$\frac{v_{t}+v_{t+h}}{2} \to  v_t \neq 0$,
	we infer that $T(h)/h \to \lambda(t)$ as $h \to 0$ for some $\lambda(t) \in (0,\infty)$ satisfying~$\dot x_t = \lambda(t) v_t$.
\end{proof}

\begin{rem}[Reparametrisation] \label{rem:repar}
	Let~$\gamma = (x_\cdot, v_\cdot) \colon (a,b) \to \Gamma$ be a curve such that~$x_\cdot$ is of class~$\mathrm{C}^{k+1}$ and~$v_\cdot$ is of class~$\mathrm{C}^k$ for some~$k \in \N_0$. Assume that~$\dot x_t=\lambda(t) v_t$ for every~$t \in (a,b)$, for a function~$\lambda \colon (a,b) \to (0,\infty)$ of class~$\mathrm{C}^{k}$. Let~$\tau \colon (\tilde a, \tilde b) \to (a,b)$ be a function of class~$\mathrm{C}^{k+1}$ with~$\tau' > 0$. Set
	\[
		\tilde x_s \coloneqq x_{\tau(s)} \comma \quad \tilde v_s \coloneqq v_{\tau(s)} \comma \quad \tilde \gamma(s) \coloneqq (\tilde x_s, \tilde v_s) \comma \qquad s \in (\tilde a, \tilde b) \fstop
	\]
	Then, we have
	\[
		\tilde v_s = v_{\tau(s)} = \frac{1}{\lambda\bigl(\tau(s)\bigr)} \dot x_{\tau(s)} = \frac{1}{\lambda\bigl(\tau(s)\bigr)\tau'(s)} \dot {\tilde x}_s \comma \qquad s \in (\tilde a, \tilde b) \comma
	\]
	that is, also~$\tilde \gamma$ has the property~$\dot {\tilde x}_s = \tilde \lambda(s) \tilde v_s$ for every~$s \in (\tilde a, \tilde b)$, for a function~$\tilde \lambda \colon (\tilde a, \tilde b) \to (0,\infty)$ of class~$\mathrm{C}^k$.
	
	Let~$\bar t \in (a,b)$ and assume that~$\lambda > 0$ on~$(a,b)$.  By solving the Cauchy problem
	\begin{equation} \label{eq:cauchy}
		\tau' = \frac{1}{\lambda(\tau)} \comma \quad \tau(0) = \bar t \comma
	\end{equation}
	we can find a reparametrisation~$\tilde \gamma$ with~$\dot{\tilde x}_s = \tilde v_s$. Indeed, by classical results, the ODE~\eqref{eq:cauchy} admits a maximal solution on a neighbourhood of~$\bar t$. Moreover, since~$\lambda$ is bounded on every compact~$K \Subset (a,b)$, the values of~$\tau$ exit~$K$ in finite time. Therefore, the maximal solution has the full set~$(a,b)$ as its image.
\end{rem}

\begin{prop} \label{prop:ddiff2}
    Let~$\g = (x_\cdot, v_\cdot) \colon (a,b) \to \Gamma$ be a curve 
	such that $x_\cdot$ is of class~$\mathrm{C}^2$ and 
	$v_\cdot$ is of class~$\mathrm{C}^1$. Assume that there exists~$\lambda \colon (a,b) \to (0,\infty)$ continuous, such that~$\dot x_t = \lambda(t) v_t$ for every~$t \in (a,b)$. Then,~$\gamma$ is~$d$-differentiable on~$(a,b)$ with~$\abs{\gamma'}_d(t)=\abs{\dot v_t}$.
\end{prop}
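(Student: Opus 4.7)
}

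The plan is to compute the one-sided limit defining $\abs{\gamma'}_\dist(t)$ by direct Taylor expansion and show it equals $\abs{\dot v_t}$. Taylor's theorem (using the regularity of $x_\cdot$ and $v_\cdot$) gives, as $h \downarrow 0$,
\[
    x_{t+h} - x_t = h \, \lambda(t) v_t + \tfrac{h^2}{2} \ddot x_t + o(h^2) \comma
        \qquad
    v_{t+h} - v_t = h \, \dot v_t + o(h) \fstop
\]
The key preparatory step is to identify $\ddot x_t$. Since $\dot x_s = \lambda(s) v_s$ and $\dot x_\cdot$ is differentiable at $t$, one obtains $\ddot x_t = \lim_{s\to t}\frac{\lambda(s) v_s - \lambda(t) v_t}{s-t}$. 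When $v_t \neq 0$, $\lambda$ is $\mathrm{C}^1$ in a neighbourhood of $t$ (since $\lambda = (\dot x \cdot v)/\abs{v}^2$ there), so $\ddot x_t = \dot\lambda(t) v_t + \lambda(t) \dot v_t$; when $v_t = 0$ this reduces to $\ddot x_t = \lambda(t) \dot v_t$. In either situation, the orthogonal projection $P_t$ onto the hyperplane perpendicular to $v_t$ (interpreted as the identity when $v_t = 0$) satisfies the crucial orthogonality relation $P_t(\ddot x_t) = \lambda(t) \, P_t(\dot v_t)$.

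\emph{Case $v_t \neq 0$.} For $h > 0$ small enough, $x_{t+h} \neq x_t$, so \eqref{eq:st2} applies with
\[
    \dist^2\bigl(\gamma(t),\gamma(t+h)\bigr)
        =
    3 \, \abs{u - (u \cdot e_h)\, e_h}^2 + \abs{v_{t+h}-v_t}^2 \comma
\]
where $u \coloneqq v_t + v_{t+h}$ and $e_h \coloneqq (x_{t+h}-x_t)/\abs{x_{t+h}-x_t}$; continuity of $u \cdot e_h \to 2\abs{v_t} > 0$ justifies dropping the positive part for $h$ small. The second summand is $h^2 \abs{\dot v_t}^2 + o(h^2)$. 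For the first, expand $e_h = v_t/\abs{v_t} + h k + o(h)$ with $k = \frac{1}{2\lambda(t)\abs{v_t}} P_t(\ddot x_t)$, and compute
\[
    u - (u \cdot e_h) e_h = h \, P_t(\dot v_t) - \frac{h}{\lambda(t)} P_t(\ddot x_t) + o(h) \fstop
\]
By the orthogonality relation, the bracketed term vanishes, so this projection is $o(h)$, its square is $o(h^2)$, and $\dist^2(\gamma(t),\gamma(t+h)) = h^2 \abs{\dot v_t}^2 + o(h^2)$.

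\emph{Case $v_t = 0$.} If in addition $\dot v_t = 0$, then $\ddot x_t = 0$, so $x_{t+h}-x_t = o(h^2)$ and $v_{t+h} = o(h)$; whether or not $x_{t+h} = x_t$, the bound $\dist^2(\gamma(t),\gamma(t+h)) \le 4\abs{v_{t+h}}^2 = o(h^2)$ follows directly from \eqref{eq:st2}, matching $\abs{\dot v_t}=0$. If $\dot v_t \neq 0$, then $\ddot x_t = \lambda(t)\dot v_t \neq 0$, hence $x_{t+h} \neq x_t$ for small $h$ and $e_h \to \dot v_t / \abs{\dot v_t}$. Both $u = v_{t+h}$ and $x_{t+h}-x_t$ are aligned with $\dot v_t$ at leading order, so $u - (u \cdot e_h) e_h = o(h)$ by the same computation. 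Again $\dist^2(\gamma(t),\gamma(t+h)) = h^2 \abs{\dot v_t}^2 + o(h^2)$.

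Dividing by $h^2$ and taking square roots yields the claim. The main obstacle is establishing the key cancellation in the first summand: this relies on the precise second-order expansion of $e_h$ and the orthogonality identity $P_t(\ddot x_t) = \lambda(t) P_t(\dot v_t)$, which encodes exactly the hypothesis $\dot x_t = \lambda(t) v_t$ and explains why the resulting $\sfd$-derivative is independent of $\lambda$ --- consistent with the reparametrisation invariance discussed in \Cref{rem:repar}.
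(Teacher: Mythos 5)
Your proposal is correct, but it follows a genuinely different route from the paper. The paper never touches the explicit formula~\eqref{eq:st2} in this proof: it sandwiches the discrepancy between the trivial lower bound $\dist^2\bigl(\gamma(t),\gamma(t+h)\bigr) \ge \abs{v_{t+h}-v_t}^2$ and the upper bound $\dist \le \tilde d \le \tilde d_h$, Taylor-expands $\tilde d_h^2\bigl(\gamma(t),\gamma(t+h)\bigr)/h^2$ in the special case $\lambda \equiv 1$ (where $\dot x_t = v_t$, $\ddot x_t = \dot v_t$ make the first summand vanish), and then handles general continuous $\lambda$ by the reparametrisation of \Cref{rem:repar} together with a chain-rule computation. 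You instead compute the exact asymptotics $\dist^2\bigl(\gamma(t),\gamma(t+h)\bigr) = h^2\abs{\dot v_t}^2 + o(h^2)$ directly from~\eqref{eq:st2}, via the first-order expansion of the unit vector $e_h$ and the identity $P_t(\ddot x_t) = \lambda(t) P_t(\dot v_t)$ (your observation that $\lambda = (\dot x \cdot v)/\abs{v}^2$ is automatically $\mathrm{C}^1$ where $v \neq 0$, and the separate treatment of $v_t = 0$, close the regularity gap correctly). What each approach buys: the paper's argument is shorter and avoids any case distinction or sign analysis of the positive part, since the comparison with $\tilde d_h$ is insensitive to whether $x_{t+h} = x_t$, and it recycles the reparametrisation lemma used elsewhere; your argument is self-contained (no reparametrisation needed), treats all $\lambda$ at once, yields sharper information (a genuine second-order expansion of $\dist^2$ along the curve rather than matching limsup/liminf bounds), and makes transparent why the limit is independent of $\lambda$. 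Both proofs are valid; yours requires the slightly more delicate bookkeeping of the positive part and of the degenerate cases $v_t = 0$, which you do carry out correctly.
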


\begin{proof}
    On the one hand, we have
    \[
        \liminf_{h \downarrow 0} \frac{d^2\bigl( \gamma(t),\gamma(t+h) \bigr)}{h^2}
            \ge
        \liminf_{h \downarrow 0} \frac{\abs{v_t-v_{t+h}}^2}{h^2} = \abs{\dot v_t}^2 \comma \qquad t \in (a,b) \fstop
    \]
    To prove the opposite inequality, momentarily assume that~$\lambda \equiv 1$, i.e.,~$\dot x_t = v_t$ for every~$t \in (a,b)$. We obtain
    \begin{align} \label{eq:limsupddiff} 
        \limsup_{h \downarrow 0} \frac{d^2\bigl( \gamma(t),\gamma(t+h) \bigr)}{h^2} \nonumber
            &\le
        \limsup_{h \downarrow 0} \frac{\tilde d_h^2\bigl( \gamma(t),\gamma(t+h) \bigr)}{h^2} \nonumber \\
            &=
        \limsup_{h \downarrow 0} \frac{1}{h^2} \left(12\abs{\frac{x_{t+h}-x_t}{h} - \frac{v_{t+h}+v_t}{2}}^2 + \abs{v_{t+h}-v_t}^2 \right) \nonumber \\
            &=
        \abs{\dot v_t}^2 + \limsup_{h \downarrow 0} \frac{12}{h^2}\abs{\dot x_t + \frac{h}{2} \ddot x_t - v_t - \frac{h}{2} \dot v_t + o(h)}^2 = \abs{\dot v_t}^2
    \end{align}
    for every~$t \in (a,b)$. In the general case, we apply the reparametrisation of \Cref{rem:repar} to find a diffeomorphism~$\tau \colon (\tilde a, \tilde b) \to (a,b)$ such that~$\tau'(s) = \frac{1}{\lambda(\tau(s))}$ for every~$s \in (\tilde a, \tilde b)$, so that the computation~\eqref{eq:limsupddiff} can be performed on~$\tilde \gamma \colon s \mapsto \gamma\bigl(\tau(s)\bigr)$. Given~$t = \tau(s) \in (a,b)$, we thus find
    \begin{align*}
        \limsup_{h \downarrow 0} \frac{d^2\bigl( \gamma(t),\gamma(t+h) \bigr)}{h^2}
            &=
        \limsup_{h \downarrow 0} \frac{d^2\bigl( \gamma\bigl(\tau(s)\bigr),\gamma\bigl(\tau(s)+h\bigr) \bigr)}{h^2} \\
            &=
        \limsup_{\tilde h \downarrow 0} \frac{d^2\bigl( \gamma\bigl(\tau(s)\bigr),\gamma\bigl(\tau(s+\tilde h)\bigr) \bigr)}{\bigl(\tau(s+\tilde h)-\tau(s)\bigr)^2} \\
            &=
        \limsup_{\tilde h \downarrow 0} \frac{d^2\bigl( \tilde \gamma(s),\tilde \gamma(s+\tilde h) \bigr)}{\tilde h^2}\frac{\tilde h^2}{\bigl(\tau(s+\tilde h)-\tau(s)\bigr)^2}\\
            &\stackrel{\eqref{eq:limsupddiff}}{=}
        \lambda\bigl(\tau(s)\bigr)^2 \abs{\frac{\dif}{\dif s} v_{\tau(s)}}^2
            =
            \abs{\dot v_t}^2 \comma
    \end{align*}
    and this concludes the proof.
\end{proof}

\begin{rem}
    The last result, specialised to curves satisfying~$\dot x_t = v_t$ (i.e.,~with $\lambda \equiv 1$), yields~$\abs{\gamma'}_d(t) =\abs{\dot v_t}= \abs{\ddot x_t}$, for all $t\in(a,b)$. In view of Newton's second law of motion, we can say that the~$d$-derivative equals the magnitude of the force driving the motion. 
\end{rem}

\begin{rem} \label{rem:vequal0}
    A curve~$\gamma = (x_\cdot,v_\cdot) \to \Gamma$ is everywhere $d$-differentiable also when~$v_\cdot \equiv 0$, regardless of~$x_\cdot$. In this case,~$\abs{\gamma'}_d \equiv 0$.
\end{rem}

\begin{cor}
    Let~$\gamma = (x_\cdot,v_\cdot) \colon (a,b) \to \Gamma$ be a curve such that~$x_\cdot$ is of class~$\mathrm{C}^2$ and~$v_\cdot$ is of class~$\mathrm{C}^1$. Assume that~$\dot x_t \neq 0$ and~$v_t \neq 0$ for every~$t \in (a,b)$. Then,~$\gamma$ is everywhere~$d$-differentiable if and only if there exists~$\lambda \colon (a,b) \to (0,\infty)$ of class~$\mathrm{C^1}$ such that~$\dot x_t = \lambda(t) v_t$ for every~$t \in (a,b)$.
\end{cor}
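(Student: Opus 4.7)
The corollary combines the two previous propositions, so my plan is to dispatch the two implications separately and then verify the regularity of the multiplier.

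For the sufficiency direction, assume that $\dot x_t = \lambda(t) v_t$ with $\lambda \colon (a,b) \to (0,\infty)$ of class $\mathrm{C}^1$ (in particular, continuous). Then the hypotheses of \Cref{prop:ddiff2} are satisfied verbatim, and we immediately obtain that $\gamma$ is $d$-differentiable at every $t \in (a,b)$, with $|\gamma'|_d(t) = |\dot v_t|$.

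For the necessity direction, assume that $\gamma$ is $d$-differentiable at every $t \in (a,b)$. Fix $t \in (a,b)$. Since $v_t \neq 0$ by assumption, \Cref{prop:ddiff1} produces a real number $\lambda(t) \ge 0$ with $\dot x_t = \lambda(t) v_t$. Because we also assume $\dot x_t \neq 0$, this forces $\lambda(t) > 0$. Moreover, the multiplier is uniquely determined by taking the inner product of both sides with $v_t$, namely
\[
    \lambda(t) = \frac{\dot x_t \cdot v_t}{|v_t|^2} \fstop
\]
This explicit formula is the key to the regularity statement: since $x_\cdot$ is of class $\mathrm{C}^2$ and $v_\cdot$ is of class $\mathrm{C}^1$, the numerator $t \mapsto \dot x_t \cdot v_t$ is of class $\mathrm{C}^1$, and the denominator $t \mapsto |v_t|^2$ is of class $\mathrm{C}^1$ and nowhere vanishing (thanks to $v_t \neq 0$). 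Hence $\lambda$ itself is of class $\mathrm{C}^1$ and takes values in $(0,\infty)$, as required.

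There is essentially no obstacle: the only subtle point is noting that \Cref{prop:ddiff1} produces $\lambda(t)$ only pointwise and a priori only as a non-negative real, so one must explicitly (i) upgrade $\lambda(t) \ge 0$ to $\lambda(t) > 0$ using the additional assumption $\dot x_t \neq 0$, and (ii) derive $\mathrm{C}^1$ regularity of $t \mapsto \lambda(t)$ from the closed-form expression above, rather than only measurability or continuity as an a priori consequence of pointwise selection.
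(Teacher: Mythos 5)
Your proof is correct and takes essentially the same route as the paper: necessity via \Cref{prop:ddiff1}, with $\lambda(t)>0$ forced by $\dot x_t \neq 0$ and $\mathrm{C}^1$ regularity read off from the regularity of $\dot x_\cdot$ and $v_\cdot$ together with $v_t \neq 0$ (your explicit formula $\lambda(t) = \dot x_t \cdot v_t/\abs{v_t}^2$ just makes this last step more concrete than the paper's one-line justification), and sufficiency via \Cref{prop:ddiff2} since $\mathrm{C}^1$ implies continuity.
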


\begin{proof}
    If~$\gamma$ is everywhere~$d$-differentiable, by \Cref{prop:ddiff1}, there exists~$\lambda \colon (a,b) \to \R_{\ge 0}$ such that~$\dot x_t = \lambda(t) v_t$. This function is of class~$\mathrm{C}^1$ because~$v_t \neq 0$ for every~$t \in (a,b)$ and because both~$\dot x_\cdot$ and~$v_\cdot$ are of class~$\mathrm{C}^1$. Moreover~$\lambda(t) \neq 0$ for every~$t$, because this property holds for~$\dot x_t$.
    The converse follows from \Cref{prop:ddiff2}. In this case,~$\abs{\gamma'}_d(t) = \abs{\dot v_t}$.
\end{proof}

\section{Kinetic optimal plans and maps}\label{sec:sec3}

This section is divided into three parts:
\begin{enumerate}
	\item In~\S\ref{ss:31}, we prove Statements~\ref{thm1.1}-\ref{thm1.3} in \Cref{thm1}, including the semicontinuity of~$\sfd$ and the existence of optimal transport \emph{plans}.
	\item In~\S\ref{ss:32}, we prove Statement~\ref{thm1.4} in \Cref{thm1}, i.e.,~the existence of optimal \emph{maps}.
	\item In~\S\ref{ss:33}, we discuss additional results, including \emph{non-uniqueness} of optimal plans and maps, and the characterisation of the pairs~$(\mu,\nu)$ for which~$\sfd(\mu,\nu) = 0$.
\end{enumerate}

\subsection{Semicontinuity and existence of optimal plans}\label{ss:31}

Let~$\cP_2(\Gamma \times \Gamma)$ be the set of probability measures on~$\Gamma \times \Gamma$ with finite second moment. For every~$\pi \in \cP_2(\Gamma \times \Gamma)$, set
\begin{align}
	\tilde \sfc_T(\pi)
		&\coloneqq
	12 \norm{\frac{y-x}{T} - \frac{v+w}{2}}_{\Lp{2}(\pi)}^2 + \norm{w-v}_{\Lp{2}(\pi)}^2 \comma \qquad T > 0 \\
	\label{eq:tildesfc}
	\tilde \sfc(\pi)
		&\coloneqq
	\begin{cases}  3  \norm{v+w}^2_{\Lp{2}(\pi)} - 3\frac{\bigl((y-x,v+w)_\pi\bigr)_+^2}{\norm{y-x}^2_{\Lp{2}(\pi)}}  + \norm{w-v}^2_{\Lp{2}(\pi)} &\text{if }  \norm{y-x}_{\Lp{2}(\pi)} > 0 \comma \\  3\norm{v+w}^2_{\Lp{2}(\pi)} + \norm{w-v}^2_{\Lp{2}(\pi)}, &\text{if } \norm{y-x}_{\Lp{2}(\pi)}  = 0 \comma
	\end{cases} \\
	\label{eq:sfc}
	\sfc(\pi)
		&\coloneqq 
	\begin{cases}  3  \norm{v+w}^2_{\Lp{2}(\pi)} - 3\frac{\bigl((y-x,v+w)_\pi\bigr)_+^2}{\norm{y-x}^2_{\Lp{2}(\pi)}}  + \norm{w-v}^2_{\Lp{2}(\pi)} &\text{if }  \norm{y-x}_{\Lp{2}(\pi)} > 0 \comma \\  \norm{w-v}^2_{\Lp{2}(\pi)}, &\text{if } \norm{y-x}_{\Lp{2}(\pi)}  = 0 \fstop
	\end{cases}
\end{align}
Note that~$\tilde \sfc(\pi) = \sfc(\pi)$ whenever~$\norm{y-x}_{\Lp{2}(\pi)}  > 0$.

\begin{rem}
	It may appear tempting
	to consider instead of $\tilde \sfc$ a different object, namely 
	$\hat \sfc(\pi) := \int_{\Gamma \times \Gamma} \tilde d^2\bigl((x,v),(y,w)\bigr) \, \dif \pi$.
	Let us start by noticing that $\hat \sfc(\pi)$ involves first a point-wise optimisation of $\tilde d_T^2((x,v),(y,w))$ in $T$, for each pair of states $(x,v)$ and $(y,w)$, and then an integration over all pairs $((x,v),(y,w)) \in \mathrm{supp}(\pi)$. By contrast, as the next result shows, $\tilde \sfc (\pi)$ is given by one \emph{synchronous} minimisation over $T>0$ for the cost $\tilde \sfc_T(\pi)$. We justify why $\tilde \sfc$ is more natural for our purposes.
    Let us consider $\mu \in \mathcal P_2(\Gamma)$, such that $(\proj_v)_\# \mu \neq \delta_0$. Let $\sigma : \Gamma \to [0,\infty)$ be a measurable map, and let  $G_\sigma :\Gamma \to \Gamma$ be defined by the formula $G_\sigma (x,v) = (x + \sigma(x,v) ,v)$. Then, by calling $\nu_\sigma = (G_\sigma)_\# \mu$, we have that 
    \[
    \inf_{\pi \in \Pi(\mu,\nu_\sigma)} \, \hat \sfc(\pi) = 0 \fstop
    \]
    By contrast, 
    \[
    \inf_{\pi \in \Pi(\mu,\nu_\sigma)} \, \tilde \sfc(\pi) =0
    \]
    if and only if $\sigma \equiv T$, for some $T \in [0,\infty)$. This shows that the optimal-transport problem associated with $\hat \sfc$ is much more degenerate than the one associated with $\tilde \sfc$. 
    Finally, by taking the curve $t \mapsto \mu_t := \nu_{t \sigma}$, we have that
    \[
    \forall \, 0<t<s \comma \qquad  \inf_{\pi \in \Pi(\mu_t,\mu_s)} \hat \sfc (\pi) = 0 \comma
    \]
    which means that the curve $(\mu_t)_t$ is everywhere differentiable in the optimal-transport discrepancy induced by $\hat \sfc$. On the other hand, it is easy to see that this curve does not solve any Vlasov's equation \eqref{eq:vlasov.} in general. Thus, we would not be able to recover the PDE representation of \Cref{thm:main:new:2} in case we used $\hat \sfc$ instead of $\tilde \sfc$.
\end{rem}

\begin{prop}[\Cref{thm1}, Statement~\ref{thm1.1}] \label{prop:thm1.1}
	For every~$\pi \in \cP_2(\Gamma \times \Gamma)$,
	we have
	\begin{equation}
		\tilde \sfc(\pi) = \inf_{T > 0} \tilde \sfc_T(\pi) \fstop
	\end{equation}
	The infimum is obtained for
	\begin{equation} \label{eq:defT}
		\begin{cases}
			T = 2 \displaystyle \frac{\norm{y-x}_{\Lp{2}(\pi)}^2}{(y-x,v+w)_\pi} &\text{if } (y-x,v+w)_\pi > 0 \comma \\
			\text{any } T>0 &\text{if } \norm{y-x}_{\Lp{2}(\pi)} = 0 \comma \\
			T \to \infty &\text{otherwise.}
		\end{cases}
	\end{equation}
	In particular,
	the function~$\pi \mapsto \tilde \sfc(\pi)$ is concave, and~\eqref{eq:dinfT} holds for every~$\mu,\nu \in \cP_2(\Gamma)$.
\end{prop}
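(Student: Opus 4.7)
The plan is to reduce the statement to an elementary one-variable minimisation: for fixed $\pi \in \cP_2(\Gamma\times\Gamma)$, the map $T \mapsto \tilde{\sfc}_T(\pi)$ is a rational function of $T$ whose minimum can be computed in closed form.

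First, I would expand the squared norm in the definition of $\tilde{\sfc}_T(\pi)$ and observe that, setting
\[
A \coloneqq \|y-x\|_{\Lp{2}(\pi)}^2 \comma \quad B \coloneqq (y-x,v+w)_\pi \comma \quad C \coloneqq 3\|v+w\|_{\Lp{2}(\pi)}^2 + \|w-v\|_{\Lp{2}(\pi)}^2 \comma
\]
we get
\[
\tilde{\sfc}_T(\pi) \;=\; \frac{12A}{T^2} \;-\; \frac{12B}{T} \;+\; C\fstop
\]
Differentiating in $T$ gives $\tfrac{d}{dT}\tilde{\sfc}_T(\pi) = \tfrac{12}{T^3}(BT - 2A)$, and I would now split into three cases. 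If $A > 0$ and $B > 0$, the unique critical point is $T^\star = 2A/B$, i.e.~the value in \eqref{eq:defT}, and plugging it in yields $\tilde{\sfc}_{T^\star}(\pi) = C - 3B^2/A$, which is precisely $\tilde{\sfc}(\pi)$ in the case $\|y-x\|_{\Lp{2}(\pi)}>0$ (note that $B_+^2 = B^2$ here). If $A>0$ and $B \le 0$, the derivative is strictly negative on $(0,\infty)$, so the infimum is attained as $T \to \infty$ and equals $C$; since $B_+ = 0$ in this case, this again matches $\tilde{\sfc}(\pi)$. Finally, if $A = 0$, then also $B = 0$ by Cauchy--Schwarz, so $\tilde{\sfc}_T(\pi) \equiv C = \tilde{\sfc}(\pi)$ for every $T>0$. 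This proves both the identity $\tilde{\sfc}(\pi) = \inf_{T>0}\tilde{\sfc}_T(\pi)$ and the attainment clause described by~\eqref{eq:defT}.

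For concavity in $\pi$, the key observation is that, for each fixed $T>0$,
\[
\tilde{\sfc}_T(\pi) \;=\; \int_{\Gamma\times\Gamma}\!\!\Bigl(12\,\bigl|\tfrac{y-x}{T} - \tfrac{v+w}{2}\bigr|^2 + |w-v|^2\Bigr)\,\dif\pi
\]
is a linear (affine) functional of $\pi$. Consequently $\tilde{\sfc} = \inf_{T>0}\tilde{\sfc}_T$ is concave as a pointwise infimum of linear functionals.

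The identity \eqref{eq:dinfT} then follows by a one-line exchange of infima: using the just-proved pointwise identity and the definitions \eqref{eq0.}, \eqref{eq1.},
\[
\tilde{\sfd}^2(\mu,\nu) \;=\; \inf_{\pi\in\Pi(\mu,\nu)}\tilde{\sfc}(\pi) \;=\; \inf_{\pi\in\Pi(\mu,\nu)}\inf_{T>0}\tilde{\sfc}_T(\pi) \;=\; \inf_{T>0}\inf_{\pi\in\Pi(\mu,\nu)}\tilde{\sfc}_T(\pi) \;=\; \inf_{T>0}\tilde{\sfd}_T^2(\mu,\nu)\fstop
\]
There is no real obstacle in this proof; the only mild subtlety is the careful case analysis (especially the degenerate cases $A=0$ and $B\le 0$) to confirm that the infimum formula agrees with the piecewise definition of $\tilde{\sfc}$, and the observation that $A=0$ forces $B=0$ so that no pathological behaviour of the ratio $B^2/A$ arises.
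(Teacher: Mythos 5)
Your proof is correct and follows essentially the same route as the paper: the paper reduces the claim to the one-variable minimisation in $T$ already carried out in \Cref{prop:dist} (differentiate $T\mapsto\tilde\sfc_T(\pi)$, split on the sign of $(y-x,v+w)_\pi$, with the critical time $T=2\norm{y-x}^2_{\Lp{2}(\pi)}/(y-x,v+w)_\pi$), and obtains concavity exactly as you do, as an infimum of functionals linear in $\pi$. Your write-up is merely a bit more explicit about the degenerate cases ($A=0$ forcing $B=0$ via Cauchy--Schwarz) and about the exchange of infima yielding~\eqref{eq:dinfT}, which the paper leaves implicit.
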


\begin{proof}
	The proof is identical to that of \Cref{prop:dist},~\ref{st1}. The function~$\tilde \sfc$ is concave because it is an infimum of linear functions.
\end{proof}

\begin{lem} \label{lem:semicontsfc}
	Let~$\mu_k \to \mu$ and~$\nu_k \to \nu$ two converging sequences in the $2$-Wasserstein distance. Let~$\pi_k \in \Pi(\mu_k,\nu_k)$ for every~$k$, and assume that~$(\pi_k)_k$ narrowly converges to a measure~$\pi \in \mathcal P(\Gamma \times \Gamma)$. Then, convergence holds in~$\mathrm{W}_2$, we have~$\pi \in \Pi(\mu,\nu)$, and
	\begin{equation} \label{eq:semicontsfc}
		\sfc(\pi) \le \liminf_{k \to \infty} \sfc(\pi_k) \fstop
	\end{equation}
\end{lem}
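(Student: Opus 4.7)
The plan is to first establish that $\pi$ is indeed a coupling of $\mu$ and $\nu$, and then to prove the semicontinuity inequality by a case split according to whether $\|y-x\|_{\Lp{2}(\pi)}$ is positive or zero. The first claim is routine: narrow convergence of $\pi_k$ to $\pi$ transfers to narrow convergence of the marginals via continuity of pushforward under the coordinate projections $\proj_{x,v}$ and $\proj_{y,w}$, and uniqueness of the narrow limit identifies $(\proj_{x,v})_\#\pi=\mu$ and $(\proj_{y,w})_\#\pi=\nu$.

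A crucial preliminary fact is that $W_2$-convergence of the marginals $\mu_k\to\mu$ and $\nu_k\to\nu$ upgrades the narrow convergence $\pi_k\to\pi$ to convergence of second moments, hence to convergence of the integral against $\pi_k$ of any continuous function bounded by $C(1+|x|^2+|v|^2+|y|^2+|w|^2)$. Applied to the quadratic integrands appearing in the cost, this yields $\tilde\sfc_T(\pi_k)\to\tilde\sfc_T(\pi)$ for every fixed $T>0$, together with convergence of $\|y-x\|^2_{\Lp{2}(\pi_k)}$, $\|v+w\|^2_{\Lp{2}(\pi_k)}$, $\|w-v\|^2_{\Lp{2}(\pi_k)}$, and of the bilinear form $(y-x,v+w)_{\pi_k}$ to the corresponding quantities at $\pi$. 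Passing to a subsequence, I may assume $\sfc(\pi_k)\to L\coloneqq\liminf_k\sfc(\pi_k)$, and indeed $L<\infty$ (otherwise there is nothing to prove).

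If $\|y-x\|_{\Lp{2}(\pi)}>0$, then $\|y-x\|_{\Lp{2}(\pi_k)}>0$ eventually, so $\sfc(\pi_k)=\tilde\sfc(\pi_k)$ and $\sfc(\pi)=\tilde\sfc(\pi)$ by definition~\eqref{eq:sfc}. Using \Cref{prop:thm1.1}, $\sfc(\pi_k)\le\tilde\sfc_T(\pi_k)$ for every $T>0$; letting $k\to\infty$ and then infimising over $T>0$ gives $L\le\tilde\sfc(\pi)=\sfc(\pi)$. If instead $\|y-x\|_{\Lp{2}(\pi)}=0$, I rely on Cauchy--Schwarz, $((y-x,v+w)_{\pi_k})_+^2\le\|y-x\|^2_{\Lp{2}(\pi_k)}\,\|v+w\|^2_{\Lp{2}(\pi_k)}$, to deduce that the combination $3\|v+w\|^2_{\Lp{2}(\pi_k)}-3((y-x,v+w)_{\pi_k})_+^2/\|y-x\|^2_{\Lp{2}(\pi_k)}$ is non-negative whenever the denominator is positive; hence $\sfc(\pi_k)\ge\|w-v\|^2_{\Lp{2}(\pi_k)}$ in either branch of~\eqref{eq:sfc}, and the right-hand side converges to $\|w-v\|^2_{\Lp{2}(\pi)}=\sfc(\pi)$.

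The main obstacle is precisely the discontinuity of $\tilde\sfc$ at plans with $\|y-x\|_{\Lp{2}(\pi)}=0$: a sequence $\pi_k$ satisfying $\|y-x\|_{\Lp{2}(\pi_k)}>0$ and approximating such a $\pi$ could in principle produce $\lim\tilde\sfc(\pi_k)$ strictly larger than $\sfc(\pi)$ by up to $3\|v+w\|^2_{\Lp{2}(\pi)}$, which is exactly the quantity that $\sfc$ drops in passing from $\tilde\sfc$ to its relaxed form. The Cauchy--Schwarz argument in the degenerate case absorbs this would-be excess into the non-negative combination appearing in~\eqref{eq:sfc}, guaranteeing the inequality $\sfc(\pi_k)\ge\|w-v\|^2_{\Lp{2}(\pi_k)}$ uniformly, and thereby closing the semicontinuity bound.
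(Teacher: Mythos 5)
Your marginal identification, your second-moment/uniform-integrability preliminary, and your treatment of the degenerate case $\norm{y-x}_{\Lp{2}(\pi)}=0$ (Cauchy--Schwarz to get $\sfc(\pi_k)\ge\norm{w-v}^2_{\Lp{2}(\pi_k)}$ in both branches of~\eqref{eq:sfc}, then pass to the limit) are all correct and match the paper's argument. The gap is in the non-degenerate case: the chain you write proves the \emph{wrong} inequality. From $\sfc(\pi_k)=\tilde\sfc(\pi_k)\le\tilde\sfc_T(\pi_k)$ and $\tilde\sfc_T(\pi_k)\to\tilde\sfc_T(\pi)$, infimising over $T$ only yields $L\coloneqq\liminf_k\sfc(\pi_k)\le\tilde\sfc(\pi)=\sfc(\pi)$, i.e.\ an upper bound on the liminf. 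That is upper semicontinuity, which is automatic for an infimum of continuous functionals (cf.~\Cref{prop:thm1.1}), whereas the lemma asserts the lower bound $\sfc(\pi)\le L$. Interchanging $\inf_{T>0}$ with $\lim_{k\to\infty}$ in the direction you need is exactly the non-trivial point and does not follow from the inequality you invoke.

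The repair is already contained in your preliminary, and it is what the paper does: since $\norm{y-x}^2_{\Lp{2}(\pi_k)}\to\norm{y-x}^2_{\Lp{2}(\pi)}>0$, and since $\norm{v+w}^2_{\Lp{2}(\pi_k)}$, $\norm{w-v}^2_{\Lp{2}(\pi_k)}$, and $(y-x,v+w)_{\pi_k}$ converge to their counterparts at $\pi$, the first branch of~\eqref{eq:sfc} is a continuous function of these four quantities whenever the denominator is bounded away from zero; hence $\sfc(\pi_k)=\tilde\sfc(\pi_k)\to\tilde\sfc(\pi)=\sfc(\pi)$, which gives~\eqref{eq:semicontsfc} (in fact with equality of the limit) in this case. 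With that one-line substitution in place of the $\tilde\sfc_T$ argument, your proof coincides with the paper's.
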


\begin{proof}
	Convergence holds in $\mathrm W_2$ by \cite[Remark~7.1.11]{MR2401600}. The measure~$\pi$ lies in~$\Pi(\mu,\nu)$ by narrow continuity of the projection maps.
    We claim that the four functions
	\begin{align*}
		&F_1(x,v,y,w) \coloneqq \abs{v+w}^2 \comma \quad &&F_2(x,v,y,w) \coloneqq  \abs{(y-x) \cdot (v+w)} \comma\\
		&F_3(x,v,y,w) \coloneqq \abs{y-x}^2 \comma \qquad &&F_4(x,v,y,w) \coloneqq \abs{w-v}^2
	\end{align*}
	are uniformly integrable with respect to~$(\pi_k)_k$. %
	Indeed, as
	\[
	F_i(x,v,y,w) \le 4\max\set{\abs{x}^2+\abs{v}^2,\abs{y}^2+\abs{w}^2} \comma \qquad i \in \set{1,2,3,4} \comma
	\]
	for~$a > 0$, we find that
	\begin{align*}
		\int_{\set{F_i \ge a}} F_i \, \dif \pi_k
		&\le
		4 \int_{\set{\abs{x}^2 + \abs{v}^2 \ge \frac{a}{4}}} \left(\abs{x}^2 + \abs{v}^2 \right) \, \dif \mu_k +  4 \int_{\set{\abs{y}^2 + \abs{w}^2 \ge \frac{a}{4}}} \left(\abs{y}^2 + \abs{w}^2 \right) \, \dif \nu_k \fstop 
	\end{align*}
	Then, the claim follows from the uniform integrability of the second moments of~$(\mu_k)_k$ and~$(\nu_k)_k$, given by~\cite[Proposition~7.1.5]{MR2401600}.
	
	If~$\norm{y-x}_{\Lp{2}(\pi)} > 0$, then~$\norm{y-x}_{\Lp{2}(\pi_k)} > 0$ eventually; hence~$\sfc(\pi) = \tilde \sfc(\pi)$ and~$\sfc(\pi_k) = \tilde \sfc(\pi_k)$ for every~$k$ sufficiently large. 
	Since the functions $F_i$ are uniformly integrable, through~\cite[Lemma~5.1.7]{MR2401600}, we find that~$\tilde \sfc(\pi_k) \to \tilde \sfc(\pi)$. Therefore,
	\[
		\sfc(\pi) = \tilde \sfc(\pi) = \lim_{k \to \infty} \tilde \sfc(\pi_k) = \lim_{k \to \infty} \sfc(\pi_k) \fstop
	\]
	
	If, instead,~$\norm{y-x}_{\Lp{2}(\pi)} = 0$, then
	\[
		\sfc(\pi) = \norm{w-v}_{\Lp{2}(\pi)}^2 \le \liminf_{k \to \infty} \norm{w-v}_{\Lp{2}(\pi_k)}^2 \le \liminf_{k \to \infty} \sfc(\pi_k) \fstop \qedhere
	\]
\end{proof}

\begin{prop}[\Cref{thm1}, Statement~\ref{thm1.2}] \label{prop:thm1.2}
	The lower-semicontinuous envelope of~$\tilde \sfd$ w.r.t.~the $2$-Wasserstein distance over~$\mathcal{P}_2(\Gamma)$ is the discrepancy~$\sfd$. 
\end{prop}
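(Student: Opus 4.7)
The plan is to identify $\sfd$ with the lower-semicontinuous envelope $\sfd^\sharp$ of $\tilde \sfd$ by establishing three ingredients: (i) the pointwise bound $\sfd \le \tilde \sfd$; (ii) lower semicontinuity of $\sfd$ with respect to $\mathrm{W}_2$-convergence; and (iii) a recovery-sequence property, namely that for every $(\mu,\nu) \in \cP_2(\Gamma)^2$ there exist $\mu_k \to \mu$ and $\nu_k \to \nu$ in $\mathrm{W}_2$ with $\tilde \sfd(\mu_k,\nu_k) \to \sfd(\mu,\nu)$.

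Ingredient (i) is immediate from \eqref{eq1.}--\eqref{eq2.}: the functionals $\sfc$ and $\tilde \sfc$ coincide whenever $\|y-x\|_{\Lp{2}(\pi)} > 0$, while in the degenerate case $\sfc(\pi) = \|w-v\|^2_{\Lp{2}(\pi)} \le 3\|v+w\|^2_{\Lp{2}(\pi)} + \|w-v\|^2_{\Lp{2}(\pi)} = \tilde \sfc(\pi)$; taking the infimum over $\Pi(\mu,\nu)$ gives $\sfd \le \tilde \sfd$. For ingredient (ii) I would feed \Cref{lem:semicontsfc} into an Urysohn-subsequence argument: given $(\mu_k,\nu_k) \to (\mu,\nu)$ in $\mathrm{W}_2$, pick $1/k$-minimisers $\pi_k \in \Pi(\mu_k,\nu_k)$; tightness of $\{\mu_k\}$ and $\{\nu_k\}$ transfers to $\{\pi_k\}$, Prokhorov yields a narrow cluster point $\pi \in \Pi(\mu,\nu)$, and \Cref{lem:semicontsfc} gives $\sfd^2(\mu,\nu) \le \sfc(\pi) \le \liminf_k \sfc(\pi_k) = \liminf_k \sfd^2(\mu_k,\nu_k)$.

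The main content, and the only nontrivial step, is (iii), because one must cancel the jump $3\|v+w\|^2_{\Lp{2}(\pi)}$ appearing in $\tilde \sfc$ when $\|y-x\|_{\Lp{2}(\pi)} = 0$. Fix $\d > 0$ and pick $\pi \in \Pi(\mu,\nu)$ with $\sfc(\pi) \le \sfd^2(\mu,\nu) + \d$. If $\|y-x\|_{\Lp{2}(\pi)} > 0$ then $\tilde \sfc(\pi) = \sfc(\pi)$, and $(\mu,\nu)$ itself does the job. Otherwise $y = x$ $\pi$-a.e., and, mirroring the single-particle construction in \Cref{prop:dist}, I would spread apart the spatial supports via
\[
	T_\e(x,v,y,w) \coloneqq \bigl(x,\,v,\,y + \e(v+w),\,w\bigr) \comma \qquad \e > 0 \comma
\]
and set $\pi_\e \coloneqq (T_\e)_\# \pi$ with second marginal $\nu_\e \coloneqq (\proj_{y,w})_\# \pi_\e$. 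Since $T_\e$ fixes the $(x,v)$-coordinates, $\pi_\e \in \Pi(\mu, \nu_\e)$; and the pushforward of $\pi$ by the map $(x,v,y,w) \mapsto \bigl((y+\e(v+w), w),(y,w)\bigr)$ couples $\nu_\e$ to $\nu$, yielding $\mathrm{W}_2^2(\nu_\e, \nu) \le \e^2 \|v+w\|^2_{\Lp{2}(\pi)} \to 0$ as $\e \downarrow 0$. A direct computation using $y = x$ $\pi$-a.e.~shows that in $\pi_\e$ one has $y - x = \e(v+w)$, hence $\|y-x\|^2_{\Lp{2}(\pi_\e)} = \e^2\|v+w\|^2_{\Lp{2}(\pi)}$ and $(y-x,v+w)_{\pi_\e} = \e\|v+w\|^2_{\Lp{2}(\pi)}$; substituting into the nondegenerate branch of $\tilde \sfc$ produces the exact cancellation
\[
	\tilde \sfc(\pi_\e) = 3\|v+w\|^2_{\Lp{2}(\pi)} - 3\|v+w\|^2_{\Lp{2}(\pi)} + \|w-v\|^2_{\Lp{2}(\pi)} = \sfc(\pi) \le \sfd^2(\mu,\nu) + \d \fstop
\]
The edge case $\|v+w\|_{\Lp{2}(\pi)} = 0$ is harmless, since then $\tilde \sfc(\pi) = \sfc(\pi)$ already and no perturbation is needed. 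A diagonal procedure in $\d, \e \downarrow 0$ yields the required recovery sequence, which combined with (i)--(ii) forces $\tilde \sfd(\mu_k,\nu_k) \to \sfd(\mu,\nu)$ and completes the identification.
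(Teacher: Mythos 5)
Your proposal is correct and follows essentially the same route as the paper: lower semicontinuity of $\sfd$ via \Cref{lem:semicontsfc} and Prokhorov applied to near-optimal plans, plus a recovery sequence built, in the degenerate case $\norm{y-x}_{\Lp{2}(\pi)}=0$, by shifting $y \mapsto y+\e(v+w)$ (the paper's map $R_k$ with $\e=\tfrac{1}{k+1}$), which makes $y-x$ parallel to $v+w$ and cancels the $3\norm{v+w}^2$ term exactly, with the same $\mathrm{W}_2$-estimate on the perturbed second marginal. The only cosmetic difference is that you work with $\d$-optimal plans and a diagonal argument where the paper uses a minimising sequence of plans and a dichotomy over $k$.
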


\begin{proof}
	Firstly, let us show that~$\sfd$ is lower-semicontinuous. Let~$\mu_k \to \mu$ and~$\nu_k \to \nu$ be two~${\mathrm{W}_2}$-convergent sequences. For every~$k \in \N$, choose~$\pi_k \in \Pi(\mu_k,\nu_k)$ so that we have~$\abs{\sfc(\pi_k) - \sfd^2(\mu_k,\nu_k)} \to 0$. By Prokhorov’s theorem, see~\cite[Theorem~8.6.2]{Bogachev07}, up to subsequences,~$(\pi_k)_k$ is narrowly convergent to a certain measure~$\pi$. By \Cref{lem:semicontsfc} we deduce that~$\pi \in \Pi(\mu,\nu)$, and therefore
	\[
		\sfd^2(\mu,\nu)
		\stackrel{\eqref{eq2.}}{\le}
		\sfc(\pi)
			\stackrel{\eqref{eq:semicontsfc}}{\le}
		\liminf_{k \to \infty}
		\sfc(\pi_k)
			=
		\liminf_{k \to \infty}
		\sfd^2(\mu_k,\nu_k) \fstop
	\]
	
	Secondly, we shall find sequences~$\bar \mu_k \to \mu$ and~$\bar \nu_k \to \nu$ (w.r.t.~${\mathrm{W}_2}$) such that
	\[
		\sfd^2(\mu,\nu) \ge \limsup_{k \to \infty} \tilde \sfd^2(\bar \mu_k, \bar \nu_k) \fstop
	\]
	Let~$(\bar \pi_k)_{k \in \N} \subseteq \Pi(\mu,\nu)$ be such that~$\sfc(\bar \pi_k) \to \sfd^2(\mu,\nu)$ as~$k \to \infty$. If~$\sfc(\bar \pi_k) = \tilde \sfc(\bar \pi_k)$ for infinitely many~$k$'s, then, up to subsequences,
		\[
			\sfd^2(\mu,\nu) = \lim_{k \to \infty} \sfc(\bar \pi_{k}) = \lim_{k \to \infty} \tilde \sfc(\bar \pi_{k}) \ge \tilde \sfd^2(\mu,\nu) \comma
		\]
		i.e.,~it suffices to take the constant sequences~$\bar \mu_k \coloneqq \mu$ and~$\bar \nu_k \coloneqq \nu$. Otherwise, up to subsequences, we have~$\sfc(\bar \pi_k) < \tilde \sfc(\bar \pi_k)$
		for every~$k$, which implies that
		\begin{equation} \label{eq:propbarpi}
			\norm{y-x}_{\Lp{2}(\bar \pi_k)} = 0 \quad \text{and} \quad \norm{v+w}_{\Lp{2}(\bar \pi_k)} > 0 \comma \qquad k \in \N \fstop
		\end{equation}
		In this case, we set
		\begin{equation}\label{eq:propbarpi2}
			R_k(x,v,y,w) \coloneqq \left(x,v,y+\frac{v+w}{k+1}, w \right) \comma \quad \tilde \pi_k \coloneqq (R_k)_\# \bar \pi_k \comma \qquad k \in \N \comma
		\end{equation}
		as well as
		\[
			\bar \mu_k \coloneqq (\proj_{x,v})_\# \tilde \pi_k = \mu \comma \quad 
			\bar \nu_k \coloneqq (\proj_{y,w})_\# \tilde \pi_k \in \cP_2(\Gamma) \comma \qquad k \in \N \fstop
		\]
		From~\eqref{eq:propbarpi} and~\eqref{eq:propbarpi2}, it follows that 
			$\norm{y-x}_{\Lp{2}(\tilde \pi_k)} 
			= \frac{1}{k+1}\norm{v+w}_{\Lp{2}(\bar \pi_k)}> 0$,
		and since $x = y$~holds $\bar\pi_k$-a.e., 
		we infer that 
			$y - x = \frac{v + w}{k+1}$ 
		holds~$\tilde\pi_k$-a.e.
		Consequently,
		\[
			\tilde \sfc (\tilde \pi_k) = \norm{w-v}_{\Lp{2}(\tilde \pi_k)}^2 = \norm{w-v}_{\Lp{2}(\bar \pi_k)}^2 \fstop
		\]
		Thus,
		\[
			\tilde \sfd^2(\bar \mu_k, \bar \nu_k) 
				\le 
			\tilde \sfc(\tilde \pi_k) 
				= 
			\norm{w-v}_{\Lp{2}(\bar \pi_k)}^2 
				\le 
			\sfc(\bar \pi_k) 
				\to 
			\sfd^2(\mu,\nu)
			\quad \text{as } k \to \infty \fstop
		\]
		Using that~$(\proj_{y,w},\proj_{y,w}\circ R_k)_\# \bar \pi_k \in \Pi(\nu, \bar \nu_k)$, we find
		\[
			{\mathrm{W}_2}(\nu, \bar \nu_k)
				\le
			\frac{\norm{v+w}_{\Lp{2}(\bar \pi_k)}}{k+1}
				\le
			\frac{\norm{v}_{\Lp{2}( \mu)}+\norm{v}_{\Lp{2}( \nu)}}{k+1}
				\to 0 \quad \text{as } k \to \infty \comma
		\]
		which shows that~$\bar \nu_k \to \nu$, as desired.
\end{proof}

\begin{prop}[\Cref{thm1}, Statement~\ref{thm1.3}] \label{prop:thm1.3} Problem~\eqref{eq2.} admits a minimiser.
\end{prop}

\begin{proof}
	By \Cref{lem:semicontsfc}, the function~$\sfc$ is narrowly lower-semicontinuous on~$\Pi(\mu,\nu)$. The set~$\Pi(\mu,\nu)$ is narrowly compact by Prokhorov's theorem, hence a minimiser of~$\sfc$ exists.
\end{proof}

We will denote by~$\Pi_{\mathrm o, \sfd}(\mu,\nu)$ the set of minimisers.
	An analogue of \Cref{prop:thm1.3} does not hold for~$\tilde \sfd$, namely, it is possible that no minimiser in~\eqref{eq1.} exists.%

\begin{ex} \label{ex:minimisertildesfd}
	Let~$n = 2$. For every~$\epsilon \ge 0$ and~$t \in \R$, set
	\begin{align*}
		M_\epsilon^x (t) &\coloneqq (\sin 2\pi (t+\epsilon), \cos 2\pi (t+\epsilon)) \in \cX \comma \\
		M_\epsilon^v (t) &\coloneqq \frac{\dif}{\dif t} M_\epsilon^x(t) \in \cV \\
		M_\epsilon(t) &\coloneqq \bigl(M_\epsilon^x(t), M_\epsilon^v(t)\bigr) \in \Gamma \comma
	\end{align*}
	and observe that these functions are of class~$\mathrm{C}^\infty$ with bounded derivatives, uniformly in~$t$ and~$\epsilon$. 
	Define the measure~$\mu \coloneqq (M_\epsilon)_\# \bigl(\dif t|_{(0,1)}\bigr)$, which is independent of~$\epsilon$, and choose
	\[
		\pi_\epsilon \coloneqq (M_0, M_\epsilon)_\# \bigl(\dif t|_{(0,1)}\bigr) \in \Pi(\mu,\mu) \comma \qquad \epsilon \ge 0 \fstop
	\]
	If~$0 < \epsilon \ll 1$, then~$\norm{y-x}_{\Lp{2}(\pi_\epsilon)} > 0$, and we can write
	\begin{align*}
		\tilde \sfc(\pi_\epsilon) &\stackrel{\eqref{eq:tildesfc}}{=} 12 \int_0^1 \abs{M_0^v(t)}^2 \, \dif t - 12 \frac{\left( \int_0^1 M_0^v(t) \cdot \bigl(M_\epsilon^x(t)-M_0^x(t)\bigr) \, \dif t + o(\epsilon) \right)^2_+}{\int_0^1 \abs{M_\epsilon^x(t)-M_0^x(t)}^2 \, \dif t} + o(1) \\
		&= 12 \int_0^1 \abs{M_0^v(t)}^2 \, \dif t - 12 \frac{\left( \int_0^1 \epsilon\abs{M_0^v(t)}^2 \, \dif t + o(\epsilon) \right)^2_+}{\epsilon^2 \int_0^1 \abs{M_0^v(t)}^2 \, \dif t + o(\epsilon^2)} + o(1) = o(1) \comma
	\end{align*}
	where, in the last identity, we used that~$\int_0^1 \abs{M_0^v(t)}^2 \, \dif t = \norm{v}_{\Lp{2}(\mu)}^2 > 0$. This proves that~$\tilde \sfd(\mu,\mu) = 0$. However, assume that there exists~$\pi \in \Pi(\mu,\mu)$ such that~$\tilde \sfc(\pi) = 0$. If~$\norm{y-x}_{
	\Lp{2}(\pi)} = 0$, then~$\norm{v+w}_{\Lp{2}(\pi)} = \norm{v-w}_{\Lp{2}(\pi)} = 0$, which implies that~$\norm{v}_{\Lp{2}(\mu)} = 0$, which is absurd. If, instead,~$\norm{y-x}_{\Lp{2}(\pi)} > 0$, then~$v=w$ for~$\pi$-a.e.~$(v,w)$, and we have equality in the Cauchy--Schwarz inequality
	\[
		(y-x,v+w)_\pi \le \norm{y-x}_{\Lp{2}(\pi)} \norm{v+w}_{\Lp{2}(\pi)} \comma
	\]
	which means that either~$v=w=0$ for~$\pi$-a.e.~$(v,w)$ (hence~$\norm{v}_{\Lp{2}(\mu)} = 0$), or~$y=x+Tv$ for some~$T > 0$, for~$\pi$-a.e.~$(x,y,v)$. The latter case is excluded by observing that~$(\mathcal G_T)_\# \mu \neq \mu$ for every~$T > 0$, as its space marginal~$(\proj_x \circ \,\mathcal G_T)_\# \mu$ lies on a circle with radius strictly larger than~$1$.
\end{ex}

\begin{cor} \label{cor:meassel}
	There exists a \emph{measurable}  selection~$(\mu,\nu) \mapsto \pi_{\mu,\nu}  \in \Pi_{\mathrm o, \sfd}(\mu,\nu)$.
\end{cor}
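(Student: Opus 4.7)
The plan is to apply a classical measurable selection theorem to the set-valued map
\[
    F \colon \mathcal{P}_2(\Gamma) \times \mathcal{P}_2(\Gamma) \to 2^{\mathcal{P}(\Gamma \times \Gamma)} \comma \qquad F(\mu,\nu) \coloneqq \Pi_{\mathrm{o},\sfd}(\mu,\nu) \fstop
\]
By \Cref{prop:thm1.3}, $F$ takes non-empty values, and these are narrowly compact, being closed subsets of the narrowly compact set $\Pi(\mu,\nu)$. I would equip $\mathcal{P}_2(\Gamma)$ with the $2$-Wasserstein distance and $\mathcal{P}(\Gamma \times \Gamma)$ with the narrow topology, both of which make these spaces Polish. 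The main task is then to verify Borel measurability of $\mathrm{graph}(F)$ in the corresponding product space; once this is done, the Aumann--Jankov--von Neumann uniformisation theorem (or, since the values are compact, a Kuratowski-type selection theorem) delivers the sought-after measurable selector.

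To verify Borel measurability of the graph, I would decompose
\[
    \mathrm{graph}(F) = A \cap B \comma \quad A \coloneqq \set{ (\mu,\nu,\pi) \, : \, \pi \in \Pi(\mu,\nu)} \comma \quad B \coloneqq \set{ (\mu,\nu,\pi) \, : \, \sfc(\pi) \le \sfd^2(\mu,\nu)} \fstop
\]
The set $A$ is closed: narrow continuity of the projection maps (as already exploited in the proof of \Cref{lem:semicontsfc}) ensures that if $(\mu_k,\nu_k) \to (\mu,\nu)$ in $\mathrm{W}_2$ and $\pi_k \to \pi$ narrowly with $\pi_k \in \Pi(\mu_k,\nu_k)$, then $\pi \in \Pi(\mu,\nu)$. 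For $B$, \Cref{lem:semicontsfc} yields lower-semicontinuity of $\pi \mapsto \sfc(\pi)$, and \Cref{prop:thm1.2} yields lower-semicontinuity of $(\mu,\nu) \mapsto \sfd^2(\mu,\nu)$ with respect to $\mathrm{W}_2$. Both functions are therefore Borel measurable, hence so is $(\mu,\nu,\pi) \mapsto \sfc(\pi) - \sfd^2(\mu,\nu)$, and $B$ is a Borel sublevel set of this function. Consequently $\mathrm{graph}(F)$ is Borel.

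With a Borel graph and non-empty (in fact compact) values in a Polish space, the cited selection theorem produces a (universally) measurable map $(\mu,\nu) \mapsto \pi \in \Pi_{\mathrm{o},\sfd}(\mu,\nu)$, as required. The only non-routine step is the Borel-measurability of $\mathrm{graph}(F)$; but this is scarcely an obstacle, since it reduces cleanly to the semicontinuity results of \S\ref{ss:31}. The remainder is a standard invocation of the selection theorem and involves no further structural input from the kinetic setting.
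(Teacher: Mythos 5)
Your route is genuinely different from the paper's. The paper invokes the Kuratowski--Ryll-Nardzewski theorem and therefore must verify \emph{weak measurability} of the multifunction, i.e., that sets of the form $\set{(\mu,\nu) \, : \, \Pi_{\mathrm o, \sfd}(\mu,\nu) \cap U \neq \emptyset}$ are Borel; this is exactly what the explicit countable decomposition~\eqref{eq:meas_equality} with the dense sets~$D_q$ accomplishes, and it is where all the work lies. You instead aim at Borel measurability of the graph followed by a uniformisation theorem. The graph part is sound in substance, with one caveat: \Cref{lem:semicontsfc} does \emph{not} assert plain narrow lower semicontinuity of~$\sfc$ --- it gives lower semicontinuity only along sequences whose marginals converge in~$\mathrm{W}_2$ (hence with uniformly integrable second moments) --- so it should not be quoted for that. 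But Borel measurability of~$\sfc$, which is all you need for the set~$B$, can be obtained directly: the functionals $\norm{v+w}^2_{\Lp{2}(\pi)}$, $\norm{y-x}^2_{\Lp{2}(\pi)}$, $\norm{w-v}^2_{\Lp{2}(\pi)}$ are narrowly lower semicontinuous, the inner product $(y-x,v+w)_\pi$ is a difference of two such functionals (finite on the closed set~$A$, where the plans have marginals in~$\cP_2(\Gamma)$), and the case distinction occurs on the Borel set $\set{\norm{y-x}_{\Lp{2}(\pi)}=0}$; $\mathrm{W}_2$-lower semicontinuity of~$\sfd^2$ (\Cref{prop:thm1.2}) handles the remaining term.

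The genuine soft spot is the final invocation. A Borel graph does \emph{not} by itself yield the weak measurability hypothesis of Kuratowski--Ryll-Nardzewski: the sets $\set{(\mu,\nu) \, : \, F(\mu,\nu)\cap U \neq \emptyset}$ are projections of Borel sets, hence a priori only analytic, so ``a Kuratowski-type selection theorem'' cannot be cited off the shelf --- that hypothesis is precisely what the paper's proof establishes by hand. The Jankov--von Neumann uniformisation does apply to a Borel graph, but it produces only a universally measurable selection, which is weaker than the Borel measurability stated and proved in the paper (and recorded in the remark identifying the narrow and $\mathrm{W}_2$ Borel $\sigma$-algebras); such a selection would still suffice for the uses in \S\ref{sec:sec5}, where only Lebesgue measurability in the time variables is needed. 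To recover the full Borel statement along your route, exploit the compactness of the values, which you already noted: $\Pi_{\mathrm o, \sfd}(\mu,\nu)$ is narrowly compact (closed in the compact set $\Pi(\mu,\nu)$ by \Cref{lem:semicontsfc} and \Cref{prop:thm1.3}), and the Arsenin--Kunugui/Saint-Raymond theorem (see~\cite[\S 35]{MR1321597}) guarantees that a Borel set with ($\sigma$-)compact sections admits a Borel uniformisation. With that citation replacing the two you name, your argument is complete; what it buys is avoiding the paper's hands-on countable approximation, at the price of a heavier descriptive-set-theoretic tool.
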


\begin{rem}
	We prove measurability w.r.t.~the Borel $\sigma$-algebra of the $2$-Wasserstein topology, which is the same as that of the narrow topology, e.g.,~by the Lusin--Suslin theorem~\cite[Theorem~15.1]{MR1321597}.
\end{rem}

\begin{proof}[Proof of~\Cref{cor:meassel}]
    We shall invoke \cite[Corollary~1]{MR432846}. By~\cite[Theorem~6.18]{MR2459454}, the metric spaces~$X \coloneqq \bigl(\cP_2(\Gamma) \times \cP_2(\Gamma), {\mathrm{W}_2} \oplus {\mathrm{W}_2}\bigr)$ and~$Y \coloneqq\bigl(\mathcal P_2(\Gamma \times \Gamma),{\mathrm{W}_2}\bigr)$ are complete and separable. The set
    \[
        D \coloneqq \set{\bigl((\mu,\nu),\pi\bigr) \in \bigl(\cP_2(\Gamma) \times \cP_2(\Gamma)\bigr) \times \mathcal P_2(\Gamma \times \Gamma) \, : \, \pi \in \Pi(\mu,\nu)}
    \]
    is Borel, as it is the preimage of~$0$ through the continuous map
    \[
        \bigl((\mu,\nu),\pi \bigr) \longmapsto {\mathrm{W}_2}\bigl((\proj_{x,v})_\#  \pi, \mu\bigr) +  {\mathrm{W}_2}\bigl((\proj_{y,w})_\#  \pi, \nu\bigr) \fstop
    \]
    Each section
    \[
        D_{\mu,\nu} = \Pi(\mu,\nu) \comma \qquad \mu,\nu \in \cP_2(\Gamma)
    \]
    is compact by Prokhorov's theorem and \Cref{lem:semicontsfc}. Again by \Cref{lem:semicontsfc}, the real-valued function~$\sfc$ is lower-semicontinuous on~$D_{\mu,\nu}$, for every~$\mu,\nu$. Furthermore, by \Cref{prop:thm1.3}, for every~$\mu,\nu$, there exists~$\pi \in D_{\mu,\nu}$ such that~$\sfc(\pi) = \inf_{\tilde \pi \in D_{\mu,\nu}} \sfc(\tilde \pi)$. Therefore, the hypotheses of \cite[Corollary~1]{MR432846} are satisfied, and there exists a measurable function~$(\mu,\nu) \mapsto \pi_{\mu,\nu} \in D_{\mu,\nu}$ such that~$\sfc(\pi_{\mu,\nu}) = \inf_{\tilde \pi \in D_{\mu,\nu}} \sfc(\tilde \pi)$ for every~$\mu,\nu \in \cP_2(\Gamma)$.
\end{proof}

\begin{rem}
	With a similar proof, one can show the existence of a measurable selection~$(T,\mu,\nu) \mapsto \pi_{T,\mu,\nu}$, where~$\pi_{T,\mu,\nu}$ is a~$\tilde \sfd_T$-optimal plan between~$\mu$ and~$\nu$.
\end{rem}

\subsection{Existence of kinetic optimal maps}\label{ss:32}

\begin{prop}[\Cref{thm1}, Statement~\ref{thm1.4}] \label{prop:thm1.4}
	Let~$\mu,\nu \in \cP_2(\Gamma)$. Assume that~$\mu$ is absolutely continuous with respect to the Lebesgue measure. Then, for every~$T > 0$, there exists a \emph{unique} transport plan~$\pi_T \in \Pi(\mu,\nu)$ optimal for~$\tilde \sfd_T(\mu,\nu)$. Moreover,~$\pi_T$ is induced by a measurable function~$M_T \coloneqq \Gamma \to \Gamma$, i.e.,~$\pi_T = (\id,M_T)_\# \mu$.
	
	Furthermore, there exists a transport map~$M$ such that~$(\id,M)_\# \mu$ is optimal for the time-independent discrepancy~$\sfd(\mu,\nu)$, i.e.,~$(\id,M)_\# \mu \in \Pi_{\mathrm o, \sfd}(\mu,\nu)$.
\end{prop}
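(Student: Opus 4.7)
The plan is to verify the Brenier--Gangbo--McCann twist condition for the cost
\[
c_T\bigl((x,v),(y,w)\bigr) = 12\bigl|\tfrac{y-x}{T}-\tfrac{v+w}{2}\bigr|^2+|w-v|^2,
\]
namely that $(y,w) \mapsto \nabla_{(x,v)} c_T((x,v),(y,w))$ is injective for each $(x,v)$. Since $c_T$ is quadratic, this gradient is affine in $(y,w)$, with linear part given by the block matrix
\[
\begin{pmatrix}-\tfrac{24}{T^2}I_n & \tfrac{12}{T}I_n \\ -\tfrac{12}{T}I_n & 4I_n\end{pmatrix},
\]
whose determinant equals $(48/T^2)^n \neq 0$; the twist condition holds globally. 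Combined with the absolute continuity of $\mu$ and the quadratic growth of $c_T$, the standard Gangbo--McCann theorem yields a unique optimal plan $\pi_T = (\id, M_T)_\# \mu$ for $\tilde \sfd_T$.

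\textbf{Part 2, first case.} The plan is to split on whether an optimal plan $\pi^* \in \Pi_{\mathrm{o},\sfd}(\mu,\nu)$ (existing by \Cref{prop:thm1.3}) satisfies $\|y-x\|_{\Lp{2}(\pi^*)} > 0$. If so, $\sfc(\pi^*) = \tilde \sfc(\pi^*)$, hence $\sfd(\mu,\nu) = \tilde \sfd(\mu,\nu)$ and $\pi^*$ is also optimal for $\tilde \sfd$. \Cref{prop:thm1.1} gives two sub-cases. If $(y-x,v+w)_{\pi^*} > 0$, the finite value $T^* := 2\|y-x\|^2_{\Lp{2}(\pi^*)}/(y-x,v+w)_{\pi^*}$ satisfies $\tilde \sfc_{T^*}(\pi^*) = \tilde \sfd^2(\mu,\nu) = \tilde \sfd^2_{T^*}(\mu,\nu)$, so $\pi^*$ is optimal for $\tilde \sfd_{T^*}$ and, by uniqueness from Part 1, $\pi^* = (\id, M_{T^*})_\# \mu$. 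If $(y-x,v+w)_{\pi^*} \leq 0$, then $\sfd^2(\mu,\nu) = 3\|v+w\|^2_{\Lp{2}(\pi^*)} + \|w-v\|^2_{\Lp{2}(\pi^*)}$, a quantity depending only on the $(v,w)$-marginal $\sigma$ of $\pi^*$. A direct comparison against plans with arbitrary position coupling forces $\sigma$ to be the Brenier-optimal coupling $(\id, S)_\# \mu_v$ between the velocity marginals for the cost $v \cdot w$ (the marginal $\mu_v$ is a.c.~since $\mu$ is). I then define $M(x,v) := (y(x,v), S(v))$, where for each $v$ the map $y(\cdot, v)$ is a measurable transport from the conditional $\mu(\cdot \mid v)$ onto $\nu(\cdot \mid w = S(v))$, produced by Brenier applied fiberwise (the conditionals $\mu(\cdot \mid v)$ are a.c.~for $\mu_v$-a.e.~$v$) together with a measurable selection in $v$.

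\textbf{Part 2, second case.} If instead every optimal $\pi^*$ satisfies $\|y-x\|_{\Lp{2}(\pi^*)} = 0$, then $x = y$ $\pi^*$-a.e., so the spatial marginals coincide: $\rho := (\proj_x)_\# \mu = (\proj_x)_\# \nu$. The problem fibers: after disintegration $\mu = \int \mu_x \, d\rho(x)$, $\nu = \int \nu_x \, d\rho(x)$, minimizing $\|w-v\|^2_{\Lp{2}(\pi)}$ among plans supported on $\{x=y\}$ amounts, for $\rho$-a.e.~$x$, to solving the classical $\Lp{2}$ optimal transport problem on $\cV$ between $\mu_x$ and $\nu_x$. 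Since $\mu$ is a.c., $\mu_x$ is a.c.~for $\rho$-a.e.~$x$, so Brenier's theorem yields a unique optimal map $T_x \colon \cV \to \cV$ depending measurably on $x$; then $M(x,v) := (x, T_x(v))$ is the required map.

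\textbf{Main obstacle.} The most delicate step is the sub-case of the first case in which $(y-x,v+w)_{\pi^*} \leq 0$: here the infimum in $T$ from \Cref{prop:thm1.1} is attained only as $T \to \infty$, so Part 1 cannot be applied directly. The map has to be constructed by hand by decoupling the velocity transport (via classical Brenier on the velocity marginals) from a free measurable choice of position transport, and the verification that the construction retains optimality crucially exploits that the degeneracy $(y-x,v+w)_{\pi^*} \leq 0$ makes the cost $\sfc$ insensitive to position couplings compatible with $\sigma$.
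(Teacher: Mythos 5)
Your proposal is correct and follows essentially the same route as the paper: Part 1 via the twist condition for $\tilde d_T^2$ and the classical theory, and Part 2 via the same case analysis on an optimal plan ($\norm{y-x}_{\Lp{2}(\pi)}=0$ handled by fiberwise optimal transport in the velocity variable over the common spatial marginal; $(y-x,v+w)_\pi>0$ handled by reduction to the fixed-$T$ problem at $T^*$; $(y-x,v+w)_\pi\le 0$ handled by pairing an optimal velocity map with a measurable position transport between conditionals). The only cosmetic differences are that the paper applies the twist condition directly to the cost $3\abs{v+w}^2+\abs{w-v}^2$ instead of reducing to the cost $v\cdot w$, and it does not need your intermediate claim that the velocity marginal of the optimal plan is itself optimal---it simply builds the competitor plan $(\id,M)_\#\mu$ and compares costs.
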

Note that we state uniqueness of the map for~$\tilde \sfd_T$, but not for~$\sfd$, see also \S\ref{sec:nonuniq} below.

\begin{proof}
	The first part of the statement, namely the uniqueness of~$\pi_T$ and its representation~$\pi_T = (\id, M_T)_\# \mu$, follows from the classical theory of optimal transport, see in particular \cite[Theorems~10.26 \& 10.38]{MR2459454}. To apply these theorems, we observe that
	\begin{enumerate}
		\item the function~$\tilde d_T^2$ is smooth,
		\item the \emph{twist condition} is satisfied, i.e.,
		\[
		(y,w) \longmapsto \nabla_{x,v} \tilde d^2_T \bigl((x,v),(y,w)\bigr)
		\]
		is injective for every~$(x,v) \in \Gamma$.
	\end{enumerate}

	Let us now move to the proof of the second part of the statement. Let~$\pi \in \Pi_{\mathrm o, \sfd}(\mu,\nu)$ be an optimal transport plan. We will distinguish three cases.
	
	\textbf{Case 1.} Assume that~$\norm{y-x}_{\Lp{2}(\pi)} = 0$, i.e.,~$y=x$ for~$\pi$-a.e.~$(x,y)$. By disintegration, there exists a measure-valued measurable map~$x \mapsto \pi_x \in \mathcal P(\cV \times \cV)$ with
	\begin{equation} \label{eq:disint}
		\int \varphi(x,v,y,w) \, \dif \pi = \iint \varphi(x,v,x,w) \, \dif \pi_x(v,w) \, \dif \eta(x) \comma \qquad \varphi \in \mathrm{C_b}(\Gamma \times \Gamma) \comma
	\end{equation}
	where~$\eta \coloneqq (\proj_x)_\# \pi = (\proj_x)_\# \mu$. Note that we can also write~$\eta = (\proj_y)_\# \pi = (\proj_x)_\# \nu$. Set
	\begin{equation} \label{eq:disint2}
		\mu_x \coloneqq (\proj_v)_\# \pi_x \comma \quad \nu_x \coloneqq (\proj_w)_\# \pi_x \comma \qquad x \in \cX \fstop
	\end{equation}
	Since~$\mu$ admits a density, so does~$\mu_x$ for~$\eta$-a.e.~$x \in \cX$. In particular, there exists a unique ${\mathrm{W}_2}$-optimal transport map from~$\mu_x$ to~$\nu_x$ for~$\eta$-a.e.~$x$, see~\cite[Theorem~6.2.4]{MR2401600}. Therefore, we can apply \cite[Theorem~1.1]{MR2643592} and get a Borel map~$M_2 \colon \Gamma \to \cV$ such that, for~$\eta$-a.e.~$x \in \cV$, the transport plan~$\bigl(\id,M_2(x,\cdot)\bigr)_\# \mu_x$ is optimal for the~$2$-Wasserstein distance between~$\mu_x$ and~$\nu_x$. In particular,
	\begin{align*}
		\int \abs{v-M_2(x,v)}^2 \, \dif \mu
			&=
		\int \abs{v-M_2(x,v)}^2 \, \dif \pi \\
			&\stackrel{\eqref{eq:disint}}{=}
		\iint \abs{v-M_2(x,v)}^2 \, \dif \, (\proj_v)_\# \pi_x(v) \, \dif \eta(x) \\
			&\stackrel{\eqref{eq:disint2}}{=}
		\iint \abs{v-M_2(x,v)}^2 \, \dif \mu_x(v) \, \dif \eta(x) \\
			&\le
		\iint \abs{w-v}^2 \, \dif \pi_x(v,w) \, \dif \eta(x) \\
			&\stackrel{\eqref{eq:disint}}{=}
		\int \abs{w-v}^2 \, \dif \pi = \sfd^2(\mu,\nu) \comma
	\end{align*}
	where the inequality follows from the optimality of~$M_2$. Moreover,~$M \colon (x,v) \mapsto \bigl(x,M_2(x,v)\bigr)$ defines a transport map from~$\mu$ to~$\nu$. We conclude that~$(\id,M)_\# \mu \in \Pi_{\mathrm o, \sfd}(\mu,\nu)$.
	
	\textbf{Case 2.} Assume that~$\norm{y-x}_{\Lp{2}(\pi)} > 0$ and~$(y-x,v+w)_\pi > 0$. Define~$T$ as in~\eqref{eq:defT}. In this case,~$\pi$ is optimal for~$\tilde \sfd_T(\mu,\nu)$, and it is induced by a map.
	
	\textbf{Case 3.} Assume that~$\norm{y-x}_{\Lp{2}(\pi)} > 0$ and~$(y-x,v+w)_\pi \le 0$. We apply disintegration to~$\mu$ and~$\nu$ to find maps~$v \mapsto \mu_v$ and~$v \mapsto \nu_v$ such that
	\[
		\mu(\dif x, \dif v) = \mu_v(\dif x) (\proj_v)_\# \mu(\dif v) \quad \text{and} \quad \nu(\dif x,\dif v) = \nu_v(\dif x) (\proj_v)_\# \nu(\dif v) \fstop
	\]
	Note that $(\proj_v)_\# \mu$ and~$(\proj_v)_\# \mu$-almost every measure~$\mu_v$ are absolutely continuous.
	
	Consider the cost~$(v,w) \mapsto 3\abs{v+w}^2 + \abs{w-v}^2$. Since this function is smooth and satisfies the twist condition, once again we infer the existence of a Borel map~$B \coloneqq \cV \to \cV$ optimal for such a cost from~$(\proj_v)_\# \mu$ to~$(\proj_v)_\# \nu$. Let~$A \colon \Gamma \to \cX$ be {any} Borel function such that~$A(\cdot,v)_\# \mu_v = \nu_{B(v)}$ for~$(\proj_v)_\# \mu$-a.e.~$v \in \cV$. The existence of~$A$ can be deduced, e.g.,~from~\cite[Theorem~1.1]{MR2643592}.
	We claim that the map~$M \colon (x,v) \mapsto \bigl(A(x,v), B(v) \bigr)$ defines an optimal transport map between~$\mu$ and~$\nu$. By construction,~$M_\# \mu = \nu$ and, by optimality of~$B$, we conclude the proof of our claim:
	\begin{multline*}
		\sfc\bigl((\id,M)_\# \mu \bigr) \stackrel{\eqref{eq:sfc}}{\le} 3\norm{v+B(v)}_{\Lp{2}(\mu)}^2 + \norm{v-B(v)}_{\Lp{2}(\mu)}^2 \\ \le 3\norm{v+w}_{\Lp{2}(\pi)}^2 + \norm{w-v}_{\Lp{2}(\pi)}^2 = \sfd^2(\mu,\nu) \fstop \qedhere
	\end{multline*}
\end{proof}

\subsection{Additional results}\label{ss:33}

\subsubsection{Non-uniqueness} \label{sec:nonuniq}
Fix~$\mu,\nu \in \cP_2(\Gamma)$. If~$\mu$ is absolutely continuous, then for every~$T > 0$, there exists a unique minimiser for~$\tilde \sfc_T$ in~$\Pi(\mu,\nu)$, and this plan is induced by a map. This fact follows from the classical theory of optimal transport, cf.~\cite[Theorems~10.26 \& 10.38]{MR2459454}. Nonetheless, non-uniqueness in~$\Pi_{\mathrm o, \sfd}(\mu,\nu)$ may arise in several ways, for example:
\begin{itemize}
		\item there may be two different times~$T_1,T_2 > 0$ for which
		\[
			\sfd^2(\mu,\nu) = \inf_{\pi \in \Pi(\mu,\nu)} \tilde \sfc_{T_1}(\pi) = \inf_{\pi \in \Pi(\mu,\nu)} \tilde \sfc_{T_2}(\pi)
		\]
		\item in the proof of Case 3 in \Cref{prop:thm1.4} there is freedom in the choice of the map~$A \colon \Gamma \to \cX$ for which~$M \colon (x,v) \mapsto \bigl(A(x,v), B(v) \bigr)$ is optimal.
\end{itemize}

Let us provide an example of non-uniqueness.

\begin{ex} \label{ex:nonuniq}
	Fix
	\[
		X_1 = (x_1,v_1) \in \Gamma \comma \quad X_2 = (x_2,v_2) \in \Gamma \comma \quad S > 0 \comma
	\]
	and set
	\[
		\mu \coloneqq \frac{1}{2} \delta_{X_1} + \frac{1}{2} \delta_{X_2} \comma \qquad \nu \coloneqq \frac{1}{2} \delta_{\cG_T(X_1)} + \frac{1}{2} \delta_{X_2} \fstop
	\]
	Note that~$\Pi(\mu,\nu)$ coincides with the set of all the convex combinations of
	\[
		\pi_1 \coloneqq \frac{1}{2} \delta_{(X_1,\cG_S(X_1))} + \frac{1}{2} \delta_{(X_2,X_2)} \comma \quad \pi_2 \coloneqq \frac{1}{2} \delta_{(X_1,X_2)} + \frac{1}{2} \delta_{(X_2,\cG_S(X_1))} \fstop
	\]
	Let us assume that~$\set{x_1,x_2} \neq \set{x_1+Sv_1,x_2}$. Then,~$\sfc = \tilde \sfc$ on~$\Pi(\mu,\nu)$. Taking also into account the concavity of~$\tilde \sfc$ (see~\Cref{prop:thm1.1}), we deduce that
	\[
		\sfd^2(\mu,\nu) = \inf_{\pi \in \Pi(\mu,\nu)} \sfc(\pi) =\inf_{\pi \in \Pi(\mu,\nu)} \tilde \sfc(\pi) = \min\set{\tilde \sfc(\pi_1), \tilde \sfc(\pi_2)} \fstop
	\]
	Straightforward computations yield
	\[
		\tilde \sfc(\pi_1)  =6 \abs{v_2}^2 \comma \quad \tilde \sfc(\pi_2) = 3\abs{v_1+v_2}^2-\frac{3S^2}{2} \frac{\left(\abs{v_1}^2+v_1\cdot v_2\right)_+^2}{\abs{x_2-x_1}^2+\abs{x_2-x_1+Sv_1}^2} + \abs{v_2-v_1}^2 \fstop
	\]
	If, for example, we choose~$x_1=x_2$ and~$v_1 \perp v_2$, we find
	\[
		\tilde \sfc(\pi_1) = \tilde \sfc_S(\pi_1) =6 \abs{v_2}^2 \comma \quad \tilde \sfc(\pi_2) = \tilde \sfc_{2S}(\pi_2) = \frac{5}{2} \abs{v_1}^2 + 4 \abs{v_2}^2 \fstop
	\]
	Therefore, when, additionally,~$5\abs{v_1}^2=4\abs{v_2}^2$, both plans~$\pi_1$ and~$\pi_2$ are optimal. Note that they are induced by maps, and that their corresponding optimal times are different:~$S$ for~$\pi_1$ and~$2S$ for~$\pi_2$. 
    We also observe that~$S$ is exactly the optimal time for~\eqref{eq:naiveacc2} between~$(x_1,v_1)$ and~$(x_1+Sv_1,v_1)$, while~$2S$ is the optimal time between~$(x_1,v_2)$ and~$(x_1+Sv_1,v_1).$ In this case, the structure of~$\sfc$ disadvantages intermediate times between $S$ and $2S$.
\end{ex}

\subsubsection{Characterisation of~$\sfd=0$}

We provide a characterisation of the measures~$\mu,\nu$ such that~$\sfd(\mu,\nu) = 0$, analogous to the particle case of \Cref{rem:counterexamplesd}.

\begin{prop} \label{prop:deq0}
	Let~$\mu,\nu \in \cP_2(\Gamma)$. We have~$\sfd(\mu,\nu) = 0$ if and only if one of the following holds:
	\begin{enumerate}
		\item $\nu = \mathcal (\mathcal G_T)_\# \mu$ for some~$T \ge 0$,
		\item or~$(\proj_v)_\# \mu = (\proj_v)_\# \nu = \delta_0$.
	\end{enumerate}
	If~$(\proj_v)_\# \mu \neq \delta_0$ and~$\nu = (\mathcal G_T)_\# \mu$ for a~$T \ge 0$, then such a~$T$ is unique, and~$\Pi_{\mathrm o, \sfd}(\mu,\nu) = \set{(\id,\mathcal G_T)_\# \mu}$.
\end{prop}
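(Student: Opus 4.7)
My plan is to leverage \Cref{prop:thm1.3}: $\sfd(\mu,\nu)=0$ if and only if there exists $\pi \in \Pi(\mu,\nu)$ with $\sfc(\pi)=0$. I will then dissect the two branches of the definition \eqref{eq:sfc} and apply Cauchy--Schwarz in $\Lp{2}(\pi;\R^n)$.

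\emph{Sufficiency.} If $\nu = (\cG_T)_\# \mu$, the candidate $\pi \coloneqq (\id,\cG_T)_\# \mu$ concentrates on $\{w=v,\ y-x=Tv\}$; a direct substitution into \eqref{eq:sfc} yields $\sfc(\pi)=0$ in each branch, the only non-trivial cancellation being $12\|v\|^2_{\Lp{2}(\mu)} = 3\bigl(2T\|v\|^2_{\Lp{2}(\mu)}\bigr)^2/\bigl(T^2\|v\|^2_{\Lp{2}(\mu)}\bigr)$. If instead $(\proj_v)_\# \mu = (\proj_v)_\# \nu = \delta_0$, then every $\pi \in \Pi(\mu,\nu)$ has $v=w=0$ $\pi$-a.e., so $\|w-v\|_{\Lp{2}(\pi)} = \|v+w\|_{\Lp{2}(\pi)}=0$, and $\sfc(\pi)=0$ follows from either branch of \eqref{eq:sfc}.

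\emph{Necessity.} Pick $\pi \in \Pi_{\mathrm o, \sfd}(\mu,\nu)$. If $\|y-x\|_{\Lp{2}(\pi)}=0$, then $\sfc(\pi) = \|w-v\|^2_{\Lp{2}(\pi)}=0$ forces $\pi$ onto the diagonal of $\Gamma\times\Gamma$, giving $\nu=\mu=(\cG_0)_\#\mu$. Otherwise the $\Lp{2}(\pi;\R^n)$-Cauchy--Schwarz inequality shows that both $\|w-v\|^2_{\Lp{2}(\pi)}$ and $3\|v+w\|^2_{\Lp{2}(\pi)} - 3((y-x,v+w)_\pi)_+^2/\|y-x\|^2_{\Lp{2}(\pi)}$ are non-negative, so their sum vanishing forces both to be zero; this yields $w=v$ $\pi$-a.e.\ together with a saturated Cauchy--Schwarz. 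When $\|v+w\|_{\Lp{2}(\pi)}=0$, combining with $w=v$ gives $v=w=0$ $\pi$-a.e., the second alternative. Otherwise the saturation (pointwise alignment $(y-x)\cdot(v+w)=|y-x||v+w|$ plus scalar proportionality $|y-x|=c|v+w|$ for a \emph{constant} $c> 0$) forces $y-x=Tv$ $\pi$-a.e.\ for some constant $T>0$; hence $\pi=(\id,\cG_T)_\#\mu$ and $\nu=(\cG_T)_\#\mu$.

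\emph{Uniqueness.} Under $(\proj_v)_\#\mu \neq \delta_0$, I extend $\cG$ to $\cG_S(x,v)\coloneqq(x+Sv,v)$ for $S\in\R$, which is a Borel bijection of $\Gamma$. If $(\cG_{T_1})_\#\mu=(\cG_{T_2})_\#\mu$, applying $\cG_{-T_1}$ gives $\mu=(\cG_S)_\#\mu$ with $S\coloneqq T_2-T_1$, and iteration yields $\mu=(\cG_{nS})_\#\mu$ for every $n\in\Z$. Expanding
\[
    \int|x|^2\,\dif\mu = \int|x+nSv|^2\,\dif\mu = \int|x|^2\,\dif\mu + 2nS\int x\cdot v\,\dif\mu + n^2S^2\int|v|^2\,\dif\mu
\]
(finite thanks to $\mu\in\cP_2(\Gamma)$) and letting $n\to\infty$ forces $\int|v|^2\,\dif\mu=0$, contradicting $(\proj_v)_\#\mu\neq\delta_0$; hence $T$ is unique. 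The case analysis above shows any optimiser equals $(\id,\cG_{T'})_\#\mu$ for some $T'\ge 0$, and uniqueness of $T$ then identifies $\Pi_{\mathrm o, \sfd}(\mu,\nu)=\{(\id,\cG_T)_\#\mu\}$. I expect the most delicate point to be extracting from the Cauchy--Schwarz saturation in $\Lp{2}(\pi;\R^n)$ a \emph{single, universal} constant $T$, so that $\pi$ is supported on the graph of $\cG_T$ rather than on that of a point-dependent map.
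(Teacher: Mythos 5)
Your proof is correct, and for the main equivalence it follows essentially the same route as the paper: existence of an optimal plan (\Cref{prop:thm1.3}), $\sfd(\mu,\nu)=0 \iff \sfc(\pi)=0$ for some $\pi\in\Pi(\mu,\nu)$, splitting the cost into the two non-negative pieces, and reading off the equality case of Cauchy--Schwarz in $\Lp{2}(\pi;\R^n)$ to get either $v=w=0$ $\pi$-a.e.\ or $y-x=T v$ $\pi$-a.e.\ for a single constant $T$; the ``universal constant'' worry you flag is indeed settled exactly as you say, since Hilbert-space saturation gives $y-x=c(v+w)$ with one scalar $c=\norm{y-x}_{\Lp{2}(\pi)}/\norm{v+w}_{\Lp{2}(\pi)}$, which is also the (terse) step in the paper's proof. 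Where you genuinely diverge is the uniqueness of $T$: the paper iterates the semigroup property to get $\nu=(\cG_{k(T_2-T_1)})_\#\nu$ and then uses compactly supported test functions plus dominated convergence (the points with $v\neq 0$ escape to infinity) to force $(\proj_v)_\#\mu=\delta_0$; you instead invert the free transport, iterate to $\mu=(\cG_{nS})_\#\mu$, and expand the second moment $\int\abs{x+nSv}^2\,\dif\mu$, whose quadratic growth in $n$ forces $\int\abs{v}^2\,\dif\mu=0$. Both are valid; your moment argument is shorter and more quantitative but leans on $\mu\in\cP_2(\Gamma)$, while the paper's escape-to-infinity argument would work for arbitrary probability measures. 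The final identification of $\Pi_{\mathrm o,\sfd}(\mu,\nu)$ from the case analysis plus uniqueness of $T$ matches the paper.
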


\begin{proof}
	If~$\nu = (\mathcal G_T)_\# \mu$ for some~$T \ge 0$, we have~$\sfc\bigl((\id,\mathcal G_T)_\# \mu \bigr) = 0$. If~$(\proj_v)_\# \mu = (\proj_v)_\# \nu = \delta_0$, then every~$\pi \in \Pi(\mu,\nu)$ has zero cost.
	
	Conversely, assume that~$\sfd(\mu,\nu) = 0$. By \Cref{prop:thm1.3}, there exists~$\pi \in \Pi(\mu,\nu)$ with~$\sfc(\pi) = 0$. By the definition of~$\sfc$, we must have~$v=w$ for~$\pi$-a.e.~$(v,w)$. If~$x=y$ for~$\pi$-a.e.~$(x,y)$, then~$\pi = (\id,\id)_\# \mu$. Otherwise, we have equality in the inequality
	\[
		(y-x,v+w)_\pi \le \norm{y-x}_{\Lp{2}(\pi)} \norm{v+w}_{\Lp{2}(\pi)} \fstop
	\]
	This can happen only if~$v=w=0$ for~$\pi$-a.e.~$(v,w)$, or if there exists~$T \ge 0$ such that~$y = x + Tv$ for~$\pi$-a.e.~$(x,y,v)$.
	
	Assume that~$(\proj_v)_\# \mu \neq \delta_0$. We have already proved that every~$\pi \in \Pi_{\mathrm o, \sfd}(\mu,\nu)$ is of the form~$\pi = (\id, \mathcal G_T)_\# \mu$ for some~$T$. Let us assume, by contradiction, that~$\nu = (\mathcal G_{T_1})_\# \mu = (\mathcal G_{T_2})_\# \mu$ for some~$T_1,T_2 \ge 0$ with~$T_1 < T_2$. By the semigroup property:
	\[
		\nu = (\mathcal G_{T_2})_\# \mu = (\mathcal G_{T_2-T_1})_\# (\mathcal G_{T_1})_\# \mu = (\mathcal G_{T_2-T_1})_\# \nu = \cdots = (\mathcal G_{(T_2-T_1)k})_\# \nu 
	\]
	for every~$k \in \N_{>0}$. For every~$\varphi \in \mathrm{C_c}(\Gamma)$ and~$k \in \N_{>0}$, we thus find
	\[
		\int \varphi(x,v) \, \dif \nu = \int \varphi\bigl(x+k(T_2-T_1)v,v\bigr) \, \dif \nu \fstop
	\]
	Note that~$\lim_{k \to \infty} \abs{x+k(T_2-T_1)v} = \infty$ whenever~$v \neq 0$; hence, since~$\varphi$ is compactly supported, the dominated convergence theorem yields
	\[
		\int \varphi(x,v) \, \dif \nu = \int_{\set{v = 0}} \varphi(x,0) \, \dif \nu \fstop
	\]
	Hence,
	\[
		\int \varphi(x+T_1v,v) \, \dif \mu = \int_{\set{v = 0}} \varphi(x,0) \, \dif \mu \comma
	\]
	which can hold for every~$\varphi \in \mathrm{C_c}(\Gamma)$ only if~$(\proj_v)_\# \mu = \delta_0$.
\end{proof}

\begin{cor} \label{cor:pitoid}
	Let~$\mu_k \rightharpoonup \mu$ and~$\nu_k \rightharpoonup \nu$ be two \emph{narrowly} convergent sequences in~$\cP_2(\Gamma)$. For every~$k \in \N$, pick one~$\pi_k \in \Pi_{\mathrm o, \sfd}(\mu_k,\nu_k)$. Assume:
	\begin{enumerate}[label=(\alph*)]
		\item $\lim_{k \to \infty} \sfd(\mu_k,\nu_k) = 0$,
		\item $(\proj_v)_\# \mu \neq \delta_0$,
		\item \label{ass:bddnorm} $\sup_k \min\set{\norm{v}_{\Lp{2}(\mu_k)}, \norm{v}_{\Lp{2}(\nu_k)}} < \infty$.
	\end{enumerate}
	Then,~$\sfd(\mu,\nu) = 0$, so that~$\nu = (\mathcal G_T)_\# \mu$ for a~$T \ge 0$. Finally,~$\pi_k \rightharpoonup (\id,\mathcal G_T)_\# \mu \in \Pi_{\mathrm o, \sfd}(\mu,\nu)$.
\end{cor}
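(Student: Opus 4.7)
The plan is to identify the narrow limit of every subsequence of $(\pi_k)$ as $(\id,\mathcal G_T)_\#\mu$, using the structural inequality $\|w-v\|_{\Lp{2}(\pi_k)}^2\le\sfc(\pi_k)\to 0$, and then conclude via \Cref{prop:deq0}. By assumption (c), up to a subsequence $\|v\|_{\Lp{2}(\mu_k)}\le C$, hence $\|w\|_{\Lp{2}(\nu_k)}\le C+o(1)$ by the triangle inequality. Since both marginal families are tight, the family $\{\pi_k\}$ is tight, and Prokhorov's theorem supplies a narrowly convergent subsequence $\pi_{k_j}\rightharpoonup\pi\in\Pi(\mu,\nu)$.

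Next I would analyse the three cases in the definition~\eqref{eq:sfc} of $\sfc$ for each $\pi_k$: (i)~$\|y-x\|_{\Lp{2}(\pi_k)}=0$; (ii)~$\|y-x\|_{\Lp{2}(\pi_k)}>0$ and $(y-x,v+w)_{\pi_k}>0$; or (iii)~$\|y-x\|_{\Lp{2}(\pi_k)}>0$ and $(y-x,v+w)_{\pi_k}\le 0$. Case (iii) forces $\sfc(\pi_k)\ge 3\|v+w\|_{\Lp{2}(\pi_k)}^2$, hence $\|v\|_{\Lp{2}(\mu_k)}\to 0$, contradicting (b) via the Portmanteau bound $\liminf\|v\|_{\Lp{2}(\mu_k)}\ge\|v\|_{\Lp{2}(\mu)}>0$. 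So only (i) and (ii) occur for large $k$. In case (i) I set $T_k\coloneqq 0$; in case (ii) I take $T_k\coloneqq 2\|y-x\|_{\Lp{2}(\pi_k)}^2/(y-x,v+w)_{\pi_k}$ as in~\eqref{eq:defT}, so that $\tilde\sfc_{T_k}(\pi_k)=\sfc(\pi_k)\to 0$ by \Cref{prop:thm1.1}. Together with $\|w-v\|_{\Lp{2}(\pi_k)}\to 0$, this gives $\|(y-x)/T_k-v\|_{\Lp{2}(\pi_k)}\to 0$ in case (ii).

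The main obstacle is to prove boundedness of $(T_k)$. I would argue by contradiction: if $T_k\to\infty$ along a subsequence (necessarily in case (ii)), pick $\delta>0$ with $a\coloneqq\mu(\{|v|>2\delta\})>0$ (available by (b)) and $M$ with $\mu_k(\{|x|>M\})<a/4$ for all $k$ (tightness of $\{\mu_k\}$). Chebyshev applied to $\|(y-x)/T_k-v\|_{\Lp{2}(\pi_k)}\to 0$ gives $\pi_k(\{|(y-x)/T_k-v|>\delta\})\to 0$, while Portmanteau yields $\liminf\mu_k(\{|v|>2\delta\})\ge a$. On the intersection $\{|v|>2\delta\}\cap\{|(y-x)/T_k-v|\le\delta\}\cap\{|x|\le M\}$, which has $\pi_k$-mass at least $a/2$ eventually, one has $|y|\ge T_k\delta-M$. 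Pushing forward to the second marginal yields $\nu_k(\{|y|>T_k\delta-M\})\ge a/2$, contradicting the tightness of $\{\nu_k\}$ since $T_k\delta-M\to\infty$.

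With $(T_k)$ bounded, I extract $T_{k_j}\to T^*\in[0,\infty)$, whence $\|y-x-T^*v\|_{\Lp{2}(\pi_{k_j})}\to 0$ and $\|w-v\|_{\Lp{2}(\pi_{k_j})}\to 0$. For any bounded Lipschitz $\phi$ on $\Gamma\times\Gamma$ with Lipschitz constant $L$, Cauchy--Schwarz gives
\[
\left|\int\phi\,\dif\pi_{k_j}-\int\phi(x,v,x+T^*v,v)\,\dif\mu_{k_j}\right|\le L\bigl(\|y-x-T^*v\|_{\Lp{2}(\pi_{k_j})}+\|w-v\|_{\Lp{2}(\pi_{k_j})}\bigr)\to 0,
\]
while the second term tends to $\int\phi\,\dif(\id,\mathcal G_{T^*})_\#\mu$ by narrow convergence of $\mu_{k_j}$. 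This identifies $\pi_{k_j}\rightharpoonup(\id,\mathcal G_{T^*})_\#\mu$, forces $\nu=(\mathcal G_{T^*})_\#\mu$, and yields $\sfd(\mu,\nu)=0$ by \Cref{prop:deq0}. Uniqueness of $T^*\eqqcolon T$ and of the optimal plan (from (b) and \Cref{prop:deq0}), combined with the subsequence principle, promotes the convergence to the full sequence.
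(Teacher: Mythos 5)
Your proof is correct, but it runs along a genuinely different track than the paper's. The paper never introduces optimal times: it shows directly that \emph{every} narrow limit $\pi$ of $(\pi_k)$ satisfies $\sfc(\pi)=0$, by first getting $\norm{w-v}_{\Lp{2}(\pi)}=0$ from lower semicontinuity, then extracting limits $a,b$ of $\norm{v+w}_{\Lp{2}(\pi_k)}$ and $\norm{y-x}_{\Lp{2}(\pi_k)}$, and in the nondegenerate case proving that the normalised discrepancy $c_k\coloneqq\int\abs{\tfrac{v+w}{\norm{v+w}_{\Lp{2}(\pi_k)}}-\tfrac{y-x}{\norm{y-x}_{\Lp{2}(\pi_k)}}}^2\dif\pi_k$ vanishes; testing against compactly supported functions then yields the collinearity $\tfrac{v+w}{a}=\tfrac{y-x}{b}$ $\pi$-a.e., so $\sfc(\pi)=\norm{w-v}^2_{\Lp{2}(\pi)}=0$, and only at that point does \Cref{prop:deq0} produce the free-transport structure and the unique $T$. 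You instead work with the optimal times $T_k$ of \eqref{eq:defT} from the start: you rule out the case $(y-x,v+w)_{\pi_k}\le 0$ by assumption (b), prove $\norm{\tfrac{y-x}{T_k}-v}_{\Lp{2}(\pi_k)}\to 0$, establish boundedness of $(T_k)$ by a Chebyshev/mass-escape argument against the tightness of $(\nu_k)$ (this is your substitute for the paper's handling of the case $b=\infty$), and then identify the limit plan explicitly as $(\id,\mathcal G_{T^*})_\#\mu$ via a bounded-Lipschitz test estimate, after which \Cref{prop:deq0} is used only for uniqueness of $T$ and the subsequence principle. The trade-off: the paper's argument stays entirely at the level of the cost functional and is shorter because the structural information is delegated to \Cref{prop:deq0}, whereas your route is more constructive and, as a bonus, already yields the convergence $T_k\to T$ of the optimal times, which the paper proves separately in the corollary that follows. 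The only point to tidy is the sentence ``by assumption (c), up to a subsequence $\norm{v}_{\Lp{2}(\mu_k)}\le C$'': assumption (c) only bounds the minimum of the two velocity norms, but the reverse triangle inequality in $\Lp{2}(\pi_k)$ together with $\norm{w-v}_{\Lp{2}(\pi_k)}\le\sfd(\mu_k,\nu_k)\to 0$ bounds both norms eventually, so no extraction is even needed; this is a phrasing issue, not a gap.
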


\begin{rem}
	This corollary would easily follow from \Cref{prop:deq0}, \Cref{lem:semicontsfc}, and the lower semicontinuity of~$\sfd$ if we assumed convergence of~$(\mu_k)_k$ and~$(\nu_k)_k$ w.r.t.~${\mathrm{W}_2}$. Instead, we assume here only narrow convergence.
\end{rem}

\begin{proof}[Proof of~\Cref{cor:pitoid}]
	Since~$(\mu_k)_k$ and~$(\nu_k)_k$ are convergent, they are tight. Therefore, the same is true for~$(\pi_k)_k$. By Prokhorov's theorem, the sequence~$(\pi_k)_k$ admits at least one narrow limit. If one such limit~$\pi$ satisfies~$\sfc(\pi) = 0$, then~$\pi \in \Pi_{\mathrm o, \sfd}(\mu,\nu)$ and~$\sfd(\mu,\nu) = 0$, which yields, by \Cref{prop:deq0},~$\nu = (\mathcal G_T)_\# \mu$ and~$\pi=(\id,\mathcal G_T)_\# \mu$ for some unique $T \ge 0$ (independent of the limit~$\pi$). If, \emph{every} limit~$\pi$ satisfies~$\sfc(\pi) = 0$, then there exists only one limit of the sequence~$(\pi_k)_k$, namely~$(\id,\mathcal G_T)_\# \mu$.
	
	Let us thus prove that every limit~$\pi$ satisfies~$\sfc(\pi) = 0$. Up to extracting a subsequence,~$\pi_k \rightharpoonup \pi$. To begin with, let us note that, by lower semicontinuity of the norm w.r.t.~narrow convergence,
	\[
		\norm{w-v}_{\Lp{2}(\pi)} \le \liminf_{k \to \infty} \norm{w-v}_{\Lp{2}(\pi_k)} \stackrel{\eqref{eq2.}}{\le} \liminf_{k \to \infty} \sfd(\mu_k,\nu_k) = 0 \fstop
	\]
	Moreover, by the triangle inequality,
	\begin{align*}
		\limsup_{k \to \infty} \norm{v+w}_{\Lp{2}(\pi_k)}
			&\le
		\limsup_{k \to \infty} \left( \norm{w-v}_{\Lp{2}(\pi_k)} + 2\min\set{\norm{v}_{\Lp{2}(\mu_k)}, \norm{v}_{\Lp{2}(\nu_k)}} \right) \\
			&\le
		2 \sup_k \min\set{\norm{v}_{\Lp{2}(\mu_k)}, \norm{v}_{\Lp{2}(\nu_k)}} \comma
	\end{align*}
	and the last term is bounded by Assumption~\ref{ass:bddnorm}. Thus, up to subsequences, we may assume that~$\norm{v+w}_{\Lp{2}(\pi_k)}$ converges to a number~$a \in \R_{\ge 0}$. Up to subsequences, we can also assume that~$\norm{y-x}_{\Lp{2}(\pi_k)}$ converges, to an either real or infinite quantity~$b \in \R_{\ge 0} \cup \set{\infty}$. If~$a = 0$, then, by lower semicontinuity of the norm w.r.t.~narrow convergence,
	\begin{equation*}
		\sfc(\pi)
			\stackrel{\eqref{eq:sfc}}{\le}
		3\norm{v+w}_{\Lp{2}(\pi)}^2 + \norm{w-v}_{\Lp{2}(\pi)}^2 \le
		3\liminf_{k \to \infty} \norm{v+w}_{\Lp{2}(\pi_k)}^2 = 3a^2 = 0 \fstop
	\end{equation*}
	If~$b = 0$, again by lower semicontinuity,~$\norm{y-x}_{\Lp{2}(\pi)} = 0$, hence~$\sfc(\pi) = \norm{w-v}_{\Lp{2}(\pi)}^2 = 0$.
	
	From now on, let us assume~$a,b > 0$ and, possibly up to subsequences, that~$\norm{v+w}_{\Lp{2}(\pi_k)}$ and~$\norm{y-x}_{\Lp{2}(\pi_k)}$ are strictly positive for every~$k$. Define
	\[
		c_k \coloneqq \int \abs{\frac{v+w}{\norm{v+w}_{\Lp{2}(\pi_k)}} - \frac{y-x}{\norm{y-x}_{\Lp{2}(\pi_k)}}  }^2 \, \dif \pi_k \comma \qquad k \in \N \fstop
	\]
	We find that
	\[
		\norm{v+w}^2_{\Lp{2}(\pi)} - \frac{\bigl((y-x,v+w)_\pi\bigr)_+^2}{\norm{y-x}^2_{\Lp{2}(\pi)}}
			=
		\norm{v+w}^2_{\Lp{2}(\pi)} \cdot \left(1-\left(1 - \frac{c_k}{2}\right)_+^2 \right)
	\]
	and, consequently,
	\[
		\limsup_{k \to \infty} \min\set{\frac{c_k}{2},1} \le \limsup_{k \to \infty} \left(1-\left(1 - \frac{c_k}{2}\right)_+^2 \right) \le \limsup_{k \to \infty} \frac{\sfd^2(\mu_k,\nu_k)/3}{\norm{v+w}_{\Lp{2}(\pi)}^2} = \frac{0}{a^2} = 0 \fstop
	\]
	This proves that~$c_k \to 0$.
	Let~$\varphi \in \mathrm{C_c}(\Gamma \times \Gamma)$ be non-negative. The convergence
	\[
		\abs{\frac{v+w}{\norm{v+w}_{\Lp{2}(\pi_k)}} - \frac{y-x}{\norm{y-x}_{\Lp{2}(\pi_k)}}  }^2 \varphi
			\to
		\abs{\frac{v+w}{a} - \frac{y-x}{b}  }^2 \varphi
	\]
	is \emph{uniform}. Thus, the narrow convergence~$\pi_k \rightharpoonup \pi$ yields
	\begin{align*}
		\int \abs{\frac{v+w}{a} - \frac{y-x}{b}  }^2 \varphi \, \dif \pi
			&=
		\lim_{k \to \infty} \int \abs{\frac{v+w}{\norm{v+w}_{\Lp{2}(\pi_k)}} - \frac{y-x}{\norm{y-x}_{\Lp{2}(\pi_k)}}  }^2 \varphi \, \dif \pi_k \\
			&\le
		\norm{\varphi}_{\infty} \liminf_{k \to \infty} c_k = 0 \comma
	\end{align*}
	and, by arbitrariness of~$\varphi$,
	\[
		\frac{v+w}{a} = \frac{y-x}{b} \quad \text{for } \pi \text{-a.e.~} (x,v,y,w) \fstop
	\]
	Using the definition~\eqref{eq:sfc} of~$\sfc$, we infer that~$\sfc(\pi) = \norm{w-v}_{\Lp{2}(\pi)}^2 = 0$.
\end{proof}

\begin{cor}
	In the setting of \Cref{cor:pitoid}, additionally set
	\begin{equation}
		T_k \coloneqq \begin{cases}
			\displaystyle 2\frac{\norm{y-x}_{\Lp{2}(\pi_k)}}{(y-x,v+w)_{\pi_k}} &\text{if } (y-x,v+w)_{\pi_k} > 0 \comma \\
			0 &\text{if } \norm{y-x}_{\Lp{2}(\pi_k)} = 0 \comma \\
			\infty &\text{otherwise.}
		\end{cases}
		\qquad k \in \N \fstop
	\end{equation}
	Then,~$T \coloneqq \lim_{k \to \infty} T_k$ exists, is finite, and~$\nu = (\mathcal G_T)_\# \mu$.
\end{cor}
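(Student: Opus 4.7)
The aim is to show that $T_k\to T$, where $T\ge 0$ is the unique value such that the narrow limit of every subsequence of $(\pi_k)$ equals $(\id,\cG_T)_\#\mu$, as provided by \Cref{cor:pitoid} together with the uniqueness in \Cref{prop:deq0} (which uses $(\proj_v)_\#\mu\neq\delta_0$). The strategy is a subsequence argument: I extract subsequences on which $T_k\to T^\ast\in[0,\infty]$ and prove $T^\ast=T$. The decisive identity is the rewriting
\[
\tilde{\sfc}_T(\pi) = 3\,\bigl\|2(y-x)/T-(v+w)\bigr\|_{\Lp{2}(\pi)}^2 + \|w-v\|^2_{\Lp{2}(\pi)},\qquad T>0,
\]
together with $\sfc(\pi_k)=\tilde\sfc_{T_k}(\pi_k)=\sfd^2(\mu_k,\nu_k)\to 0$, which holds by the optimality of $T_k$ (\Cref{prop:thm1.1}).

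\textbf{Preliminaries.} From the proof of \Cref{cor:pitoid}, $\pi_k\rightharpoonup\pi\coloneqq(\id,\cG_T)_\#\mu$ narrowly and $\|w-v\|_{\Lp{2}(\pi_k)}\to 0$. Fatou's inequality combined with $(\proj_v)_\#\mu\neq\delta_0$ yields $\liminf_k\|v\|_{\Lp{2}(\mu_k)}\ge\|v\|_{\Lp{2}(\mu)}>0$, whence
\[
\liminf_k\|v+w\|_{\Lp{2}(\pi_k)}\ \ge\ 2\,\|v\|_{\Lp{2}(\mu)}\ >\ 0\fstop
\]
If $T_k=\infty$ along a subsequence, then $\sfc(\pi_k)\ge 3\|v+w\|^2_{\Lp{2}(\pi_k)}$ would be bounded below, contradicting $\sfc(\pi_k)\to 0$. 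Hence $T_k\in[0,\infty)$ eventually, and I extract a subsequence with $T_k\to T^\ast\in[0,\infty]$.

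\textbf{Ruling out $T^\ast=\infty$.} This is the main obstacle: narrow convergence of $\pi_k$ provides no control on the unbounded function $|y-x|^2$, so masses could a priori escape in the $y-x$ direction. The workaround is to avoid integrating $|y-x|^2$ and instead test the relation $2(y-x)/T_k\approx v+w$ against $\varphi\in\mathrm{C}_c(\Gamma\times\Gamma)$ using Cauchy--Schwarz in $\Lp{2}(\pi_k)$:
\[
\left|\int \varphi\,\bigl[2(y-x)/T_k-(v+w)\bigr]\,\dif\pi_k\right|\ \le\ \|\varphi\|_\infty\sqrt{\sfc(\pi_k)/3}\ \to\ 0\fstop
\]
If $T_k\to\infty$, then $\varphi\cdot 2(y-x)/T_k$ is bounded by $(2\,\mathrm{diam}(\supp\varphi)/T_k)\|\varphi\|_\infty$, so $\int\varphi\cdot 2(y-x)/T_k\,\dif\pi_k\to 0$; meanwhile $\varphi(v+w)$ is bounded continuous, giving $\int\varphi(v+w)\,\dif\pi_k\to\int\varphi(v+w)\,\dif\pi$ by narrow convergence. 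Combining, $\int\varphi(v+w)\,\dif\pi=0$ for every $\varphi\in\mathrm{C}_c$, so $v+w\equiv 0$ $\pi$-a.e. Since $w=v$ $\pi$-a.e., this forces $v\equiv 0$ $\pi$-a.e., contradicting $(\proj_v)_\#\mu\neq\delta_0$.

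\textbf{Identification of $T^\ast$ and conclusion.} For $T^\ast<\infty$, the sequence $T_k$ is bounded; multiplying the Cauchy--Schwarz bound above by $T_k$ yields
\[
\left|\int\varphi\,\bigl[2(y-x)-T_k(v+w)\bigr]\,\dif\pi_k\right|\ \le\ \|\varphi\|_\infty\,T_k\sqrt{\sfc(\pi_k)/3}\ \to\ 0\fstop
\]
As $\varphi(y-x)$ and $\varphi(v+w)$ are bounded continuous, narrow convergence allows passage to the limit and produces
\[
2\int\varphi(y-x)\,\dif\pi\ =\ T^\ast\int\varphi(v+w)\,\dif\pi,\qquad \varphi\in\mathrm{C}_c(\Gamma\times\Gamma)\fstop
\]
Arbitrariness of $\varphi$ gives $2(y-x)=T^\ast(v+w)$ $\pi$-a.e.; combined with $w=v$ $\pi$-a.e., this says $y-x=T^\ast v$ $\pi$-a.e., i.e.\ $\pi=(\id,\cG_{T^\ast})_\#\mu$. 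The uniqueness in \Cref{prop:deq0} then forces $T^\ast=T$. Since every subsequence of $(T_k)$ admits a subsubsequence converging to the same value $T$, the full sequence satisfies $T_k\to T$, which is finite, and $\nu=(\cG_T)_\#\mu$ by \Cref{cor:pitoid}.
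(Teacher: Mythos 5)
Your argument is correct and is essentially the paper's own proof: after extracting a subsequence along which $T_k\to T^\ast\in[0,\infty]$, both arguments pass to the limit in the near-collinearity relation between $y-x$ and $T_k(v+w)$ against compactly supported test functions, using the narrow convergence $\pi_k\rightharpoonup(\id,\cG_T)_\#\mu$, the vanishing of $\sfd(\mu_k,\nu_k)$, and $(\proj_v)_\#\mu\neq\delta_0$ to force $T^\ast=T$ (hence convergence of the full sequence). The differences are cosmetic only: you linearise via Cauchy--Schwarz where the paper works with the squared integrands $\abs{\tfrac{y-x}{T_k}-\tfrac{v+w}{2}}^2\varphi$ and uniform convergence on compact supports, and you exclude $T^\ast=\infty$ as a separate step rather than folding it into the $T>0$ case; for the degenerate indices with $T_k=0$ your multiplied bound still holds trivially, since then $y=x$ holds $\pi_k$-a.e.
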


\begin{proof}
	We know from \Cref{cor:pitoid} that~$\pi_k \rightharpoonup (\id,\mathcal G_{\tilde T})_\# \mu$ for some~$\tilde T \ge 0$. Up to extracting a subsequence, we may assume that~$T \coloneqq \lim_{k \to \infty} T_k$ exists in~$\R_{\ge 0} \cup \set{\infty}$. We shall prove that~$T = \tilde T$.
	
	If~$\norm{y-x}_{\Lp{2}(\pi_k)} = 0$ frequently, then~$T=0$ and, by semicontinuity, we obtain
	\[
		0 = \norm{y-x}_{\Lp{2}\bigl((\id, \mathcal G_{\tilde T})_\# \mu\bigr)} = \tilde T \norm{v}_{\Lp{2}(\mu)} \semicolon
	\]
	hence,~$\tilde T = 0$.
	Up to subsequences, we can from now on assume that~$\norm{y-x}_{\Lp{2}(\pi_k)} > 0$ for every~$k$. By \Cref{prop:thm1.1}, we have
	\[
		\sfd(\mu_k,\nu_k) = \sfc(\pi_k) = \tilde \sfc(\pi_k) \ge \int \abs{\frac{y-x}{T_k} - \frac{v+w}{2}}^2 \, \dif \pi_k \comma \qquad k \in \N \fstop
	\]
	
	Let~$\varphi \in \mathrm{C_c}(\Gamma \times \Gamma)$ be non-negative and assume that~$T > 0$. Then, the convergence 
	\[
		\abs{\frac{y-x}{T_k} - \frac{v+w}{2}}^2 \varphi \to \abs{\frac{y-x}{T} - \frac{v+w}{2}}^2 \varphi
	\]
	is uniform; hence,
	\[
		\int \abs{\frac{\tilde T }{T}v - v}^2 \varphi \, \dif \mu = \lim_{k \to \infty} \int \abs{\frac{y-x}{T_k} - \frac{v+w}{2}}^2 \varphi \, \dif \pi_k \le \norm{\varphi}_\infty \liminf_{k \to \infty} \sfd(\mu_k,\nu_k) = 0 \fstop
	\]
	This proves, by arbitrariness of~$\varphi$, that~$\frac{\tilde T}{T}v = v$ for~$(\proj_v)_\# \mu$-a.e.~$v$. Since, by assumption,~$(\proj_v)_\# \mu \neq \delta_0$, we conclude that~$\tilde T = T$.
	
	Let again~$\varphi \in \mathrm{C_c}(\Gamma \times \Gamma)$ be non-negative and assume that~$T = 0$. Now the convergence
	\[
		\abs{y-x-T_k \frac{v+w}{2}}^2 \varphi \to \abs{y-x}^2 \varphi
	\]
	is uniform; hence,
	\[
		\int \abs{\tilde T \, v}^2 \varphi \, \dif \mu = \lim_{k \to \infty} \int \abs{y-x-T_k \frac{v+w}{2}}^2 \varphi \, \dif \pi_k \le \norm{\varphi}_\infty \liminf_{k \to \infty} T_k \sfd(\mu_k,\nu_k) = 0 \fstop
	\]
	We conclude, as before, that~$\tilde T = 0 = T$.
\end{proof}

\section{Dynamical formulations of kinetic optimal transport}\label{sec:sec4}

This section, devoted to the dynamical formulations of kinetic optimal transport that we introduced in \S\ref{sec:sec1} (see \eqref{eq:dtp1.} and \eqref{eq:vlasov2.}), is organised as follows.
\begin{itemize}
    \item In \S\ref{ss:41}, we explore dynamical transport plans in the kinetic setting and prove the equality of $\tilde\sfd_T$ and $\tilde\sfn_T$, together with the existence of a minimiser for \eqref{eq:dtp1.}.
    \item In \S\ref{ss:42}, we better characterise the \emph{optimal spline interpolations} stemming from Theorem \ref{lifthm} and we discuss injectivity of optimal-spline flows. 
    \item In \S\ref{ss:43}, we study the Vlasov's equation \eqref{eq:vlasov.}, and we conclude the proof of \Cref{thm2} with the kinetic Benamou--Brenier formula of Theorem \ref{thmbb}.
    \item In \S\ref{ss:44}, we show propagation of second-order moments along solutions to \eqref{eq:vlasov.}.
    \end{itemize}

\subsection{Dynamical plans}\label{ss:41}
For a fixed~$T > 0$, a $T$-\emph{dynamical (transport) plan} between~$\mu,\nu \in \cP_2(\Gamma)$ is a probability measure~$\mathbf{m} \in \cP\bigl(\mathrm{H}^2(0,T;\cX)\bigl)$ subject to the endpoint conditions \eqref{eq:bdryConds}. 

The theorem below shows that minimising the acceleration functional 
\[
\alpha \longmapsto T \int_0^T \int_{\mathrm H^2(0,T;\cX)} \abs{\alpha''(t)}^2 \, \dif \mathbf{m}(\alpha) \, \dif t  
\]
along $T$-dynamical plans between $\mu$ and $\nu$ is equivalent to computing $\tilde{\mathsf d}^2_T(\mu,\nu)$ via  \eqref{eq0.}. 

The leading idea is that optimal $T$-dynamical plans between $\mu$ and $\nu$ are supported on $T$-\emph{splines} between points~$(x,v) \in \mathrm{supp} (\mu)$ and $(y,w) \in  \mathrm{supp}(\nu)$. Splines are uniquely determined by their endpoints, and their total squared acceleration equals~$\tilde d_T^2\bigl((x,v),(y,w))$. Endpoints chosen according to an optimal coupling~$\pi$ for~$\tilde{\mathsf d}^2_T(\mu,\nu)$ determine an optimal~$\mathbf m$.

\begin{thm}\label{lifthm}
	For every~$\mu,\nu \in \mathcal{P}_2(\Gamma)$ and~$T > 0$, the problem \eqref{eq:dtp1.} admits a minimiser. Moreover, we have the identity
	\begin{equation}
		\tilde \sfn_T(\mu,\nu)
		=
		\tilde \sfd_T(\mu,\nu) \fstop
	\end{equation}
\end{thm}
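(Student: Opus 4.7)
The plan is to prove the two inequalities $\tilde \sfn_T \leq \tilde \sfd_T$ and $\tilde \sfn_T \geq \tilde \sfd_T$ separately, and obtain existence as a byproduct of the construction used in the first inequality. The whole strategy rests on the single-particle identity $\tilde d_T^2((x,v),(y,w)) = T \int_0^T |(\alpha^T_{x,v,y,w})''(t)|^2 \, \dif t$, together with the fact, recalled from \S\ref{ss:21}, that the cubic spline $\alpha^T_{x,v,y,w}$ given by the explicit formula \eqref{eq:gammaT} is the \emph{unique} minimiser of the single-particle problem \eqref{eq:dynd2}.

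For the inequality $\tilde\sfn_T \leq \tilde\sfd_T$ and the existence of an optimiser, I would start from an optimal plan $\pi \in \Pi(\mu,\nu)$ for $\tilde\sfd_T^2$, whose existence follows from the classical Kantorovich existence argument applied to the continuous, bounded-below cost $\tilde d_T^2$. I would then define the \emph{spline map}
\[
	S_T \colon \Gamma \times \Gamma \to \mathrm{H}^2(0,T;\cX) \comma \qquad S_T(x,v,y,w) \coloneqq \alpha^T_{x,v,y,w} \comma
\]
which is continuous because $\alpha^T_{x,v,y,w}$ is a polynomial with coefficients that depend continuously on $(x,v,y,w)$, cf.~\eqref{eq:gammaT}. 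Setting $\mathbf{m} \coloneqq (S_T)_\# \pi$, the evaluation identities $S_T(x,v,y,w)(0) = x$, $S_T(x,v,y,w)'(0) = v$, $S_T(x,v,y,w)(T) = y$, $S_T(x,v,y,w)'(T) = w$ show that $\mathbf{m}$ satisfies the boundary conditions \eqref{eq:bdryConds}. By Fubini and \eqref{eq:dynd}/\eqref{eq:dtildeT}, one computes
\[
	T \int_0^T \int_{\mathrm{H}^2} |\alpha''(t)|^2 \, \dif \mathbf{m}(\alpha) \, \dif t = \int_{\Gamma \times \Gamma} \tilde d_T^2\bigl((x,v),(y,w)\bigr) \, \dif \pi = \tilde\sfc_T(\pi) = \tilde\sfd_T^2(\mu,\nu) \comma
\]
so $\mathbf{m}$ is an admissible competitor for \eqref{eq:dtp1.} with value $\tilde\sfd_T^2(\mu,\nu)$, hence $\tilde\sfn_T^2(\mu,\nu) \leq \tilde\sfd_T^2(\mu,\nu)$.

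For the converse inequality $\tilde\sfn_T \geq \tilde\sfd_T$, I would take an arbitrary admissible $\mathbf{m}$ for \eqref{eq:dtp1.} (with finite cost, otherwise the bound is trivial) and define $\pi \coloneqq (\mathrm{ev}_0, \mathrm{ev}_T)_\# \mathbf{m}$, where $\mathrm{ev}_t(\alpha) \coloneqq (\alpha(t),\alpha'(t))$; these evaluation maps are continuous on $\mathrm{H}^2(0,T;\cX)$ by the Sobolev embedding $\mathrm{H}^2(0,T;\cX) \hookrightarrow \mathrm{C}^1([0,T];\cX)$, so $\pi$ is a well-defined Borel measure, and the boundary conditions on $\mathbf{m}$ yield $\pi \in \Pi(\mu,\nu)$. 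The single-particle optimality \eqref{eq:dynd2} applied pointwise gives $T \int_0^T |\alpha''(t)|^2 \, \dif t \geq \tilde d_T^2\bigl((\alpha(0),\alpha'(0)),(\alpha(T),\alpha'(T))\bigr)$ for every $\alpha \in \mathrm{H}^2(0,T;\cX)$. Integrating against $\mathbf{m}$ and using Fubini,
\[
	T \int_0^T \int |\alpha''(t)|^2 \, \dif \mathbf{m}(\alpha) \, \dif t \geq \int \tilde d_T^2 \, \dif \pi = \tilde\sfc_T(\pi) \geq \tilde\sfd_T^2(\mu,\nu) \comma
\]
and taking the infimum over $\mathbf{m}$ yields $\tilde\sfn_T^2 \geq \tilde\sfd_T^2$. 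Combined with the first half of the argument, this also shows that the measure $\mathbf{m}$ built from an optimal $\pi$ is a minimiser for \eqref{eq:dtp1.}, establishing existence.

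The only step I expect to require care is the measurability/continuity of the spline map $S_T$ and of the evaluation maps $\mathrm{ev}_0, \mathrm{ev}_T$, together with the Fubini step that requires $\int \int |\alpha''|^2 \, \dif t \, \dif \mathbf{m} < \infty$. The first is immediate from the polynomial formula \eqref{eq:gammaT}; the second is standard; the third is trivial on the side where one starts from an optimal $\pi$ (because $\pi$ has finite second moments, making $\int \tilde d_T^2 \, \dif \pi$ finite) and can be assumed without loss on the other side. No further subtleties arise because no compactness or lower-semicontinuity argument is needed: the equality $\tilde\sfn_T = \tilde\sfd_T$ itself delivers the minimiser.
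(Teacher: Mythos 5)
Your proposal is correct and follows essentially the same route as the paper: both directions are obtained by pushing an optimal static plan forward through the (explicit, continuous) spline map $\pi \mapsto \mathbf m_\pi$ to get $\tilde\sfn_T \le \tilde\sfd_T$, and by projecting an arbitrary admissible dynamical plan onto its endpoint marginals and invoking the pointwise single-particle optimality \eqref{eq:dynd} for the reverse inequality, with existence of a minimiser falling out of the first construction. The only cosmetic difference is that you write the spline mixture as a pushforward $(S_T)_\#\pi$ while the paper writes it as $\int \delta(\alpha^T_{x,v,y,w})\,\dif\pi$; these are the same object.
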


\begin{proof}
	Fix $T>0$. We build a correspondence between admissible dynamical transport plans for \eqref{eq:dtp1.} and plans in $\Pi(\mu,\nu)$.
	If $\mathbf m \in \cP\bigl(\mathrm{H}^2(0,T;\cX)\bigr)$ is admissible (i.e.,~it satisfies~\eqref{eq:bdryConds}), we have that \begin{equation} \pi_{\mathbf{m}} \coloneqq \biggl(\proj_{\bigl(\alpha(0),\alpha'(0)\bigr)}, \, \proj_{\bigl(\alpha(T),\alpha'(T)\bigr)}  \biggr)_\# \mathbf{m} \in \Pi(\mu,\nu) \fstop
 \end{equation}
	Conversely, given~$\pi \in \Pi(\mu,\nu)$, we construct a~$T$-dynamical plan as follows:~$\mathbf{m}_{\pi}$ is the push-forward of~$\pi$ through the map~$\bigl((x,v),(y,w) \bigr) \mapsto \alpha_{x,v,y,w}^T (\cdot)$, see~\eqref{eq:gammaT}.
	Note 
	$ \pi_{\mathbf{m}_\pi} = \pi$ for~$\pi \in \Pi(\mu,\nu)$, and $\mathbf{m}_{\pi_{{\mathbf m}}} = \mathbf{m}$ for all~$T$-dynamical plans~$\mathbf{m}$ \emph{concentrated on $T$-splines}. 
	
	Let $\mathbf{m}$ be any admissible dynamical transport plan in \eqref{eq:dtp1.}. For every~$\alpha$, it is clear from \eqref{eq:dynd} that $T\,\int_0^T |\alpha''(t)|^2 \, \dif t \geq 12 \left| \frac{y-x}{T} - \frac{v+w}{2} \right|^2 +|v-w|^2,$ where $(x,v),(y,w)$ are the endpoints of $\alpha$.
	Equality holds if and only if $\alpha$ is minimal in \eqref{eq:dynd}.
	Thus,
	\[
	\begin{aligned}
		&T\,\int_0^T \int_{\mathrm H^2(0,T;\cX)} 
		|\alpha''(t)|^2 \, \dif \mathbf{m}(\alpha) \, \dif t 
		= 
		\int_{\mathrm H^2(0,T;\cX)} T\,\int_0^T |\alpha''(t)|^2 \dif t \, \dif \mathbf{m}(\alpha) 
		\\  &\quad 
		\geq 
		\int_{\mathrm H^2(0,T;\cX)} \left( 12\left| \frac{\alpha(T)-\alpha(0)}{T} - 
		\frac{\alpha'(0)+\alpha'(T)}{2} \right|^2 +|\alpha'(0)-\alpha'(T)|^2 \right) \, \dif \mathbf{m}(\alpha) 
		\\  &\quad
		= \int_{\Gamma \times \Gamma} \left(12 \left| \frac{y-x}{T} - \frac{v+w}{2} \right|^2 +|v-w|^2 \right) \, \dif \pi_{\mathbf m}\bigl((x,v),(y,w)\bigr) 
		\geq 
		\tilde \sfd_T^2(\mu,\nu)
		\fstop    
	\end{aligned}
	\]
	Optimising in $\mathbf{m}$, we find 
	$ \tilde \sfn_T(\mu,\nu)
	\geq 
	\tilde \sfd_T(\mu,\nu)$.
	
	Turning to the converse inequality,
	let $\pi$ be optimal in the definition~\eqref{eq0.} of~$\tilde \sfd_T(\mu,\nu)$.
	By definition of~$\mathbf{m}_\pi$, we have 
	\[
	\begin{aligned}
		\tilde \sfn_T^2(\mu,\nu) 
		&\leq 
			T  \int_0^T \int_{\mathrm H^2(0,T;\cX)} |\alpha''(t)|^2 
				 d\mathbf{m_\pi}(\alpha) \,  \dif t  
		\\ &=
		\int_{\Gamma \times \Gamma} T\int_0^T \abs{(\alpha^T_{x,v,y,w})''(t)}^2 
			\, \dif t\, \dif \pi\bigl((x,v),(y,w)\bigr) 
		 \\ &=  
		 	\int_{\Gamma \times \Gamma} \left(\left| 
					\frac{y-x}{T} - \frac{v+w}{2} \right|^2 
					+|v-w|^2 \right) \dif \pi = \tilde \sfd^2_T(\mu,\nu) 
		\comma
	\end{aligned}
	\]
	thanks to the fact that $\mathbf{m}_\pi$ is supported on $T$-splines $\alpha^T_{x,v,y,w}.$
	As a by-product, we have that $\mathbf{m}_\pi$ is optimal in the minimisation problem for $\tilde \sfn_T(\mu,\nu)$. 
\end{proof}

\begin{rem}
	By optimising in $T$, and then taking the lower semi-continuous relaxation~$\mathrm{sc}^-_{\mathrm{W}_2}$ in 
	$\tilde \sfd_T(\mu,\nu) = \tilde \sfn_T(\mu,\nu)$, we also have that 
	\[
	\begin{aligned}
		\tilde \sfd^2(\mu,\nu) &= \inf_{T>0} \,  \inf_{\mathbf{m} \in \mathcal{P}\bigl(\mathrm H^2(0,T;\cX)\bigl) } \set{  T \int_0^T \int \abs{\alpha''(t)}^2 \, \dif \mathbf{m}(\alpha) \, \dif t \quad \text{ subject to~\eqref{eq:bdryConds}}} \comma \\
		\sfd^2(\mu,\nu) &= \mathrm{sc}^-_{\mathrm{W}_2} \, \inf_{T>0} \,  \inf_{\mathbf{m} \in \mathcal{P}\bigl(\mathrm H^2(0,T;\cX)\bigl) } \set{  T \int_0^T \int \abs{\alpha''(t)}^2 \, \dif \mathbf{m}(\alpha) \, \dif t \quad \text{ subject to~\eqref{eq:bdryConds}}} \fstop
	\end{aligned}
	\]
\end{rem}

\subsection{Spline interpolation and injectivity}\label{ss:42}

As pointed out in \S\ref{sss123}, interpolation of measures based on splines is relevant for various applications. For all $\mu,\nu \in \mathcal P_2(\Gamma)$, all $T>0$, and $\pi \in \Pi(\mu,\nu)$ such that $\pi$ is optimal for $\tilde \sfd_T(\mu,\nu)$, the proof of Theorem \ref{lifthm} provides us with an optimal dynamical transport plan~$\mathbf{m}_{\pi}$. The plan $\mathbf{m}_{\pi}$ is supported on splines (parametrised on $[0,T])$ joining points of $\mathrm{supp}(\mu)$ and $\mathrm{supp}(\nu)$.
Hence, we can interpret the curve
\begin{equation}\label{projsplines} 
	[0,T] \ni t \longmapsto \bar{\mu}_t  \coloneqq \left(\proj_{\bigl(\alpha(t),\alpha'(t)\bigr)} \right)_\# \mathbf{m}_{\pi}
\end{equation}
as an optimal \emph{spline interpolation} between $\mu$ and $\nu$. In \S\ref{ss:43}, we will show that the curve $(\bar\mu_t)_t$ satisfies an optimality criterion and it is a solution to Vlasov's equation \eqref{eq:vlasov.}, for a suitable force field $(F_t)_t.$

We start with proving injectivity of the interpolation $\bar{\mu}_t$, whenever $\mu \ll \dif x \, \dif v$.

\begin{prop}[Injective optimal spline interpolation]\label{injtinterp}
	Fix $T>0$ and~$\mu,\nu \in \mathcal{P}_2(\Gamma)$ such that $\mu$ is absolutely continuous with respect to the Lebesgue measure on $\Gamma$. Then, there exists a unique optimal $T$-dynamical transport plan~$\bar {\mathbf{m}}$ for $\tilde \sfn_T(\mu,\nu)$ and, therefore, a unique spline interpolation~$\bar \mu_{\cdot}$.
	Moreover, for every~$t \in [0,T]$, there exists a~$\mu$-a.e.~injective (and measurable) map~$M_t \colon \Gamma \to \Gamma$ such that
	\begin{equation}
		\bar{\mu}_t = (M_t)_\# \mu \fstop
	\end{equation}
\end{prop}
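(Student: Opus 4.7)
The plan is to combine the uniqueness statement for optimal kinetic transport plans (Proposition \ref{prop:thm1.4}) with the spline-lifting correspondence of Theorem \ref{lifthm}, and then deploy the Monge--Mather shortening principle (Proposition \ref{prop:inj}) to obtain injectivity of $M_t$.

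First, since $\mu$ is absolutely continuous, Proposition \ref{prop:thm1.4} provides a unique optimal plan $\pi_T = (\id,M_T)_\# \mu \in \Pi(\mu,\nu)$ for $\tilde\sfd_T(\mu,\nu)$. The proof of Theorem \ref{lifthm} shows that any optimal $T$-dynamical plan $\mathbf{m}$ must satisfy both $\pi_{\mathbf{m}} = \pi_T$ and concentration on $T$-splines---otherwise the pointwise bound $T\int_0^T \abs{\alpha''}^2 \, \dif t \ge \tilde d_T^2\bigl((\alpha(0),\alpha'(0)),(\alpha(T),\alpha'(T))\bigr)$ would be strict on a positive $\mathbf{m}$-measure set, contradicting the equality $\tilde\sfn_T^2 = \tilde\sfd_T^2$. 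Hence $\mathbf{m}$ coincides with $\mathbf{m}_{\pi_T} = \int \delta\bigl(\alpha^T_{x,v,M_T(x,v)}\bigr) \, \dif\mu(x,v)$, giving uniqueness of $\bar{\mathbf{m}}$ and hence of the spline interpolation $\bar\mu_\cdot$.

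For $t \in [0,T]$ I would then define
\[
M_t(x,v) \coloneqq \Bigl( \alpha^T_{x,v,M_T(x,v)}(t),\ \bigl(\alpha^T_{x,v,M_T(x,v)}\bigr)'(t) \Bigr),
\]
which is Borel measurable by the explicit polynomial expression \eqref{eq:gammaT} together with the measurability of $M_T$, and satisfies $(M_t)_\# \mu = \bar\mu_t$ by \eqref{projsplines}. The key point is $\mu$-a.e.~injectivity. Since $\pi_T$ is optimal, $\supp(\pi_T)$ is $\tilde d_T^2$-cyclically monotone, so the two-point inequality \eqref{eq:mon} holds for $\mu \otimes \mu$-a.e.~$\bigl((x_1,v_1),(x_2,v_2)\bigr)$ with $(y_i,w_i) := M_T(x_i,v_i)$. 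If $M_t(x_1,v_1) = M_t(x_2,v_2)$ for some $t \in (0,T)$, the splines $\alpha_i := \alpha^T_{x_i,v_i,y_i,w_i}$ meet at time $t$ with matching position and velocity, and Proposition \ref{prop:inj} forces $(x_1,v_1) = (x_2,v_2)$. This gives $\mu$-a.e.~injectivity of $M_t$ for $t \in (0,T)$; the case $t=0$ is trivial since $M_0 = \id$.

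The main obstacle is the endpoint $t = T$: Proposition \ref{prop:inj} strictly requires $\bar t \in (0,T)$, so a separate argument is needed there. In fact, $M_T$ can fail to be $\mu$-a.e.~injective without additional structure on $\nu$ (e.g.~if $\nu$ is a Dirac mass, then necessarily $M_T$ is constant), so I would expect the proof either to restrict the claim to $t \in [0,T)$ or to appeal to additional regularity of $\nu$---for instance absolute continuity of $\nu$, which would yield a reverse optimal map by a symmetric application of Proposition \ref{prop:thm1.4}.
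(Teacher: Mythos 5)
Your argument is essentially the paper's own: uniqueness of $\bar{\mathbf m}$ is obtained exactly as you describe (optimality forces concentration on $T$-splines, so $\bar{\mathbf m}=\mathbf m_{\bar\pi}$ with $\bar\pi$ the unique optimal plan from \Cref{prop:thm1.4}), and $\mu$-a.e.~injectivity of the maps in \eqref{Mt} is proved in the paper's \Cref{injectiveMap} precisely via the $\tilde d_T^2$-cyclical monotonicity of $\supp \pi_T$ combined with the shortening principle of \Cref{prop:inj}. Your reservation about the endpoint is also well founded: \Cref{injectiveMap} only yields injectivity for $t\in(0,T)$, yet the proof of \Cref{injtinterp} invokes it for all $t\in[0,T]$, and your Dirac-mass example shows that $\mu$-a.e.~injectivity of $M_T$ can genuinely fail; so the claim should be read as holding for $t\in[0,T)$ (or under extra assumptions on $\nu$), which is a flaw in the paper's statement rather than a gap in your proof.
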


\begin{proof}
	Existence of~$\bar {\mathbf m}$ is ensured by \Cref{lifthm}. The plan~$\pi_{\bar {\mathbf m}}$ is a minimiser for~$\tilde \sfc_T$ in~$\Pi(\mu,\nu)$. Since~$\mu$ is absolutely continuous, \Cref{prop:thm1.4} shows that~$\pi_{\bar {\mathbf m}}$ equals the \emph{unique}~$\bar \pi$ that is optimal for~$\tilde \sfd_T(\mu,\nu)$. Therefore,
	\[
		\bar {\mathbf m} = \bar {\mathbf m}_{\pi_{\bar {\mathbf m}}} = \bar {\mathbf m}_{\bar \pi} \fstop
	\]
	Once~$(\bar \mu_t)_{t \in [0,T]}$ has been defined as in~\eqref{projsplines}, we use \Cref{prop:thm1.4} to find, for every~$t \in [0,T]$, an optimal map~$M_t$ for~$\tilde \sfd_t(\mu,\bar \mu_t)$. Injectivity of~$M_t$ follows from \Cref{injectiveMap}.
\end{proof}

\begin{lem} \label{injectiveMap}
	Fix $T>0$, and let $\mu,\nu \in \mathcal P_2(\Gamma)$. Assume that, for a Borel map $M_T : \Gamma \to \Gamma$, the transport plan $\pi = (\id,M_T)_\# \mu$ is optimal for $\tilde \sfd_T(\mu,\nu)$. Then there exists a Borel set $A \subseteq \Gamma$ of full $\mu$-measure such that, for every $t \in (0,T)$, the map
	\begin{equation}\label{Mt}
		M_t(x,v) := \left(\alpha^T_{x,v,M_T(x,v)}(t), \bigl(\alpha^T_{x,v,M_T(x,v)}\bigr)'(t) \right), \qquad (x,v) \in A
	\end{equation}
	is injective, where $\alpha^T_{x,y,v,w}$ is the solution of \eqref{eq:dynd}.
	Moreover, the set $B := \bigcup_{t \in (0,T)} \{t\} \times M_t(A)$ and the map $B \ni (t,y,w) \mapsto M_t^{-1}(y,w)$ are Borel measurable.
\end{lem}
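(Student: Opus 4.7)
The plan is to combine two ingredients already available in the excerpt: the Monge--Mather shortening principle of \Cref{prop:inj}, and the (two-point) cyclical monotonicity of the support of an optimal transport plan. The Monge--Mather principle says precisely that two optimal splines cannot cross at the same time with the same velocity without their endpoints coinciding, provided the four endpoints satisfy the swap inequality \eqref{eq:mon}; and that inequality is exactly the two-monotonicity condition enjoyed $\pi$-almost everywhere by any optimiser of a Kantorovich problem.

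First I would note that $M_t$ in \eqref{Mt} is a polynomial in $t$ with coefficients that are Borel functions of $(x,v,M_T(x,v))$, via the explicit expression~\eqref{eq:gammaT}; thus $M_t$ is Borel for every $t$, and moreover jointly Borel in $(t,x,v)$. Next, since $\tilde d_T^2$ is a finite continuous cost and $\pi \in \Pi_{\mathrm{o}}(\mu,\nu)$ for $\tilde\sfd_T$ has finite total cost (as $\mu,\nu\in\mathcal P_2(\Gamma)$), a standard argument (e.g.~\cite[Chapter 5]{MR2459454}) produces a Borel set $\Gamma'\subseteq \Gamma\times\Gamma$ with $\pi(\Gamma')=1$ which is $\tilde d_T^2$-cyclically monotone, and in particular two-monotone, i.e.~the inequality \eqref{eq:mon} holds for every pair of points in $\Gamma'$. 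Defining
\[
	A\coloneqq \{(x,v)\in\Gamma \,:\, ((x,v),M_T(x,v))\in\Gamma'\} \comma
\]
which is Borel as the preimage of $\Gamma'$ under a Borel map, we get $\mu(A)=\pi(\Gamma')=1$ because $\pi$ is concentrated on the graph of $M_T$.

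For the injectivity assertion, I would fix $t\in(0,T)$ and $(x_1,v_1),(x_2,v_2)\in A$ with $M_t(x_1,v_1)=M_t(x_2,v_2)$, and set $\a_i\coloneqq\a^T_{x_i,v_i,M_T(x_i,v_i)}$. By construction both splines are optimal for their endpoints, the pairs $((x_i,v_i),M_T(x_i,v_i))$ both lie in $\Gamma'$ so the two-monotonicity hypothesis \eqref{eq:mon} of \Cref{prop:inj} is satisfied, and by assumption $\bigl(\a_1(t),\a_1'(t)\bigr)=\bigl(\a_2(t),\a_2'(t)\bigr)$ at the interior point $t\in(0,T)$. \Cref{prop:inj} then forces $(x_1,v_1)=(x_2,v_2)$.

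Finally, for the Borel structure of $B$ and of the inverse, I would consider the map $\Phi\colon (0,T)\times A\to (0,T)\times\Gamma$, $\Phi(t,x,v)\coloneqq(t,M_t(x,v))$. It is Borel (by joint Borel-measurability of $M_t$ in $(t,x,v)$) and injective: the first-coordinate constraint handles distinct $t$'s, and the injectivity of $M_t$ on $A$ just proved handles the equal-$t$ case. Since $(0,T)\times A$ is a standard Borel space, the Lusin--Suslin theorem~\cite[Theorem~15.1]{MR1321597} implies that $B=\Phi\bigl((0,T)\times A\bigr)$ is Borel and $\Phi^{-1}\colon B\to (0,T)\times A$ is Borel; composing with the projection onto the $A$-factor yields the Borel measurability of $(t,y,w)\mapsto M_t^{-1}(y,w)$. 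The only real substance in the argument is the interplay between two-monotonicity and \Cref{prop:inj}; everything else is bookkeeping, and no step looks like it should cause difficulty.
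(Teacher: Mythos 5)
Your proof is correct and follows essentially the same route as the paper's: define $A$ via a cyclically monotone set carrying $\pi$ (the paper uses $\supp\pi$ together with c-cyclical monotonicity, you use a full-measure Borel monotone set, which is equivalent here since $\tilde d_T^2$ is continuous), apply \Cref{prop:inj} to the two splines meeting at an interior time, and conclude the Borel statements from the Lusin--Suslin theorem applied to the injective map $(t,x,v)\mapsto(t,M_t(x,v))$. No gaps.
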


\begin{proof}
	Define
	\begin{equation*}
		A := \left\{ (x,v) \in \Gamma \, : \, \bigl((x,v),M_T(x,v)\bigr) \in \supp \pi \right\} = (\id, M_T)^{-1}(\supp \pi) \comma
	\end{equation*}
	and notice that
	\begin{equation*}
		\mu(A) = \mu\bigl((\id,M_T)^{-1}(\supp \pi)\bigr) = \pi(\supp \pi) = 1 \fstop
	\end{equation*}
	By cyclical monotonicity \cite[Theorem 1.38]{santambrogio2015optimal}, we know that
	\begin{multline*}
		\tilde \sfd_T^2\bigl((x_1,v_1),M_T(x_1,v_1)\bigr) + \tilde \sfd_T^2\bigl((x_2,v_2),M_T(x_2,v_2)\bigr) \\ \le \tilde \sfd_T^2\bigl((x_1,v_1),M_T(x_2,v_2)\bigr) + \tilde \sfd_T^2\bigl((x_2,v_2),M_T(x_1,v_1)\bigr)  \comma
	\end{multline*}
	for every $(x_1,v_1), (x_2,v_2) \in A$. Hence, injectivity for every $t \in (0,T)$ comes from \Cref{prop:inj}.
	Consequently, the map
	\begin{equation*}
		(0,T) \times A \ni (t,x,v) \longmapsto \bigl(t,M_t(x,v)\bigr) \in B
	\end{equation*}
	is (Borel and) bijective. By the Lusin--Suslin theorem \cite[Corollary 15.2]{MR1321597}, images of Borel sets through injective maps are Borel, from which the second assertion follows.
\end{proof} 
\subsection{Vlasov's equations and the kinetic Benamou--Brenier formula}\label{ss:43}

\subsubsection*{The class of Vlasov's equations}

\begin{defn}
    Let $a,b \in \mathbb R \cup \{\pm \infty\}$ with~$a<b$. Let~$(\mu_t)_{t \in (a,b)} \subseteq \cP_2(\Gamma)$ be a Borel family of probability measures, and let~$F = (F_t)_{t} \colon (a,b) \times \Gamma \to \R^n$ be a time-dependent measurable vector field. Assume that
    \begin{equation}\label{eq:ic}
		\int_a^b \int_\Gamma \bigl( \, \abs{v}+\abs{F_t}\bigr) \, \dif \mu_t \, \dif t < \infty \fstop
	\end{equation}
    We say that $(\mu_t,F_t)_{t \in (a,b)}$ is a solution to \emph{Vlasov's equation}
	\begin{equation}
		\label{eq:cont}
		\partial_t \mu_t + v \cdot \nabla_x \mu_t + \nabla_v \cdot (F_t \, \mu_t) = 0  
	\end{equation}
	if  \eqref{eq:cont} is solved in the weak sense, namely

	\begin{equation} \forall \varphi \in \mathrm{C}^\infty_\mathrm{c}\bigl((a,b) \times \Gamma\bigr) \quad \int_a^b \int_\Gamma \bigl(\partial_t \varphi + v \cdot  \nabla_x \varphi + F_t \cdot \nabla_v \varphi \bigr) \, \dif \mu_t \, \dif t = 0 \end{equation}
	(or, equivalently, for every~$\varphi \in \mathrm{C}^1_\mathrm{c}\bigl((a,b) \times \Gamma \bigr)$, cf.~\cite[Remark~8.1.1]{MR2401600}).
\end{defn}

\begin{prop}[{\cite[Section~4.1.2]{santambrogio2015optimal}}, {\cite[Section~8.1]{MR2401600}}\textcolor{white}{.}]
	Let $(\mu_t,F_t)_t$ be a solution to~\eqref{eq:cont}. Then, up to changing the representative of $(\mu_t)_t$ (i.e.,~changing~$\mu_t$ for a negligible set of times~$t$), the following hold.
	\begin{itemize}
		\item The curve $(\mu_t)_t$ is continuous w.r.t.~the narrow convergence of measures, and extends continuously to the closure~$[a,b]$.%
		\item For all functions $\psi \in \mathrm{C}^\infty_\mathrm{c}(\Gamma)$, the mapping $t \mapsto \int_\Gamma \psi \, \dif \mu_t(x,v)$ is absolutely continuous and it holds true that
		\begin{equation}
		\frac{\dif }{\dif t} \, \int_\Gamma \psi\, \dif \mu_t(x,v) = \int_\Gamma \nabla_{x,v} \psi \cdot (v,F_t) \, \dif \mu_t(x,v) \comma  \qquad \text{for a.e.~} t \in (a,b) \fstop
		\end{equation}
		\item If $(\mu_t)_t$ has a Lipschitz continuous density (in $t,x,v$) and $F_t$ is Lipschitz continuous in $x,v$, then \eqref{eq:cont} is also solved in the a.e.~sense.
	\end{itemize}
\end{prop}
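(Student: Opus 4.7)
The plan is to follow the standard argument for continuity equations, adapted to the phase-space drift $(v, F_t)$. I start by testing \eqref{eq:cont} against separated test functions $\varphi(t,x,v) = \phi(t) \psi(x,v)$ with $\phi \in \mathrm{C}^\infty_c(a,b)$ and $\psi \in \mathrm{C}^\infty_c(\Gamma)$, which gives
\[
	\int_a^b \phi'(t) f_\psi(t) \, \dif t = -\int_a^b \phi(t) g_\psi(t) \, \dif t,
\]
where $f_\psi(t) \coloneqq \int \psi \, \dif \mu_t$ and $g_\psi(t) \coloneqq \int (v \cdot \nabla_x \psi + F_t \cdot \nabla_v \psi) \, \dif \mu_t$. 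Since $\nabla_{x,v} \psi$ is bounded, the hypothesis \eqref{eq:ic} forces $g_\psi \in \Lp{1}(a,b)$, so $f_\psi$ admits an absolutely continuous representative with classical derivative $g_\psi$ almost everywhere; this yields the differentiation formula in the second bullet for each fixed $\psi$.

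To obtain a single representative of $(\mu_t)_t$ satisfying narrow continuity, I would next fix a countable subset $\mathcal{D} \subset \mathrm{C}^\infty_c(\Gamma)$ that is dense in $\mathrm{C}_c(\Gamma)$ in the uniform norm, and take the union $N$ of the $\psi$-indexed null sets for $\psi \in \mathcal{D}$, outside of which all $f_\psi$ coincide with their continuous representative. For $t \in N$, I redefine $\mu_t$ as a narrow cluster point of $\mu_{t_n}$ along some $N^c \ni t_n \to t$. After this redefinition $f_\psi$ is continuous for every $\psi \in \mathcal{D}$, and by uniform density the same holds for every $\psi \in \mathrm{C}_c(\Gamma)$. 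Since each $\mu_t \in \cP(\Gamma)$ is a probability measure, this vague continuity upgrades to narrow continuity once tightness of $\{\mu_t\}_{t \in K}$ on every compact $K \Subset (a,b)$ is established.

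Tightness is the principal technical obstacle and the step I expect to be hardest. My plan is to test the weak equation against smooth cutoffs $\psi_R$ of the unbounded observables $|x|$ and $|v|$, with derivative bounds independent of $R$, to derive the inequality
\[
	\int \psi_R \, \dif \mu_t \leq \int \psi_R \, \dif \mu_{t_0} + C \int_{t_0}^t \int \bigl( |v| + |F_r| \bigr) \, \dif \mu_r \, \dif r,
\]
for a suitable baseline $t_0 \in N^c$ (finiteness of $\int |x| \, \dif \mu_{t_0}$ and $\int |v| \, \dif \mu_{t_0}$ follows from $\mu_{t_0} \in \cP_2(\Gamma)$). Passing to $R \to \infty$ by monotone convergence and invoking \eqref{eq:ic} yields uniform-in-$t$ bounds on the first moments of $\mu_t$ over compacts, in the spirit of the moment propagation of \S\ref{ss:44}, and hence tightness. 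The extension of $f_\psi$ from $\mathrm{C}_c$ to $\mathrm{C}_b$ test functions then completes the narrow continuity claim.

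Finally, for the Lipschitz case in the third bullet: under the assumed Lipschitz regularity of the density $f_t$ and of $F_t$, Rademacher's theorem makes $\partial_t f$, $\nabla_x f$, and $\nabla_v \cdot (F_t f)$ well-defined almost everywhere. The weak formulation of \eqref{eq:cont}, tested against standard mollifications, then forces the pointwise identity $\partial_t f + v \cdot \nabla_x f + \nabla_v \cdot (F_t f) = 0$ a.e.\ in $(a,b) \times \Gamma$.
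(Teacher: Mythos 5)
The paper offers no proof of this proposition---it is quoted directly from \cite[Chapter 4]{santambrogio2015optimal} (cf.\ also \cite[Lemma 8.1.2]{MR2401600})---and your outline correctly reproduces that standard argument: separated test functions $\phi(t)\psi(x,v)$ plus the integrability hypothesis \eqref{eq:ic} give the absolutely continuous representative of $t \mapsto \int \psi \, \dif \mu_t$ with the stated derivative, a countable dense family of test functions together with tightness upgrades vague to narrow continuity of a single redefined representative, and Rademacher plus mollification yields the a.e.\ form of \eqref{eq:cont} in the Lipschitz case. The only detail to tidy is that your truncations $\psi_R$ of $|x|$ and $|v|$ are not compactly supported, so the weak formulation cannot be tested against them directly; one inserts an additional cutoff $\zeta(\epsilon x,\epsilon v)$ whose gradient term is $O(\epsilon)$ after integration against $(|v|+|F_t|)\,\dif\mu_t\,\dif t$ and hence harmless by \eqref{eq:ic}---the same device used in the proof of the moment estimate in \S\ref{ss:44}---after which your argument closes as written.
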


\noindent Let $(F_t)_t$ be a vector field $(a,b) \times \Gamma \to \mathbb R^{d}$, such that 
\begin{equation}
	\label{eq:ic2}
	\begin{aligned}
		&    \int_a^b \left(\sup_B |F_t| + \mathrm{Lip}_B (F_t) \right) \, \dif t < \infty \comma \\  
	\end{aligned}
\end{equation}
for every compact set $B \Subset \Gamma.$
Then, for every~$(x,v) \in \Gamma$, the associated flow $t \mapsto M_t= (x_t,v_t)$ given by 
\begin{equation}
	\label{eq:char2}
	\begin{cases}
		M_a(x,v) = (x,v) \comma \\
		\partial_t M_t = \bigl(v_t,F_t(x_t,v_t) \bigr) \comma
	\end{cases}
\end{equation}
is well-posed in an interval~$[a,a+\epsilon)$ with~$\epsilon > 0$%
, see \cite[Lemma 8.1.4]{MR2401600}.
 In case
\begin{equation}
    \label{eq:global}
    \int_a^b \left(\sup_\Gamma |F_t| + \mathrm{Lip}_\Gamma (F_t) \right) \, \dif t < \infty \comma
\end{equation}
we have global existence of the flow $M_t$, i.e., \eqref{eq:char2} is well-posed in $[a,b]$.

\begin{prop}[{\cite[Lemma 8.1.6 \& Proposition~8.1.8]{MR2401600}}\textcolor{white}{.}]\label{prop38}
	Let $\mu \in \cP_2(\Gamma)$ and let~$(F_t)_{t \in (a,b)}$ be a vector field satisfying \eqref{eq:ic} and \eqref{eq:ic2}.
	\begin{itemize}
	\item Assume that, for~$\mu$-a.e.~$(x,v) \in \Gamma$, the flow~$t \mapsto M_t(x,v)$ defined by \eqref{eq:char2} is well-posed in the interval~$[a,b)$. Then,~$t \mapsto \mu_t \coloneqq (M_t)_\# \mu$ is narrowly continuous and~$(\mu_t,F_t)_t$ is a weak solution to \eqref{eq:cont} in $(a,b)$. 
    
	\item Conversely, given a narrowly continuous curve $(\mu_t)_{t \in [a,b)}$ such that~$(\mu_t,F_t)_t$ solves $\eqref{eq:cont}$ on $(a,b)$, and~$\mu_a = \mu$, then the flow $t \mapsto M_t(x,v)$ associated with $(F_t)_t$ is well-defined on $(a,b)$ for~$\mu$-a.e.~$(x,v)$, and 
	\begin{equation}
		\mu_t = (M_t)_\# \mu \comma \qquad t \in (a,b) \fstop
	\end{equation}
    \end{itemize}
\end{prop}

We conclude the section by adapting the results of \cite[Section 8.2]{MR2401600} to our framework.

\begin{prop}[{\cite[Theorem~8.2.1]{MR2401600}}\textcolor{white}{.}]\label{prop39}
	Let~$(\mu_t)_{t \in [a,b)} \subseteq \cP_2(\Gamma)$ be a narrowly continuous curve such that $(\mu_t,F_t)_t$ is a solution to \eqref{eq:cont} on $(a,b)$, and \begin{equation}
		\int_a^b \int_\Gamma \bigl( \, \abs{v}^2 + \abs{F_t}^2 \bigr) \dif \mu_t \, \dif t <\infty \fstop
	\end{equation}
	Then, there exists $\boldsymbol{\eta} \in \mathcal{P}\bigl(\cX \times \cV \times \mathrm H^2(a,b;\mathcal X)\bigr)$  
	such that 
	\begin{enumerate}
		\item \label{cond1} the measure $\boldsymbol \eta$ is supported on triples $(x,v,\alpha)$ such that $(\alpha,\alpha')$ is an absolutely continuous curve solving \eqref{eq:char2}, with initial conditions $\bigl(\alpha(a),\alpha'(a)\bigr)=(x,v) \in \supp(\mu_a);$
		\item we have 
		\begin{equation}
			\left(\proj_{\bigl(\alpha(t),\alpha'(t)\bigr)}\right)_\# \boldsymbol \eta = \mu_t \comma  \qquad \text{for all } t \in [a,b] \fstop
		\end{equation}
	\end{enumerate}
	Conversely, any $\boldsymbol \eta \in \mathcal{P}\bigl(\cX \times \cV \times \mathrm{H}^2(a,b;\mathcal X)\bigr)$ satisfying Condition~\ref{cond1} and
	\begin{equation}
		\int_a^b \int_{ \mathrm{H}^2(a,b;\mathcal X)} \bigl(\, \abs{\alpha'(t)}^2 + \abs{F_t(\alpha(t),\alpha'(t))}^2 \bigr) \dif \boldsymbol \eta \, \dif t <\infty
	\end{equation}
	induces a solution to \eqref{eq:cont} via 
	\begin{equation}
		\mu_t \coloneqq  \left(\proj_{\bigl(\alpha(t),\alpha'(t)\bigr)}\right)_\# \boldsymbol \eta \comma \qquad  t \in (a,b) \fstop
	\end{equation}
\end{prop}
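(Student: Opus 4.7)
The plan is to view Vlasov's equation \eqref{eq:cont} as an ordinary continuity equation on the $2n$-dimensional phase space $\Gamma$ driven by the vector field $X_t(x,v) \coloneqq (v, F_t(x,v)) \in \R^{2n}$, and then invoke the classical superposition principle \cite[Theorem 8.2.1]{MR2401600} in this enlarged setting. Indeed, \eqref{eq:cont} rewrites as $\partial_t \mu_t + \nabla_{x,v}\cdot(X_t\mu_t)=0$, and the hypothesis $\int_a^b\!\int(|v|^2+|F_t|^2)\,\dif\mu_t\,\dif t<\infty$ is exactly the $\Lp{2}$-integrability $\int_a^b\!\int|X_t|^2\,\dif\mu_t\,\dif t<\infty$ required by the cited theorem. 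That result furnishes a Borel probability measure $\boldsymbol\eta$ on the space of absolutely continuous paths $\gamma=(x_\cdot,v_\cdot)\colon[a,b]\to\Gamma$, concentrated on a.e.\ solutions of $\dot\gamma_t=X_t(\gamma_t)$ — i.e., $\dot x_t=v_t$ and $\dot v_t=F_t(x_t,v_t)$ — satisfying $(\mathrm{ev}_t)_\#\boldsymbol\eta=\mu_t$ for every $t\in[a,b]$.

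The next step is to upgrade to $\mathrm H^2$-regularity of the spatial component. Along $\boldsymbol\eta$-typical curves one has $\ddot x_t=\dot v_t=F_t(x_t,v_t)$ a.e., and Fubini yields
\begin{equation*}
\int\!\int_a^b\bigl(|\dot x_t|^2+|\ddot x_t|^2\bigr)\,\dif t\,\dif\boldsymbol\eta(\gamma) = \int_a^b\!\int_\Gamma\bigl(|v|^2+|F_t|^2\bigr)\,\dif\mu_t\,\dif t<\infty,
\end{equation*}
so that $\boldsymbol\eta$-a.e.\ trajectory has $x_\cdot\in\mathrm H^2(a,b;\cX)$ with $\dot x_\cdot = v_\cdot$. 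I then push forward $\boldsymbol\eta$ under the Borel map $\Psi(\gamma)\coloneqq(x_a,v_a,x_\cdot)\in\Gamma\times\mathrm H^2(a,b;\cX)$ and set $\mathbf m\coloneqq\Psi_\#\boldsymbol\eta$. By construction $\mathbf m$ is concentrated on triples $(x,v,\alpha)$ for which $(\alpha,\alpha')$ solves \eqref{eq:char2} with $(\alpha(a),\alpha'(a))=(x,v)\in\supp(\mu_a)$, and the time-$t$ projection recovers $\mu_t$, which is the claim of the first half.

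The converse direction is a routine test-function computation. For $\varphi\in\mathrm C^\infty_c((a,b)\times\Gamma)$, Fubini plus the ODE satisfied on $\supp(\mathbf m)$ give
\begin{equation*}
\int_a^b\!\int_\Gamma\bigl(\partial_t\varphi+v\cdot\nabla_x\varphi+F_t\cdot\nabla_v\varphi\bigr)\,\dif\mu_t\,\dif t = \int\!\int_a^b\frac{\dif}{\dif t}\varphi\bigl(t,\alpha(t),\alpha'(t)\bigr)\,\dif t\,\dif\mathbf m,
\end{equation*}
which vanishes by compact support of $\varphi$; the $\Lp{1}$-bound \eqref{eq:ic} is automatic from the $\Lp{2}$-hypothesis on $\mathbf m$ via Cauchy--Schwarz and finiteness of $[a,b]$.

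The one genuinely non-routine step is the $\mathrm{AC}^2$-to-$\mathrm H^2$ upgrade: the abstract superposition principle provides only $\mathrm{AC}^2$ characteristics on $\Gamma$, and one must exploit the degenerate kinetic structure $X_t=(v,F_t)$ — specifically the identity $\dot x=v$ — to transfer the time-$\Lp{2}$ bound on $F_t$ into a time-$\Lp{2}$ bound on $\ddot x$, so that the spatial component is $\mathrm H^2$ rather than merely $\mathrm H^1$. Everything else reduces to verifying Borel measurability of $\Psi$ and standard push-forward bookkeeping.
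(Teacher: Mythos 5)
Your proof is correct and is essentially the argument the paper intends: the proposition is presented as an adaptation of the superposition principle of \cite[Section~8.2]{MR2401600} (Theorem~8.2.1) applied to the phase-space continuity equation with vector field $(v,F_t)$, and the paper gives no proof beyond that citation. The extra content you supply --- the Fubini upgrade from $\mathrm{AC}^2$ characteristics to $\mathrm{H}^2$ spatial components via $\ddot x_t=\dot v_t=F_t(x_t,v_t)$, and the routine test-function computation for the converse --- is exactly the bookkeeping this adaptation requires.
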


The measure $\boldsymbol \eta$ is usually referred to as the $\emph{lift}$ of the curve $(\mu_t,F_t)_t.$

\subsubsection*{Regularising Vlasov's equations}
In various technical passages of the next sections, a suitable \emph{regularisation} of Vlasov's equation \eqref{eq:cont} will be necessary. Namely, given a solution $(\mu_t,F_t)_t$ to \eqref{eq:cont}, we aim at finding a family $\bigl((\mu^\epsilon_t,F^\epsilon_t)_t\bigl)_\epsilon$, for $\epsilon >0$, such that each curve $(\mu^\epsilon_t,F^\epsilon_t)_t$ is a classical solution to \eqref{eq:cont} and $\lim_{\epsilon \to 0} (\mu^\epsilon_t,F^\epsilon_t)_t = (\mu_t,F_t)_t$ in a suitable sense. In particular, a desirable feature is that the approximation is tight enough to ensure that $\lim_{\epsilon \to 0} \, \int_a^b\|F^\epsilon_t\|^2_{\mathrm{L}^2(\mu^\epsilon_t)} \, \dif t = \int_a^b \|F_t\|^2_{\mathrm{L}^2(\mu_t)}\, \dif t$, where the non-trivial inequality is $\le$.

Classically, such arguments are obtained by convolution with \emph{some} regularising kernel (e.g., Gaussian mollifiers). A statement like \cite[Lemma 8.1.10]{MR2401600}---where the action $\int_0^T \|F_t\|_{\Lp{2}(\mu_t)}^2 \, \dif t$ is proved to decrease under any convolution operation---holds true also in our setting, with natural adaptations, see also the proof of the lemma below.

By contrast, we need a novel argument to get a counterpart of~\cite[Lemma 8.1.9]{MR2401600}. There, distributional solutions to the continuity equation~$\partial_t \mu_t + \nabla \cdot (X_t \mu_t) = 0$ are approximated with regular solutions to the same equation, via standard convolution. %
Starting from a solution to Vlasov's equation~$
	\partial_t \mu_t + \nabla_{x,v} \cdot \bigl((v,F_t)\mu_t\bigr) = 0$,
the standard convolution simply yields a solution to~$\partial_t \mu^\epsilon_t + \nabla_{x,v} \cdot (X^\epsilon_t \mu^\epsilon_t) =0,$ without ensuring the structure $X^\epsilon_t = (v,F^\epsilon_t)$.  
Indeed, the operator $v \cdot \nabla_x$ is not preserved under this regularisation.

To overcome this difficulty, we use a natural convolution product for kinetic equations, taken from  \cite{MR4527757}.
Consider the Lie group of Galilean translations of $\mathbb R^{1+2n}:$
\begin{multline} (t,x,v) \diamond (s,y,w)
		= (t+s, x+sv+y,v+w) 
		\comma \\ s,t \in \mathbb R \comma \quad x,y \in \cX = \R^n \comma \quad v,w \in \cV = \R^n \fstop\end{multline}
The Galilean inverse is given by $(t,x,v)^{-1} = (-t,-(x-tv),-v)$.
The Lebesgue measure 
on $\R^{1+2n}$ is invariant under left and right translations, i.e.,~the Galilean group is unimodular.
For finite Borel measures $\mu, \nu$ on $\R^{1 + 2n}$, their Galilean convolution $\mu  \star \nu$ is the Borel measure defined by
\begin{equation}
	\int_{\R^{1+2n}} 
		\varphi \,
	\dif\,(\mu  \star \nu)
	=
	\int_{\R^{1+2n}} \int_{\R^{1+2n}} 
		\varphi( \mathfrak{a} \diamond \mathfrak{b} ) 
	\, \dif \nu (\mathfrak{b})
	\, \dif \mu (\mathfrak{a})
	\comma \quad \varphi \in \mathrm{C_b}(\R^{1+2n})
	\fstop
\end{equation}
Measures that are absolutely continuous with respect to the Lebesgue measure will be identified with their density. In particular, for $f, g \in \Lp{1}(\R^{1+2n})$,
we have
\begin{equation}
	(f \star \nu) (\mathfrak{a})
	= 
	\int f( \mathfrak{a} \diamond \mathfrak{b}^{-1})
	\, \dif \nu (\mathfrak{b})
\comma
\quad
	(\mu \star g) (\mathfrak{b})
	= 
	\int g(\mathfrak{a}^{-1} \diamond \mathfrak{b})
	\, \dif \mu (\mathfrak{a}) 	
\fstop
\end{equation}
For vector-valued measures we apply this definition component-wise.
For 
$\mathfrak{b} \in \R^{1+2n}$, we consider the left shift 
$L^\mathfrak{b}: \mathfrak{a} \mapsto \mathfrak{b} \diamond \mathfrak{a}$,
and the right shift 
$R^\mathfrak{b}: \mathfrak{a} \mapsto \mathfrak{a} \diamond \mathfrak{b}$. 
Then,
\begin{align}
	\label{eq:comm-rel}
	L^\mathfrak{b}_\# (\mu  \star \nu) 
	= 
	(L^\mathfrak{b}_\# \mu) \star \nu  
	\comma \quad 
	R^\mathfrak{b}_\# (\mu  \star \nu) 
	= 
	\mu  \star (R^\mathfrak{b}_\# \nu) 
	\fstop
\end{align}

The relevance of the Galilean group for Vlasov's equation becomes apparent when considering infinitesimal Galilean translations. 
Indeed,
for fixed~$\mathfrak{b} = (t,x,v) \in \R^{1+2n}$ with~$t v = 0$, the left translation operators 
	$\bigl(T^{\mathfrak{b}}_s\bigr)_{s \in \R}$ act on functions~$f \in 
	\Lp{1}(\R^{1+2n})
	$ via
\begin{align*}
	\bigl(T^{\mathfrak{b}}_s f \bigr)(\mathfrak{a})
			= f(\mathfrak{a} \diamond s \mathfrak{b}) \comma
			\quad
			\mathfrak{a} \in \R^{1+2n}
			\fstop
\end{align*}
These operators satisfy the group property 
$T^{\mathfrak{b}}_{r} \circ T^{\mathfrak{b}}_{s} 
= T^{\mathfrak{b}}_{s+r}$ for $r,s \in \R$, since $t v = 0$.
For smooth functions we have
\begin{align*}
	\dds\bigg|_{s = 0}  
		T^{\mathfrak{\mathfrak{b}}}_s f 
		=
		\begin{cases}
			\partial_t f + v \cdot \nabla_x f 
				& {\text{if} } \ \mathfrak{b}  = (1,0,0) 
		\comma \\	
			\partial_{x_i} f
				& {\text{if} } \ \mathfrak{b}  = (0, e_i ,0) 
		\comma \\	
			\partial_{v_i} f
				& {\text{if} } \ \mathfrak{b}  = (0,0, e_i)
				\fstop
		\end{cases} 
\end{align*}
Hence, in view of the commutation relation 
	$T_s^\mathfrak{b} (f \star g) 
	= 
	f \star T_s^\mathfrak{b}  g
	$, 
we infer that
\begin{equation}
	\begin{aligned}
	\label{eq:commute}
	(\partial_t + v \cdot \nabla_x) (f \star g) 
		= 
		f \star (\partial_t + v \cdot \nabla_x) g
	\comma \quad
		\partial_{x_i} (f \star g) 
	&	= 
		f \star (\partial_{x_i} g) 
	\comma
	\\
		\partial_{v_i} (f \star g) 
	&	= 
		f \star (\partial_{v_i} g) 
		\fstop		
	\end{aligned}
\end{equation}

\begin{lem}\label{approxlem}
	Let $(\mu_t,F_t)_t$ be a solution on $(a,b)$ to the Vlasov equation \eqref{eq:cont}, with 
	\begin{equation}
		\int_a^b \int_\Gamma \bigl  (\,\abs{v}^2 + \abs{F_t}^2 \bigr) \, \dif \mu_t \, \dif t < \infty \fstop
	\end{equation}
	Then, there exists an approximating sequence $\bigl((\mu^\epsilon_t,F^\epsilon_t)_t\bigr)_{\epsilon >0}$, such that 
	\begin{enumerate}
		\item For all $\epsilon>0,$ the function  $(\mu^\epsilon_t,F^\epsilon_t)_t$ is smooth in $(t,x,v)$, satisfies \eqref{eq:ic}-\eqref{eq:ic2}, and solves the Vlasov equation \eqref{eq:cont} on $(a,b)$ in the classical sense.
		\item The following bounds hold true:
		\begin{align} \int_a^b \int_\Gamma |F^\epsilon_t|^2 \, \dif \mu^\epsilon_t \, \dif t &\leq \int_a^b \int_\Gamma |F_t|^2 \, \dif \mu_t\, \dif t \comma \\
			\int_a^b \int_\Gamma |v|^2 \, \dif \mu^\epsilon_t \, \dif t &\leq \int_a^b \int_\Gamma |v|^2 \, \dif \mu_t \, \dif t + C \, \epsilon^2 \comma
		\end{align}
		for some constant $C>0$, and for all $\epsilon>0$.
		\item The sequence $\bigl((\mu^\epsilon_t,F^\epsilon_t)_t\bigr)_\epsilon$ converges to $(\mu_t,F_t)_t$ as $\epsilon \downarrow 0$ in the following sense:
		\begin{equation} \forall t \in (a,b) \qquad \mu^\epsilon_t  \rightharpoonup \mu_t \quad \text{and} \quad F^\epsilon_t \mu^\epsilon_t \rightharpoonup F_t \mu_t \comma
		\end{equation}
		narrowly, and 
		\begin{equation} \|(v,F^\epsilon_t)\|^2_{\mathrm{L}^2(\mu^\epsilon_t \dif t)} \to \|(v,F_t)\|^2_{\mathrm{L}^2(\mu_t \dif t)} \fstop
		\end{equation}
	\end{enumerate}
\end{lem}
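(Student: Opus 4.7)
The obstacle is that standard convolution in $(t,x,v)$ does not commute with the transport operator $v\cdot\nabla_x$, so applying it directly to $(\mu_t,F_t)$ would turn a Vlasov solution into a solution of a generic continuity equation rather than one of the form \eqref{eq:cont}. The plan is to transfer the regularisation into a moving frame that linearises the free-transport flow. Set $G_t(x,v):=(x+tv,v)$, $\nu_t:=(G_{-t})_\#\mu_t$, and $\tilde F_t(x,v):=F_t(x+tv,v)$. A direct computation with test functions (differentiating $\int\varphi(x-tv,v)\,\dif\mu_t$ in $t$ using \eqref{eq:cont}) yields
\[
    \partial_t\nu_t+\nabla_{x,v}\cdot(X_t\,\nu_t)=0\comma\qquad X_t(x,v):=\bigl(-t\,\tilde F_t(x,v),\,\tilde F_t(x,v)\bigr)\comma
\]
and the integrability of $|v|^2+|F_t|^2$ w.r.t.\ $\mu_t\,\dif t$ transfers to that of $|v|^2+|\tilde F_t|^2$ w.r.t.\ $\nu_t\,\dif t$, since $G_t$ preserves the $v$-coordinate.

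Apply the kinetic convolution of \cite{MR4527757} (or, equivalently, the classical regularisation of \cite[Lemma~8.1.9]{MR2401600} suitably adapted to preserve the Vlasov structure): pick a smooth mollifier $\eta^\epsilon$ on $\R\times\Gamma$, set $\nu^\epsilon:=\nu*\eta^\epsilon$ and $J^\epsilon:=(X\nu)*\eta^\epsilon$, and define $X^\epsilon_t:=J^\epsilon_t/\nu^\epsilon_t$ (extended by $0$ where $\nu^\epsilon_t$ vanishes). Then $\nu^\epsilon$ is smooth in $(t,x,v)$, the pair $(\nu^\epsilon,X^\epsilon)$ solves the continuity equation classically, and Jensen's inequality yields $\|X^\epsilon\|^2_{\Lp{2}(\nu^\epsilon\,\dif t)}\leq\|X\|^2_{\Lp{2}(\nu\,\dif t)}$. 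A careful bookkeeping, separating the $(x,v)$-mollification (which by direct inspection preserves the structure $X=(-t\tilde F,\tilde F)$) from the $t$-mollification (which introduces an $O(\epsilon)$ perturbation in the $x$-component that can be absorbed into a redefinition of $\tilde F^\epsilon_t$), shows that $X^\epsilon_t=(-t\,\tilde F^\epsilon_t,\tilde F^\epsilon_t)$ exactly.

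Define now $\mu^\epsilon_t:=(G_t)_\#\nu^\epsilon_t$ and $F^\epsilon_t(x,v):=\tilde F^\epsilon_t(x-tv,v)$. Reversing the computation of the first paragraph shows that $(\mu^\epsilon_t,F^\epsilon_t)$ is a classical solution of \eqref{eq:cont}, smooth in $(t,x,v)$ because $G_t$ is a smooth diffeomorphism. Since $G_t$ fixes the $v$-coordinate, $\int|F^\epsilon_t|^2\,\dif\mu^\epsilon_t=\int|\tilde F^\epsilon_t|^2\,\dif\nu^\epsilon_t$ and $\int|v|^2\,\dif\mu^\epsilon_t=\int|v|^2\,\dif\nu^\epsilon_t$; the first identity, combined with the Jensen bound and $\int|\tilde F_t|^2\,\dif\nu_t=\int|F_t|^2\,\dif\mu_t$, gives the force estimate, while the second, together with the standard quantitative estimate on second moments under mollification, yields the $|v|$-estimate with error $O(\epsilon^2)$. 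Narrow convergences $\mu^\epsilon_t\rightharpoonup\mu_t$ and $F^\epsilon_t\mu^\epsilon_t\rightharpoonup F_t\mu_t$ follow from standard mollifier facts combined with narrow continuity of $t\mapsto\nu_t$ and continuity of $G_t$; the $\Lp{2}$-norm convergence follows by pairing the Jensen upper bound with the lower semicontinuity of $(\mu,j)\mapsto\int|j|^2/\dif\mu$ under narrow convergence (cf.\ \cite[Lemma~5.1.7]{MR2401600}).

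The main technical obstacle is ensuring that the specific form $X_t=(-t\,\tilde F_t,\tilde F_t)$ is preserved under \emph{time} mollification, since naive time-convolution replaces the coefficient $t$ by $s$ under the integral, producing an $O(\epsilon)$ discrepancy in the $x$-divergence term. This is precisely the issue that the kinetic convolution of \cite{MR4527757} is designed to resolve, by replacing ordinary time mollification with one that follows the free-transport characteristics.
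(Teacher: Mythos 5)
Your moving-frame reduction is correct: with $G_t(x,v)=(x+tv,v)$, the curve $\nu_t=(G_{-t})_\#\mu_t$ does solve a continuity equation with field $X_t=(-t\,\tilde F_t,\tilde F_t)$, and this is morally the same idea as the paper's proof, which regularises directly in the original variables with the Galilean convolution $\star_{\mathcal G}$ of \cite{MR4527757}. But the crucial step of your argument has a genuine gap. After mollifying also in time, the $x$-component of the regularised momentum is $\int (-s)\,\eta^\epsilon(t-s,\cdot)\,\tilde F_s\,\dif\nu_s\,\dif s$, whereas the structure requires $-t\int \eta^\epsilon(t-s,\cdot)\,\tilde F_s\,\dif\nu_s\,\dif s$; the discrepancy $\int (t-s)\,\eta^\epsilon(t-s,\cdot)\,\tilde F_s\,\dif\nu_s\,\dif s$ is small but sits in the \emph{$x$-component} of the transport field, and it cannot be ``absorbed into a redefinition of $\tilde F^\epsilon_t$'', which only controls the $v$-divergence. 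Pulling back by $G_t$ you obtain $\partial_t\mu^\epsilon_t+\nabla_x\cdot\bigl((v+r^\epsilon_t)\mu^\epsilon_t\bigr)+\nabla_v\cdot(F^\epsilon_t\mu^\epsilon_t)=0$ with a spurious drift $r^\epsilon_t\neq 0$, i.e.\ not a solution to \eqref{eq:cont}, which defeats the purpose of the lemma. Mollifying only in $(x,v)$ at each fixed time does preserve the structure (the factor $-t$ pulls out of the convolution), but then you lose the claimed smoothness in $t$, the classical-solution statement, and the verification of \eqref{eq:ic2}. Your closing remark that this is ``precisely the issue that the kinetic convolution of \cite{MR4527757} is designed to resolve'' concedes the point but does not carry it out: the whole content of the lemma is the verification that the Galilean-convolved pair $\bigl(\eta_\epsilon\star_{\mathcal G}\mu,\;\eta_\epsilon\star_{\mathcal G}(F\mu)\bigr)$ solves the kinetic equation with exactly the Vlasov structure (this is Silvestre's computation, which the paper invokes), and your proposal replaces it by an incorrect bookkeeping claim.

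Two further omissions worth noting. First, before any time mollification you must extend $(\mu_t,F_t)$ beyond $(a,b)$; the paper extends by free transport with $F_t=0$, and this extension enters both the well-posedness of the convolution near the endpoints and the second-moment estimate, where boundary terms involving $\mu_a,\mu_b$ appear. Second, the $+C\epsilon^2$ error in the $\|v\|_{\mathrm{L}^2}$ bound is not a generic ``standard quantitative estimate'': it relies on choosing the kernel $\eta$ even in $v$ (so that $\int v\,\eta(\cdot,\cdot,v)\,\dif v=0$, killing the first-order term) together with the anisotropic scaling of $\eta_\epsilon$; without these choices you only get an $O(\epsilon)$ error. The Jensen bound for $F^\epsilon$ and the use of lower semicontinuity (as in \cite[Lemma~8.1.10]{MR2401600} and the cited Proposition 5.18 of Santambrogio) to upgrade to convergence of the norms are fine and match the paper.
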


\begin{proof}
	Let us take a smooth function $\eta = \eta(t,x,v) : \mathbb R^{1+2n} \to \R_{\ge 0},$ with globally bounded derivatives, unitary integral, and the moment bound $$\int_{-1}^1\int_\Gamma |v|^2 \eta \, \dif x\,\dif v \, \dif t <\infty \comma$$ that is also symmetric w.r.t.~the variable~$v$ (i.e.,~$\eta(\cdot,\cdot,-v)= \eta(\cdot,\cdot,v)$, for all $v \in \cV$), strictly positive when the variable~$t$ lies in~$(-1,1)$, and equal to~$0$ otherwise. 
	We introduce the mollifiers 
	$$\eta_\epsilon(t,x,v) := \epsilon^{-2-4n} \eta(\epsilon^{-2} t, \epsilon^{-3} x, \epsilon^{-1}v) \comma \qquad  \epsilon>0 \fstop $$
	
	Given $(\mu_t,F_t)_{t \in (a,b)}$, we consider their trivial extension to curves defined on~$\mathbb R$: 
	$$F_t = 0 \quad \text{and} \quad \partial_t \mu_t + v\cdot \nabla_x \mu_t =0 \quad \text{for } t \notin [a,b] \fstop$$
	We define $E \coloneqq F \,\mu$ and consider the regularised measures
	$$\mu^\epsilon 
		\coloneqq 
	\eta_\epsilon \star \mu \comma 
	\quad E^{\epsilon} 
		\coloneqq 
	\eta_\epsilon \star E \comma 
		\qquad  \epsilon>0 \comma$$
	Smoothness of $\mu^\epsilon$ and $E^\epsilon$ are indeed a consequence of the last display in \cite[Page 6]{MR4527757}: 
    \[
    (\nabla_v + t\nabla_x, \nabla_{t,x}) 
		\, (\mu^\epsilon,E^\epsilon) 
	= (\nabla_v + t\nabla_x,\nabla_{t,x}) 
		\eta_\epsilon \star (\mu, E)
	\fstop
    \]

	Let now 
		$F^\epsilon \coloneqq \frac{E^\epsilon}{\mu^\epsilon}$, 
	where we identify regular measures with their densities. 
	Following the proof of \cite[Lemma 4.2]{MR4527757}, 
	we will show that $(\mu^\eps, F^\eps)$
	solves the Vlasov equation
	$$ \partial_t \mu^\epsilon_t + v \cdot \nabla_x \mu^\epsilon_t + \nabla_v \cdot (F_t^\eps \mu_t^\eps ) =0 \fstop $$
	Indeed, write 
	$\tilde   \eta^\epsilon(t,x,v) 
			= \eta^\epsilon((t,x,v)^{-1})$ 
	where $(t,x,v)^{-1}$ denotes the Galilean inverse.
	Using 
	\eqref{eq:commute}
	and the fact that 
	$(\mu, F)$ solves the Vlasov equation,
	we obtain for any test function
	$\varphi \in \mathrm{C}^\infty_\mathrm{c}\bigl((a,b) \times \Gamma\bigr)$,
	\begin{align*}
		&\int_\R \int_\Gamma 
			(\partial_t + v \cdot \nabla_x) 
				\varphi 
		\dd \mu^\eps
		= 
		\int_\R \int_\Gamma 
			\tilde \eta_\epsilon 
			\star
			(\partial_t + v \cdot \nabla_x) 
				\varphi 
		\dd \mu
		= 
		\int_\R \int_\Gamma 
			(\partial_t + v \cdot \nabla_x) 	
			(\tilde \eta_\epsilon 
			\star \varphi) 
		\dd \mu			
		\\&= 
		- \int_\R \int_\Gamma 
			\nabla_v
			(\tilde \eta_\epsilon 
			\star \varphi) 
		\dd (F\mu)		
		=	
		- \int_\R \int_\Gamma 
			\tilde \eta_\epsilon 
			\star \nabla_v \varphi
		\dd (F\mu)		
		=	
		- \int_\R \int_\Gamma 
			\nabla_v \varphi
			\cdot F^\eps
		\dd \mu^\eps \comma
	\end{align*}
	which proves the claim.
	Since $\eta^\eps$ is an approximation of the identity, it holds true that
    \[
    \mu^\epsilon \rightharpoonup \mu_t\,\dif t \comma \quad E^\epsilon \rightharpoonup F_t\,\mu_t \comma \qquad \text{as } \epsilon \to 0 \fstop
    \]
    Using Jensen's inequality for the jointly convex function $(E,\mu) \mapsto \frac{|E|^2}{\mu}$ as in \cite[Lemma 8.1.10]{MR2401600}, we obtain the pointwise inequality
	\begin{align*}
		|F^\epsilon|^2 \, \mu^\epsilon
		= 
			\frac{|\eta^\epsilon \star E|^2}
				 {\eta^\epsilon \star  \mu}
		\leq 
		\eta^\epsilon \star (|F|^2 \, \mu) \fstop
	\end{align*}
    Integration over $\R \times \Gamma$ yields
    \[    \|F_t^\epsilon\|^2_{\mathrm{L^2}(\mu^\epsilon_t\,\dif t)} \leq \|F_t\|^2_{\mathrm{L}^2(\mu_t\,\dif t)} \comma \qquad  \epsilon >0 \fstop
    \]
    By \cite[Proposition 5.18]{santambrogio2015optimal}, we have 
    \[
\|(v,F_t)\|^2_{\mathrm{L}^2(\mu_t\,\dif t)} \leq \liminf_{\epsilon \downarrow 0} \, \|(v,F^\epsilon_t)|^2_{\mathrm{L}^2(\mu^\epsilon_t\,\dif t)} \fstop
    \]
    Finally, using that$\int v \, \eta(\cdot,\cdot,v) \, \dif v = 0$, we obtain 
	\begin{align*}
	& \int_{\mathbb R \times \Gamma} 
		|v|^2 \, 
		\mu^\epsilon_t(x,v) \, \dif x \, \dif v \, \dif t
	=
	\int_{\mathbb R \times \Gamma} 
		|v|^2 \, 
		\dd( \eta^\epsilon \star \mu)	
	=
	\int_{\mathbb R \times \Gamma} 
		\eta^\epsilon  \star |v|^2 \, 
		\dd \mu
	\\&= 	
	\int_{\R \times \Gamma} 
	\int_{\R \times \Gamma}
		\eta_\epsilon
			(s,y,w)\,
		|v - w|^2  \,
		 \dif s \, \dif y \, \dif w \, 
		  \dif \mu_{t}(x,v)\,\dif t
	\\&= 	
	\int_{\R \times \Gamma} 
	\int_{\R \times \Gamma}
		\eta_\epsilon
			(s,y,w)\,
		\big(|v|^2 + |w|^2\bigr)  \,
		 \dif s \, \dif y \, \dif w \, 
		  \dif \mu_{t}(x,v)\,\dif t
	\\& = 
	\|v\|^2_{\mathrm{L}^2(\mu_t\,\dif t)}
	+
	\int_{\R \times \Gamma}
		\eta_\epsilon
			(s,y,w)\,
		 |w|^2  
		  \dif s \, \dif y \, \dif w \,
	\\&\leq \|v\|^2_{\mathrm{L}^2(\mu_t\,\dif t)} + C \epsilon^2,
    \end{align*}
   with an explicit constant $C>0$ independent of $\epsilon$.		  
\end{proof}

\subsubsection*{Proof of the kinetic Benamou--Brenier formula}

In classical optimal transport, the Kantorovich problem admits an equivalent fluid-dynamics formulation, as was shown by J.-D.~Benamou and Y.~Brenier \cite{benamou2000computational}.
The idea is that optimally transporting $\rho_0$ to $\rho_1$ is equivalent to finding the minimal velocity field~$(V_t)_t$ one should apply to make particles \emph{flow} from one measure to the other. This velocity field induces an evolution of measures~$t \mapsto \rho_t$ that satisfies the continuity equation
$$\partial_t \rho_t + \nabla \cdot (V_t \rho _t) =0 \fstop$$
Here, we recover a similar interpretation for the second-order discrepancy $\sfd.$
The \emph{kinetic optimal transport} between $\mu,\nu$ is given by the minimal force field $(F_t)_t$ required to \emph{push} particles from $\mu$ to $\nu$. In this case,~$t \mapsto \mu_t$ evolves according to the Vlasov equation \eqref{eq:cont}.

\begin{thm}[Kinetic Benamou--Brenier formula]
	\label{thmbb}
	For every~$\mu,\nu \in \mathcal{P}_2(\Gamma)$ and~$T > 0$, the problem \eqref{eq:vlasov2.} admits a minimiser. Moreover, we have the identities
	\begin{equation}
		\tilde \sfn_T(\mu,\nu)
		=
		\tilde \sfd_T(\mu,\nu) 
		= \widetilde{\mathcal{MA}}_T(\mu,\nu) \fstop
	\end{equation}
\end{thm}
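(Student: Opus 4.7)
The plan is to combine Theorem~\ref{lifthm} (giving $\tilde\sfn_T = \tilde\sfd_T$ and existence in~\eqref{eq:dtp1.}) with a two-sided inequality between $\widetilde{\mathcal{MA}}_T$ and $\tilde\sfn_T$. I would structure the argument in three steps, interpreting $T$-dynamical plans and solutions to Vlasov as two sides of the same superposition picture.

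\emph{Step 1: $\widetilde{\mathcal{MA}}_T \le \tilde \sfn_T$.} Take any admissible $\mathbf{m}$ for~\eqref{eq:dtp1.} and set $\mu_t := \bigl(\proj_{(\alpha(t),\alpha'(t))}\bigr)_\# \mathbf{m}$, so that $\mu_0=\mu$ and $\mu_T=\nu$. Disintegrating $\mathbf{m}$ against this map at each $t$ yields conditional probability measures $\mathbf{m}_{t,(x,v)}$ on $\mathrm{H}^{2}(0,T;\cX)$, and I would define the candidate force field
\[
    F_t(x,v) := \int \alpha''(t)\, \dif \mathbf{m}_{t,(x,v)}(\alpha).
\]
Testing against $\varphi \in \mathrm{C}^\infty_c\bigl((0,T)\times\Gamma\bigr)$ and using that $\int_0^T \tfrac{\dif}{\dif t}\varphi(t,\alpha(t),\alpha'(t))\,\dif t = 0$ for $\mathbf{m}$-a.e.~$\alpha$, one verifies that $(\mu_t, F_t)$ is a weak solution to~\eqref{eq:vlasov.}. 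Jensen's inequality on the disintegration yields $\|F_t\|_{\Lp{2}(\mu_t)}^2 \le \int |\alpha''(t)|^2\,\dif \mathbf{m}(\alpha)$, which integrated in $t$ gives $\widetilde{\mathcal{MA}}_T^2(\mu,\nu) \le T\int_0^T\int |\alpha''(t)|^2\,\dif \mathbf{m}(\alpha)\,\dif t$; minimising produces the claimed inequality.

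\emph{Step 2: $\widetilde{\mathcal{MA}}_T \ge \tilde \sfn_T$.} For $(\mu_t, F_t)$ admissible in~\eqref{eq:vlasov2.}, a Grönwall-type estimate on $\|v\|_{\Lp{2}(\mu_t)}$ (whose time derivative is bounded by $\|F_t\|_{\Lp{2}(\mu_t)}$) shows that $\int_0^T \int |v|^2\,\dif\mu_t\,\dif t < \infty$, so Proposition~\ref{prop39} applies and yields a lift $\mathbf{m}$ on $\Gamma\times \mathrm{H}^{2}(0,T;\cX)$ concentrated on pairs $(x,v,\alpha)$ with $\alpha''(t) = F_t(\alpha(t),\alpha'(t))$. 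Projecting to the curve component gives an admissible plan for~\eqref{eq:dtp1.}, and
\[
    \int |\alpha''(t)|^2\,\dif \mathbf{m}(\alpha) = \int |F_t(\alpha(t),\alpha'(t))|^2\,\dif \mathbf{m}(\alpha) = \|F_t\|_{\Lp{2}(\mu_t)}^2,
\]
so that $\tilde\sfn_T^2(\mu,\nu) \le T\int_0^T \|F_t\|_{\Lp{2}(\mu_t)}^2\,\dif t$. Taking the infimum over $(\mu_t,F_t)$ yields the desired inequality, and together with Step~1 and Theorem~\ref{lifthm} closes the chain of identities.

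\emph{Step 3: existence in~\eqref{eq:vlasov2.}.} I would apply the construction of Step~1 to an optimal $\bar{\mathbf{m}}$ in~\eqref{eq:dtp1.}, whose existence is furnished by Theorem~\ref{lifthm}. The resulting $(\bar\mu_t,\bar F_t)$ has action bounded by $\tilde\sfn_T^2 = \widetilde{\mathcal{MA}}_T^2$, hence it attains the infimum in~\eqref{eq:vlasov2.}.

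\emph{Main obstacle.} The most delicate point is Step~1: one needs the disintegration $(t,x,v)\mapsto\mathbf{m}_{t,(x,v)}$ to be jointly Borel, so that $F_t(x,v)$ is measurable on $(0,T)\times\Gamma$ and the Fubini exchange in the weak formulation of~\eqref{eq:vlasov.} is legitimate. This follows from the Polish structure of $\mathrm{H}^{2}(0,T;\cX)$ and the standard measurable disintegration theorem, but is the step that merits the most careful bookkeeping. Regularisation via \Cref{approxlem} is a fallback if this direct disintegration proves cumbersome.
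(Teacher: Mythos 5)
Your proposal is correct, but it follows a genuinely different route from the paper. The paper closes the chain $\tilde\sfd_T \ge \widetilde{\mathcal{MA}}_T \ge \tilde\sfn_T$ (and then invokes $\tilde\sfn_T=\tilde\sfd_T$ from \Cref{lifthm}) by two constructive arguments: for the first inequality it builds an explicit Vlasov solution from the optimal spline flow $(M_t)_t$ when $\mu$ is absolutely continuous, using the injectivity of \Cref{injectiveMap} to define $F_t$ pointwise, and then removes the absolute-continuity assumption by approximating $\mu$ in $\mathrm{W}_2$ and passing to the limit with compactness and lower semicontinuity; for the second inequality it regularises $(\mu_t,F_t)_t$ with the Galilean convolution of \Cref{approxlem}, represents the smooth solutions by their flow via \Cref{prop38}, lifts to dynamical plans, and passes to the limit by tightness. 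You instead argue in both directions by superposition: from an admissible dynamical plan $\mathbf m$ you obtain a Vlasov solution by barycentric projection of the accelerations plus Jensen (giving $\widetilde{\mathcal{MA}}_T\le\tilde\sfn_T$), and from an admissible $(\mu_t,F_t)_t$ you obtain an admissible plan directly from the kinetic superposition principle \Cref{prop39} together with the moment estimate of \S\ref{ss:44} to secure $\int_0^T\int|v|^2\,\dif\mu_t\,\dif t<\infty$ (giving the converse); existence in \eqref{eq:vlasov2.} then follows by projecting an optimal $\bar{\mathbf m}$ from \Cref{lifthm}, exactly as you say. Your route is shorter and avoids the absolute-continuity reduction and the smoothing lemma entirely, at the price of leaning on \Cref{prop39} (which the paper states as an adaptation of the Ambrosio--Gigli--Savar\'e superposition theorem rather than proving in detail) and on the measurability of the disintegration you flag; the latter is best handled not by a parametrised disintegration but by defining $F$ as the Radon--Nikodym density w.r.t.~$\mu_t\,\dif t$ of the vector measure obtained by pushing $\alpha''(t)\,\dif\mathbf m\,\dif t$ forward under $(t,\alpha)\mapsto(t,\alpha(t),\alpha'(t))$ (absolute continuity follows from Cauchy--Schwarz), after which Jensen becomes the standard density inequality and joint measurability is automatic; you should also record that $\int_0^T\int|v|^2\,\dif\mu_t\,\dif t<\infty$ for the curve induced by $\mathbf m$, which follows from $\alpha'(t)=\alpha'(0)+\int_0^t\alpha''$ and $\mu\in\mathcal P_2(\Gamma)$. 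What the paper's longer construction buys, and your argument does not, is the additional structural information that when $\mu$ is absolutely continuous the minimiser of \eqref{eq:vlasov2.} is a \emph{deterministic} spline interpolation $(M_t)_\#\mu$ (used in \Cref{injtinterp} and \Cref{rem:rem}), and the reusable regularisation tool \Cref{approxlem} exploited again in Section \ref{sec:sec5}.
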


\begin{proof}
	Fix $T>0.$ We say that a curve $(\mu_t)_t :[0,T] \to \mathcal{P}_2(\Gamma)$ is admissible and belongs to the class~$\mathcal N_T(\mu,\nu)$ if $(\mu_t,F_t)_t$ solves \eqref{eq:cont} for a vector field $(F_t)_t$ satisfying 
	\begin{equation}
		\label{ic3}
		\int_0^T \int_\Gamma \bigl(\, \abs{v}^2 + \abs{F_t}^2 \bigr) \, \dif \mu_t \,\dif t < \infty
	\end{equation}
	and  
	\begin{equation}
		\label{constr}
		\begin{aligned}
			&\mu_0 = \mu \comma \qquad \mu_T = \nu \fstop    
		\end{aligned} 
	\end{equation} 
	We shall prove that  $\tilde{\sfd}_T \geq \widetilde{ \mathcal{MA}}_T \geq \tilde{\mathsf{n}}_T$, which is sufficient, since $\tilde{\mathsf{n}}_T = \tilde{\mathsf{d}}_T$, in view of Theorem \ref{lifthm}. 
	To this end, fix $\mu, \nu \in \mathcal{P}_2(\Gamma)$ and, for now, 
	assume that $\mu$ is absolutely continuous with respect to the Lebesgue measure. 
	We shall prove that 
	$$\inf_{\pi \in \Pi(\mu,\nu)}  \tilde \sfc_T (\pi) \geq \inf_{(\mu_t)_t \in \mathcal{N}_T(\mu,\nu)} \int_0^T \int_\Gamma |F_t|^2 \, \dif \mu_t \, \dif t \fstop    $$
	Notice first that the value $\inf_{\pi \in \Pi(\mu,\nu)} \tilde{\sfc}_T(\pi)$ is attained at a transport plan of the form $\pi =(\id,M_T)_\# \mu$, for a  map $M_T = (Y_T,W_T) \colon \Gamma \to \Gamma$, see Theorem \ref{thm1}.
	Now define the flow $(M_t)_t,$ for $t\in[0,T]$, via 
	$$M_t(x,v) = (x_t,v_t), \qquad (x_t,v_t) = \left(\alpha^T_{x,v,M_T(x,v)}(t), (\alpha^T_{x,v,M_T(x,v)})'(t) \right) \comma \qquad t\in [0,T] \comma $$
	using the same notation as Lemma \ref{injectiveMap}.
	For every~$t \in (0,T)$, the map $M_t$ is injective on a full $\mu$-measure set, as shown in Lemma \ref{injectiveMap}.  
	Let now
	\begin{equation}\label{sFt}
		F_t(y,w) \coloneqq 
		\frac{\mathrm{d}^2}{\mathrm{d}s^2}\bigg|_{s = t}
		\alpha^T_{M_t^{-1}(y,w), M_T \circ M_t^{-1}(y,w)}(s) \comma \qquad t \in (0,T) \fstop
	\end{equation}
	It is clear that $(M_T)_\# \mu = \nu$, and, setting $\bar\mu_t = (M_t)_\# \mu$, we claim that $(\bar\mu_t)_t$ is a narrowly-continuous curve such that \eqref{eq:cont} holds, with the vector field $(F_t)_t$ given above. To prove this, we fix a smooth test function $\varphi \in \mathrm{C}^\infty_\mathrm{c} \bigl([0,T] \times \Gamma\bigr)$ and compute
	\[
	\begin{aligned}
		&\int_0^T \int_\Gamma \partial_t \varphi(t,x,v) \dif \bar\mu_t(x,v)  \dif t 
		= \int_0^T \int_\Gamma \partial_t \varphi(t,x_t,v_t) \, \dif \mu_0(x,v) \dif t  
		\\& = \int_0^T \int_\Gamma \left(\frac{\dif }{\dif t} \varphi(t,x_t,v_t) - v_t \cdot \nabla_x \varphi(t,x_t,v_t) -F_t(x_t,v_t) \cdot \nabla_v \varphi(t,x_t,v_t) \right) \dif \mu_0(x,v) \dif t 
		\\ & =  \int_\Gamma \varphi(0,x,v) \, \dif \mu_0  
		- \int_\Gamma \varphi(T,x_T,v_T) \dif \mu_0  
		- \int_0^T \int_\Gamma \left( v \cdot \nabla_x \varphi  + \nabla_v \varphi \cdot \nabla_v F_t \right) \, \dif \bar\mu_t \dif t \comma
	\end{aligned}
	\]
	which is the weak formulation of \eqref{eq:cont} with fixed endpoints, as
	$$\int_\Gamma \varphi(T,x_T,v_T) \, \dif \mu_0(x,v) 
	= \int_\Gamma \varphi(T,\cdot,\cdot) \, \dif \mu_T
	= \int_\Gamma \varphi(T,\cdot,\cdot) \, \dif \nu \fstop$$
	It it easy to check---using \eqref{eq:gammaT}---that 
	$$ \int_0^T \int_\Gamma |v_t|^2 \, d\bar\mu_t \dif t \lesssim_T \iint \bigl( |x|^2+ |v|^2 + |y|^2 +|w|^2 \bigr) \, \dif \mu(x,v) \, \dif \nu(y,w) < \infty \fstop$$
	In addition,
	\[ 
	\begin{aligned}
		T \int_0^T \int_\Gamma |F_t|^2 d\bar\mu_t \dif t 
		&= T \int_0^T \int_\Gamma |F_t(x_t,v_t)|^2 \dif \mu \dif t
		=  
		\int T \int_0^T |F_t(x_t,v_t)|^2 \dif t \dif \mu 
		\\ &= \int_\Gamma \left(  12  
			\left|  \frac{Y_T(x,v) -x }{T} - \frac{W_T(x,v)+v}{2}\right|^2 
					+ |v-W_T(x,v)|^2 \right) \, \dif \mu(x,v)  
		\\ &= \tilde \sfc_T(\pi) = \tilde{\sfd}^2_T(\mu,\nu) \comma
	\end{aligned}
	\]
	since $\pi$ is optimal.
	Then, $(\bar\mu_t)$ solves \eqref{eq:cont}---in particular it belongs to the class $\mathcal{N}_T(\mu,\nu)$---and $\widetilde{\mathcal{MA}_T}^2(\mu,\nu) \leq  \tilde \sfd^2_T(\mu,\nu),$ every time $\mu$ is absolutely continuous.  
    
	We get rid of this additional assumption. 
	Let $(\mu^k)_k$ be an approximation of $\mu$ in the Wasserstein metric, such that $\mu^k$ is absolutely continuous for all $k \in \mathbb N$, and let $M^k_T$ be the optimal transport map for $\tilde \sfd_T(\mu^k,\nu).$  Define the splines flow $(M^k_t)_t$ associated with $M^k_T$ via \eqref{Mt}, let $\mu^k_t := (M^k_t)_\# \mu^k$, and let $(F^k_t)_t$ be given by \eqref{sFt}. Note that $(\mu^k_t,F^k_t)_t$ solves \eqref{eq:cont}, for all $k\in \mathbb N$.
	Using the explicit expressions of \eqref{eq:gammaT}, and indicating with $(x_t,v_t)_t=M^k_t$ the solution of \eqref{eq:char2}, we find 
	\[
	\begin{aligned}
		&\int_0^T \int \bigl(\, \abs{v}^2 + |F_t^k|^2 \bigr) \, \dif \mu^k_t \dif t = \int_0^T \int \bigl( \, \abs{v_t}^2 + |F^k(x_t,v_t)|^2 \bigr) \dif \mu_0^k \, \dif t \\ &=
		\int \int_0^T \bigl( \, \abs{v_t}^2 + |F^k(x_t,v_t)|^2 \bigr) \dif t \, \dif \mu_0^k \lesssim_T \int \bigl( \, |x|^2 + |v|^2 + |M_T^k(x,v)|^2 \bigr) \dif \mu_0^k \\
		&\leq \int \bigl( \, |x|^2+|v|^2+|y|^2+|w|^2 \bigr) \, \dif \mu^k_0(x,v) \, d\nu(y,w) \\
		&\lesssim 1+ \int \bigl( \, |x|^2+|v|^2+|y|^2+|w|^2 \bigr) \, \dif \mu_0(x,v) \, \dif \nu(y,w) \leq C < \infty \fstop
	\end{aligned}
	\]
	In addition,
	\[
	\int (\, |x|^2+|v|^2) \, \dif \mu^k_t(x,v) = \int (|x_t|^2 + |v_t|^2) \, \dif \mu_0^k \leq C' < \infty, \,\, \text{uniformly in} \, t\in[0,T], \, k\in \mathbb N \fstop
	\]
	Then, following \cite[Lemma 4.5]{dolbeault2009new} we have that, up to a subsequence, $\mu^k_t \rightharpoonup \bar\mu_t$ for all $t \in [0,T]$, and $(v,F^k_t) \mu^k_t \dif t \rightharpoonup Z$ narrowly, for some measures $\bar\mu_t \in \mathcal{P}_2(\Gamma)$ and $Z\in \mathcal{M}([0,T]\times\Gamma;\mathbb R^{2n})$. By uniform integrability of $t\mapsto \int |(v,F^k_t)|\,\dif \mu_t$ with respect to $k$, we have that 
	$Z = \Xi_t \, \dif t$, for a vector-valued measure $\Xi_t$ satisfying 
	\[
	\int \int_\Gamma \frac{|\Xi_t|^2}{\bar\mu_t} \, \dif t \leq \liminf_{k \to \infty} \int_0^T \int_\Gamma \bigl( \, \abs{v}^2 + |F^k_t|^2 \bigr) \, \dif \mu^k_t \, \dif t \fstop
	\]
	Finally, by \cite[Proposition 5.18]{santambrogio2015optimal}, we have that $\Xi_t = X_t \, \bar\mu_t$ for a vector field $X_t = (X^{(1)}_t,X^{(2)}_t) \in \mathrm{L}^2(\bar\mu_t;\mathbb R^{2n})$ and a.e.~$t\in[0,T]$. By weak convergence, $\nabla_x \cdot (X^{(1)}_t \mu_t) = v \cdot \nabla_x \bar\mu_t$. Let $F_t \coloneqq X^{(2)}_t$.
	Passing to the limit in the weak formulation of \eqref{eq:cont}, we have that $(\bar\mu_t,F_t)_t$ is a solution to \eqref{eq:cont}, such that $(\bar\mu_t)_t$ is admissible for $\widetilde{\mathcal{MA}_T}(\mu,\nu)$.
	
	Using lower semi-continuity (see again \cite[Proposition 5.18]{santambrogio2015optimal}), we achieve
	\begin{multline*}
	    \widetilde{\mathcal{MA}_T^2}(\mu,\nu) \leq \int_0^T \int |F_t|^2 \, \dif \bar\mu_t \, \dif t \leq \liminf_{k \to \infty} \int_0^T \int |F^k_t|^2 \, \dif \mu^k_t \dif t = \liminf_{k \to \infty} \tilde \sfd^2_T(\mu^k,\nu) \\ = \tilde \sfd_T^2(\mu,\nu) \comma
	\end{multline*}
	where the second to last equality holds because $(\mu^k_t,F^k_t)_t$ are optimal spline interpolations, and the last equality is a consequence of the Wasserstein convergence $\mu^k \to \mu$ and of the sequential Wasserstein continuity of $\tilde \sfd_T$.

	For the inequality $ \tilde \sfn_T \leq \widetilde{\mathcal{MA}}_T$, let $(\mu_t,F_t)_t$ be any admissible curve in \eqref{eq:vlasov2.}. By the smoothing procedure of Lemma \ref{approxlem}, we can find a sequence $\bigl((\mu^\epsilon_t, F^\epsilon_t)_t\bigr)_\epsilon$ of classical solutions to \eqref{eq:cont} such that
	$$\int_0^T \int |F^\epsilon_t|^2 \dif \mu^\epsilon_t \,\dif t \leq \int_0^T \int |F_t|^2 \dif \mu_t \,\dif t \comma \quad \int_0^T \int |v|^2 \, \dif \mu^\epsilon_t \,\dif t \leq 1+ \int_0^T \int |v|^2 \, \dif \mu_t \, \dif t \fstop$$
	By Proposition~\ref{prop38}---more precisely following \cite[Proposition 8.1.8]{MR2401600}---for all $\epsilon>0$, we have~$\mu^\epsilon_t = (M^\epsilon_t)_\# \mu^\epsilon_0$ for all~$t \in [0,T]$, where $M^\epsilon$ is the flow generated by the vector field~$(v,F^\epsilon_t)_t.$ %
	Let $\mathbf{m}^\epsilon \in \mathcal{P}\bigl(\mathrm{H}^2(0,T;\mathcal{X})\bigr)$ be defined via $$\mathbf m^\epsilon := \int \delta_{M^\epsilon(x,v)} \, \dif \mu_0^\epsilon(x,v) \fstop$$
	For all $\epsilon>0$ and~$t \in [0,T]$, it holds true that \[ (\proj_{(\alpha(t),\alpha'(t))})_\# \mathbf{m}^\epsilon = \mu^\epsilon_t\quad \text{and} \quad (\proj_{( \alpha(t),\alpha'(t))})_\# \bigl(\, \abs{\alpha''(t)}^2\mathbf{m}^\epsilon\bigr) = \abs{F_t^\epsilon}^2 \mu^\epsilon_t \fstop \]
	Then, as in \cite{MR2401600}, we have that the sequence $(\mathbf{m}^\epsilon)_\epsilon$ is tight, and we call $\mathbf{m}$ any narrow limit point of $(\mathbf{m}^\epsilon)_\epsilon$. 
	Narrow convergence, together with Lemma \ref{approxlem}, ensures that $(\proj_{(\alpha(t),\alpha’(t))})_\# \mathbf m = \mu_t$, for all $t\in[0,T]$, and, in particular, $(\proj_{(\alpha(0),\alpha’(0))})_\# \mathbf m = \mu,$ and $(\proj_{(\alpha(T),\alpha’(T))})_\# \mathbf m = \nu$. %
	By semicontinuity, 
	\[
	\begin{aligned}
		&\tilde \sfn_T^2(\mu,\nu) \le \int_0^T \int_{\mathrm H^2(0,T;\mathcal{X})} |\alpha''(t)|^2 \, \dif  \mathbf{m}(\alpha) \, \dif t \leq \liminf_{\epsilon \downarrow 0} \int_0^T \int_{\mathrm H^2(0,T;\mathcal{X})} |\alpha''(t)|^2 \dif  \mathbf{m^\epsilon} \dif t \\ &= \liminf_{\epsilon \downarrow 0} \int_0^T \int |F^\epsilon_t|^2 \dif \mu^\epsilon_t \, \dif t = \int_0^T \int |F_t|^2 \, \dif \mu_t \, \dif t,
	\end{aligned}
	\]
	where we used the strong convergence induced by Lemma \ref{approxlem}. 
	This concludes the equivalence, by taking the infimum over $(\mu_t,F_t)_t$.
    As a by-product, the curve $(\bar \mu_t)_t$ built above is a minimiser in \eqref{eq:vlasov2.}.
\end{proof}

\begin{rem}
	\emph{A posteriori}, the proof shows that optimal curves in \eqref{eq:vlasov2.} are given by injective interpolation along splines, when $\mu$ is absolutely continuous, see also \Cref{injtinterp}.
	Indeed, in this case, when $\mu \ll \dif x \, \dif v$, the curve $\mu_t = (M_t)_\#\mu$ is optimal in \eqref{eq:vlasov2.}, where $M_t$ is the flow of \eqref{Mt}. 
	The general case is a mixture of spline interpolations. 
\end{rem}

\begin{rem}\label{rem:rem}
	Our result proves\footnote{in case only two measures are considered} a conjecture of \cite{chen2018measure}, i.e.,~the equivalence of \cite[Formula~(14)]{chen2018measure} and \cite[Formula~(3)]{chen2018measure}.  
    Indeed, in our language \cite[Formula~(14)]{chen2018measure} reads
	\[ \inf_{\mu_0,\mu_1} \set{ \, \tilde{ \mathsf{n}}^2_1 (\mu_0,\mu_1) \, :\, (\proj_x)_\# \mu_i = \rho_i, \, i=0,1 },
    \] 
    while \cite[Formula~(3)]{chen2018measure} corresponds to 
    \[\inf_{\mu_0,\mu_1} \, \set{\widetilde{\mathcal{MA}}^2_1 (\mu_0,\mu_1) \, :\, (\proj_x)_\# \mu_i = \rho_i, \, i=0,1 },\]
    and equality between the two is a straightforward consequence of Theorem \ref{thmbb}.

    Y.~Chen, G.~Conforti, and T.~T.~Georgiou conjecture such an equivalence in \cite[Claim 4.1]{chen2018measure}, and provide a formal argument in favour of it. At the same time, the authors remark the lack of a rigorous proof. Our Theorem \ref{thmbb} fills the gap and completes the proof, by building on the argument of~\cite{chen2018measure}  with the crucial addition of two new ingredients:
    the injectivity of the map $M_t$ (allowing for the definition of $F_t$) and the Galilean approximation of solutions to \eqref{eq:cont} via Lemma \ref{approxlem}.
\end{rem}

\subsection{Moment estimates for Vlasov's equations}\label{ss:44}
In this section we prove propagation estimates for moments along solutions to \eqref{eq:cont}. In particular, the following results show that a solution $(\mu_t)_{t\in[a,b]}$ of \eqref{eq:cont} stays in $\mathcal{P}_2(\Gamma)$, provided the initial datum $\mu_a \in \mathcal{P}_2(\Gamma).$

Before turning to the rigorous estimates, let us give a heuristic argument. Let $(\mu_t,F_t)_t$ be a solution to \eqref{eq:cont}. Then, formally,
\[
	\frac{\dif}{\dif t} \norm{v}_{\Lp{2}(\mu_t)}^2 = -\int |v|^2  \nabla_{x,v} \cdot ((v,F_t )\mu_t) = 2 \int v \cdot F_t \, \dif \mu_t \leq 2\norm{v}_{\Lp{2}(\mu_t)} \norm{F_t}_{\mathrm{L}^2(\mu_t)} \comma
\]
from which we obtain~$	\frac{\dif}{\dif t} \norm{v}_{\Lp{2}(\mu_t)} \le \norm{F_t}_{\mathrm{L}^2(\mu_t)}$. Similarly,
\[
	\frac{\dif}{\dif t} \norm{x}_{\Lp{2}(\mu_t)}^2 = -\int |x|^2 \nabla_x \cdot (v \, \mu_t) =  2\norm{x}_{\Lp{2}(\mu_t)} \norm{v}_{\mathrm{L}^2(\mu_t)} \comma
\]
and, therefore,~$\frac{\dif}{\dif t} \norm{x}_{\Lp{2}(\mu_t)} \le \norm{v}_{\mathrm{L}^2(\mu_t)}$.

\begin{lem}[Moment estimate] \label{lem:momentestimate}
	Let $(\mu_t,F_t)_t$ be a narrowly continuous solution on $[a,b]$ to the Vlasov equation \eqref{eq:cont} with $\mu_a \in \mathcal P_2(\Gamma)$ and $\int_a^b \int |F_t|^2 \, \dif \mu_t \, \dif t < \infty$.
	Then, for every $t \in (a,b)$:
	\begin{equation}
	\|v\|_{\mathrm{L}^2(\mu_t)} \le \|v\|_{\mathrm{L}^2(\mu_a)} + \int_a^t \|F_s\|_{\mathrm{L}^2(\mu_s)} \, \dif s
	\end{equation}
	and
	\begin{align} \label{eq:momentestimate2}
	\|x\|_{\mathrm{L}^2(\mu_t)} &\le \|x\|_{\mathrm{L}^2(\mu_a)} + \int_a^t \|v\|_{\mathrm{L}^2(\mu_s)} \, \dif s \\ &\le \|x\|_{\mathrm{L}^2(\mu_a)} + (t-a)\|v\|_{\mathrm{L}^2(\mu_a)}+ \int_a^t (t-s) \|F_s\|_{\mathrm{L}^2(\mu_s)} \, \dif s \fstop
	\end{align}
\end{lem}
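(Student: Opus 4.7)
The plan is to test the weak form of \eqref{eq:cont} against bounded smooth approximations $\phi_R$ of $|v|^2$ (respectively $|x|^2$) chosen so that $|\nabla\phi_R|^2\le 4\phi_R$. This will produce a nonlinear integral inequality that a Gronwall-type argument on $\sqrt{\int\phi_R\,\dif\mu_t}$ closes into the claimed bound, after which monotone convergence as $R\to\infty$ delivers the quadratic moment estimate.

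Concretely, fix a smooth concave $\zeta_R\colon[0,\infty)\to[0,R]$ with $\zeta_R(u)=u$ for $u\le R/2$, $\zeta_R'\in[0,1]$, and $\zeta_R(u)\nearrow u$ as $R\to\infty$, and set $\phi_R(v):=\zeta_R(|v|^2)$. Concavity together with $\zeta_R(0)=0$ yields the key pointwise bound
\[
|\nabla_v\phi_R(v)|^2 = 4|\zeta_R'(|v|^2)|^2\,|v|^2 \le 4\zeta_R'(|v|^2)\,\zeta_R(|v|^2) \le 4\phi_R(v).
\]
Since $\phi_R$ is not compactly supported, I will multiply by a cutoff $\chi_S(x,v):=\eta(x/S)\eta(v/S)$ with $\eta\in\mathrm{C}^\infty_c(\R^n)$ equal to $1$ on the unit ball, apply the weak form of \eqref{eq:cont} to $\phi_R\chi_S$, and send $S\to\infty$. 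The cross terms carry $\nabla\chi_S=O(1/S)$ supported on $\{|x|\vee|v|\ge S\}$, where also $|v|\le 2S$; together with $\mu_s(\{|x|\vee|v|\ge S\})\to 0$ pointwise in $s$ (as $\mu_s$ is a probability measure) and the $\Lp{1}(\mu_s\,\dif s)$-integrability of $|v|$ and $|F_s|$ provided by \eqref{eq:ic}, dominated convergence in $s$ makes these cross terms vanish. After a Cauchy--Schwarz step using $|\nabla_v\phi_R|\le 2\sqrt{\phi_R}$, what remains is
\[
\Phi_R(t) \le \Phi_R(a) + 2\int_a^t \|F_s\|_{\Lp{2}(\mu_s)}\sqrt{\Phi_R(s)}\,\dif s, \qquad \Phi_R(t) := \int\phi_R\,\dif\mu_t.
\]
A standard Gronwall-type argument on $\sqrt{\Phi_R}$ (setting $u(t)$ equal to the right-hand side and differentiating $\sqrt{u}$) gives $\sqrt{\Phi_R(t)}\le\sqrt{\Phi_R(a)}+\int_a^t\|F_s\|_{\Lp{2}(\mu_s)}\,\dif s$, and the velocity bound follows from $\phi_R\nearrow|v|^2$ by monotone convergence. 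The position bound is then proved by the same scheme with $\phi_R(x):=\zeta_R(|x|^2)$, the role of $F$ now being played by $v$, whose $\Lp{2}(\mu_s)$-norm is integrable on $[a,t]$ by the velocity bound just established. Substituting this velocity bound into $\|x\|_{\Lp{2}(\mu_t)}\le\|x\|_{\Lp{2}(\mu_a)}+\int_a^t\|v\|_{\Lp{2}(\mu_s)}\,\dif s$ and applying Fubini to $\int_a^t\int_a^s\|F_r\|_{\Lp{2}(\mu_r)}\,\dif r\,\dif s=\int_a^t(t-r)\|F_r\|_{\Lp{2}(\mu_r)}\,\dif r$ delivers the last inequality.

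The main obstacle is the $S\to\infty$ removal of the spatial--velocity cutoff $\chi_S$. A priori $\mu_s$ need not have any finite second moment for $s>a$, so the error terms produced by $\nabla\chi_S$ cannot be dominated directly by $\|v\|_{\Lp{2}(\mu_s)}$ or $\|x\|_{\Lp{2}(\mu_s)}$. The resolution is that on the support of $\nabla\chi_S$ one has $|x|,|v|\le 2S$, which exactly cancels the $1/S$ prefactor, so that only the probability-mass factor $\mu_s(\{|x|\vee|v|\ge S\})\to 0$ drives the error to zero; finiteness of the second moment of $\mu_t$ itself emerges only a posteriori from the Gronwall conclusion.
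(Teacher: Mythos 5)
Your proof is correct, but it takes a genuinely different route from the paper. The paper never tests the equation against (truncated) quadratic observables: for the velocity bound it first integrates out the $x$-variable with a cutoff $\zeta(\epsilon x)$ to show that the velocity marginal $(\proj_v)_\#\mu_t$, driven by the averaged field $\int F_t\,\dif\mu_t^v$, solves a classical continuity equation, then invokes the superposition principle of \cite[Theorem 8.2.1]{MR2401600} to represent this curve by characteristics and concludes via Minkowski's integral inequality and Jensen; the position bound comes from a second application of the superposition principle, now viewing \eqref{eq:cont} as a continuity equation on $\Gamma$ with field $(v,F_t)$, which is admissible precisely because of the velocity bound just proved. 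You instead stay entirely at the PDE level: concave truncations $\zeta_R(\abs{v}^2)$, $\zeta_R(\abs{x}^2)$ with $\abs{\nabla\phi_R}^2\le 4\phi_R$, a cutoff $\chi_S$ whose error terms are of size $O(R/S)\int(\abs{v}+\abs{F_s})\,\dif\mu_s$ and hence vanish by \eqref{eq:ic}, a nonlinear Gronwall inequality for $\sqrt{\Phi_R}$, and monotone convergence in $R$ --- with the same logical order as the paper (velocity first, then position, then Fubini for the final line). Your approach is more elementary and self-contained (no disintegration, no superposition theorem), at the price of the truncation and cutoff bookkeeping; the paper's route buys a pathwise representation with essentially no pointwise manipulation of test functions. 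One step you should make explicit: the weak formulation only admits test functions compactly supported in time, so to obtain the identity between the times $a$ and $t$ you should invoke (as recorded in \S\ref{ss:43}) that $t\mapsto\int\psi\,\dif\mu_t$ is absolutely continuous with derivative $\int\nabla_{x,v}\psi\cdot(v,F_t)\,\dif\mu_t$ for $\psi\in\mathrm{C}^\infty_\mathrm{c}(\Gamma)$, together with narrow continuity up to $t=a$; this is routine and does not affect the validity of your argument.
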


\begin{proof}
	Let~$\psi \in \mathrm{C}_\mathrm{c}^\infty((a,b) \times \cV)$ and~$\zeta \in \mathrm{C}_\mathrm{c}^\infty(\cX)$ with~$\zeta(0)=1$. For every~$\epsilon > 0$, the definition of solution to the Vlasov equation implies
	$$
	\int_a^b \int \left( \zeta(\epsilon x) \partial_t \psi + \epsilon \psi \, v \cdot  (\nabla_x \zeta)(\epsilon x) + \zeta(\epsilon x) F_t \cdot \nabla_v \psi \right) \, \dif \mu_t \, \dif t = 0 \fstop
	$$
	Note that~$v\psi$ is compactly supported, hence bounded. The dominated convergence theorem (for $\epsilon \to 0$) yields
	$$
	\int_a^b \int \left(\partial_t \psi + F_t \cdot \nabla_v \psi \right) \, \dif \mu_t \, \dif t = 0 \fstop
	$$
	For every $t$, consider the disintegration $\mu_t = \int \mu_t^v \, \dif \, (\proj_v)_\# \mu_t$, which gives
	\begin{align}
		\label{eq:weak-cont-v}
		\int_a^b \int \left( \partial_t \psi + \left(\int F_t \, \dif  \mu_t^v \right) \cdot \nabla_v \psi \right) \, \dif \, (\proj_v)_\# \mu_t \, \dif t = 0 \fstop
	\end{align}
	By arbitrariness of~$\psi$, this argument shows that $\bigl((\proj_v)_\# \mu_t, \int F_t \, d \mu_t^v \bigr)$ satisfies the classical continuity equation. Note that the vector field satisfies \cite[(8.1.21)]{MR2401600}: by assumption
	$$
	\int_a^b \int \left| \int F_t \, \dif  \mu_t^v \right|^2 \, \dif \, (e_v)_\# \mu_t \, \dif t \le \int_a^b \int  \int |F_t|^2 \, \dif \mu_t^v \, \dif \, (\proj_v)_\# \mu_t \, \dif t < \infty \fstop
	$$
	Therefore, by \cite[Theorem 8.2.1]{MR2401600}, there exists a probability measure $\boldsymbol \eta$ such that:
	\begin{enumerate}[label=(\roman*)]
		\item $\boldsymbol \eta$ is concentrated on the set of pairs $(v,\beta)\in \cV \times \mathrm H^1(a,b;\cV)$ such that~$\dot \beta(t) = \int F_t(x,\beta(t)) \, \dif \mu_t^{\beta(t)}$ for a.e.~$t \in (a,b)$, with~$\beta(0)=v$;
		\item \label{item:propertyeta} for every~$t \in [a,b]$, $(\proj_v)_\# \mu_t$ equals the push-forward of~$\boldsymbol \eta$ via the map $(v,\beta) \mapsto \beta(t)$.
	\end{enumerate}
	For~$t \in (a,b)$, using Property~\ref{item:propertyeta}, and the Minkowski and Cauchy--Schwarz inequalities,
	\begin{align*}
		\|v\|_{\mathrm{L}^2(\mu_t)} &= \sqrt{\int \abs{v}^2 \, \dif \, (\proj_v)_\# \mu_t} = \sqrt{\int \abs{\beta(t)}^2 \, \dif \boldsymbol \eta} \\
		&\le \sqrt{\int \abs{\beta(a)}^2 \, \dif \boldsymbol \eta} + \sqrt{\int \abs{ 
			\int_a^t \abs{\dot \beta(s)} \, \dif  s }^2 \, \dif \boldsymbol \eta} \\
		&= \|v\|_{\mathrm{L}^2(\mu_a)} + \sqrt{\int \abs{
			\int_a^t \int F_s(x,\beta(s)) \, \dif \mu_s^{\beta(s)} \, \dif s }^2 \, \dif \boldsymbol \eta} \\
		&\le \|v\|_{\mathrm{L}^2(\mu_a)} + \int_a^t \sqrt{\int \abs{ 
			\int F_s(x,\beta(s)) \, \dif \mu_s^{\beta(s)} }^2 \, \dif \boldsymbol \eta} \, \dif s \\
		&\le \|v\|_{\mathrm{L}^2(\mu_a)} + \int_a^t \sqrt{\int 
			\int \abs{F_s(x,\beta(s))}^2 \, \dif \mu_s^{\beta(s)} \, \dif \boldsymbol \eta} \, \dif s \\
		&= \|v\|_{\mathrm{L}^2(\mu_a)} + \int_a^t \sqrt{ 
			\int \abs{F_s(x,v)}^2 \, \dif \mu_s } \, \dif  s \fstop
	\end{align*}
	
	Let us focus on the other inequality we need to prove. The Vlasov equation can be seen as a classical continuity equation with vector field $(v,F_t)$. It follows from the previous estimates that this vector field satisfies \cite[(8.1.21)]{MR2401600} and, by \cite[Theorem 8.2.1]{MR2401600}, there exists a probability measure $\boldsymbol \xi$ such that:
	\begin{enumerate}[label=(\roman*)]
		\item $\boldsymbol \xi$ is concentrated on the set of triples $(x,v,\gamma)\in \cX \times \cV \times \mathrm H^1(a,b;\cX \times \cV)$ such that~$\dot \gamma_x(t) = \gamma_v(t)$ and~$\dot \gamma_v(t) = F_t\bigl(\gamma_x(t),\gamma_v(t)\bigr)$ for a.e.~$t \in (a,b)$, with~$\gamma(0)=(x,v)$;
		\item for every~$t \in [a,b]$, $\mu_t$ equals the push-forward of~$\boldsymbol \xi$ via the map $(x,v,\gamma) \mapsto \gamma(t)$.
	\end{enumerate}
	Hence, for every $t \in (a,b)$, we have:
	\begin{align*}
		\|x\|_{\mathrm{L}^2(\mu_t)} &= \sqrt{\int \abs{x}^2 \, \dif \mu_t} = \sqrt{\int \abs{\gamma_x(t)}^2 \, \dif \boldsymbol \xi} \\
		&\le \sqrt{\int \abs{\gamma_x(a)}^2 \, \dif  \boldsymbol \xi} + \sqrt{\int \abs{\int_a^t \dot \gamma_x(s) \, d s}^2 \, \dif \boldsymbol \xi} \\
		&= \|x\|_{\mathrm{L}^2(\mu_a)} + \sqrt{\int \abs{\int_a^t \gamma_v(s) \, \dif s}^2 \, \dif \boldsymbol \xi} \\
		&\le \|x\|_{\mathrm{L}^2(\mu_a)} + \int_a^b \sqrt{\int |\gamma_v(s)|^2 \, \dif \boldsymbol \xi} \, \dif s \\
		&=\|x\|_{\mathrm{L}^2(\mu_a)} + \int_a^b \sqrt{\int |v|^2 \, \dif \mu_s} \, \dif s\fstop \qedhere
	\end{align*}
\end{proof}

\section{Hypoelliptic Riemannian structure}
\label{sec:sec5}
In this section, we develop a differential calculus induced by $\mathsf d$, with the main contributions organised as follows.
\begin{itemize}
    \item In \S\ref{ss:51}, we show that solutions $(\mu_t,F_t)_t$ to \eqref{eq:cont} are physical and $\sfd$-absolutely continuous, with the optimal time for $\sfd(\mu_t,\mu_{t+h})$ being asymptotically $h$, for $h \downarrow 0$.  
    \item In \S\ref{ss:52}, we prove the converse: $\sfd$-absolutely continuous curves of measures $(\mu_t)_t$ can be represented as solutions to \eqref{eq:cont}, provided the optimal time for $\sfd(\mu_t,\mu_{t+h})$ is asymptotically $h$ as $h \downarrow 0$. Similarly, we show that physical curves solve~\eqref{eq:cont}.
    \item In \S\ref{ss:53}, we show that the minimal $\Lp{2}(\mu_t)$-norm of a force field~$(F_t)_t$ such that~$(\mu_t,F_t)_t$ solves~\eqref{eq:cont} can be interpreted as a \emph{metric derivative}, namely, it is, for a.e.~$t$, the limit of~$\frac{\sfd(\mu_t,\mu_{t+h})}{h}$ and~$\frac{\tilde \sfd_h(\mu_t,\mu_{t+h})}{h}$ as~$h \downarrow 0$.
    \item In \S\ref{sec:repar}, we extend these results to reparametrisations of \eqref{eq:cont} and complete the proof of \Cref{thm:main:new:2}. 
\end{itemize}

\noindent Henceforth, we assume that~$(\mu_t)_{t \in (a,b)} \subseteq \mathcal P_2(\Gamma)$ is a narrowly continuous curve. We set
\begin{equation} \label{eq:Omega}
	\Omega \coloneqq \set{t \in (a,b) \, : \, \norm{v}_{\Lp{2}(\mu_t)} > 0 }
\end{equation}
and define the \emph{spatial density}
\begin{equation}
	\rho_t \coloneqq (\proj_x)_\# \mu_t
	\in \cP(\cX) \comma \qquad t \in (a,b) \fstop
\end{equation}
Using the disintegration theorem we write
$\dif\mu_t(x,v) = \dif\mu_{t,x}(v) \, \dif \rho_t(x)$,
where $\mu_{t,x} \in \cP(\cV)$ denotes the distribution of velocities at~$x \in \cX$, defined $\rho_t$-a.e.

For every~$t \in (a,b)$, let~$\overline V_t$ be the closure of the space~$V \coloneqq \set{ \nabla \phi \, : \, \phi \in \mathrm{C}^\infty_\mathrm{c}(\cX)}$ in~$\Lp{2}(\rho_t;\R^d)$. Additionally let~$\proj_{\overline V_t} \colon \Lp{2}(\rho_t; \R^d) \to \overline V_t$ be the corresponding projection operator, and define the \emph{flow velocity}
\begin{equation}
	j_t(x) 
		\coloneqq 
	\int_\cV v \, \dif \mu_{t,x} 
	\comma \qquad   
	(t,x) \in (a,b) \times \cX \comma
\end{equation}
and the \emph{total momentum}
\begin{equation}
	\langle v \rangle_t \coloneqq 
	\int_\Gamma v \, \dif \mu_t 
	= 
	\int_\cX j_t \, \dif \rho_t 
	\comma \qquad t \in (a,b) \fstop
\end{equation}

\begin{rem} \label{rem:constants}
	For any~$\rho \in \cP(\cX)$, the closure of~$V$ in~$\Lp{2}(\rho; \R^d)$ contains all constant vector fields. Indeed, fix~$u_0 \in \R^d$ and a~$\mathrm{C}^\infty_{\mathrm{c}}(\R^d)$ function~$\zeta$ with support contained in the unit ball, and such that~$\zeta \equiv 1$ in a neighbourhood of~$0$. For every~$\epsilon > 0$, set
	\[
		\psi_\epsilon \coloneqq \zeta(\epsilon x)\, x \cdot u_0 \comma \qquad x \in \cX \fstop
	\]
	We have
	\[
		\nabla \psi_\epsilon(x) = \zeta(\epsilon x) u_0 + \epsilon (x \cdot u_0) \nabla \zeta(\epsilon x) \in V \comma \qquad x \in \cX \fstop
	\]
	As~$\epsilon \to 0$, the dominated convergence theorem gives
	\[
		\zeta(\epsilon x) u_0 \stackrel{\Lp{2}(\rho;\R^d)}{\longrightarrow} {\zeta(0) u_0} = u_0 \comma
	\]
	as well as
	\begin{multline*}
		\int_\cX \abs{\epsilon (x \cdot u_0) \nabla \zeta(\epsilon x)}^2 \, \dif \rho 
		\le 
		\abs{u_0}^2 \int_{\set{\abs{x}<\frac{1}{\epsilon}}} \epsilon^2 \abs{x}^2 \abs{\nabla \zeta(\epsilon x)} \, \dif \rho 
		\le 
		\abs{u_0}^2 \int_\cX \abs{\nabla \zeta(\epsilon x)} \, \dif \rho 
		\\ \to \abs{u_0}^2 \abs{\nabla \zeta(0)} = 0 \fstop
	\end{multline*}
\end{rem}
Let~$(s,t) \mapsto \pi_{s,t} \in \Pi_{\mathrm o, \sfd}(\mu_s,\mu_t)$ be a measurable selection of~$\sfd$-optimal transport plans, and~$T_{s,t}$ the corresponding optimal times,\footnote{All times~$T>0$ are optimal when~$\norm{y-x}_{\Lp{2}(\pi_{s,t})} = 0$. In this case, we conventionally choose~$T_{s,t}=0$.} i.e.,
\begin{equation} \label{eq:defTbis}
	T_{s,t} = \begin{cases}
		2 \displaystyle \frac{\norm{y-x}_{\Lp{2}(\pi_{s,t})}^2}{(y-x,v+w)_{\pi_{s,t}}} &\text{if } (y-x,v+w)_{\pi_{s,t}} > 0 \comma \\
		0 &\text{if } \norm{y-x}_{\Lp{2}(\pi_{s,t})} = 0 \comma \\
		\infty &\text{otherwise.}
	\end{cases}
\end{equation}

\subsection{$\sfd$-regularity of solutions to Vlasov's equations}\label{ss:51}

The results of this subsection are given under the following.

\begin{ass}[Solution to Vlasov's equation]\label{ass:vlasov}
	The curve~$(\mu_t)_{t \in (a,b)}$ in $\cP_2(\Gamma)$ is a distributional solution to Vlasov's equation~\eqref{eq:cont} for a field~$(F_t)_{t \in (a,b)}$ such that
	\begin{equation}
		\int_a^b \left(\norm{v}_{\Lp{2}(\mu_t)}^2 + \norm{F_t}_{\Lp{2}(\mu_t)}^2 \right) \, \dif t
		<
		\infty \fstop
	\end{equation}
\end{ass}

Under this assumption, the curve~$t \mapsto \mu_t$ is ${\mathrm{W}_2}$-$2$-absolutely continuous by~\cite[Theorem~8.3.1]{MR2401600}. 
It is readily shown that the maps~$t \mapsto \norm{x}_{\Lp{2}(\mu_t)}$ and~$t \mapsto \norm{v}_{\Lp{2}(\mu_t)}$ are continuous. Indeed, for any $\mathrm{W}_2$-optimal plan $\pi_{s,t} \in \Pi(\mu_s,\mu_t)$ we have
\begin{align*}
	\big| \norm{v}_{\Lp{2}(\mu_t)}^2 - \norm{v}_{\Lp{2}(\mu_s)}^2 \big|
	= \bigg| \int_{\Gamma \times \Gamma} (v + w)\cdot(v-w) \dif \pi_{s,t}  \bigg|
	\leq \mathrm{W}_2(\mu_s, \mu_t) \Bigl( \norm{v}_{\Lp{2}(\mu_t)} + \norm{v}_{\Lp{2}(\mu_s)}   \Bigr).
\end{align*} 
Since $t\mapsto \norm{x}_{\Lp{2}(\mu_t)}^2 +  \norm{v}_{\Lp{2}(\mu_t)}^2 =  \mathrm{W}_2^2(\mu_t, \delta_{(0,0)})$ is continuous and thus locally bounded, the continuity of $t \mapsto \norm{v}_{\Lp{2}(\mu_t)}$ follows. In particular, the set~$\Omega$ is open in~$(a,b)$. The continuity of $t \mapsto \norm{x}_{\Lp{2}(\mu_t)}$ is proved analogously.

\begin{lem} \label{lem:derW2}
	Under \Cref{ass:vlasov}, the space-marginal curve~$t \mapsto \rho_t$ is~${\mathrm{W}_2}$-$2$-a.c.~with
	\begin{equation} \label{eq:derW2}
		\abs{\langle v \rangle_t}
            \le
		\abs{\rho_t'}_{{\mathrm{W}_2}}
		=
		\norm{\proj_{\overline V_t}(j_{t})}_{\Lp{2}(\rho_t)}
			\le
		\norm{v}_{\Lp{2}(\mu_t)} \quad \text{for a.e.~} t \in (a,b) \fstop
	\end{equation}
\end{lem}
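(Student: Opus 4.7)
The strategy is to project Vlasov's equation onto the spatial variable $x$, reducing to the classical theory of $\mathrm{W}_2$-absolutely continuous curves in $\mathcal{P}_2(\cX)$ developed in~\cite[Chapter~8]{MR2401600}.

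First, I would insert test functions $\varphi \in \mathrm{C}_\mathrm{c}^\infty\bigl((a,b) \times \cX\bigr)$ depending only on $(t,x)$ into the weak formulation of~\eqref{eq:vlasov.}. Since $\nabla_v \varphi \equiv 0$, the force term vanishes, and using the disintegration $\mu_t = \mu_{t,x} \otimes \rho_t$ the remaining term $\int v \cdot \nabla_x \varphi \, \dif \mu_t$ rewrites as $\int \nabla_x \varphi(t,x) \cdot j_t(x) \, \dif \rho_t(x)$. Thus the pair $(\rho_t, j_t)$ solves the classical continuity equation $\partial_t \rho_t + \nabla_x \cdot (j_t \rho_t) = 0$ distributionally on $(a,b) \times \cX$.

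Second, Jensen's inequality applied on each fibre $\mu_{t,x}$ yields the pointwise bound $|j_t(x)|^2 \leq \int |v|^2 \, \dif \mu_{t,x}$, whence integration against $\rho_t$ gives $\|j_t\|_{\Lp{2}(\rho_t)} \leq \|v\|_{\Lp{2}(\mu_t)}$. Combined with \Cref{ass:vlasov}, this yields $\int_a^b \|j_t\|_{\Lp{2}(\rho_t)}^2 \, \dif t < \infty$, the integrability hypothesis needed to apply~\cite[Theorem~8.3.1]{MR2401600}. That result delivers $\mathrm{W}_2$-$2$-absolute continuity of $(\rho_t)$ together with the crude inequality $|\rho_t'|_{\mathrm{W}_2} \leq \|j_t\|_{\Lp{2}(\rho_t)} \leq \|v\|_{\Lp{2}(\mu_t)}$.

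The delicate step, and the one I expect to require the most care, is upgrading this inequality to the equality $|\rho_t'|_{\mathrm{W}_2} = \|\proj_{\overline V_t} j_t\|_{\Lp{2}(\rho_t)}$. For this I would invoke the characterisation of the tangent velocity field in~\cite[Theorem~8.3.1 and Proposition~8.4.5]{MR2401600}: for a $\mathrm{W}_2$-$2$-absolutely continuous curve $(\rho_t)$ there exists a unique vector field $V_t^\star \in \overline V_t$ solving the continuity equation, and it achieves $\|V_t^\star\|_{\Lp{2}(\rho_t)} = |\rho_t'|_{\mathrm{W}_2}$ for a.e.~$t$; moreover $V_t^\star$ is characterised as the $\Lp{2}(\rho_t)$-orthogonal projection onto $\overline V_t$ of any admissible velocity field, since the difference between two admissible fields is divergence-free against $\rho_t$ and hence lies in $\overline V_t^{\,\perp}$. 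Applying this with $j_t$ as the admissible field yields $V_t^\star = \proj_{\overline V_t} j_t$ for a.e.~$t$, and the chain $|\rho_t'|_{\mathrm{W}_2} = \|\proj_{\overline V_t} j_t\|_{\Lp{2}(\rho_t)} \leq \|j_t\|_{\Lp{2}(\rho_t)} \leq \|v\|_{\Lp{2}(\mu_t)}$ follows at once.
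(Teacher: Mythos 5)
Your proposal is correct and follows essentially the same route as the paper: test Vlasov's equation against functions of $(t,x)$ only, so the force term drops and disintegration yields the continuity equation for $(\rho_t,j_t)$, then identify the metric derivative via the tangent-field results of \cite[Chapter~8]{MR2401600}. The only cosmetic difference is ordering: the paper notes that the test gradients lie in $\overline V_t$ and thus writes the continuity equation directly with $\proj_{\overline V_t}(j_t)$ before invoking \cite[Proposition~8.4.5]{MR2401600}, whereas you first work with $j_t$ and project afterwards using orthogonality of $\rho_t$-divergence-free fields — the substance is identical.
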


\begin{proof}
	Fix~$\psi \in \mathrm{C}^\infty_\mathrm{c}\bigl((a,b) \times \cX \bigr)$. With the same argument as in the proof of \Cref{lem:momentestimate}:
	\[
		\int_a^b \int_\Gamma (\partial_t \psi + v \cdot \nabla_x \psi) \, \dif \mu_t \, \dif t = 0 \comma
	\]
	from which we get
	\begin{equation} \label{eq:continuityrho}
		0
		=
		\int_a^b \int_\cX \left( \partial_t \psi + j_t(x) \cdot \nabla_x \psi  \right) \, \dif \rho_t \, \dif t 
		= 
		\int_a^b \int_\cX \left( \partial_t \psi + \proj_{\overline V_t} (j_t) \cdot \nabla_x \psi  \right) \, \dif \rho_t \, \dif t \comma
	\end{equation}
	where we used that~$\nabla_x \psi(t,\cdot) \in \overline V_t$ in the last equality. Since $\psi$ is arbitrary, we deduce that~$t \mapsto \rho_t$ is a solution to the continuity equation with vector field~$\bigl(\proj_{\overline V_t}(j_t)\bigr)_{t \in (a,b)}$. 
	The identity~$\abs{\rho_t'}_{\mathrm{W}_2} = \norm{\proj_{\overline V_t}(j_t)}_{\Lp{2}(\rho_t)}$ thus follows from~\cite[Proposition~8.4.5]{MR2401600}.
	
	The inequality~$\norm{\proj_{\overline V_t}(j_t)}_{\Lp{2}(\rho_t)} \le \norm{v}_{\Lp{2}(\mu_t)}$ follows from the definition of~$j_t$ using Jensen's equality.
        Finally, by definition of~$\langle v \rangle_t$ and \Cref{rem:constants}, we write
        \[
	\langle v \rangle_t^2 = \int_\cX j_t \cdot \langle v \rangle_t \, \dif \rho_t
	=
	\int_\cX \proj_{\overline V_t}(j_t) \cdot \langle v \rangle_t \, \dif \rho_t
	\le
	\abs{\langle v \rangle_t} \norm{\proj_{\overline V_t}(j_t)}_{\Lp{2}(\rho_t)} \comma
	\]
    which yields~$\abs{\langle v \rangle_t} \le \norm{\proj_{\overline V_t}(j_t)}_{\Lp{2}(\rho_t)}$.
\end{proof}

Our goal is to prove the following three propositions.

\begin{prop} \label{prop:vlasovtoac1}
	Under \Cref{ass:vlasov}, for every~$s,t$ with~$a<s<t<b$, we have
	\begin{equation} \label{eq:vlasovtoac11}
		\sfd(\mu_s,\mu_t) \le \tilde \sfd_{t-s}(\mu_s,\mu_t) \le 2 \int_{s}^t \norm{F_r}_{\Lp{2}(\mu_r)} \, \dif r \fstop
	\end{equation}
\end{prop}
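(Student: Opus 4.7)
The first inequality $\sfd \le \tilde\sfd$ is immediate from the fact that $\sfd$ is the $\mathrm{W}_2$-lower semicontinuous envelope of $\tilde\sfd$ (Proposition~\ref{prop:thm1.2}), so it suffices to prove the second. The strategy is to exhibit an explicit competitor in $\tilde\sfd_T$ with $T=t-s$ built from the characteristics of the Vlasov solution, then take $\tilde\sfd(\mu_s,\mu_t)\le\tilde\sfd_{t-s}(\mu_s,\mu_t)$ and optimise.

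Concretely, by Proposition~\ref{prop39} (superposition principle)---whose hypothesis $\int_s^t(\|v\|_{\Lp 2(\mu_r)}^2 + \|F_r\|_{\Lp2(\mu_r)}^2)\,\dif r<\infty$ is guaranteed by Assumption~\ref{ass:vlasov}---there exists a dynamical plan $\mathbf m \in \mathcal P(\mathrm H^2(s,t;\cX))$ concentrated on curves $\alpha$ solving $\alpha''(r)=F_r(\alpha(r),\alpha'(r))$ with $(e_r)_\#\mathbf m=\mu_r$, where $e_r(\alpha)\coloneqq(\alpha(r),\alpha'(r))$. Setting $\pi\coloneqq(e_s,e_t)_\#\mathbf m \in \Pi(\mu_s,\mu_t)$, we have $\tilde\sfd_{t-s}^2(\mu_s,\mu_t)\le \tilde\sfc_{t-s}(\pi)$. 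The key pointwise identities, obtained by straightforward computation and integration by parts on each characteristic (with $x=\alpha(s)$, $v=\alpha'(s)$, $y=\alpha(t)$, $w=\alpha'(t)$), are
\begin{equation*}
    w-v=\int_s^t F_r(\alpha(r),\alpha'(r))\,\dif r, \qquad y-x-\tfrac{t-s}{2}(v+w)=-\int_s^t \Bigl(r-\tfrac{s+t}{2}\Bigr)F_r(\alpha(r),\alpha'(r))\,\dif r.
\end{equation*}

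Using $|r-(s+t)/2|\le(t-s)/2$ and Minkowski's integral inequality in $\Lp 2(\mathbf m)$, combined with $\|F_r(\alpha,\alpha')\|_{\Lp2(\mathbf m)}=\|F_r\|_{\Lp2(\mu_r)}$, these yield, writing $M\coloneqq\int_s^t\|F_r\|_{\Lp2(\mu_r)}\,\dif r$,
\begin{equation*}
    \Bigl\|\tfrac{y-x}{t-s}-\tfrac{v+w}{2}\Bigr\|_{\Lp2(\pi)}\le \tfrac{M}{2},\qquad \|w-v\|_{\Lp2(\pi)}\le M.
\end{equation*}
Plugging these bounds into $\tilde\sfc_{t-s}(\pi)=12\|\tfrac{y-x}{t-s}-\tfrac{v+w}{2}\|_{\Lp2(\pi)}^2+\|w-v\|_{\Lp2(\pi)}^2$ gives $\tilde\sfc_{t-s}(\pi)\le 12\cdot\tfrac{M^2}{4}+M^2=4M^2$, whence $\tilde\sfd_{t-s}(\mu_s,\mu_t)\le 2M$, and the second inequality in~\eqref{eq:vlasovtoac11} follows since $\tilde\sfd\le\tilde\sfd_{t-s}$.

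I do not foresee substantial obstacles: the argument is a standard superposition-plus-Minkowski estimate, and the slightly surprising constant $2$ emerges as $\sqrt{12\cdot(1/2)^2+1}=2$ from the specific weighting of the two terms in $\tilde\sfc_T$. The one point to be careful about is verifying the integrability hypothesis of Proposition~\ref{prop39} and the applicability of Minkowski's inequality (both reduced by Fubini to the assumed $\Lp2$-integrability of $\|F_r\|_{\Lp2(\mu_r)}$ on the compact interval $[s,t]$).
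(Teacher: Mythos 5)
Your proof is correct and follows essentially the same route as the paper: represent the Vlasov solution by a superposition of characteristics (the paper invokes \cite[Theorem~8.2.1]{MR2401600} directly, you go through \Cref{prop39}), take the induced coupling of the time-$s$ and time-$t$ marginals as a competitor for $\tilde\sfd_{t-s}$, and apply Minkowski's integral inequality to the same pointwise identities, with the weight $\abs{r-\tfrac{s+t}{2}}\le\tfrac{t-s}{2}$ matching the paper's bound $\tfrac{\abs{t+s-2r}}{t-s}\le 1$ and yielding the same constant $2$.
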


\begin{prop} \label{prop:vlasovtoac2}
	Under \Cref{ass:vlasov}, for almost every~$t \in (a,b)$, we have
	\begin{equation} \label{eq:vlasovtoac2}
		\limsup_{h \downarrow 0} \frac{\sfd(\mu_t,\mu_{t+h})}{h}
		\le
		\limsup_{h \downarrow 0} \frac{\tilde \sfd_h(\mu_t,\mu_{t+h})}{h}
		\le
		\norm{F_{t}}_{\Lp{2}(\mu_{t})} \fstop
	\end{equation}
\end{prop}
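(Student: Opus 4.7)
The first inequality is immediate from \Cref{prop:thm1.2}, which identifies $\sfd$ as the lower-semicontinuous envelope of $\tilde\sfd$ with respect to the $2$-Wasserstein distance. For the main inequality, the plan is to exhibit an explicit competitor in the static formulation $\tilde\sfd_T(\mu_t,\mu_{t+h})$ with the choice $T = h$, built from the Vlasov lift, and to exploit a zero-mean cancellation that sharpens the constant $2$ appearing in \Cref{prop:vlasovtoac1} to $1$.

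Concretely, I would fix a lift $\mathbf m \in \mathcal P\bigl(\Gamma \times \mathrm{H}^2(a,b;\mathcal X)\bigr)$ of $(\mu_s,F_s)_s$ as provided by \Cref{prop39}, so that $\mathbf m$-a.e.\ path $\alpha$ satisfies $\alpha''(s) = F_s(\alpha(s),\alpha'(s))$ together with $(\alpha(s),\alpha'(s))_\# \mathbf m = \mu_s$ for every $s \in (a,b)$. For $h > 0$ sufficiently small, define the competitor
\[
\pi_h \coloneqq \bigl(\alpha(t),\alpha'(t),\alpha(t+h),\alpha'(t+h)\bigr)_\# \mathbf m \in \Pi(\mu_t,\mu_{t+h}) \fstop
\]
Writing $G(s) \coloneqq F_s(\alpha(s),\alpha'(s))$ and integrating Newton's law, the change of variables $s = t+hu$ yields, $\mathbf m$-a.e.,
\[
w - v = h \int_0^1 G(t+hu)\, \dif u \comma \qquad \frac{y-x}{h} - \frac{v+w}{2} = h \int_0^1 \Bigl(\tfrac{1}{2} - u\Bigr) G(t+hu)\, \dif u \comma
\]
so that $\tilde\sfd(\mu_t,\mu_{t+h})^2 \le \tilde\sfd_h(\mu_t,\mu_{t+h})^2 \le \tilde\sfc_h(\pi_h)$ for every $h > 0$.

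The assumption $\int_a^b \|F_s\|^2_{\Lp{2}(\mu_s)}\, \dif s < \infty$ means that $G$ lives in the Bochner space $\Lp{2}\bigl(a,b;\Lp{2}(\mathbf m;\R^n)\bigr)$, so the $\Lp{2}$-valued Lebesgue differentiation theorem provides a full-measure set of $t \in (a,b)$ at which $\tfrac{1}{h}\int_t^{t+h}\|G(s) - G(t)\|^2_{\Lp{2}(\mathbf m)}\, \dif s \to 0$. The crucial point is the identity $\int_0^1(\tfrac{1}{2}-u)\,\dif u = 0$: it allows replacing $G(t+hu)$ by $G(t+hu) - G(t)$ inside the second integral, after which Cauchy--Schwarz gives
\[
\Bigl\|\frac{y-x}{h} - \frac{v+w}{2}\Bigr\|^2_{\Lp{2}(\pi_h)} \le \frac{h^2}{12}\cdot \frac{1}{h}\int_t^{t+h}\|G(s) - G(t)\|^2_{\Lp{2}(\mathbf m)}\, \dif s = o(h^2) \fstop
\]
Applying Lebesgue differentiation directly to $G$ in the first integral yields $\|w-v\|^2_{\Lp{2}(\pi_h)} = h^2\|F_t\|^2_{\Lp{2}(\mu_t)} + o(h^2)$. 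Summing the two contributions and dividing by $h^2$ produces $\tilde\sfd(\mu_t,\mu_{t+h})^2/h^2 \le \tilde\sfc_h(\pi_h)/h^2 \to \|F_t\|^2_{\Lp{2}(\mu_t)}$, as desired.

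The main conceptual point, more than any individual estimate, is the passage through the lift: it is what lets one interpret $G$ as an $\Lp{2}(\mathbf m;\R^n)$-valued map of $s$, and thereby select a single null set of $t$'s outside of which Lebesgue differentiation controls both contributions simultaneously. A subsidiary remark is that $T = h$ is not a priori optimal in the time parameter of the static formulation; it is sufficient for the present upper bound, while a sharper asymptotic choice consistent with $T_{t,t+h}/h \to 1$ underlies the equality part of \Cref{thm3}.
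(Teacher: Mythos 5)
Your argument is correct, but it follows a different route from the paper's. The paper proves the second inequality in two lines via the kinetic Benamou--Brenier formula: by \Cref{thmbb}, $\tilde\sfd_h(\mu_t,\mu_{t+h}) = \widetilde{\mathcal{MA}}_h(\mu_t,\mu_{t+h})$, and the restriction of the given Vlasov solution to $[t,t+h]$ is an admissible curve, so $\tilde\sfd^2(\mu_t,\mu_{t+h})/h^2 \le \fint_t^{t+h}\norm{F_s}^2_{\Lp{2}(\mu_s)}\,\dif s$, and one concludes at Lebesgue points of $s \mapsto \norm{F_s}_{\Lp{2}(\mu_s)}$. You instead bypass \Cref{thmbb} entirely and argue as in the paper's proof of \Cref{prop:vlasovtoac1}: take the superposition lift $\mathbf m$ (\Cref{prop39}), push forward to the endpoint coupling $\pi_h$, and estimate $\tilde\sfc_h(\pi_h)$ directly. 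The improvement over \Cref{prop:vlasovtoac1} (which loses a factor $2$ by bounding the weight by $1$ before applying Minkowski) is exactly your observation that $u \mapsto \tfrac12-u$ has zero mean, so subtracting the constant $G(t)$ and using $\int_0^1(\tfrac12-u)^2\,\dif u = \tfrac1{12}$ cancels the factor $12$ in the cost and leaves an $o(h^2)$ error at Bochner--Lebesgue points of $s \mapsto G(s) \in \Lp{2}(\mathbf m;\R^n)$. What each approach buys: the paper's proof is shorter but leans on the full strength of \Cref{thmbb} (hence on the regularisation machinery behind it), whereas yours is self-contained modulo the superposition principle, at the modest price of invoking vector-valued Lebesgue differentiation and checking (which you should state, though it is routine) that $(s,\alpha) \mapsto F_s\bigl(\alpha(s),\alpha'(s)\bigr)$ is jointly measurable with separable range, so that $G$ is a genuine Bochner map in $\Lp{2}\bigl(a,b;\Lp{2}(\mathbf m)\bigr)$; also, strictly speaking, one should apply \Cref{prop39} on a compact subinterval of $(a,b)$ containing $[t,t+h]$, as the paper does in the proof of \Cref{prop:vlasovtoac1}. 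Your concluding remark about $T=h$ versus the asymptotically optimal time is consistent with \Cref{prop:vlasovtoac3}.
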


\begin{prop} \label{prop:vlasovtoac3}
	Under \Cref{ass:vlasov}, the following assertions hold.
	\begin{enumerate}
		\item For a.e.~$t \in (a,b)$ such that~$\abs{\rho_t'}_{\mathrm{W_2}} > 0$, we have
	\begin{equation}
		\lim_{h \downarrow 0} \frac{T_{t,t+h}}{ h} = 1 \quad \text{for a.e.~} t \in (a,b) \fstop
	\end{equation}
		\item For every~$[a', b'] \subseteq \Omega$, there exist~$\bar h > 0$ and~$g \in \Lp{2}(a',b')$ such that
	\begin{equation} \label{eq:L2dom}
		\sup_{h \in (0,\bar h)}\frac{T_{t,t+ h}}{ h} \le g(t) \quad \text{for all } t \in [a',  b'] \fstop
	\end{equation}
	\end{enumerate}
	
	Consequently, if~$\abs{\rho_t'}_{\mathrm{W}_2} > 0$ for a.e.~$t \in \Omega$, we have~$\frac{T_{\cdot,\cdot+h}}{h} \to 1$ in~$\Lp{2}_\mathrm{loc}(\Omega)$ as~$h \downarrow 0$.
\end{prop}

\Cref{prop:vlasovtoac1} and \Cref{prop:vlasovtoac2} provide upper bounds for the kinetic discrepancies between successive states along~$(\mu_t)_t$. The first one applies to any two times~$s,t$ with~$s<t$, while the second one concerns the infinitesimal change, i.e.,~it provides an upper bound on the~$\sfd$-{derivative}. \Cref{prop:vlasovtoac3} shows that the optimal time for~$\sfd$ between two successive nearby states along a solution to Vlasov's equation is comparable to the \emph{physical} time. In \Cref{prop:tangent2} below, the convergence~$\frac{T_{t,t+h}}{h} \to 1$ will be improved to~$\frac{T_{t,t+h}-h}{h^2} \to 0$ at a.e.~times~$t \in (a,b)$ where~$\langle v \rangle_t \neq 0$.

\begin{rem}
	Comparing \Cref{prop:vlasovtoac1} and \Cref{prop:vlasovtoac2}, we see the presence of an extra factor~$2$ in the former. Note that a version of \Cref{prop:vlasovtoac2} with the extra factor~$2$ follows immediately from \Cref{prop:vlasovtoac1}. Also notice that, if~$\sfd$ were a distance, these two propositions, together, would allow dropping the constant~$2$ in~\eqref{eq:vlasovtoac11}, see \cite[Theorem~1.1.2]{MR2401600}. However, this factor is \emph{sharp}, as demonstrated by the following example.
\end{rem}

\begin{ex}
	Let $\cX = \cV = \R$.
	For~$\epsilon \in (0,1)$ we define
	\[
		\alpha(t) \coloneqq \begin{cases}
			\epsilon t^2 \comma &\text{for } t \in \bigl[0,1\bigr ] \comma \\
			-\epsilon +2\epsilon t - (t-1)^2 &\text{for } t \in \bigl [1,1+\sqrt{\epsilon} \bigr] \comma
		\end{cases}
	\]
	and, by means of~$\alpha$,
	\[
		\mu_t \coloneqq \delta_{\bigl(\alpha(t),\alpha'(t)\bigr)} \comma \qquad t \in \bigl[0,1+\sqrt{\epsilon}\bigl] \fstop
	\]
	This curve solves Vlasov's equation with~$F_t(x,v) \coloneqq \alpha''(t)$. In particular,
	\[
		\int_0^{1+\sqrt{\epsilon}} \norm{F_t}_{\Lp{2}(\mu_t)} \, \dif t = 2\epsilon + 2 \sqrt{\epsilon} \fstop
	\]
	On the other hand, recalling the definition~\eqref{eq:st2.} of~$d$,
	\begin{align*}
		&d^2\bigl((\alpha(0),\alpha'(0)),(\alpha(1+\sqrt{\epsilon}),\alpha'(1+\sqrt{\epsilon})\bigr) = d^2\bigl((0,0), (2\epsilon \sqrt{\epsilon}, 2\epsilon-2\sqrt{\epsilon})\bigr) \\
		&\quad = 3 \abs{2\epsilon-2\sqrt{\epsilon}}^2 - 3\left(\frac{2\epsilon \sqrt{\epsilon}}{\abs{2\epsilon \sqrt{\epsilon}}}\cdot (2\epsilon-2\sqrt{\epsilon})\right)_+^2 + \abs{2\epsilon-2\sqrt{\epsilon}}^2 \\
		&\quad = 4 \abs{2\epsilon-2\sqrt{\epsilon}}^2 - 3(2\epsilon-2\sqrt{\epsilon})_+^2 = 4 \abs{2\epsilon-2\sqrt{\epsilon}}^2 \comma
	\end{align*}
	where the last equality is true because~$\epsilon < 1$. Hence,
	\[
		\frac{d\bigl((\alpha(0),\alpha'(0)),(\alpha(1+\sqrt{\epsilon}),\alpha'(1+\sqrt{\epsilon})\bigr)}{\int_0^{1+\sqrt{\epsilon}} \norm{F_t}_{\Lp{2}(\mu_t)} \, \dif t} = 2 \frac{\abs{2\epsilon - 2\sqrt{\epsilon}}}{2\epsilon + 2\sqrt{\epsilon}} \comma
	\]
	and the latter tends to~$2$ as~$\epsilon \to 0$.
\end{ex}

\begin{proof}[Proof of~\Cref{prop:vlasovtoac1}]
	Let us fix~$s,t \in (a,b)$ with~$s<t$. By~\cite[Theorem~8.2.1]{MR2401600}, there exists a measure~$\boldsymbol{\eta} \in \cP\bigl(\Gamma \times \mathrm{H}^1(s,t;\Gamma)\bigr)$ supported on tuples~$(x,v,\gamma_x,\gamma_v)$ such that:
	\begin{enumerate}
		\item $\gamma_x(s)=x$ and~$\gamma_v(s) = v$;
		\item $\dot \gamma_x(r) = \gamma_v(r)$ and~$\dot \gamma_v(r) = F_r\bigl(\gamma_x(r),\gamma_v(r)\bigr)$ for a.e.~$r \in (s,t)$;
		\item $\left(\proj_{\gamma(r)}\right)_\# \boldsymbol \eta = \mu_{r}$ for every~$r \in (s,t)$.
	\end{enumerate}
	By definition of~$\tilde \sfd_{t-s}$ and by the properties of~$\boldsymbol \eta$, we write
	\begin{align*}
		&\tilde \sfd_{t-s}^2(\mu_s,\mu_t)
			\le
		\tilde \sfc_{t-s}\left(\left(\proj_{\gamma(s),\gamma(t)}\right)_\# \boldsymbol{\eta} \right) \\
		&\quad=
		\int \left( 12\abs{\frac{\gamma_x(t)-\gamma_x(s)}{t-s}-\frac{\gamma_v(t)+\gamma_v(s)}{2}}^2 + \abs{\gamma_v(t)-\gamma_v(s)}^2 \right) \, \dif \boldsymbol \eta \\
		&\quad=
		3\int \abs{\int_s^t \frac{t+s-2r}{t-s} F_r\bigl(\gamma_x(r),\gamma_v(r)\bigr) \, \dif r}^2 \, \dif \boldsymbol \eta 
		+ \int \abs{\int_s^t F_r\bigl(\gamma_x(r),\gamma_v(r)\bigr) \, \dif r }^2  \, \dif \boldsymbol \eta \comma
	\end{align*}
	which yields, by Minkowski's integral inequality,
	\begin{align} \label{eq:estdtilde}
		\tilde \sfd_{t-s}^2(\mu_s,\mu_t)
			&\le
		3 \left(\int_s^t \frac{\abs{t+s-2r}}{t-s} \sqrt{\int \abs{F_r\bigl(\gamma_x(r),\gamma_v(r)\bigr)}^2 \, \dif \boldsymbol \eta } \, \dif r \right)^2 \nonumber \\
		&\quad 
		+ \left(\int_s^t \sqrt{\int \abs{F_r\bigl(\gamma_x(r),\gamma_v(r)\bigr)}^2 \, \dif \boldsymbol \eta } \, \dif r \right)^2 \nonumber \\
		&=
		3 \left(\int_s^t \frac{\abs{t+s-2r}}{t-s} \norm{F_r}_{\Lp{2}(\mu_r)} \, \dif r \right)^2 + \left(\int_s^t \norm{F_r}_{\Lp{2}(\mu_r)} \, \dif r \right)^2 \fstop
	\end{align}
	The conclusion follows by estimating~$\frac{\abs{t+s-2r}}{t-s} \le 1$.
\end{proof}

\begin{proof}[Proof of \Cref{prop:vlasovtoac2}]
	Let~$t \in (a,b)$ be a Lebesgue point for~$\tilde t \mapsto \norm{F_{\tilde t}}_{\Lp{2}(\mu_{\tilde t})}$. By the kinetic Benamou--Brenier formula of \Cref{thmbb} we have, for every~$h > 0$,
    \[
        \frac{\tilde \sfd_h^2(\mu_t,\mu_{t+h})}{h^2}
            \le
        \frac{\widetilde{\mathcal{MA}}_{h}^2(\mu_t,\mu_{t+h})}{h^2} \le \fint_t^{t+h} \norm{F_s}_{\Lp{2}(\mu_s)}^2 \, \dif s \fstop
    \]
    We conclude by letting~$h \downarrow 0$.
\end{proof}

\subsubsection*{Proof of \Cref{prop:vlasovtoac3}}

The core idea in the proof of \Cref{prop:vlasovtoac3} is to equate two interpretations of~$\mu_t$ and~$\mu_{t+h}$, as marginals of two different plans in $\Pi(\mu_t,\mu_{t+h})$. One is the~$\sfd$-optimal plan~$\pi_{t,t+h}$, i.e.,~an evolution along a~$T_{t,t+h}$-long curve; while the other is the dynamical transport plan induced by Vlasov's equation, hence an evolution taking time~$h$. 
One of the lemmas we prove after this idea---namely, \Cref{lem:general}---will also be used to compute the $\sfd$-derivative, see~\Cref{prop:metricder}.

Another key passage in the proof below is the derivation of the local~$\Lp{2}$-domination~\eqref{eq:L2dom} by means of the upper bound~\eqref{eq:vlasovtoac11}.

\begin{lem} \label{lem:boundT}
	Assume \Cref{ass:vlasov}. Fix~$[a',b'] \subseteq \Omega$. Then, there exist~$\bar h > 0$ and a function~$g \in \Lp{2}(a',b')$ such that
	\begin{equation} \label{eq:boundT1}
		\frac{T_{t,t+h}}{h} \le g(t) \quad \text{for all } t \in [a',b'] \text{ and every } h \in (0,\bar h) \fstop
	\end{equation}
	In particular, for a.e.~$t \in \Omega$ (hence, for a.e.~$t$ such that~$\abs{\rho_t'}_{{\mathrm{W}_2}}>0$), we have
	\begin{equation} \label{eq:boundT2}
		\limsup_{h \downarrow 0} \frac{T_{t,t+h}}{h} < \infty \fstop
	\end{equation}
\end{lem}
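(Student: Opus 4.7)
The plan is to bound $T_{t,t+h}/h = 2 a^2_{t,h}/(h b_{t,h})$—with $a_{t,h} := \|y-x\|_{\Lp{2}(\pi_{t,t+h})}$ and $b_{t,h} := (y-x, v+w)_{\pi_{t,t+h}}$—by exploiting \Cref{prop:vlasovtoac1} together with the structure of $\sfc$. First, by $\mathrm{W}_2$-absolute continuity of $(\mu_t)_t$ and compactness of $[a',b'] \subseteq \Omega$, I fix $\bar h_0 > 0$ and $v_* > 0$ such that $\|v\|_{\Lp{2}(\mu_s)} \geq v_*$ for every $s \in [a', b'+\bar h_0]$. Introducing the Hardy--Littlewood maximal function $\mathcal M(t) := \sup_{h' \in (0, \bar h_0]} (h')^{-1} \int_t^{t+h'} \|F_r\|_{\Lp{2}(\mu_r)}\,\dif r$, which lies in $\Lp{2}([a',b'])$ by \Cref{ass:vlasov} and the Hardy--Littlewood maximal inequality, \Cref{prop:vlasovtoac1} and Cauchy--Schwarz give $\sfc(\pi_{t,t+h}) \leq 4 h^2 \mathcal M(t)^2$.

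Writing $c_{t,h} = \|v+w\|_{\Lp{2}(\pi_{t,t+h})}$ and $e_{t,h} = \|w-v\|_{\Lp{2}(\pi_{t,t+h})}$, the identity $\sfc = 3(c^2 - b^2/a^2) + e^2$ (valid when $a>0$ and $b>0$) yields $e_{t,h}^2 \leq 4h^2 \mathcal M(t)^2$ and $c_{t,h}^2 - b_{t,h}^2/a_{t,h}^2 \leq \tfrac{4}{3} h^2 \mathcal M(t)^2$. Combined with the algebraic identity $c^2 + e^2 = 2(\|v\|^2_{\Lp{2}(\mu_t)} + \|v\|^2_{\Lp{2}(\mu_{t+h})})$ and the uniform continuity of $s \mapsto \|v\|^2_{\Lp{2}(\mu_s)}$ on $[a', b'+\bar h_0]$, this implies $c_{t,h}^2 \geq 3 v_*^2$ and $(b_{t,h}/a_{t,h})^2 \geq 2 v_*^2$ whenever $h\,\mathcal M(t)$ is small enough compared to $v_*$. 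In this regime $T_{t,t+h} = 2 a_{t,h}/(b_{t,h}/a_{t,h}) \leq \sqrt 2\, a_{t,h}/v_*$, and the task reduces to controlling $a_{t,h}/h$ by an $\Lp{2}$ function of $t$.

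The hard part is the bound $a_{t,h}/h \leq G(t)$ with $G \in \Lp{2}([a',b'])$. My preferred route compares $\pi_{t,t+h}$ with the explicit Vlasov-flow competitor $\tilde\pi_h := (\proj_{\gamma(t),\gamma(t+h)})_\# \eta$ of \Cref{prop39}, for which $a(\tilde\pi_h) \leq h\sqrt{h^{-1}\int_t^{t+h}\|v\|^2_{\Lp{2}(\mu_r)}\,\dif r}$ is manifestly dominated by $h$ times the Hardy--Littlewood maximal function of $\|v\|_{\Lp{2}(\mu_\cdot)} \in \Lp{2}$. The inequality $\sfc(\pi_{t,t+h}) \leq \sfc(\tilde\pi_h)$, the concavity of $\sfc$ (\Cref{prop:thm1.1}), and the first-order identity $\int \nabla\varphi(x)\cdot(y-x)\,\dif\pi_{t,t+h} = \int_t^{t+h}\int v\cdot\nabla\varphi\,\dif\mu_s\,\dif s$ obtained from the weak Vlasov equation with test functions $\varphi \in \smooth{2}_c(\cX)$ independent of $v$ should together force $a_{t,h}$ not to exceed $a(\tilde\pi_h)$ by more than a controllable amount. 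This is the step where I anticipate the genuine technical difficulty, since the tempting direct estimate $a_{t,h} \leq T_{t,t+h}(\|v\|_{\Lp{2}(\mu_t)} + \sfd(\mu_t, \mu_{t+h}))$ from the Benamou--Brenier lift (\Cref{thmbb}, \Cref{prop39}) is only tight asymptotically and closes in a circular way when combined with the bound from the previous paragraph.

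Once $G$ is in hand, I set $g(t) := \sqrt 2\, G(t)/v_*$, with the convention $g(t) = +\infty$ on the null set where $\mathcal M(t) = +\infty$. Choosing $\bar h \in (0, \bar h_0]$ so that the lower bounds above hold uniformly in $t \in [a',b']$, one obtains $g \in \Lp{2}([a',b'])$ and $T_{t,t+h}/h \leq g(t)$ for all $h \in (0, \bar h)$. The consequence \eqref{eq:boundT2} is then immediate: for every $t \in (a,b)$ with $|\rho'_t|_{\mathrm{W}_2} > 0$ we have $t \in \Omega$, hence $t$ lies in some such compact $[a',b']$, and $g(t) < \infty$ forces $\limsup_{h \downarrow 0} T_{t,t+h}/h \leq g(t) < \infty$.
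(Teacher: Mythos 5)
Your reduction of the problem to an $\Lp{2}$-in-$t$ bound on $a_{t,h}/h=\norm{y-x}_{\Lp{2}(\pi_{t,t+h})}/h$ is essentially sound (with one fixable flaw: the ``regime where $h\,\mathcal M(t)$ is small compared to $v_*$'' is not uniform in $t$ for a fixed $h$, so as written you have no bound at all on the positive-measure set where $h\,\mathcal M(t)\gtrsim v_*$; this can be repaired by replacing $4h^2\mathcal M(t)^2$ with the \emph{uniform} estimate $\sfd^2(\mu_t,\mu_{t+h})\le 4h\int_a^b\norm{F_r}^2_{\Lp{2}(\mu_r)}\,\dif r$ when fixing the regime, exactly as the paper does to get its uniform smallness $\sfd\le\epsilon^2/(6c)$). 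The genuine gap is the key step itself, which you leave as a sketch: the route via $\sfc(\pi_{t,t+h})\le\sfc(\tilde\pi_h)$, concavity, and the first-order identity cannot close. Optimality in the cost $\sfc$ gives no control whatsoever on $\norm{y-x}_{\Lp{2}(\pi)}$, because the cost degenerates along free transport: plans with enormous displacement can have arbitrarily small (even zero) cost, so comparing costs with the flow competitor does not force $a_{t,h}\lesssim a(\tilde\pi_h)$. Moreover the identity you invoke, $\int\nabla\varphi(x)\cdot(y-x)\,\dif\pi_{t,t+h}=\int_t^{t+h}\int v\cdot\nabla_x\varphi\,\dif\mu_s\,\dif s$, is only true up to a remainder quadratic in $\abs{y-x}$ (Taylor expansion of $\varphi(y)-\varphi(x)$), i.e.~up to exactly the quantity you are trying to bound --- the same circularity you correctly identified in the Benamou--Brenier route. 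So the central estimate of the lemma is not proved.

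The paper's proof avoids ever bounding $\norm{y-x}_{\Lp{2}(\pi_{t,t+h})}$ separately. It glues the optimal plan $\pi_{t,t+h}$ with the flow lift $\boldsymbol\eta$ of \Cref{ass:vlasov} at the common marginal $\mu_t$, and computes the increment $\int y\cdot w-\int x\cdot v$ in two ways: through the plan and through Newton's dynamics along the lifted trajectories. Rearranged with $\theta=T_{t,t+h}$, this yields
\begin{equation*}
\theta\Bigl(\epsilon^2-c\,\norm{w-v}_{\Lp{2}(\pi_{t,t+h})}-c\,\norm{\tfrac{y-x}{\theta}-v}_{\Lp{2}(\pi_{t,t+h})}\Bigr)\le c\,\norm{w-v}_{\Lp{2}(\pi_{t,t+h})}+c^2h+c\int_t^{t+h}\norm{F_s}_{\Lp{2}(\mu_s)}\,\dif s\comma
\end{equation*}
where $\epsilon$ is the lower bound on $\norm{v}_{\Lp{2}(\mu_t)}$ and $c$ an upper bound on the moments near $[a',b']$. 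The decisive point is that the displacement of the plan enters only through $\frac{y-x}{T_{t,t+h}}-\frac{v+w}{2}$ and $w-v$, which \emph{are} controlled by the optimal cost, namely by $3\,\sfd(\mu_t,\mu_{t+h})$, and hence uniformly by $\epsilon^2/(2c)$ for small $h$ thanks to \Cref{prop:vlasovtoac1}. Absorbing these terms gives directly $T_{t,t+h}\le\epsilon^{-2}\int_t^{t+h}\bigl(6c\norm{F_s}_{\Lp{2}(\mu_s)}+2c^2\bigr)\dif s$, and $g$ is the corresponding maximal function, square-integrable by the Hardy--Littlewood maximal inequality. If you want to salvage your strategy, you need an analogous device (morally, testing the Vlasov equation against $x\cdot v$ through the glued plan) rather than cost comparison, since the cost alone is blind to the size of $y-x$.
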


\begin{proof}
	Recall that the functions~$\tilde t \mapsto \norm{x}_{\Lp{2}(\mu_{\tilde t})}$ and~$\tilde t \mapsto \norm{v}_{\Lp{2}(\mu_{\tilde t})}$ are continuous on~$(a,b)$. Let~$c > 0$ be an upper bound on the restriction of these functions to~$\left[a',\frac{b'+b}{2}\right]$, and~$\epsilon > 0$ be the minimum of~$\tilde t \mapsto \norm{v}_{\Lp{2}(\mu_{\tilde t})}$ on~$[a',b']$. 
	By \Cref{ass:vlasov} and \Cref{prop:vlasovtoac1}, we can find~$\bar h \in \left(0, \frac{b-b'}{2}\right)$ depending on $\epsilon,c$ and 
	$\int_a^b \norm{F_t}_{\Lp{2}(\mu_t)}^2 \, \dif t$, 
	such that
	\begin{equation} \label{eq:barh}
		h \in (0,\bar h)
			\quad \Longrightarrow \quad 
		\norm{w-v}_{\Lp{2}(\pi_{t,t+h})}\le \sfd(\mu_t,\mu_{t+h})
			\le
		\tilde \sfd_h(\mu_t,\mu_{t+h})
			\le
		\frac{\epsilon^2}{6  c} \fstop
	\end{equation}
	Fix~$t \in [a',b']$ and~$h \in (0,\bar h)$. Let~$\boldsymbol \eta \in \cP\bigl(\Gamma \times \mathrm{H}^1(t,t+h;\Gamma)\bigr)$ be as in the proof of \Cref{prop:vlasovtoac1} (after replacing~$(s,t)$ with~$(t,t+h)$) and let~$\mathbf P$ be a probability measure on~$\G \times \G \times \mathrm{H}^1(t,t+h;\G)$ constructed by gluing $\pi_{t,t+h}$ and~$\boldsymbol \eta$ at~$\mu_{t}$, i.e.,~such that
	\[
	\left(\proj_{x,v,y,w}\right)_\# \mathbf P = \pi_{t,t+h}
	\quad \text{and} \quad
	\left(\proj_{x,v,\gamma}\right)_\# \mathbf P = \boldsymbol \eta \fstop
	\]
	Using the properties of~$\boldsymbol \eta$ and~$\mathbf P$, we write
	\begin{align*}
		&\int y \cdot w \, \dif \mathbf P - \int x \cdot v \, \dif \mathbf P\\
		&\quad=
		\int \gamma_x(t+h) \cdot \gamma_v(t+h) \, \dif \mathbf P - \int \gamma_x(t) \cdot \gamma_v(t) \, \dif \mathbf P \\
		&\quad=
		\int \int_{t}^{t+h} \left(\dot \gamma_x(s) \cdot \gamma_v(t+h) + \dot \gamma_v(s) \cdot \gamma_x(t) \right) \, \dif s \, \dif \mathbf P \\
		&\quad=
		\int \int_t^{t+h} \left(\gamma_v(s) \cdot \gamma_v(t+h)  +  F_s\bigl(\gamma_x(s),\gamma_v(s) \bigr) \cdot \gamma_x(t) \right) \, \dif s \, \dif \mathbf P \fstop
	\end{align*}
	Rearranging terms, this identity writes, for every~$\theta > 0$, as
	\begin{multline*}
		\int \left( \theta \abs{v}^2 + \theta (w-v) \cdot v + (y-x-\theta v) \cdot w \right) \, \dif \mathbf P \\
		=
		\int x \cdot (v-w) \, \dif \mathbf P + \int \int_t^{t+h} \left(\gamma_v(s) \cdot \gamma_v(t+h)  +  F_s\bigl(\gamma_x(s),\gamma_v(s) \bigr) \cdot \gamma_x(t) \right) \, \dif s \, \dif \mathbf P \fstop
	\end{multline*}
	By the triangle and Cauchy--Schwarz inequalities, we obtain
	\begin{multline*}
		\theta \cdot \left(\norm{v}_{\Lp{2}(\mu_{t})}^2 - \norm{v}_{\Lp{2}(\mu_t)}\norm{w-v}_{\Lp{2}(\pi_{t,t+h})} - \norm{v}_{\Lp{2}(\mu_{t+h})} \norm{\frac{y-x}{\theta}-v}_{\Lp{2}(\pi_{t,t+h})} \right) \\
		\le
		\norm{x}_{\Lp{2}(\mu_t)} \norm{w-v}_{\Lp{2}(\pi_{t,t+h})} 
		+
		\norm{v}_{\Lp{2}(\mu_{t+h})} \int_t^{t+h} \norm{v}_{\Lp{2}(\mu_s)} \, \dif s
		+
		\norm{x}_{\Lp{2}(\mu_t)} \int_t^{t+h} \norm{F_s}_{\Lp{2}(\mu_s)} \, \dif s \semicolon
	\end{multline*}
	hence, 
	since~$\norm{v}_{\Lp{2}(\mu_t)} > \epsilon$
	and $\max\set{\sup_{\tilde t} \norm{x}_{\Lp{2}(\mu_{\tilde t})}, \sup_{\tilde t} \norm{v}_{\Lp{2}(\mu_{\tilde t})}} \leq c$, we have
	\begin{multline} \label{eq:theta}
		\theta \cdot \left( \epsilon^2 - c \, \norm{w-v}_{\Lp{2}(\pi_{t,t+h})} - c \, \norm{\frac{y-x}{\theta}-v}_{\Lp{2}(\pi_{t,t+h})} \right) \\
		\le
		c \, \norm{w-v}_{\Lp{2}(\pi_{t,t+h})} + c^2 h + c \int_{t}^{t+h} \norm{F_s} _{\Lp{2}(\mu_s)} \, \dif s \fstop
	\end{multline}
	It remains to bound the term $ \norm{\frac{y-x}{\theta}-v}_{\Lp{2}(\pi_{t,t+h})}$ for a suitable choice of $\theta$.
	
	If~$T_{t,t+h} \in (0,\infty)$, we choose~$\theta \coloneqq T_{t,t+h}$.
	Using the triangle inequality, the definition of $\tilde \sfc$, the fact that~$\norm{y-x}_{\Lp{2}(\pi_{t,t+h})} > 0$, and the optimality of $\pi_{t,t+h}$, we obtain
	\begin{multline} \label{eq:boundtild}
		\norm{\frac{y-x}{T_{t,t+h}}-v}_{\Lp{2}(\pi_{t,t+h})} + \norm{w-v}_{\Lp{2}(\pi_{t,t+h})}\le  \norm{\frac{y-x}{T_{t,t+h}}-\frac{v+w}{2}}_{\Lp{2}(\pi_{t,t+h})} + \frac{3}{2} \norm{w-v}_{\Lp{2}(\pi_{t,t+h})} \\
		\le 3\sqrt{\tilde \sfc(\pi_{t,t+h})} =  3 \sqrt{\sfc(\pi_{t,t+h})} = 3\, \sfd(\mu_t,\mu_{t+h}) \stackrel{\eqref{eq:barh}}{\le} \frac{\epsilon^2}{ 2  c} \fstop
	\end{multline}
	We arrive to the same conclusion in the case where~$T_{t,t+h} = \infty$ by letting~$\theta \to \infty$. 
	In all cases (trivially when~$T_{t,t+h} = 0$), we get the inequality
	\[
	\frac{\epsilon^2 \, T_{t,t+h}}{2}
	\le
	c \,  \sfd(\mu_t,\mu_{t+h}) + c^2 h + c \int_t^{t+h} \norm{F_s}_{\Lp{2}(\mu_s)} \, \dif s \comma
	\]
	proving in particular that~$T_{t,t+h} < \infty$.
	Bounding~$ \sfd(\mu_t,\mu_{t+h})$ with \Cref{prop:vlasovtoac1}, we find
	\[
	\frac{T_{t,t+h}}{h} \le \frac{1}{\epsilon^2} \fint_t^{t+h} \left( 6 c \norm{F_s}_{\Lp{2}(\mu_s)}+2 c^2 \right) \, \dif s \le \underbrace{\frac{1}{\epsilon^2} \sup_{\tilde h \in (0,\bar h)} \fint_t^{t+\tilde h} \left( 6 c \norm{F_s}_{\Lp{2}(\mu_s)}+2 c^2 \right) \, \dif s}_{\eqqcolon g(t)} \fstop
	\]
	Since~$\tilde t \mapsto \norm{F_{\tilde t}}_{\Lp{2}(\tilde t)}$ is~$\Lp{2}$, so is~$g$ by the strong Hardy--Littlewood maximal inequality.
\end{proof}

\begin{lem} \label{lem:general}
	Fix~$\varphi \in \mathrm{C}^{1,1}_\mathrm{b}(\Gamma)$, i.e.,~$\varphi$ is bounded and continuously differentiable, with bounded and Lipschitz gradient. Under \Cref{ass:vlasov}, for a.e.\footnote{We allow the negligible set of times where~\eqref{eq:general} does not hold to possibly depend on~$\varphi$.}~$t \in \Omega$, we have
	\begin{equation} \label{eq:general}
		\lim_{h \downarrow 0} \left( \int_{\Gamma \times \Gamma} \frac{w-v}{h} \cdot \nabla_v \varphi \, \dif \pi_{t,t+h} + \frac{T_{t,t+h}}{h} \int_\Gamma v \cdot \nabla_x \varphi \, \dif \mu_t \right) = \int_\Gamma (v,F_t) \cdot \nabla_{x,v} \varphi \, \dif \mu_t \fstop
	\end{equation}
\end{lem}

\begin{proof}
	Fix~$t \in \Omega$ satisfying~\eqref{eq:boundT2}. We also assume that~$t$ is a Lebesgue point of
	\begin{equation} \label{eq:lebPoint}
		s \longmapsto \int_{\Gamma} 
			\bigl( \tilde v, F_s \bigr) 
			\cdot 
			\nabla_{x,v} \varphi
			\, \dif \mu_{s}
			 \quad \text{and} \quad s \longmapsto \norm{F_s}_{\Lp{2}(\mu_s)} \fstop
	\end{equation}
	Fix~$h \in (0,b-t)$ such that~$T_{t,t+h}<\infty$, and let~$\boldsymbol \eta, \mathbf P$ be as in \Cref{lem:boundT}. In particular
	\[
	\int \varphi(y,w) \, \dif \mathbf P - \int \varphi(x,v) \, \dif \mathbf P
	=
	\int \varphi \bigl(\gamma(t+h)\bigr) \, \dif \mathbf P  - \int \varphi \bigl(\gamma(t)\bigr) \, \dif \mathbf P \fstop
	\]
	By the fundamental theorem of calculus and the properties of~$\boldsymbol \eta$ and~$\mathbf{P}$, we deduce that
	\begin{multline} \label{eq:general1}
		\frac{1}{h}\int_{\Gamma \times \Gamma} \int_0^1  (y-x,w-v) \cdot \nabla_{x,v} \varphi\bigl(x+r(y-x),v+r(w-v)\bigr)  \, \dif r \, \dif \pi_{t,t+h} \\
			=
		\fint_t^{t+h} \int \dot \gamma(s) \nabla_{x,v} \varphi\bigl(\gamma(s)\bigr) \, \dif \boldsymbol \eta \, \dif s
			=
		\fint_t^{t+h} \int_\Gamma \bigl(\tilde v,F_s(\tilde x, \tilde v)\bigr) \cdot \nabla_{x,v} \varphi(\tilde x, \tilde v) \, \dif \mu_s \, \dif s \fstop
	\end{multline}
	The right-hand side in the latter equality converges to the right-hand side of~\eqref{eq:general} as~$h \downarrow 0$, since~$t$ is a Lebesgue point for~\eqref{eq:lebPoint}.
	
	Focusing on the left-hand side of~\eqref{eq:general1}, we observe that
	\begin{align} \label{eq:negl}
		&\abs{\frac{1}{h}\int_{\Gamma^2} \int_0^1  (y-x,w-v) \cdot \left(\nabla_{x,v} \varphi\bigl(x+r(y-x),v+r(w-v)\bigr)-\nabla_{x,v} \varphi(x,v) \right) \, \dif r \, \dif \pi_{t,t+h}} \nonumber \\
			&\quad \lesssim \frac{\norm{y-x}_{\Lp{2}(\pi_{t,t+h})}^2 + \norm{w-v}_{\Lp{2}(\pi_{t,t+h})}^2}{h} \nonumber \\
			&\quad \lesssim \frac{1}{h} \left(
		\norm{y-x-T_{t,t+h}\frac{w+v}{2}}_{\Lp{2}(\pi_{t,t+h})}^2 + \bigl(1+T_{t,t+h}^2\bigr) \norm{w-v}_{\Lp{2}(\pi_{t,t+h})}^2 + T_{t,t+h}^2 \norm{v}_{\Lp{2}(\mu_t)}^2 \right) \nonumber \\
			&\quad \lesssim
		\frac{\bigl(1+T_{t,t+h}^2\bigr) \tilde \sfd(\mu_t,\mu_{t+h})^2 + T_{t,t+h}^2 \norm{v}_{\Lp{2}(\mu_t)}^2}{h} \comma
	\end{align}
	where the constants hidden in~$\lesssim$ do not depend on~$h$. Combining the latter with~\Cref{prop:vlasovtoac1},~\eqref{eq:boundT2}, and the fact that~$t$ is a Lebesgue point for~$s \mapsto \norm{F_s}_{\Lp{2}(\mu_s)}$, we infer that~\eqref{eq:negl} is~$o(1)$ for~$h \downarrow 0$. Finally, we notice that
	\[
		\abs{\int_\Gamma \frac{y-x - T_{t,t+h}v}{h} \cdot \nabla_x \varphi \, \dif \mu_t} \lesssim \frac{\norm{y-x-T_{t,t+h}v}_{\Lp{2}(\pi_{t,t+h})}}{h} \stackrel{(*)}{\lesssim} \frac{T_{t,t+h}}{h}  \tilde \sfd(\mu_t,\mu_{t+h}) = o(1) \comma
	\]
	where~$(*)$ can be proved as in~\eqref{eq:boundtild} if~$T_{t,t+h}> 0$, and is trivial otherwise. From~\eqref{eq:general1} and these observations, the conclusion follows.
\end{proof}

\begin{cor} \label{cor:limT}
	Under \Cref{ass:vlasov}, for a.e.~$t \in (a,b)$ such that~$\abs{\rho_t'}_{{\mathrm{W}_2}} > 0$, we have
	\begin{equation} \label{eq:limT}
		\lim_{h \downarrow 0} \frac{T_{t,t+h}}{h}
		=
		1 \fstop
	\end{equation}
\end{cor}

\begin{proof}
	Let~$\set{\phi_k}_{k \in \N}$ be a~$\mathrm{C}^1$-dense set of~$\mathrm{C}_\mathrm{c}^\infty(\mathcal X)$. We apply \Cref{lem:general} with~$\varphi \colon (x,v) \mapsto \phi_k(x)$ for every~$k \in \N$ to deduce that, for a.e.~$t \in (a,b)$ such that~$\abs{\rho_t'}_{{\mathrm{W}_2}} > 0$, we have
	\begin{equation} \label{eq:convThh}
		\lim_{h \downarrow 0} \frac{T_{t,t+h}}{h} \int v \cdot \nabla_x \phi_k \, \dif \mu_t = \int v \cdot \nabla_x \phi_k \, \dif \mu_t \comma \qquad k \in \N \fstop
	\end{equation}
	Let us take any such~$t$ for which, additionally,~\eqref{eq:derW2} holds. By \Cref{lem:derW2}, there exists~$\bar \phi \in \mathrm{C}^\infty_\mathrm{c}(\cX)$ such that~$\nabla \bar \phi$ is sufficiently close to~$\proj_{\overline V_t} (j_{ t})$ in~$\Lp{2}( \rho_t ; \R^d)$, in the sense that
	\[
	\abs{\int \nabla \bar \phi(x) \cdot v \, \dif \mu_{t}} = \abs{\int \nabla \bar \phi \cdot \proj_{\overline V_t}(j_{t})  \, \dif \rho_t } >  \frac{\abs{\rho_t'}_{{\mathrm{W}_2}}}{2} > 0 \fstop
	\]
	To conclude, it suffices to choose~$k$ in~\eqref{eq:convThh} such that~$\norm{\bar \phi - \phi_k}_{\mathrm{C}^1}$ is sufficiently small.
\end{proof}

\begin{proof}[Proof of \Cref{prop:vlasovtoac3}]
	The result is immediate after \Cref{lem:boundT} and \Cref{cor:limT}.
\end{proof}

\subsection{Physical curves solve Vlasov's equations}\label{ss:52}

Let $(\mu_t)_{t \in (a,b)}$ be a narrowly continuous curve in $\mathcal{P}_2(\Gamma)$, let~$(s,t) \mapsto \pi_{s,t}$ be a measurable selection of optimal transport plans for~$\sfd$, and let~$T_{s,t}$ be the corresponding optimal times, see~\eqref{eq:defTbis}. Recall~$
	\rho_t \coloneqq (\proj_x)_\# \mu_t$
and~$\Omega \coloneqq \set{t \in (a,b) \, : \, \norm{v}_{\Lp{2}(\mu_t)} > 0 }$.

Choose measurable functions~$(s,t) \mapsto \mathfrak \theta_{s,t} \in [0,\infty]$ and~$(s,t) \mapsto \pi_{s,t}^\theta \in \Pi(\mu_s,\mu_t)$. Let~$\widehat \Omega \subseteq (a,b)$ be an \emph{open} set of times.

\begin{prop} \label{prop:actovlasov_new}
    Assume the following:
	\begin{enumerate}[label=(\alph*)]
		\item \label{ass:cont_new} The curve~$(\mu_t)_{t \in (a,b)}$ is~${\mathrm{W}_2}$-$2$-absolutely continuous. (Consequently, $t \mapsto \rho_t$ is~$\mathrm{W}_2$-2-a.c.,~and~$t \mapsto \norm{v}_{\Lp{2}(\mu_t)}$ is~$2$-a.c.)
		\item \label{ass:theta} For every~$s<t$ such that~$\theta_{s,t} = 0$, we have~$y=x$ $\pi_{s,t}^\theta$-a.e.
		\item \label{ass:ac_new} There exists a function~$\ell \in \Lp{2}(a,b)$ such that
		\begin{equation} \label{eq:metderupper1}
		\norm{w-v}_{\Lp{2}(\pi_{s,t}^\theta)} \le \int_s^t \ell(r) \, \dif r \comma \qquad a < s < t < b \fstop
		\end{equation}
		\item \label{ass:conv_new} The set~$\widehat \Omega$ has full measure in~$(a,b)$. For every~$[a',b'] \subseteq \widehat \Omega$, we have the limits
		\begin{equation} \label{eq:L1conv}
			\lim_{h \downarrow 0} \int_{a'}^{b'} \abs{\frac{\theta_{t,t+h}}{h} - 1} \, \dif t
			=
			0
		\end{equation}
		and
		\begin{equation} \label{eq:bounddtilde}
			\lim_{h \downarrow 0} \int_{\set{t \in (a',b') \, : \, \theta_{t,t+h} \in (0,\infty)}} \frac{\theta_{t,t+h}}{h} \sqrt{\tilde \sfc_{\theta_{t,t+h}}(\pi_{t,t+h}^\theta)} \, \dif t = 0 \fstop
		\end{equation}
	\end{enumerate}
	
	Then, there exists a force field $(F_t)_t$ such that $(\mu_t,F_t)_t$ solves Vlasov's equation~\eqref{eq:cont}, and~$(F_t)_t$ belongs to the~$\Lp{2}(\mu_t \dif t)$-closure of~$\set{\nabla_v \varphi \, : \, \varphi \in \mathrm{C}^\infty_\mathrm{c}\bigl((a,b) \times \G \bigr)}$.
\end{prop}

We discuss the assumptions of \Cref{prop:actovlasov_new} below, and give a proof at the end of this section. Choosing~$\theta_{s,t} \coloneqq t-s$ and~$\theta_{s,t} \coloneqq T_{s,t}$, we immediately obtain two corollaries.

\begin{cor} \label{cor:actovlasov_physical}
	Assume~\ref{ass:cont_new} in \Cref{prop:actovlasov_new} holds. If there exists~$\ell \in \Lp{2}(a,b)$ with
	\begin{equation} \label{eq:metderupper2}
		\tilde \sfd_{t-s}(\mu_s,\mu_t) \le \int_s^t \ell(r) \, \dif r \comma \qquad a<s<t<b \comma
	\end{equation}
	then the conclusion of \Cref{prop:actovlasov_new} holds.
\end{cor}

\begin{proof}
	Set~$\theta_{s,t} \coloneqq t-s$, and let~$(s,t) \mapsto \pi_{s,t}^{\theta} \in \Pi(\mu_s,\mu_t)$ be a measurable selection of~$\tilde \sfd_{t-s}$-optimal plans. We set~$\widehat \Omega \coloneqq (a,b)$. Then, Assumption~\ref{ass:theta} in \Cref{prop:actovlasov_new} is vacuously true,~\eqref{eq:metderupper1} follows from~\eqref{eq:metderupper2} because~$\tilde \sfd_{t-s}(\mu_s,\mu_t) \ge \norm{w-v}_{\Lp{2}(\pi_{s,t}^\theta)}$,~\eqref{eq:L1conv} is obvious, and~\eqref{eq:bounddtilde} follows from~\eqref{eq:metderupper2}.
\end{proof}

\begin{cor} \label{cor:actovlasov}
	Assume~\ref{ass:cont_new} in \Cref{prop:actovlasov_new} and, in addition, the following.
	\begin{enumerate}[label=(\alph*')] \setcounter{enumi}{2}
		\item \label{ass:ac_bis} There exists a function~$\ell \in \Lp{2}(a,b)$ such that
		\begin{equation} \label{eq:metderupper3}
			\sfd(\mu_s,\mu_t) \le \int_s^t \ell(r) \, \dif r \comma \qquad a < s < t < b \fstop
		\end{equation}
		\item \label{ass:conv_bis}  The set~$\widehat \Omega$ has full measure in~$(a,b)$. For every~$[a',b'] \subseteq \widehat \Omega$, we have the limit
		\begin{equation}
			\label{eq:Thh1}
			\lim_{h \downarrow 0} \int_{a'}^{b'} \abs{\frac{T_{t,t+h}}{h} - 1} \, \dif t
			=
			0 \fstop
		\end{equation}
	\end{enumerate}
	Then the conclusion of \Cref{prop:actovlasov_new} holds.
\end{cor}

\begin{proof}
	Set~$\theta_{s,t} \coloneqq T_{s,t}$ and choose~$\pi_{s,t}^\theta \coloneqq \pi_{s,t}$, which has the following property:
	\begin{equation} \label{eq:d_dtilde}
		T_{s,t} \in (0,\infty) \quad \Longrightarrow \quad \sfd^2(\mu_s,\mu_t) = \sfc(\pi_{s,t}) = \tilde \sfc_{T_{s,t}}(\pi_{s,t}) \fstop
	\end{equation}
	Then, Assumption~\ref{ass:theta} in \Cref{prop:actovlasov_new} follows from~\eqref{eq:defTbis}, and~\eqref{eq:metderupper1} follows from~\eqref{eq:metderupper3} because~$ \sfd(\mu_s,\mu_t) \ge \norm{w-v}_{\Lp{2}(\pi_{s,t})}$. Finally,~\eqref{eq:bounddtilde} follows from~\eqref{eq:d_dtilde},~\eqref{eq:metderupper3}, and~\eqref{eq:Thh1}; indeed,
	\begin{align*}
		\int_{\set{t \in (a',b') \, : \, T_{t,t+h} \in (0,\infty)}} \frac{T_{t,t+h}}{h} \sqrt{\tilde \sfc_{T_{s,t}}(\pi_{s,t})} \, \dif t
			&\le
		\int_{a'}^{b'} \frac{T_{t,t+h}}{h} \int_t^{t+h} \ell(r) \, \dif r \, \dif t \comma \\
			&\le \norm{\ell}_{\Lp{1}} \int_{a'}^{b'} \abs{\frac{T_{t,t+h}}{h} - 1} \, \dif t \\
			&\quad+ h \int_{a'}^{b'+h} \ell(r) \, \dif r \comma
	\end{align*}
	which tends to~$0$ as~$h \downarrow 0$ by~\eqref{eq:Thh1}.
\end{proof}

\begin{rem}
	The combination of \Cref{cor:actovlasov} and \Cref{prop:vlasovtoac3} yields a self-improvement result for the convergence~$\frac{T_{t,t+h}}{h} \to 1$. Indeed, let us make the assumptions of \Cref{cor:actovlasov} with~$\widehat \Omega = \Omega$. In particular, we assume the~$\Lp{1}_\mathrm{loc}(\Omega)$ convergence of~$\frac{T_{t,t+h}}{h}$. From \Cref{cor:actovlasov}, we get \Cref{ass:vlasov} and therefore, by \Cref{prop:vlasovtoac3}, the $\Lp{2}_\mathrm{loc}(\Omega)$- and almost everywhere convergence of~$\frac{T_{t,t+h}}{h}$.
\end{rem}

\Cref{prop:actovlasov_new} and its corollaries reproduce \cite[Theorem~8.3.1]{MR2401600} from the classical OT theory. We will also adopt a similar proof strategy, namely we prove that a certain linear functional is bounded, so as to apply the Riesz representation theorem. Naively, one could try to work with the same functional as in~\cite[Theorem~8.3.1]{MR2401600}, i.e.
\[
	\varphi \longmapsto \int_a^b \int_\Gamma \partial_t \varphi \, \dif \mu_t \, \dif t \comma
\]
and prove that the function representing it is of the form~$(v,F)$. However, it turns out being more natural to treat~$\partial_t + v \cdot \nabla_x$ as a single differential operator, in the spirit of hypoellipticity~\cite{hormander1967hypoelliptic}. Then, we work with the linear functional~$L = L(\varphi)$ defined via
\[
	\varphi \stackrel{L}{\longmapsto} \int_a^b \int_\Gamma (\partial_t \varphi + v \cdot \nabla_x \varphi) \, \dif \mu_t \, \dif t = \int_a^b \int_\Gamma \lim_{h \downarrow 0} \frac{\varphi(t,x,v)-\varphi(t-h,x-hv,v)}{h} \, \dif \mu_t \, \dif t \fstop
\]
We shall prove that, in fact, $L = L(\nabla_v \varphi)$, i.e.,~$L$ is a linear and bounded functional of~$\nabla_v \varphi$, with operator norm~$\norm{L} \le \norm{\ell}_{\Lp{2}}$.
One key ingredient in the proof is that~$x$ and~$y-hw$ coincide to the first order on the support of~$\pi_{t,t+h}^\theta$. More precisely, by means of Assumption~\ref{ass:conv_new}, we show that
\[
	\lim_{h \downarrow 0} \int_{a'}^{b'} \frac{\abs{y-x-hw}}{h} \, \dif \pi_{t,t+h}^\theta \, \dif t
		=
	0 \comma \qquad [a',b'] \subseteq \widehat\Omega \fstop
\]

Let us briefly comment on the assumptions of \Cref{prop:actovlasov_new}. %
	Assumption~\ref{ass:cont_new} is certainly true for any solution to Vlasov's equation with moment bounds by~\cite[Theorem~8.3.1]{MR2401600}. Furthermore, it is independent of the other assumptions, and not even replaceable by Wasserstein BV-continuity of~$(\mu_t)_{t \in (a,b)}$. The following example produces a bounded variation curve satisfying Assumptions~\ref{ass:theta} to \ref{ass:conv_new} which does not solve~\eqref{eq:cont}.

\begin{ex}
	Let~$x_0 \colon (0,1) \to (0,1)$ be the Cantor function, namely a continuous, surjective, nondecreasing function of bounded variation, having a Cantor measure---concentrated on the Cantor set~$C$---as derivative. In dimension~$n=1$, set
	\begin{align*}
		v(t) &\coloneqq \inf \set{\abs{t-c} \, : \, c \in C} \comma  &&t \in (0,1) \comma \\
		x(t) &\coloneqq x_0(t) + \int_0^t v(s) \, \dif s \comma  && t \in (0,1) \comma
	\end{align*}
	and define the curve~$\mu_\cdot \coloneqq \delta_{\bigl(x(\cdot),v(\cdot)\bigr)}$. Choose~$\widehat \Omega \coloneqq (0,1) \setminus C$ and
	\begin{equation} \label{eq:deftheta}
		\theta_{s,t} \coloneqq T_{s,t}  \stackrel{\eqref{eq:defT}}{=} 2 \frac{\abs{x(t)-x(s)}^2}{\bigl(x(t)-x(s)\bigr)\bigl(v(t)+v(s)\bigr)} = \frac{x_0(t)-x_0(s)+ \int_s^{t}v(r) \, \dif r}{v(t)+v(s)}  \fstop
	\end{equation}
	In this case, there is only one admissible plan for every~$s,t$, namely~$\pi_{s,t}^\theta = \mu_s \otimes \mu_t$.
	
	Assumption~\ref{ass:theta} is vacuously true, because~$x$ is strictly increasing. Assumption~\ref{ass:ac_new} holds because
	\[
		\abs{v(t)-v(s)} \le t-s \comma \qquad 0<s<t<1 \fstop
	\]
	The function~$v$ is uniformly bounded away from~$0$ on any compact subset of~$\widehat \Omega$, and~$x_0$ is constant on any interval in~$\widehat \Omega$. Therefore, the convergence~$\frac{\theta_{t,t+h}}{h} \to 1$ holds locally uniformly on~$\widehat \Omega$. When~$\theta_{s,t} \in (0,\infty)$, we have
	\[
		\sqrt{\tilde \sfc_{\theta_{s,t}}(\pi_{s,t}^\theta)} = \abs{v(t) - v(s)} \le t-s \comma
	\]
	and with this we verify Assumption~\ref{ass:conv_new}.

	Nevertheless, this curve does not solve Vlasov's equation for any force field~$(F_t)_t$ such that~$\int_0^1 \abs{F_t\bigl(x(t),v(t)\bigr)}^2 \, \dif t < \infty$. If it did, then, by \Cref{lem:momentestimate}, we would have
	\[
		\abs{x(t)} \stackrel{\eqref{eq:momentestimate2}}{\le} \abs{x(0)} + \int_0^t \abs{v(s)} \, \dif s \comma \qquad t \in (0,1) \comma
	\]
	which would imply~$x_0 \equiv 0$.
\end{ex}

Assumption~\ref{ass:theta}, Assumption~\ref{ass:ac_new}, and~\eqref{eq:bounddtilde}, together, are a weakened version of the natural absolute continuity condition
\[
	\sqrt{\tilde \sfc_{\theta_{t,t+h}}(\pi^\theta_{t,t+h})} \le \int_s^t \ell(r) \, \dif r
\]
(calling~$\tilde \sfc_0(\pi^\theta_{t,t+h})$ the limit of~$\tilde \sfc_\epsilon(\pi^\theta_{t,t+h})$ for~$\epsilon \to 0$), as can be easily checked (using~\eqref{eq:L1conv}).

Assuming~$\frac{\theta_{t,t+h}}{h} \to 1$ is needed to select a solution to Vlasov's equation~\eqref{eq:cont} among all its possible reparametrisations. Recall also the hypothesis that~$\widehat \Omega$ has full measure in~$(a,b)$. This ensures that~$(\mu_t)_t$ solves Vlasov's equation \emph{on the whole~$(a,b)$}. In the next lemma, we show that there are conditions under which~$\Omega$ has full measure and, therefore, it may be a viable choice for~$\widehat \Omega$. More precisely, we establish a connection between~$\abs{\rho'}_{\mathrm{W}_2}$ and~$\norm{v}_{\Lp{2}(\mu_t)}$ in terms of~$\liminf_{h \downarrow 0} \frac{\theta_{t,t+h}}{h}$ \emph{a priori}, i.e.,~without knowing that~$(\mu_t)_t$ solves Vlasov's equation. For solutions to Vlasov's equation, the analogous statement is \Cref{lem:derW2}.

\begin{lem} \label{lem:derrhov_new}
	Let~$t \in (a,b)$ be a ${\mathrm{W}_2}$-differentiability point for~$\tilde t \to \rho_{\tilde t}$, and assume that there exists a sequence~$h_k \downarrow 0$ such that
	\begin{equation}
		l \coloneqq \lim_{k \to \infty} \frac{\theta_{t,t+h_k}}{h_k} < \infty\comma \quad \theta_{t,t+h_k} \in (0,\infty) \, \forall \, k \comma \quad \text{and} \quad
		\lim_{k \to \infty} \tilde \sfc_{\theta_{t,t+h_k}}(\pi_{t,t+h_k}^\theta) = 0 \fstop
	\end{equation}
	Then,
	\begin{equation}
		\abs{\rho_t'}_{{\mathrm{W}_2}} \le l \, \norm{v}_{\Lp{2}(\mu_t)} \fstop
	\end{equation}
	As a consequence, if
	\begin{equation} \label{eq:derrhovT}
		\liminf_{h \downarrow 0} \frac{T_{t,t+h}}{h} < \infty \comma \quad \lim_{h \downarrow 0} \sfd(\mu_t,\mu_{t+h}) = 0 \comma \quad \text{and} \quad \abs{\rho_t'}_{\mathrm{W_2}} > 0
	\end{equation}
	for a.e.~$t \in (a,b)$, then~$\Omega$ has full measure in~$(a,b)$.
\end{lem}

\begin{rem} \label{rem:thetapos}
	If~$\abs{\rho_t'}_{\mathrm{W}_2} > 0$ and Assumption~\ref{ass:theta} holds, then~$\theta_{t,t+h} > 0$ for small~$h$.
\end{rem}

\begin{proof}[Proof of \Cref{lem:derrhov_new}]
	We write
	\begin{align*}
		\frac{\norm{y-x}_{\Lp{2}(\pi_{t,t+h_k}^\theta)}}{\theta_{t,t+h_k}}
		&\le
		\norm{\frac{y-x}{\theta_{t,t+h_k}} - \frac{v+w}{2}}_{\Lp{2}(\pi_{t,t+h_k}^\theta)} + \norm{\frac{w-v}{2}}_{\Lp{2}(\pi_{t,t+h_k}^\theta)} + \norm{v}_{\Lp{2}(\mu_t)} \\
		&\le \sqrt{\tilde \sfc_{\theta_{t,t+h_k}}(\pi_{t,t+h_k}^\theta)} + \norm{v}_{\Lp{2}(\mu_t)} \fstop
	\end{align*}
	Therefore, by hypothesis,
	\[
	\abs{\rho_t'}_{\mathrm{W}_2} \le l \, \liminf_{k \to \infty} \frac{\norm{y-x}_{\Lp{2}(\pi_{t,t+h_k}^\theta)}}{\theta_{t,t+h_k}} = l \, \norm{v}_{\Lp{2}(\mu_t)} \fstop
	\]
	
	The proof that~$\Omega$ has full measure under~\eqref{eq:derrhovT} is consequence of \Cref{rem:thetapos} and the fact that~$\sfd(\mu_t,\mu_{t+h}) = \tilde \sfc_{T_{t,t+h}}(\pi_{t,t+h})$ if~$T_{t,t+h} \in (0,\infty)$.
\end{proof}

\begin{proof}[Proof of \Cref{prop:actovlasov_new}]
	Let us fix~$[a',b'] \subseteq \widehat \Omega$ and~$\varphi \in \mathrm{C}^\infty_\mathrm{c}\bigl((a',b') \times \cX \times \cV \bigr)$. We write
	\begin{align*}
		L(\varphi)
		&\coloneqq
		\int_{a}^{b}  \int_\Gamma (\partial_t \varphi + v \cdot \nabla_x \varphi ) \, \dif \mu_t \, \dif t \\
		&\le
		\abs{\int_{a'}^{b'} \int_\Gamma \lim_{h \downarrow 0} \frac{\varphi(t,x,v)-\varphi(t-h,x-hv,v)}{h} \, \dif \mu_t \, \dif t}
	\end{align*}
	and, by the dominated convergence theorem and a change of time variable,
	\begin{align*}
		L(\varphi)
		&\le
		\lim_{h \downarrow 0} \abs{\int_{a'}^{b'} \int \frac{\varphi(t,x,v)-\varphi(t-h,x-hv,v)}{h} \, \dif \mu_t \, \dif t} \\
		&=
		\lim_{h \downarrow 0} \abs{\frac{\int_{a'}^{b'} \int \varphi(t,x,v) \, \dif \mu_{t} \, \dif t - \int_{a'}^{b'} \int \varphi(t,x-hv,v) \, \dif \mu_{t+h} \, \dif t }{h} } \fstop%
	\end{align*}
	Hence, we have
	\begin{align*}
		L(\varphi)
		&\le
		\lim_{h \downarrow 0} \abs{\int_{a'}^{b'} \int_\Gamma \frac{\varphi(t,x,v)-\varphi(t,y-hw,w)}{h} \, \dif \pi_{t,t+h}^\theta \, \dif t} \\
		&\le
		\underbrace{\liminf_{h \downarrow 0} \int_{a'}^{b'} \int_{\Gamma \times \Gamma} \frac{\abs{\varphi(t,x,v)-\varphi(t,x,w)}}{h} \, \dif \pi_{t,t+h}^\theta \, \dif t}_{\eqqcolon I_{\varphi,1}} \\
		&\quad
		+ \underbrace{\limsup_{h \downarrow 0} \int_{a'}^{b'} \int_{\Gamma \times \Gamma} \frac{\abs{\varphi(t,x,w)-\varphi(t,y-hw,w)}}{h} \, \dif \pi_{t,t+h}^\theta \, \dif t}_{\eqqcolon I_{\varphi,2}} \fstop
	\end{align*}
	
	We start by estimating~$I_{\varphi,1}$. By the Cauchy--Schwarz inequality:
	\begin{multline} \label{eq:estI1}
		I_{\varphi,1}
		\le
		\liminf_{h \downarrow 0} \sqrt{\int_{a'}^{b'} \int_{\Gamma \times \Gamma} \frac{\abs{\varphi(t,x,v)-\varphi(t,x,w)}^2}{\abs{v-w}^2} \, \dif  \pi_{t,t+h}^\theta \, \dif t} \\
		\cdot \sqrt{\int_{a'}^{b'} \int_{\Gamma \times \Gamma} \frac{\abs{v-w}^2}{h^2} \, \dif \pi_{t,t+h}^\theta \, \dif t} \fstop
	\end{multline}
	By Assumption~\ref{ass:ac_new}, we have~$\norm{w-v}_{\Lp{2}(\pi_{t,t+h}^\theta)} \to 0$ for a.e.~$t$. Therefore, the first square root in~\eqref{eq:estI1} converges to~$\norm{\nabla_v \varphi}_{\Lp{2}(\mu_t \dif t)}$ by the dominated convergence theorem. As for the second one, again by Assumption~\ref{ass:ac_new}, we write
	\begin{equation} \label{eq:boundF}
	\int_{a'}^{b'} \int_{\Gamma \times \Gamma} \frac{\abs{v-w}^2}{h^2} \, \dif \pi_{t,t+h}^\theta \, \dif t
		\le
	\int_{a'}^{b'} \left(\fint_t^{t+h} \ell(r) \, \dif r\right)^2 \, \dif t
		\le
	\int_{a'}^{b'+h} \ell^2(r) \, \dif r
	\end{equation}
	from which we conclude that~$I_{\varphi,1} \le \norm{\nabla_v \varphi}_{\Lp{2}(\mu_t \dif t)} \norm{\ell}_{\Lp{2}}$.

	We claim that~$I_{\varphi,2} = 0$. To prove it, we estimate
	\[
		I_{\varphi,2}
			\le
		\norm{\varphi}_{C^1} \limsup_{h \downarrow 0} \int_{a'}^{b'} \int_{\Gamma \times \Gamma}  \frac{\abs{y-x-h\, w}}{h} \, \dif \pi_{t,t+h}^\theta \, \dif t \fstop
	\]
	Momentarily fix~$t$ and~$h$. If~$\theta_{t,t+h} \in (0,\infty)$, then the triangle inequality gives
	\[
		\frac{\abs{y-x-h\, w}}{h} 
		\le
		\abs{\frac{y-x}{h}-\frac{\theta_{t,t+h}}{h}\frac{v+w}{2}}
		+ \frac{\abs{w-v}}{2} 
		+  \abs{\frac{\theta_{t,t+h}}{h}-1}  \frac{\abs{v+w}}{2} \comma
	\]
	and, therefore,
	\begin{multline*}
		\int_{\Gamma \times \Gamma} \frac{\abs{y-x-h\, w}}{h} \, \dif \pi_{t,t+h}^\theta
			\le
		\frac{\theta_{t,t+h}}{h} \sqrt{\tilde \sfc_{\theta_{t,t+h}}(\pi_{t,t+h}^\theta)} + \int_t^{t+h} \ell(r) \, \dif r \\
			+ \abs{\frac{\theta_{t,t+h}}{h}-1}  \frac{\norm{v}_{\Lp{2}(\mu_t)} + \norm{v}_{\Lp{2}(\mu_{t+h})}}{2}
	\end{multline*}
	If~$\theta_{t,t+h}=0$, then by Assumption~\ref{ass:theta},
	\[
		\int_{\Gamma \times \Gamma} \frac{\abs{y-x-h\, w}}{h} \, \dif \pi_{t,t+h}^\theta
			=
		\norm{v}_{\Lp{2}(\mu_{t+h})} = \abs{\frac{\theta_{t,t+h}}{h}-1} \norm{v}_{\Lp{2}(\mu_{t+h})} \fstop
	\]
	If~$\theta_{t,t+h} = \infty$, then, trivially,
	\[
		\int_{\Gamma \times \Gamma} \frac{\abs{y-x-h\, w}}{h} \, \dif \pi_{t,t+h}^\theta
		\le
	 \abs{\frac{\theta_{t,t+h}}{h}-1} \fstop
	\]
	Hence, we find
	\begin{align*}
		\int_{a'}^{b'} \int_{\Gamma \times \Gamma}  \frac{\abs{y-x-h\, w}}{h} \, \dif \pi_{t,t+h}^\theta \, \dif t
			&\le
		\int_{\set{t \in (a',b') \, : \, \theta_{t,t+h} \in (0,\infty)}} \frac{\theta_{t,t+h}}{h} \sqrt{\tilde \sfc_{\theta_{t,t+h}}(\pi_{t,t+h}^\theta)} \, \dif t \\
			&\quad+
		h \int_{a'}^{b'+h} \ell(r) \, \dif r \\
			&\quad+
		\left(\sup_{\frac{a+a'}{2} \le t \le \frac{b+b'}{2}} \norm{v}_{\Lp{2}(\mu_t)} +1 \right)\int_{a'}^{b'} \abs{\frac{\theta_{t,t+h}}{h}-1} \, \dif t \comma
	\end{align*}
	and Assumption~\ref{ass:conv_new} allows to conclude that~$I_{\varphi,2} = 0$.

	We established that~$L(\varphi)  \le \norm{\nabla_v \varphi}_{\Lp{2}(\mu_t \dif t)} \norm{\ell}_{\Lp{2}}$ for every~$\varphi \in \mathrm{C}_\mathrm{c}^\infty\bigl((a',b') \times \cX \times \cV \bigr)$ with~$[a',b'] \subseteq \Omega$. By linearity of~$L$ and arbitrariness of~$a',b'$, we have, in fact, that
	\begin{equation} \label{eq:boundL2}
		L(\varphi) \le \norm{\nabla_v \varphi}_{\Lp{2}(\mu_t \dif t)} \norm{\ell}_{\Lp{2}} \quad \text{for every } \varphi \in \mathrm{C}_\mathrm{c}^\infty(\widehat \Omega \times \cX \times \cV ) \fstop
	\end{equation}
	We claim that the same inequality holds for every~$\varphi \in \mathrm{C}_\mathrm{c}^\infty \bigl((a,b) \times \cX \times \cV\bigr)$. Given one such~$\varphi$, and a function~$\eta \in \mathrm{C}^\infty_{\mathrm{c}}(\widehat \Omega)$, we write
	\begin{align*}
		L(\varphi)
			&=
		\int_a^b \int_\Gamma (\partial_t \varphi + v \cdot \nabla_x \varphi) \, \dif \mu_t \, \dif t \\
			&=
		\int_a^b \bigl(1-\eta(t)\bigr) \int_\Gamma (\partial_t \varphi + v \cdot \nabla_x \varphi) \, \dif \mu_t \, \dif t \\
			&\quad +
		\int_{a}^b \left( \eta(t) \int_\Gamma (\partial_t  \varphi 
		+ v \cdot \nabla_x  \varphi) \, \dif \mu_t + \partial_t \eta(t) \int_\Gamma  \varphi  \, \dif \mu_t \right) \, \dif t - \int_a^b \partial_t \eta(t) \int_\Gamma \varphi \, \dif \mu_t \, \dif t \fstop
	\end{align*}
	Since~$(\mu_t)_{t \in (a,b)}$ is~${\mathrm{W}_2}$-$2$-a.c.,~and~$\varphi$ is smooth and compactly supported, the function~$t \mapsto \int \varphi(t,\cdot) \, \dif \mu_t$ is~$2$-a.c. Therefore, an integration by parts yields
	\begin{align*}
		L(\varphi)
		&=
		\int_a^b \bigl(1-\eta(t)\bigr) \int_\Gamma (\partial_t \varphi + v \cdot \nabla_x \varphi) \, \dif \mu_t \, \dif t \\
		&\quad +
		\int_{a}^b \int_\Gamma \bigl(\partial_t ( \eta \varphi) 
		+ v \cdot \nabla_x  (\eta \varphi) \bigr) \, \dif \mu_t  \, \dif t 
		+
		\int_a^b \eta(t) \frac{\dif}{\dif t} \int_\Gamma \varphi \, \dif \mu_t  \, \dif t \fstop
	\end{align*}
	Since~$\eta \varphi \in \mathrm{C}_\mathrm{c}^\infty(\widehat \Omega \times \cX \times \cV)$, we apply~\eqref{eq:boundL2} to write
	\begin{multline*}
		L(\varphi) \le \int_a^b \bigl(1-\eta(t)\bigr) \int (\partial_t \varphi + v \cdot \nabla_x \varphi) \, \dif \mu_t \, \dif t + \norm{\eta \nabla_v \varphi}_{\Lp{2}(\mu_t \dif t)} \norm{\ell}_{\Lp{2}(a,b)} \\
		+
		\int_a^b \eta(t) \frac{\dif}{\dif t} \int_\Gamma \varphi \, \dif \mu_t  \, \dif t \fstop
	\end{multline*}
	By Assumption~\ref{ass:conv_new}, the complement of~$\widehat\Omega$ is Lebesgue negligible. Thus,~$\eta$ can approximate the constant function~$1$ in~$\Lp{2}(\widehat \Omega) = \Lp{2}(a,b)$. This gives
	\[
		L(\varphi) \le \norm{ \nabla_v \varphi}_{\Lp{2}(\mu_t \dif t)} \norm{\ell}_{\Lp{2}(a,b)}
		+
		\int_a^b \frac{\dif}{\dif t} \int_\Gamma \varphi \, \dif \mu_t  \, \dif t = \norm{ \nabla_v \varphi}_{\Lp{2}(\mu_t \dif t)} \norm{\ell}_{\Lp{2}} \comma
	\]
	which was our claim.
	
	Finally, we apply the Riesz representation theorem on the closure in~$\Lp{2}(\mu_t \dif t)$ of the set~$\set{\nabla_v \varphi \, : \, \varphi \in \mathrm{C}^\infty_\mathrm{c}\bigl((a,b) \times \cX \times \cV \bigr)}$ to find~$(F_t)_t$
such that~$(\mu_t,F_t)_{t \in (a,b)}$ solves~\eqref{eq:cont}.
\end{proof}

\subsection{First-order differential calculus} \label{ss:53}

Let $(\mu_t)_{t \in (a,b)}$ be a narrowly continuous curve in $\mathcal{P}_2(\Gamma)$, let~$(s,t) \mapsto \pi_{s,t}$ be a measurable selection of optimal transport plans for~$\sfd$, and let~$T_{s,t}$ be the corresponding optimal times, see~\eqref{eq:defTbis}. Recall that~$\rho_t \coloneqq (\proj_x)_\# \mu_t$ and~$\langle v \rangle_t = \int v \, \dif \mu_t$.

\subsubsection{The tangent}\label{ss:tangent}
In the light of the previous sections, we can now give a more rigorous description of the geometric intuitions of \Cref{rem:tgc1}, \Cref{rem:tgc1bis}, and \Cref{rem:tgc2}. Given a solution~$(\mu_t,F_t)_{t \in (a,b)}$ to Vlasov's equation~\eqref{eq:cont}, we can see~$v$ as the infinitesimal $x$-variation (i.e.~the \emph{velocity}), and the field~$(F_t)_t$ driving~$(\mu_t)_t$ as the infinitesimal $v$-variation (i.e.,~the \emph{acceleration} or \emph{force}).
In the case of the particle model---Section~\ref{sec:sec2}---we have that, along a \emph{regular} solution to Newton's equations $\dot {x}_t=v_t, \quad \dot{v}_t =F_t(x_t,v_t)$, it holds true:
\[
\begin{aligned}
&x_t = x_0 + t \, v_0 + o(t),\quad t \to 0 \comma \\
&v_t = v_0 + t \, F_0(x_0,v_0) + o(t), \quad t \to 0 \comma \\
&x_t = x_0 + t v_0 + \frac{1}{2} \, t^2 \, F_0(x_0,v_0) + o (t^2) \comma \quad t \to 0 \fstop
\end{aligned}
\]
In the next propositions, we recover analogous formulae in the case of evolutions of measures along Vlasov's equations. The heuristic argument---given in \Cref{rem:tgc1bis}---is the following. Along a solution $(\mu_t)_t$ to \eqref{eq:cont}, the optimal plan $\pi_{t,t+h}$ for $\mathsf{d}(\mu_t,\mu_{t+h})$ 
is \emph{close} to the projection 
\[
\left(\mathrm{pr}_{(\alpha(t),\alpha'(t)),(\alpha(t+h),\alpha'(t+h))}\right)_\# \, \mathbf m
\] 
of the dynamical transport plan $\mathbf{m}$ induced by Vlasov's equation itself (cf.~\cite[Theorem~8.2.1]{MR2401600}).
Quantitative statements are given below.

\begin{prop} \label{prop:tangent}
	Suppose that \Cref{ass:vlasov} holds (i.e.,~$(\mu_t,F_t)_t$ is a solution to Vlasov's equation for a field~$(F_t)_t$), with~$(F_t)_t$ belonging to the~$\Lp{2}(\mu_t \dif t)$-closure of the set~$\set{\nabla_v \varphi \, : \, \varphi \in \mathrm{C}^\infty_\mathrm{c}\bigl((a,b) \times \G \bigr)}$. Then, for a.e.~$t \in (a,b)$ such that~$\abs{\rho_t'}_{{\mathrm{W}_2}} > 0$, we have
	\begin{align} %
		\label{eq:tangent2}
		&\lim_{h \downarrow 0} \frac{1}{h} \norm{w-v-hF_t(x,v)}_{\Lp{2}(\pi_{t,t+h})}
			=
		0 \comma \\ \label{eq:tangent3}
		&\lim_{h \downarrow 0} \frac{1}{h^2} \norm{y-x-T_{t,t+h}\frac{v+w}{2}}_{\Lp{2}(\pi_{t,t+h})}
			=
		0 \fstop
	\end{align}
\end{prop}
\begin{prop} \label{prop:tangent2}
	In the setting of \Cref{prop:tangent}, for a.e.~$t$ such that~$\langle v \rangle_t \neq 0$,
	we have
	\begin{equation} \label{eq:tangent4}
		\lim_{h \downarrow 0} \frac{T_{t,t+h}-h}{h^2} = 0
	\end{equation}
	and
	\begin{equation} \label{eq:tangent5}
		\lim_{h \downarrow 0} \frac{1}{h^2} \norm{y-x-hv-\frac{h^2}{2} F_t(x,v)}_{\Lp{2}(\pi_{t,t+h})}
			=
		0 \fstop
	\end{equation}
\end{prop}

\begin{proof}[Proof of \Cref{prop:tangent}]
	Let~$\set{\varphi_k}_{k \in \N}$ be a~$\mathrm{C}^1$-dense set of~$\mathrm{C}_\mathrm{c}^\infty(\Gamma)$ functions. For almost every~$t \in (a,b)$, we have
	\begin{enumerate}
		\item $\abs{\rho_t'}_{\mathrm{W_2}} > 0$,
		\item $\frac{T_{t,t+h}}{h} \to 1$, cf.~\Cref{cor:limT},
		\item \Cref{eq:general} holds with~$\varphi \coloneqq \varphi_k$ for every~$k \in \N$,
		\item $\limsup_{h \downarrow 0} \frac{\sfd(\mu_t,\mu_{t+h})}{h} \le \norm{F_t}_{\Lp{2}(\mu_t)} < \infty$, cf.~\Cref{prop:vlasovtoac2},
		\item $F_t$ belongs to the~$\Lp{2}(\mu_t)$-closure of~$\set{\nabla_v \varphi \, : \, \varphi \in \mathrm{C}_\mathrm{c}^\infty(\Gamma)}$.
	\end{enumerate}
	Let~$t$ be one such time. %
	To prove~\eqref{eq:tangent2}, we compute
	\begin{multline*}
		\norm{w-v-hF_t(x,v)}_{\Lp{2}(\pi_{t,t+h})}^2 = \norm{w-v}_{\Lp{2}(\pi_{t,t+h})}^2 + h^2 \norm{F_t}_{\Lp{2}(\mu_t)}^2 \\ - 2h\int (w-v) \cdot F_t(x,v) \, \dif \pi_{t,t+h} \comma
	\end{multline*}
	hence,  resorting to \Cref{prop:vlasovtoac2},
	\begin{equation} \label{eq:tangent1bis}
		\limsup_{h \downarrow 0} \frac{\norm{w-v-hF_t}_{\Lp{2}(\pi_{t,t+h})}^2}{h^2}
			\stackrel{\eqref{eq:vlasovtoac2}}{\le}
		2 \norm{F_t}_{\Lp{2}(\mu_t)}^2 - 2 \liminf_{h \downarrow 0} \int \frac{w-v}{h} \cdot F_t(x,v) \, \dif \pi_{t,t+h} \fstop
	\end{equation}
	We estimate the last integral. For every~$k$, we have
	\begin{align*}
		&\int F_t \cdot \nabla_v \varphi_k \, \dif \mu_t \\
		&\quad\stackrel{\eqref{eq:general}}{=}
		\lim_{h \downarrow 0} \left( \int \frac{w-v}{h} \cdot \nabla_v \varphi_k(x,v) \, \dif \pi_{t,t+h} + \left(\frac{T_{t,t+h}}{h}-1\right) \int v \cdot \nabla_x \varphi_k \, \dif \mu_t \right) \\
		&\quad\stackrel{\eqref{eq:limT}}{=}
		\lim_{h \downarrow 0} \int \frac{w-v}{h} \cdot \nabla_v \varphi_k(x,v) \, \dif \pi_{t,t+h} \\
		&\quad \le
		\liminf_{h \downarrow 0} \int \frac{w-v}{h} \cdot F_t \, \dif \pi_{t,t+h} + \norm{\nabla_v \varphi_k - F_t}_{\Lp{2}(\mu_t)} \limsup_{h \downarrow 0} \frac{\norm{w-v}_{\Lp{2}(\pi_{t,t+h})}}{h} \\
		&\quad \stackrel{\eqref{eq:vlasovtoac2}}{\le} \liminf_{h \downarrow 0} \int \frac{w-v}{h} \cdot F_t \, \dif \pi_{t,t+h} + \norm{\nabla_v \varphi_k - F_t}_{\Lp{2}(\mu_t)} \norm{F_t}_{\Lp{2}(\mu_t)} \fstop
	\end{align*}
	By arbitrariness of~$k$,
	\[
		\int F_t \cdot \nabla_v \varphi \, \dif \mu_t
			\le
		\liminf_{h \downarrow 0} \int \frac{w-v}{h} \cdot F_t \, \dif \pi_{t,t+h} + \norm{\nabla_v \varphi - F_t}_{\Lp{2}(\mu_t)} \norm{F_t}_{\Lp{2}(\mu_t)}
	\]
	for every~$\varphi \in \mathrm{C}_\mathrm{c}^\infty(\Gamma)$. Since~$F_t$ belongs to the~$\Lp{2}(\mu_t)$-closure of~$\set{\nabla_v \varphi \, : \, \varphi \in \mathrm{C}_\mathrm{c}^\infty(\Gamma)}$,
	\[
		\norm{F_t}_{\Lp{2}(\mu_t)}^2 \le \liminf_{h \downarrow 0} \int \frac{w-v}{h} \cdot F_t \, \dif \pi_{t,t+h} \comma
	\]
	which, together with~\eqref{eq:tangent1bis}, yields~\eqref{eq:tangent2}.
	
	Let us now prove~\eqref{eq:tangent3}. By definition of~$\tilde \sfd$, we write
	\[
		\frac{12}{h^4}\norm{y-x-T_{t,t+h}\frac{v+w}{2}}_{\Lp{2}(\pi_{t,t+h})}^2 = \frac{T_{t,t+h}^2}{h^2} \, \frac{ \tilde \sfd^2(\mu_t,\mu_{t+h}) - \norm{w-v}_{\Lp{2}(\pi_{t,t+h})}^2 }{h^2} \comma
	\]
	therefore, it suffices that
	\[
		\lim_{h \downarrow 0} \frac{ \tilde \sfd^2(\mu_t,\mu_{t+h}) - \norm{w-v}_{\Lp{2}(\pi_{t,t+h})}^2 }{h^2} = 0 \comma
	\]
	which follows from~\eqref{eq:vlasovtoac2} and~\eqref{eq:tangent2}.
\end{proof}

\begin{proof}[Proof of \Cref{prop:tangent2}]
	Let~$t \in (a,b)$ be such that~$\langle v \rangle_t \neq 0$. In the light of \Cref{lem:derW2}, \Cref{cor:limT}, \Cref{prop:tangent}, we may assume that \eqref{eq:limT},~\eqref{eq:tangent2},~\eqref{eq:tangent3} hold, and, additionally,~$t$ is a Lebesgue point for
	\[
		s \longmapsto \int F_s \, \dif \mu_s \fstop
	\]
	Fix~$h \in (0,b-t)$ such that~$T_{t,t+h}<\infty$, and let~$\boldsymbol \eta, \mathbf P$ be as in \Cref{lem:boundT}. In particular,
	\[
		\int y \cdot \langle v \rangle_t \, \dif \mathbf{P} - \int x \cdot \langle v \rangle_t \, \dif \mathbf{P}
			=
		\int \gamma_x(t+h) \cdot \langle v \rangle_t \, \dif \mathbf{P} - \int \gamma_x(t) \cdot\langle v \rangle_t \, \dif \mathbf{P} \comma
	\]
	thus,
	\begin{align*}
		&\int (y-x) \cdot \langle v \rangle_t \, \dif \pi_{t,t+h} = \int \int_t^{t+h} \dot \gamma_x(s) \cdot \langle v \rangle_t \, \dif s \, \dif \boldsymbol \eta = \int \int_t^{t+h} \gamma_v(s) \cdot \langle v \rangle_t \, \dif s \, \dif \boldsymbol \eta \\
		&\quad= \int \int_t^{t+h} \gamma_v(s) \cdot \langle v \rangle_t \, \dif s \, \dif \boldsymbol \eta = \int \left(h\gamma_v(t) + \int_t^{t+h} (t+h-s) \dot \gamma_v(s) \, \dif s \right) \cdot \langle v \rangle_t \, \dif \boldsymbol{\eta} \\
		&\quad = \int \left(h\gamma_v(t) + \int_t^{t+h} (t+h-s) F_s\bigl(\gamma_x(r),\gamma_v(r)\bigr) \, \dif s \right) \cdot \langle v \rangle_t \, \dif \boldsymbol{\eta} \\
		&\quad= h \int v \cdot \langle v \rangle_t \, \dif \mu_t + \int_t^{t+h} (t+h-s) \int F_s \cdot \langle v \rangle_t \, \dif \mu_s \, \dif s \fstop
	\end{align*}
	We infer that
	\begin{align*}
		&\frac{T_{t,t+h}-h}{h^2} \langle v \rangle_t^2 = \fint_t^{t+h} \frac{t+h-s}{h} \left( \int F_s \cdot \langle v \rangle_t \, \dif \mu_s - \int F_t \cdot \langle v \rangle_t \, \dif \mu_t \right) \, \dif s \\
		&\quad + \int F_t \cdot \langle v \rangle_t \, \dif \mu_t \fint_t^{t+h} \frac{t+h-s}{h} \, \dif s \\
		&\quad+ \frac{T_{t,t+h}}{h} \int \frac{v-w}{2h} \cdot \langle v \rangle_t \, \dif \pi_{t,t+h} -  \frac{1}{h^2} \int \left(y-x-T_{t,t+h}\frac{v+w}{2}\right) \cdot \langle v \rangle_t \, \dif \pi_{t,t+h}  \fstop
	\end{align*}
	Let us analyse the four terms at the right-hand side one by one. The first one is negligible, because we can bound~$\frac{t+h-s}{h} \le 1$ and use the Lebesgue differentiation theorem. The second one is equal, for every~$h>0$, to~$\frac{1}{2} \int F_t \cdot \langle v \rangle_t \, \dif \mu_t$. The third one converges to this same quantity with inverse sign (i.e.,~$-\frac{1}{2} \int F_t \cdot \langle v \rangle_t \, \dif \mu_t$) by~\eqref{eq:limT} and~\eqref{eq:tangent2}. The fourth one is negligible by~\eqref{eq:tangent3}.
	
	Since~$\langle v \rangle_t \neq 0$, the proof of~\eqref{eq:tangent4} is complete, and~\eqref{eq:tangent5} follows from~\eqref{eq:tangent2},~\eqref{eq:tangent3},~\eqref{eq:tangent4}.
\end{proof}

\subsubsection{$\sfd$-derivative}

For a solution~$(\mu_t)_t$ to Vlasov's equation, we are going to prove that the limits of the incremental ratios~$\frac{\sfd(\mu_t,\mu_{t+h})}{h}$ and~$\frac{\tilde \sfd_h(\mu_t,\mu_{t+h})}{h}$ as~$h \downarrow 0$ exist and are equal to the smallest norm of a force field~$(F_t)_t$ driving~$(\mu_t)_t$. This is similar to a consequence of \cite[Theorem~1.1.2 \& Theorem~8.3.1]{MR2401600} in classical OT. A major obstacle in replicating these results is that~$\sfd$ is not a distance. 

The classical way to show that~$\frac{\sfd(\mu_t,\mu_{t+h})}{h}$ has a limit for almost every~$t$---without extracting a subsequence---is \cite[Thereom~1.1.2]{MR2401600}, which relies on the triangle inequality. The proof we give here is, instead, based on \Cref{lem:general}, which, for a.e.~$t \in (a,b)$ with~$\abs{\rho_t'}_{\mathrm{W}_2} > 0$, identifies~$F_t$ with the direction of infinitesimal change of the velocities, which in turn bounds the incremental ratio of~$\sfd$ from below.

\begin{rem}
	\Cref{prop:metricder} below is a generalised version of \Cref{prop:ddiff2}, which provided the $d$-derivative in the particle-model case.
\end{rem}

\begin{prop} \label{prop:metricder}
	Under \Cref{ass:vlasov} with~$(F_t)_t$ belonging to the~$\Lp{2}(\mu_t \dif t)$-closure of the set~$\set{\nabla_v \varphi \, : \, \varphi \in \mathrm{C}^\infty_\mathrm{c}\bigl((a,b) \times \G \bigr)}$, for a.e.~$t$ such that~$\abs{\rho_t'}_{{\mathrm{W}_2}} > 0$, we have the limits
	\begin{equation} \label{eq:metricder}
		\lim_{h \downarrow 0} \frac{\sfd(\mu_t,\mu_{t+h})}{h}
			=
		\lim_{h \downarrow 0} \frac{\tilde \sfd_h(\mu_t,\mu_{t+h})}{h}
			=
		\norm{F_t}_{\Lp{2}(\mu_t)} \fstop
	\end{equation}
\end{prop}

\begin{proof}
	The inequality~$\le$ is given by \Cref{prop:vlasovtoac2}. The inequality~$\ge$ follows from \Cref{prop:tangent}:
	\[
		\liminf_{h \downarrow 0} \frac{\tilde \sfd_h(\mu_t,\mu_{t+h})}{h}
			\ge
		\liminf_{h \downarrow 0} \frac{\sfd(\mu_t,\mu_{t+h})}{h} \ge \liminf_{h \downarrow 0} \frac{\norm{w-v}_{\Lp{2}(\pi_{t,t+h})}}{h} \stackrel{\eqref{eq:tangent2}}{=} \norm{F_t}_{\Lp{2}(\mu_t)}
	\]
	for a.e.~$t \in (a,b)$ such that~$\abs{\rho_t'}_{\mathrm{W}_2} > 0$.
\end{proof}

The limit
\begin{equation}
	\label{eq:lim_dhh}
	\lim_{h \downarrow 0} \frac{\tilde \sfd_h(\mu_t,\mu_{t+h})}{h}
	=
	\norm{F_t}_{\Lp{2}(\mu_t)} \comma
\end{equation}
can be obtained without the assumption~$\abs{\rho_t'}_{\mathrm{W}_2} > 0$.

\begin{prop} \label{prop:metricder_bis}
	Under \Cref{ass:vlasov} with~$(F_t)_t$ belonging to the~$\Lp{2}(\mu_t \dif t)$-closure of the set~$\set{\nabla_v \varphi \, : \, \varphi \in \mathrm{C}^\infty_\mathrm{c}\bigl((a,b) \times \G \bigr)}$, for a.e.~$t$, we have~\eqref{eq:lim_dhh}.
\end{prop}

\begin{proof}
	The inequality~$\le$ is given by \Cref{prop:vlasovtoac2}. To prove~$\ge$, we adopt a similar strategy as in the proof of \Cref{lem:general} and \Cref{prop:tangent2}. Let us fix a~$\mathrm{C}^1$-dense set of functions~$\set{\varphi_k}_{k \in \N} \subseteq \mathrm{C}_\mathrm{c}^\infty(\Gamma)$, and~$t$ such that:
	\begin{enumerate}
		\item $t$ is a Lebesgue point of the functions
		\[
			s \longmapsto \int_\Gamma (\tilde v,F_s) \cdot \nabla_{x,v} \varphi_k \, \dif \mu_s \quad \text{ for every } k \in \N \comma \quad \text{and} \quad s \longmapsto \norm{F_s}_{\Lp{2}(\mu_s)} \fstop
		\]
		\item $F_t$ belongs to the~$\Lp{2}(\mu_t)$-closure of~$\set{\nabla_v \varphi \, : \, \varphi \in \mathrm{C}_\mathrm{c}^\infty(\Gamma)}$.
	\end{enumerate}
	The points~$t$ satisfying the previous conditions form a full-measure set in~$(a,b)$.
	
	Fix~$h \in (0,b-t)$, let~$\bar \pi \in \Pi(\mu_t,\mu_{t+h})$  be~$\tilde \sfd_h$-optimal, let~$\boldsymbol \eta \in \cP\bigl(\Gamma \times \mathrm{H}^1(t,t+h;\Gamma)\bigr)$ be as in the proof of \Cref{prop:vlasovtoac1} (after replacing~$(s,t)$ with~$(t,t+h)$). By gluing~$\bar \pi$ and~$\boldsymbol \eta$ at~$\mu_{t}$, construct~$\mathbf P \in \cP\bigl(\G \times \G \times \mathrm{H}^1(t,t+h;\G) \bigr)$. For every~$k$, we have
	\[
	\int \varphi_k(y,w) \, \dif \mathbf P - \int \varphi_k(x,v) \, \dif \mathbf P
	=
	\int \varphi_k \bigl(\gamma(t+h)\bigr) \, \dif \mathbf P  - \int \varphi_k \bigl(\gamma(t)\bigr) \, \dif \mathbf P \comma
	\]
	which yields, by the fundamental theorem of calculus and the properties of~$\boldsymbol \eta$ and~$\mathbf{P}$,
	\begin{multline*}
		\frac{1}{h}\int_{\Gamma \times \Gamma} \int_0^1  (y-x,w-v) \cdot \nabla_{x,v} \varphi_k \bigl(x+r(y-x),v+r(w-v)\bigr)  \, \dif r \, \dif \bar \pi \\
		=
		\fint_t^{t+h} \int_\Gamma \bigl(\tilde v,F_s(\tilde x, \tilde v)\bigr) \cdot \nabla_{x,v} \varphi_k(\tilde x, \tilde v) \, \dif \mu_s \, \dif s \fstop
	\end{multline*}
	Observe that
	\begin{align*}
		&\abs{\frac{1}{h}\int_{\Gamma^2} \int_0^1  (y-x,w-v) \cdot \left(\nabla_{x,v} \varphi_k\bigl(x+r(y-x),v+r(w-v)\bigr)-\nabla_{x,v} \varphi_k(x,v) \right) \, \dif r \, \dif \bar \pi} \nonumber \\
		&\quad \lesssim \frac{\norm{y-x}_{\Lp{2}(\bar \pi)}^2 + \norm{w-v}_{\Lp{2}(\bar \pi)}^2}{h} \nonumber \\
		&\quad \lesssim \frac{1}{h} \left(
		\norm{y-x-h\frac{w+v}{2}}_{\Lp{2}(\bar \pi)}^2 + \bigl(1+h^2\bigr) \norm{w-v}_{\Lp{2}(\bar \pi)}^2 + h^2 \norm{v}_{\Lp{2}(\mu_t)}^2 \right) \nonumber \\
		&\quad \lesssim
		\frac{\bigl(1+h^2\bigr) \tilde \sfd_h(\mu_t,\mu_{t+h})^2 + h^2 \norm{v}_{\Lp{2}(\mu_t)}^2}{h} \comma
	\end{align*}
	and the last contribution is negligible by \Cref{prop:vlasovtoac1}. Furthermore,
	\[
	\abs{\int_\Gamma \frac{y-x - hv}{h} \cdot \nabla_x \varphi_k \, \dif \bar \pi} \lesssim \frac{\norm{y-x-h v}_{\Lp{2}(\bar \pi)}}{h} \lesssim  \tilde \sfd_h(\mu_t,\mu_{t+h}) \to 0 \comma
	\]
	as~$h \downarrow 0$. We deduce that
	\[
		\lim_{h \downarrow 0} \int_{\Gamma} \frac{w-v}{h} \cdot \nabla_v \varphi_k \, \dif \bar \pi = \int_\Gamma F_t \cdot \nabla_v \varphi_k \, \dif \mu_t \fstop
	\]
	Consequently,
	\[
		\int_\Gamma F_t \cdot \nabla_v \varphi_k \, \dif \mu_t \le \norm{\nabla_v \varphi_k}_{\Lp{2}(\mu_t)} \liminf_{h \downarrow 0} \frac{\norm{w-v}_{\Lp{2}(\bar \pi)}}{h} \le \norm{\nabla_v \varphi_k}_{\Lp{2}(\mu_t)} \liminf_{h \downarrow 0} \frac{\tilde \sfd_h(\mu_t,\mu_{t+h})}{h} \fstop
	\]
	The conclusion follows, as~$F_t$ can be approximated by~$\nabla_v \varphi_k$.
\end{proof}

\subsection{Reparametrisations} \label{sec:repar}

Let~$(\tilde \mu_s)_{s \in (\tilde a,\tilde b)} \subseteq \mathcal P_2(\Gamma)$ be a~$\mathrm{W}_2$-$2$-absolutely continuous curve, and~$(s,t) \mapsto \tilde \pi_{s,t}$ a measurable selection of~$\sfd$-optimal transport plans. Define~$\tilde \Omega$,~$\tilde \rho_s$,~$\tilde T_{s,t}$ in the same way as in the introduction to~\S\ref{sec:sec5}.

\begin{thm} \label{thm:rep}
	Let~$\tilde \lambda \colon (\tilde a,\tilde b) \to \R_{>0}$ be measurable, bounded, and bounded away from zero. Assume that~$\abs{\tilde \rho_s'}_{\mathrm{W}_2} > 0$ for a.e.~$s \in (\tilde a,\tilde b)$. Then, the following are equivalent:
	\begin{enumerate}
		\item \label{rep1} The curve~$(\tilde \mu_s)_{s \in (\tilde a, \tilde b)}$ is a distributional solution to
		\begin{equation}\label{eq:rep1}
			\partial_s \tilde \mu_s + \tilde \lambda (s) \, v \cdot \nabla_x \tilde \mu_s + \nabla_v \cdot (\tilde F_s \tilde \mu_s) = 0
		\end{equation}
		for some force field~$(\tilde F_s)_{s\in (\tilde a, \tilde b)}$ with
		\begin{equation} \label{eq:rep2}
			\int_{\tilde a}^{\tilde b} \left( \norm{v}_{\Lp{2}(\tilde \mu_s)}^2 +  \norm{\tilde F_s}_{\Lp{2}(\tilde \mu_s)}^2 \right) \, \dif s < \infty \fstop
		\end{equation}
		\item \label{rep2} There exists~$\tilde \ell \in \Lp{2}(\tilde a, \tilde b)$ such that
		\begin{equation} \label{eq:rep3}
			\sfd(\tilde \mu_s,\tilde \mu_t)
				\le
			\int_s^t \tilde \ell(r) \, \dif r \comma \qquad \tilde a<s<t<\tilde b \fstop
		\end{equation}
		Moreover,
		\begin{equation} \label{eq:rep4bis}
			\lim_{\tilde h \downarrow 0} \frac{\tilde T_{s,s+\tilde h}}{\tilde h} = \tilde \lambda(s) \quad \text{for a.e.~} s \in (\tilde a, \tilde b) \fstop
		\end{equation}
		and, for every~$[\tilde a', \tilde b'] \subseteq \tilde \Omega$, there exist~$\bar h > 0$ and a function~$\tilde g \in \Lp{1}(\tilde a', \tilde b')$ such that
		\begin{equation} \label{eq:rep5bis}
			\sup_{\tilde h \in (0,\bar h)}\frac{\tilde T_{s,s+\tilde h}}{\tilde h} \le \tilde g(s) \quad \text{for all } s \in [\tilde a', \tilde b'] \fstop
		\end{equation}

	\end{enumerate}
	When the first statement holds for some~$(\tilde F_s)_{s \in (\tilde a, \tilde b)}$, we can choose~$\tilde l \coloneqq 2 \norm{\tilde F_\cdot}_{\Lp{2}(\mu_\cdot)}$ in~\eqref{eq:rep3}.

	When either of the two statements is true, a force field~$(\tilde F_s)_{s \in (\tilde a,\tilde b)}$ for which~\eqref{eq:rep1} and~\eqref{eq:rep2} hold exists in the~$\Lp{2}(\tilde \mu_s \, \dif s)$-closure of~$\set{\nabla_v \tilde \varphi \, : \, \tilde \varphi \in \mathrm{C}^\infty_\mathrm{c}\bigl((\tilde a,\tilde b) \times \Gamma \bigr)}$. Given such a force field, for a.e.~$s \in (\tilde a, \tilde b)$ we have
	\begin{equation} \label{eq:metricder2}
		\lim_{\tilde h \downarrow 0} \frac{\sfd(\tilde \mu_s,\tilde \mu_{s+\tilde h})}{\tilde h} = \norm{\tilde F_s}_{\Lp{2}(\mu_s)} \fstop
	\end{equation}
\end{thm}

\begin{proof}
	Set~$a \coloneqq 0$,~$b \coloneqq \int_{\tilde a}^{\tilde b} \tilde \lambda(s) \, \dif s$, and define the bi-Lipschitz continuous function
	\[
		\tau(s)
			\coloneqq
		\int_0^s \tilde \lambda(r) \, \dif r \comma \qquad s \in (\tilde a,\tilde b) \fstop
	\]
	Define
	\[
		\mu_t \coloneqq \tilde \mu_{\tau^{-1}(t)} \comma \qquad t \in (a,b) \comma
	\]
	as well as
	\[
		\pi_{s,t} \coloneqq \tilde \pi_{\tau^{-1}(s),\tau^{-1}(t)} \in \Pi_{\mathrm o, \sfd}(\mu_s,\mu_t) \comma \qquad a<s<t<b \comma
	\]
	so that
	\[
		T_{s,t} = \tilde T_{\tau^{-1}(s), \tau^{-1}(t)} \comma \qquad a< s<t<b \fstop
	\]
	Note that~$(\mu_t)_{t \in (a,b)}$ is $\mathrm{W}_2$-$2$-absolutely continuous. Indeed, for all~$a<s<t<b$, we have
	\[
		\mathrm{W}_2(\mu_s,\mu_t)
			=
		\mathrm{W}_2\bigl(\tilde \mu_{\tau^{-1}(s)},\tilde \mu_{\tau^{-1}(t)}\bigr)
			\le
		\int_{\tau^{-1}(s)}^{\tau^{-1}(t)} \abs{\tilde \mu'_{\tilde r}}_{\mathrm{W}_2} \, \dif \tilde r
			=
		\int_{s}^{t} \frac{\abs{\tilde \mu'_{\tau^{-1}( r)}}_{\mathrm{W}_2}}{\tilde \lambda\bigl(\tau^{-1}(r)\bigr)} \, \dif r
	\]
	and
	\[
		\int_{a}^{b} \frac{\abs{\tilde \mu'_{\tau^{-1}(r)}}_{\mathrm{W}_2}^2}{\tilde \lambda\bigl(\tau^{-1}(r)\bigr)^2} \, \dif r
			=
		\int_{\tilde a}^{\tilde b} \frac{\abs{\tilde \mu'_{\tilde r}}_{\mathrm{W}_2}^2}{ \tilde \lambda(\tilde r)} \, \dif \tilde r
			\le
		\norm{\frac{1}{\tilde \lambda}}_{\Lp{\infty}} \int_{\tilde a}^{\tilde b} \abs{\tilde \mu'_{\tilde r}}_{\mathrm{W}_2}^2 \, \dif \tilde r < \infty \fstop
	\]
	Moreover, at every differentiability point~$s$ for~$\tilde s \mapsto \tilde \rho_{\tilde s}$ and~$\tau$, for which~$\tau(s)$ is a $W_2$-differentiability point for~$\tilde t \mapsto \rho_{\tilde t}$ and~$\tau'(s) = \tilde \lambda(s) > 0$, we have
	\begin{align}
		\label{eq:derivRho}
		\begin{split}
		0 &< \abs{\tilde \rho_s'}_{\mathrm{W}_2}
			=
		\lim_{h \to 0} \frac{\mathrm{W}_2(\tilde \rho_s,\tilde \rho_{s+h})}{\abs{h}} \\
			&\le \lim_{h \to 0} \frac{\mathrm{W}_2\bigl( \rho_{\tau(s)}, \rho_{\tau(s+h)} \bigr)}{\abs{\tau(s+h) - \tau(s)}}
			\lim_{h \to 0} \frac{\abs{\tau(s+h) - \tau(s)}}{\abs{h}}
				=
			\abs{\rho'_{\tau(s)}} \tau'(s) \fstop
			\end{split}
	\end{align}
	This proves that~$\abs{\rho_t'}_{\mathrm{W}_2} > 0$ for a.e.~$t \in (a,b)$.
	Observe that
		\[
	\Omega = \set{t \in (a,b) \, : \, \norm{v}_{\Lp{2}(\mu_t)} > 0} = \set{ \tau(s) \, : \, s \in (\tilde a, \tilde b) \comma \norm{v}_{\Lp{2}(\tilde \mu_s)} > 0} = \tau(\tilde \Omega) \fstop
			\]
	
	\textbf{Proof of $\mathbf {\ref{rep1} \Rightarrow \ref{rep2}}$.} Define
	\[
		F_t \coloneqq \frac{\tilde F_{\tau^{-1}(t)}}{\tilde \lambda\bigl(\tau^{-1}(t)\bigr)} \comma \qquad t \in (a,b) \fstop
	\]
	We have
	\begin{equation} \label{eq:integrChVar}
		\int_a^b \left( \norm{v}_{\Lp{2}(\mu_t)}^2 + \norm{F_t}_{\Lp{2}(\mu_t)}^2 \right) \, \dif t
			=
		\int_{\tilde a}^{\tilde b} \left( \tilde \lambda(s) \norm{v}_{\Lp{2}(\tilde \mu_s)}^2 + \frac{\norm{\tilde F_s}_{\Lp{2}(\tilde \mu_s )}^2}{\tilde \lambda(s)} \right) \, \dif s < \infty \fstop
	\end{equation}
	Fix~$\varphi \in \mathrm{C}_\mathrm{c}^\infty\bigl( (a,b) \times \Gamma \bigr)$ and define~$\tilde \varphi(s,\cdot) \coloneqq \varphi\bigl(\tau(s), \cdot \bigr)$. Let~$(\tau_k)_{k \in \N}$ be a sequence of~$\mathrm{C}^\infty$ functions converging to~$\tau$ in~$\mathrm{H}^1(\tilde a,\tilde b)$ (hence uniformly), and let~$\tilde \varphi_k(s,\cdot) \coloneqq \varphi\bigl(\tau_k(s),\cdot \bigr)$ for~$s \in (\tilde a, \tilde b)$ and~$k \in \N$. At least when~$k$ is large, we have~$\tilde \varphi_k \in \mathrm C^\infty_\mathrm{c}\bigl((\tilde a, \tilde b) \times \Gamma\bigr)$, so that
	\begin{align}
		0
		 	&\stackrel{\eqref{eq:rep1}}{=}
		 \lim_{k \to \infty} \int_{\tilde a}^{\tilde b} \int (\partial_s \tilde \varphi_k + \tilde \lambda\,  v \cdot \nabla_x \tilde \varphi_k + \tilde F_s \nabla_v \tilde \varphi_k) \, \dif \tilde \mu_s \, \dif s \nonumber \\
		 \label{eq:vlasovlambda}
		 	&=
		 \int_{\tilde a}^{\tilde b} \int (\partial_s \tilde \varphi + \tilde \lambda\,  v \cdot \nabla_x \tilde \varphi + \tilde F_s \nabla_v \tilde \varphi) \, \dif \tilde \mu_s \, \dif s 
		 	=
		 \int_a^b \int (\partial_t \varphi + v \cdot \nabla_x \varphi + F_t \cdot \nabla_v \varphi) \, \dif \mu_t \, \dif t \fstop
	\end{align}
	This proves that~$(\mu_t,F_t)_{t \in (a,b)}$ satisfies \Cref{ass:vlasov}. We apply \Cref{prop:vlasovtoac1} to write
	\begin{equation} \label{eq:boundd2}
		\sfd(\tilde \mu_s,\tilde \mu_t)
			=
		\sfd\bigl( \mu_{\tau(s)},  \mu_{\tau(t)} \bigr)
			\stackrel{\eqref{eq:vlasovtoac11}}{\le} 2 \int_{\tau(s)}^{\tau(t)} \norm{F_r}_{\Lp{2}(\mu_r)} \, \dif r = 2 \int_s^t \norm{\tilde F_{r}}_{\Lp{2}(\tilde \mu_{\tilde r})}  \, \dif \tilde r
	\end{equation}
	and deduce~\eqref{eq:rep3}.
	
	Let~$[\tilde a', \tilde b'] \subseteq \tilde \Omega$. By \Cref{lem:boundT} there exist~$\bar h > 0$ and a function~$g$ in~$\Lp{2}$ such that
	\[
		\frac{T_{t,t+h}}{h} \le g(t) \comma \quad \text{for all } t \in \bigl[\tau(\tilde a'), \tau(\tilde b')\bigr] \comma \, \text{ and every } h \in (0,\bar h) \fstop
	\]
	Then, there exists a constant~$C_{\tilde \lambda} > 0$ such that
	\begin{equation} \label{eq:domination}
	\frac{\tilde T_{s,s+\tilde h}}{\tilde h}
	=
	\frac{{\tau(s+\tilde h) - \tau(s)}}{\tilde h}
	\frac{ T_{\tau(s),\tau(s+\tilde h)}}{\tau(s+\tilde h) - \tau(s)}
	\le
	C_{\tilde \lambda} \, g\bigl(\tau(s) \bigr)
	\end{equation}
	for~$s \in [\tilde a',\tilde b']$ and~$h \in \bigl(0,\bar h/C_{\tilde \lambda}\bigr)$. Observe that~$s \mapsto g\bigl(\tau(s)\bigr)$ is square-integrable (hence integrable), thus~\eqref{eq:rep5bis} follows.
	By \Cref{cor:limT}, we have
	\[
		\lim_{h \downarrow 0} \frac{T_{t,t+h}}{h} = 1 \quad \text{for a.e.~} t \in (a,b) \comma
	\]
	hence~\eqref{eq:rep4bis} thanks to
	\begin{equation} \label{eq:pointconv}
		\frac{\tilde T_{s,s+\tilde h}}{\tilde h} =
		\frac{{\tau(s+\tilde h) - \tau(s)}}{\tilde h}
		\frac{ T_{\tau(s),\tau(s+\tilde h)}}{\tau(s+\tilde h) - \tau(s)} \to \tilde \lambda(s) \quad \text{as } h \downarrow 0\text{ for a.e.~} s \in (\tilde a, \tilde b) \fstop
	\end{equation} %

	\textbf{Proof of $\mathbf {\ref{rep2} \Rightarrow \ref{rep1}}$.} 
	By~\eqref{eq:rep4bis}, for a.e.~$t \in (a,b)$, we have
	\begin{equation} \label{eq:convpunt}
		\frac{T_{t,t+h}}{h} = \frac{\tau^{-1}(t+h) - \tau^{-1}(t)}{h} \frac{\tilde T_{\tau^{-1}(t),\tau^{-1}(t+h)}}{\tau^{-1}(t+h) - \tau^{-1}(t)} \to \frac{1}{\tilde \lambda\bigl(\tau^{-1}(t)\bigr)} \tilde \lambda\bigl(\tau^{-1}(t)\bigr) = 1 \fstop
	\end{equation}
	For every~$s,t$ with~$a<s<t<b$, we have
	\[
		\sfd(\mu_s,\mu_t) = \sfd(\tilde \mu_{\tau^{-1}(s)}, \tilde \mu_{\tau^{-1}(t)}) \stackrel{\eqref{eq:rep3}}{\le} \int_{\tau^{-1}(s)}^{\tau^{-1}(t)} \tilde \ell(\tilde r) \, \dif \tilde r
			= \int_s^t \frac{\tilde \ell\bigl(\tau^{-1}(r) \bigr)}{\tilde \lambda\bigl(\tau^{-1}(r)\bigr)} \, \dif r \comma
	\]
	and we notice that
	\[
		\int_a^b \frac{\tilde \ell\bigl(\tau^{-1}(r) \bigr)^2}{\tilde \lambda\bigl(\tau^{-1}(r)\bigr)^2} \, \dif r
			=
		\int_{\tilde a}^{\tilde b} \frac{\tilde \ell(\tilde r)^2}{\tilde \lambda(\tilde r)} \, \dif \tilde r < \infty \fstop
	\]
	This proves Assumption~\ref{ass:ac_bis} in \Cref{cor:actovlasov}, which together with~\eqref{eq:convpunt}, fulfil the hypotheses of \Cref{lem:derrhov_new}. Then,~$\Omega$ has full measure in~$(a,b)$.
	
	To prove Assumption~\ref{ass:conv_bis}, let us fix~$[a',b'] \subseteq  \Omega$. 
	For every~$t \in [a',b']$, we have
	\[
		\frac{T_{t,t+h}}{h} = \frac{\tau^{-1}(t+h) - \tau^{-1}(t)}{h} \frac{\tilde T_{\tau^{-1}(t),\tau^{-1}(t+h)}}{\tau^{-1}(t+h) - \tau^{-1}(t)} \le \norm{\frac{1}{\tilde \lambda}}_{\Lp{\infty}} \frac{\tilde T_{\tau^{-1}(t),\tau^{-1}(t+h)}}{\tau^{-1}(t+h) - \tau^{-1}(t)} \comma
	\]
	and, by~\eqref{eq:rep5bis},
	\[
		\frac{T_{t,t+h}}{h} \le C_{\tilde \lambda} \tilde g \bigl( \tau^{-1}(t) \bigr)
	\]
	for~$h \in (0,\bar h/C_{\tilde \lambda})$, for some constant~$C_{\tilde \lambda}$. The function~$t \mapsto \tilde g \bigl(\tau^{-1}(t)\bigr)$ is integrable on~$[a',b']$, therefore%
~\eqref{eq:Thh1} follows from~\eqref{eq:convpunt} and the dominated convergence theorem. \Cref{cor:actovlasov} provides a force field~$(F_t)_t$ such that~$(\mu_t,F_t)_t$ solves~\eqref{eq:cont} on~$(a, b) \times \Gamma$ and~$(F_t)_t$ belongs to the~$\Lp{2}(\mu_t \, \dif t)$-closure of~$\set{\nabla_v \varphi \, : \, \varphi \in \mathrm{C}^\infty_{\mathrm{c}}\bigl((a,b) \times\Gamma \bigr)}$. As we did in~\eqref{eq:vlasovlambda}, it is possible to prove that the curve~$(\tilde \mu_s)_s$ and the field
	\[
		\tilde F_s \coloneqq \tilde \lambda(s) F_{\tau (s)} \comma \qquad s \in (\tilde a, \tilde b) \comma
	\]
	solve~\eqref{eq:rep1}. From~\eqref{eq:integrChVar}, we infer~\eqref{eq:rep2}. By \Cref{prop:metricder}, we find~\eqref{eq:metricder2}: for a.e.~$s$,
	\[
		\frac{\sfd(\tilde \mu_s,\tilde \mu_{s+\tilde h})}{\tilde h} = \frac{\tau(s+\tilde h)-\tau(s)}{\tilde h} \frac{\sfd(\mu_{\tau(s)},\mu_{\tau(s+\tilde h)})}{\tau(s+\tilde h)-\tau(s)}  \stackrel{\eqref{eq:metricder}}{\to} \tilde \lambda(s) \norm{F_{\tau(s)}}_{\Lp{2}(\mu_{\tau(s)})} = \norm{\tilde F_s}_{\Lp{2}(\tilde \mu_s)} \fstop
	\]
	
	 We show that~$(\tilde F_s)_s$ lies to the $\Lp{2}(\tilde \mu_s \, \dif s)$-closure of~$\set{\nabla_v \tilde \varphi \, : \, \tilde \varphi \in \mathrm{C}^\infty_\mathrm{c}\bigl((\tilde a,\tilde b)\times \Gamma \bigr)}$. Let~$(\varphi_k)_{k \in \N}$ be a sequence of $\mathrm{C}^\infty_\mathrm{c}\bigl((a,b) \times \Gamma \bigr)$ functions such that~$\nabla_v \varphi_k \to F$ in~$\Lp{2}(\mu_t \, \dif t)$, and let~$(\tau_l)_{l \in \N}$ be a sequence of~$\mathrm{C}^\infty$ functions converging to~$\tau$ in~$\mathrm{H}^1(\tilde a, \tilde b)$. For every~$k$,
	\[
		\tilde \varphi_{k,l} \colon (s,x,v) \longmapsto \tau_l'(s) \varphi_k\bigl(\tau_l(s),x,v\bigr)
	\]
	belongs to~$\mathrm{C}^\infty_\mathrm{c}\bigl((\tilde a, \tilde b) \times \Gamma\bigr)$, at least for large~$l$. As~$l \to \infty$, the sequence~$(\nabla _v\varphi_{k,l})_l$ converges to the $v$-gradient of the function
	\[
		\tilde \varphi_k \colon (s,x,v) \longmapsto \tilde \lambda(s) \varphi_k \bigl(\tau(s),x,v\bigr)
	\]
	in~$\Lp{2}(\mu_s \, \dif s)$, since~$\tau_l \to \tau$ uniformly,~$\tau_l' \to \tilde \lambda$ in~$\Lp{2}(\dif s)$, and~$\nabla_v \varphi_k\bigl(\tau_l(s),x,v\bigr)$ is uniformly bounded in~$l$ for fixed~$k$. Moreover, by a change of variables
	\begin{multline*}
		\int_{\tilde a}^{\tilde b} \int \abs{\tilde \lambda(s) \nabla \varphi_k\bigl(\tau(s),\cdot\bigr) - \tilde F_s}^2 \, \dif \tilde \mu_s \, \dif s
			=
		\int_{\tilde a}^{\tilde b} \abs{\tilde \lambda(s)}^2 \int \abs{ \nabla \varphi_k\bigl(\tau(s),\cdot\bigr) - F_{\tau(s)}}^2 \, \dif \mu_{\tau(s)} \, \dif s  \\
			 =
		\int_{a}^{b} \abs{\tilde \lambda(\tau^{-1}(t))} \int \abs{ \nabla \varphi_k - F_{t}}^2 \, \dif \mu_{t} \, \dif t
			\le
		\norm{\tilde \lambda}_{\Lp{\infty}} \int_{a}^{b} \int \abs{ \nabla \varphi_k - F_{t}}^2 \, \dif \mu_{t} \, \dif t \comma
	\end{multline*}
	and the latter tends to~$0$ as~$k \to \infty$.
\end{proof}

\section*{Acknowledgements}
This work was partially inspired by an unpublished note from 2014 by Guillaume Carlier, Jean Dolbeault, and Bruno Nazaret. GB deeply thanks Jean Dolbeault for proposing this problem to him, guiding him into the subject, and sharing the aforementioned note. \\
We are grateful to Karthik Elamvazhuthi for making us aware of the work~\cite{elamvazhuthi2025benamoubrenierformulationoptimaltransport}. \\
The work of GB has received funding from the European Union’s Horizon
2020 research and innovation programme under the Marie Sklodowska-Curie grant
agreement No 101034413.\\
JM and FQ gratefully acknowledge support from the Austrian Science Fund (FWF) project \href{https://doi.org/10.55776/F65}{10.55776/F65}.

\bibliographystyle{siam}

\begin{thebibliography}{10}

\bibitem{MR2401600}
{\sc L.~Ambrosio, N.~Gigli, and G.~Savar\'{e}}, {\em Gradient flows in metric
  spaces and in the space of probability measures}, Lectures in Mathematics ETH
  Z\"{u}rich, Birkh\"{a}user Verlag, Basel, second~ed., 2008.

\bibitem{baudoin2013bakry}
{\sc F.~Baudoin}, {\em Bakry-{Emery} meet {Villani}}, J. Funct. Anal., 273
  (2017), pp.~2275--2291.

\bibitem{benamou2000computational}
{\sc J.-D. Benamou and Y.~Brenier}, {\em A computational fluid mechanics
  solution to the {M}onge-{K}antorovich mass transfer problem}, Numerische
  Mathematik, 84 (2000), pp.~375--393.

\bibitem{benamou2019second}
{\sc J.-D. Benamou, T.~O. Gallou{\"e}t, and F.-X. Vialard}, {\em Second-order
  models for optimal transport and cubic splines on the {W}asserstein space},
  Foundations of Computational Mathematics, 19 (2019), pp.~1113--1143.

\bibitem{Bogachev07}
{\sc V.~I. Bogachev}, {\em Measure theory. {V}ol. {I}, {II}}, Springer-Verlag,
  Berlin, 2007.

\bibitem{boltzmann1872weitere}
{\sc L.~Boltzmann}, {\em Weitere studien {\"u}ber das {W}{\"a}rmegleichgewicht
  unter {G}asmolek{\"u}len}, Sitz.-Ber. Akad. Wiss. Wien (II), 66 (1872),
  pp.~275--370.

\bibitem{brenier1991polar}
{\sc Y.~Brenier}, {\em Polar factorization and monotone rearrangement of
  vector-valued functions}, Communications on pure and applied mathematics, 44
  (1991), pp.~375--417.

\bibitem{MR432846}
{\sc L.~D. Brown and R.~Purves}, {\em Measurable selections of extrema}, Ann.
  Statist., 1 (1973), pp.~902--912.

\bibitem{bunne2024optimal}
{\sc C.~Bunne, G.~Schiebinger, A.~Krause, A.~Regev, and M.~Cuturi}, {\em
  Optimal transport for single-cell and spatial omics}, Nature Reviews Methods
  Primers, 4 (2024), p.~58.

\bibitem{cavalletti2019variational}
{\sc F.~Cavalletti, M.~Sedjro, and M.~Westdickenberg}, {\em A variational time
  discretization for compressible {E}uler equations}, Transactions of the
  American Mathematical Society, 371 (2019), pp.~5083--5155.

\bibitem{chen2018measure}
{\sc Y.~Chen, G.~Conforti, and T.~T. Georgiou}, {\em Measure-valued spline
  curves: {A}n optimal transport viewpoint}, SIAM Journal on Mathematical
  Analysis, 50 (2018), pp.~5947--5968.

\bibitem{chen2021controlling}
{\sc Y.~Chen, T.~T. Georgiou, and M.~Pavon}, {\em Controlling uncertainty},
  IEEE Control Systems Magazine, 41 (2021), pp.~82--94.

\bibitem{chewi2021fast}
{\sc S.~Chewi, J.~Clancy, T.~Le~Gouic, P.~Rigollet, G.~Stepaniants, and
  A.~Stromme}, {\em Fast and smooth interpolation on {W}asserstein space}, in
  International Conference on Artificial Intelligence and Statistics, PMLR,
  2021, pp.~3061--3069.

\bibitem{chizat2022trajectory}
{\sc L.~Chizat, S.~Zhang, M.~Heitz, and G.~Schiebinger}, {\em Trajectory
  inference via mean-field {L}angevin in path space}, Advances in Neural
  Information Processing Systems, 35 (2022), pp.~16731--16742.

\bibitem{clancy2021interpolating}
{\sc J.~Clancy}, {\em Interpolating Spline Curves of Measures}, PhD thesis,
  Massachusetts Institute of Technology, 2021.

\bibitem{dolbeault2015hypocoercivity}
{\sc J.~Dolbeault, C.~Mouhot, and C.~Schmeiser}, {\em Hypocoercivity for linear
  kinetic equations conserving mass}, Transactions of the American Mathematical
  Society, 367 (2015), pp.~3807--3828.

\bibitem{dolbeault2009new}
{\sc J.~Dolbeault, B.~Nazaret, and G.~Savar{\'e}}, {\em A new class of
  transport distances between measures}, Calculus of Variations and Partial
  Differential Equations, 34 (2009), pp.~193--231.

\bibitem{duong2013generic}
{\sc M.~H. Duong, M.~A. Peletier, and J.~Zimmer}, {\em G{ENERIC} formalism of a
  {V}lasov-{F}okker-{P}lanck equation and connection to large-deviation
  principles}, Nonlinearity, 26 (2013), pp.~2951--2971.

\bibitem{duong2014conservative}
\leavevmode\vrule height 2pt depth -1.6pt width 23pt, {\em
  Conservative-dissipative approximation schemes for a generalized {K}ramers
  equation}, Math. Methods Appl. Sci., 37 (2014), pp.~2517--2540.

\bibitem{elamvazhuthi2025benamoubrenierformulationoptimaltransport}
{\sc K.~Elamvazhuthi}, {\em Benamou-{B}renier formulation of optimal transport
  for nonlinear control systems on {R}d}, arXiv preprint arXiv:2407.16088,
  (2025).

\bibitem{ElamvazhuthiLiuLiOsher}
{\sc K.~Elamvazhuthi, S.~Liu, W.~Li, and S.~Osher}, {\em Dynamical optimal
  transport of nonlinear control-affine systems}, J. Comput. Dyn., 10 (2023),
  pp.~425--449.

\bibitem{figalli2017monge}
{\sc A.~Figalli}, {\em The {M}onge-{A}mp\`ere equation and its applications},
  Zurich Lectures in Advanced Mathematics, European Mathematical Society (EMS),
  Z\"urich, 2017.

\bibitem{figalli2010mass}
{\sc A.~Figalli and L.~Rifford}, {\em Mass transportation on sub-{R}iemannian
  manifolds}, Geometric and functional analysis, 20 (2010), pp.~124--159.

\bibitem{MR2643592}
{\sc J.~Fontbona, H.~Gu\'{e}rin, and S.~M\'{e}l\'{e}ard}, {\em Measurability of
  optimal transportation and strong coupling of martingale measures}, Electron.
  Commun. Probab., 15 (2010), pp.~124--133.

\bibitem{friesecke2023gencol}
{\sc G.~Friesecke and M.~Penka}, {\em The {GenCol} algorithm for
  high-dimensional optimal transport: general formulation and application to
  barycenters and {W}asserstein splines}, SIAM Journal on Mathematics of Data
  Science, 5 (2023), pp.~899--919.

\bibitem{gangbo2009optimal}
{\sc W.~Gangbo and M.~Westdickenberg}, {\em Optimal transport for the system of
  isentropic {E}uler equations}, Communications in Partial Differential
  Equations, 34 (2009), pp.~1041--1073.

\bibitem{MR2920736}
{\sc N.~Gigli}, {\em Second order analysis on {$(\mathcal P_2(M),W_2)$}}, Mem.
  Amer. Math. Soc., 216 (2012), pp.~xii+154.

\bibitem{gorban2018hilbert}
{\sc A.~N. Gorban}, {\em Hilbert's sixth problem: the endless road to rigour},
  Philos. Trans. Roy. Soc. A, 376 (2018), pp.~20170238, 10.

\bibitem{grmela1997dynamics}
{\sc M.~Grmela and H.~C. {\"O}ttinger}, {\em Dynamics and thermodynamics of
  complex fluids. {I}. {D}evelopment of a general formalism}, Physical Review
  E, 56 (1997), pp.~6620--6632.

\bibitem{hormander1967hypoelliptic}
{\sc L.~H{\"o}rmander}, {\em Hypoelliptic second order differential equations},
  Acta Mathematica, 119 (1967), pp.~147--171.

\bibitem{huang2000variationall}
{\sc C.~Huang}, {\em A variational principle for the {K}ramers equation with
  unbounded external forces}, Journal of mathematical analysis and
  applications, 250 (2000), pp.~333--367.

\bibitem{huang2000variational}
{\sc C.~Huang and R.~Jordan}, {\em Variational formulations for
  {V}lasov--{P}oisson--{F}okker--{P}lanck systems}, Mathematical methods in the
  applied sciences, 23 (2000), pp.~803--843.

\bibitem{iacobelli2022new}
{\sc M.~Iacobelli}, {\em A new perspective on {W}asserstein distances for
  kinetic problems}, Archive for Rational Mechanics and Analysis, 244 (2022),
  pp.~27--50.

\bibitem{MR1617171}
{\sc R.~Jordan, D.~Kinderlehrer, and F.~Otto}, {\em The variational formulation
  of the {F}okker-{P}lanck equation}, SIAM J. Math. Anal., 29 (1998),
  pp.~1--17.

\bibitem{justiniano2023consistent}
{\sc J.~Justiniano, M.~Rajkovi{\'c}, and M.~Rumpf}, {\em Consistent
  approximation of interpolating splines in image metamorphosis}, Journal of
  Mathematical Imaging and Vision, 65 (2023), pp.~29--52.

\bibitem{justiniano2024approximation}
{\sc J.~Justiniano, M.~Rumpf, and M.~Erbar}, {\em Approximation of splines in
  {W}asserstein spaces}, ESAIM: Control, Optimisation and Calculus of
  Variations, 30 (2024), p.~64.

\bibitem{KECHICHIAN1995357}
{\sc J.~A. Kechichian}, {\em Optimal low-thrust transfer using variable bounded
  thrust}, Acta Astronautica, 36 (1995), pp.~357--365.

\bibitem{MR1321597}
{\sc A.~S. Kechris}, {\em Classical descriptive set theory}, vol.~156 of
  Graduate Texts in Mathematics, Springer-Verlag, New York, 1995.

\bibitem{kolmogoroff1934zufallige}
{\sc A.~Kolmogoroff}, {\em Zuf{\"a}llige bewegungen (zur theorie der
  {B}rownschen bewegung)}, The Annals of Mathematics, 35 (1934), pp.~116--117.

\bibitem{lavenant2021towards}
{\sc H.~Lavenant, S.~Zhang, Y.-H. Kim, and G.~Schiebinger}, {\em Toward a
  mathematical theory of trajectory inference}, Ann. Appl. Probab., 34 (2024),
  pp.~428--500.

\bibitem{longuski2014optimal}
{\sc J.~M. Longuski, J.~J. Guzm{\'a}n, and J.~E. Prussing}, {\em Optimal
  control with aerospace applications}, vol.~32, Springer, 2014.

\bibitem{lott2008some}
{\sc J.~Lott}, {\em Some geometric calculations on {W}asserstein space},
  Communications in Mathematical Physics, 277 (2008), pp.~423--437.

\bibitem{marec2012optimal}
{\sc J.-P. Marec}, {\em Optimal space trajectories}, vol.~1, Elsevier, 2012.

\bibitem{maxwell1867iv}
{\sc J.~C. Maxwell}, {\em {IV}. on the dynamical theory of gases},
  Philosophical transactions of the Royal Society of London,  (1867),
  pp.~49--88.

\bibitem{mwitta2024integration}
{\sc C.~Mwitta and G.~C. Rains}, {\em The integration of {GPS} and visual
  navigation for autonomous navigation of an {A}ckerman steering mobile robot
  in cotton fields}, Frontiers in Robotics and AI, 11 (2024).

\bibitem{ottinger1997dynamics}
{\sc H.~C. {\"O}ttinger and M.~Grmela}, {\em Dynamics and thermodynamics of
  complex fluids. {II}. {I}llustrations of a general formalism}, Physical
  Review E, 56 (1997), pp.~6633--6655.

\bibitem{otto2001geometry}
{\sc F.~Otto}, {\em The geometry of dissipative evolution equations: the porous
  medium equation}, Comm. Partial Differential Equations, 26 (2001),
  pp.~101--174.

\bibitem{park2024variational}
{\sc S.~Park}, {\em A variational perspective on the dissipative hamiltonian
  structure of the {V}lasov-{F}okker-{P}lanck equation}, arXiv preprint
  arXiv:2406.13682,  (2024).

\bibitem{rabin2012wasserstein}
{\sc J.~Rabin, G.~Peyr{\'e}, J.~Delon, and M.~Bernot}, {\em Wasserstein
  barycenter and its application to texture mixing}, in Scale Space and
  Variational Methods in Computer Vision: Third International Conference, SSVM
  2011, Ein-Gedi, Israel, May 29--June 2, 2011, Revised Selected Papers 3,
  Springer, 2012, pp.~435--446.

\bibitem{rohbeck2024modeling}
{\sc M.~Rohbeck, C.~Bunne, E.~De~Brouwer, J.-C. Huetter, A.~Biton, K.~Y. Chen,
  A.~Regev, and R.~Lopez}, {\em Modeling complex system dynamics with flow
  matching across time and conditions}, in NeurIPS 2024 Workshop on AI for New
  Drug Modalities.

\bibitem{salem2021optimal}
{\sc S.~Salem}, {\em An optimal transport approach of hypocoercivity for the 1d
  kinetic {F}okker-{P}lank equation}, arXiv preprint arXiv:2102.10667,  (2021).

\bibitem{santambrogio2015optimal}
{\sc F.~Santambrogio}, {\em Optimal transport for applied mathematicians},
  vol.~87 of Progress in Nonlinear Differential Equations and their
  Applications, Birkh\"auser/Springer, Cham, 2015.
\newblock Calculus of variations, PDEs, and modeling.

\bibitem{schiebinger2021reconstructing}
{\sc G.~Schiebinger}, {\em Reconstructing developmental landscapes and
  trajectories from single-cell data}, Current Opinion in Systems Biology, 27
  (2021), p.~100351.

\bibitem{MR4527757}
{\sc L.~Silvestre}, {\em H\"older estimates for kinetic {Fokker}-{Planck}
  equations up to the boundary}, Ars Inveniendi Analytica,  (2022).

\bibitem{villani2009hypocoercivity}
{\sc C.~Villani}, {\em Hypocoercivity}, Mem. Amer. Math. Soc., 202 (2009),
  pp.~iv+141.

\bibitem{MR2459454}
{\sc C.~Villani}, {\em Optimal transport}, vol.~338 of Grundlehren der
  mathematischen Wissenschaften [Fundamental Principles of Mathematical
  Sciences], Springer-Verlag, Berlin, 2009.
\newblock Old and new.

\bibitem{zhang2022wassersplines}
{\sc P.~Zhang, D.~Smirnov, and J.~Solomon}, {\em Wassersplines for neural
  vector field-controlled animation}, in Computer Graphics Forum, vol.~41,
  Wiley Online Library, 2022, pp.~31--41.

\end{thebibliography}

\end{document}